\newtheorem{theorem}{Theorem}[section]
\newtheorem{lemma}[theorem]{Lemma}
\newtheorem{proposition}[theorem]{Proposition}
\newtheorem{corollary}[theorem]{Corollary}
\newtheorem{theorem?}[theorem]{Theorem?}
\newtheorem{problem}[theorem]{Problem}
\newtheorem{example}[theorem]{Example}
\newtheorem{definition}[theorem]{Definition}
\numberwithin{equation}{section}
\begin{document}

\newcommand{\cc}{\mathfrak{c}}
\newcommand{\N}{\mathbb{N}}
\newcommand{\PP}{\mathbb{P}}
\newcommand{\QQ}{\mathbb{Q}}
\newcommand{\forces}{\Vdash}
\newcommand{\R}{\mathbb{R}}
\newcommand{\LL}{\mathbb{L}}
\newcommand{\MM}{\mathbb{M}}
\newcommand{\dual}[1]{#1^\star}
\newcommand{\ddual}[1]{#1^{\star\star}}
\newcommand{\str}[1]{\hat{#1}}
\newcommand{\ud}{\,\mathrm{d}}
\newcommand{\wstar}[1]{#1^*}

\title{On automorphisms of the Banach space $\ell_\infty/c_0$}
\author{Piotr Koszmider}
\address{Institute of Mathematics, Polish Academy of Sciences,
ul. \'Sniadeckich 8,  00-656 Warszawa, Poland}
\email{\texttt{piotr.koszmider@impan.pl}}
\thanks{The first author was partially supported by the National Science Center research grant 
2011/01/B/ST1/00657. } 

\author{Crist\'obal Rodr\'\i guez-Porras}
\address{Departamento de Matem\'aticas, Facultad de Ciencias, Universidad de Los
 Andes. 5101 M\'erida, Venezuela}
\address{Equipe de Logique, UFR de Math\'ematiques, Universit\'e Denis Diderot Paris
7. 75013 Paris, France}
\email{\texttt{cristobalr@ula.ve}}
\thanks{The second author was partially supported by  the
ESF-INFTY grant number 4473} 

\subjclass{}
\begin{abstract}
We investigate Banach space automorphisms 
$T:\ell_\infty/c_0\rightarrow\ell_\infty/c_0 $ focusing on the possibility of representing  their fragments
of the form $$T_{B,A}:\ell_\infty(A)/c_0(A)\rightarrow \ell_\infty(B)/c_0(B)$$ for $A, B\subseteq \N$ infinite
by means of linear operators from $\ell_\infty(A)$ into $\ell_\infty(B)$,  infinite  $A\times B$-matrices,
continuous maps from $B^*=\beta B\setminus B$ into $A^*$, or 
bijections from $B$ to $A$.
This leads to the analysis of general linear operators on $\ell_\infty/c_0$.
We present many examples, introduce and investigate several classes of operators,
for some of them we obtain satisfactory representations and for other 
give examples showing that it is impossible. 
In particular, we show that there are automorphisms of  $\ell_\infty/c_0$
which cannot be lifted to operators on $\ell_\infty$ and assuming OCA+MA we show that every automorphism of 
$\ell_\infty/c_0$ with no fountains or with no funnels is locally, i.e.,
for some infinite $A, B\subseteq \N$ as above,  induced by a bijection from $B$ to $A$.
This additional set-theoretic assumption is necessary as we show that the continuum hypothesis 
implies the existence of counterexamples of diverse flavours. However, many 
basic  problems, some of which are listed in the last section, remain open.
\end{abstract}

\maketitle
{}

\markright{}

\section{Introduction}
The set-theoretic analysis of the Boolean algebra $\wp(\N)/\text{\emph{Fin}}$ of sets of the integers
modulo finite sets, being far from concluded,
has been quite successful. Some of the impressive results  refer to the structure of automorphisms
of the algebra which under CH can be quite pathologically complicated, 
as first observed by W. Rudin (Theorem 4.7 of \cite{rudin})  but consistently may all be
trivial, that is induced by an almost permutation of $\N$ (originally proved  by S. Shelah in \cite{shelahproper}).  
During the development of the theory, the Proper Forcing Axiom (\cite{shelahproper})
and the Open Coloring Axiom (introduced in \cite{stevopartitions}
by S. Todorcevic)
 have been established as  tools not only implying
the triviality of all automorphisms but also excluding other pathological mappings, for example embeddings of 
a big class of quotients into $\wp(\N)/\text{\emph{Fin}}$  (\cite{boban, stevoquotients, ilijasauto}).

These results have also a profound impact on more complex mathematical structures. For example,
they directly imply that the question whether the only automorphisms of the Banach algebra $\ell_\infty/c_0$
are those induced by almost permutations of $\N$ is undecidable. Indirectly they recently  inspired
the research in $C^*$-algebras resulting in the undecidability of
the structure of the automorphisms of the Calkin algebra of operators on the Hilbert space modulo
the compact operators (\cite{ilijas, weaver}).

The main focus of this paper is another  natural question, namely, what is the impact of the combinatorics of $\wp(\N)/\text{\emph{Fin}}$ on
the automorphisms of $\ell_\infty/c_0$ considered as a Banach 
space\footnote{Recall that the Banach space  $\ell_\infty/c_0$ is canonically isometric to
the Banach space $C(\N^*)$ of all continuous functions on the Stone space  $\N^*=\beta\N\setminus \N$ of
the Boolean algebra $\wp(\N)/\text{\emph{Fin}}$. Hence, the link between $\ell_\infty/c_0$
and $\wp(\N)/\text{\emph{Fin}}$ is canonical, however there are many more linear operators
on $\ell_\infty/c_0$ than those induced by homomorphisms of the Boolean algebra
$\wp(\N)/\text{\emph{Fin}}$.}, in particular if the Open Coloring Axiom (OCA) or the Proper Forcing Axiom (PFA) can be successfully
used in this context. At the moment the situation seems similar to that of 
the early stage of the research on $\wp(\N)/\text{\emph{Fin}}$ and $\N^*$: the usual axioms 
seem too weak to resolve many basic questions about the Banach space $\ell_\infty/c_0$
(\cite{christina, univ, stevoembeddings, krupski}),
the continuum hypothesis provides some answers leaving a chaotic picture full of pathological objects
obtained using transfinite induction (\cite{drewnowski, castilloplichko}) 
and there is a hope (based, for example, on e.g., \cite{dowpfa}) that alternative axioms like OCA, OCA+MA, PFA etc.,
would provide an elegant structural  theory of automorphisms of $\ell_\infty/c_0$. This hope
is not only based on the case of $\wp(\N)/\text{\emph{Fin}}$  but some other cases as well 
(\cite{stevobasis, justin}). 

In order to explain our results we need to introduce some background and terminology.
In the case of the Boolean algebra $\wp(\N)/\text{\emph{Fin}}$ and its automorphism $h$
the following conditions are equivalent for every two cofinite sets $A, B\subseteq \N$:
\begin{itemize}
\item There is an isomorphism $H: \wp(A)\rightarrow\wp(B)$ such that $[H(C)]_{\text{\emph{Fin}}}=h([C]_{\text{\emph{Fin}}})$
for all $C\subseteq A$ ($h$ lifts to $\wp(\N)$);
\item There is an isomorphism $G: \text{\emph{FinCofin}}(A)\rightarrow \text{\emph{FinCofin}}(B)$
such that $[\bigcup\{G(n): n\in C\}]_{\text{\emph{Fin}}}=h([C]_{\text{\emph{Fin}}})$
for all $C\subseteq A$  ($h$ is induced by an almost automorphism of $\text{\emph{FinCofin}}(\N)$);
\item There is a bijection $\sigma: B\rightarrow A$ 
 such that
 $[\{n\in B: \sigma(n)\in C\}]_{\text{\emph{Fin}}}=h([C]_{\text{\emph{Fin}}})$
for all $C\subseteq  A$ ($h$ is trivial).
\end{itemize}
Another special feature of liftings of automorphisms on $\wp(\N)/\text{\emph{Fin}}$, i.e., homomorphisms of $\wp(\N)$ satisfying the properties above is that
\begin{itemize}
\item Every isomorphism from $\wp(A)$ and $\wp(B)$ for $A, B\subseteq \N$ infinite
is continuous with respect to the product topologies on $\{0,1\}^A$ and $\{0,1\}^B$.
\end{itemize}
Moreover,  if we identify points of $\N^*$ with ultrafilters
 in $\wp(\N)/\text{\emph{Fin}}$, the Stone duality gives that:
\begin{itemize}
\item for every homomorphism $h$ of $\wp(\N)/\text{\emph{Fin}}$ there is a continuous map $\psi:\N^*\rightarrow \N^*$ 
such that   
$$\chi_{h([A]_{\text{\emph{Fin}}})^*}=\chi_{A^*}\circ\psi$$
for every $A\subseteq \N$.
\end{itemize}
The corresponding notions for operators on $\ell_\infty/c_0$ are summarized in the
following: 
\begin{definition}\label{trivializations} If $T: \ell_\infty/c_0\rightarrow \ell_\infty/c_0$ is a
linear bounded operator,   $A, B\subseteq\N$ cofinite, then we say that
\begin{enumerate}
\item $T$ is liftable (can be lifted) if, and only if, there is a linear bounded
$S:\ell_\infty(A)\rightarrow \ell_\infty(B)$ such that for all $f\in \ell_\infty$ we have
$$T([f]_{c_0})= [S(f)]_{c_0}$$
 
\item $T$ is a matrix operator if, and only if, there is an operator
$S:c_0(A)\rightarrow c_0(B)$ given by a real  matrix $(b_{ij})_{i\in B, j\in A}$ such that
for all $f\in \ell_\infty(A)$ we have
$$T([f]_{c_0})=[(\sum_{j\in A} b_{ij}f(j))_{i\in B}]_{c_0}.$$

\item $T$ is a trivial operator if, and only if, there is a  {\bf nonzero} real $r\in \R$,
and a bijection $\sigma: B\rightarrow A$
 such that for all
$f\in \ell_\infty(A)$ we have
$$T([f]_{c_0})=[rf\circ\sigma]_{c_0}.$$

\item $T$ is  canonizable\footnote{
It would be reasonable to consider here also the possibility of having  for all 
$f^*\in C(\N^*)$  the condition $T(f^*)=gf^*\circ \psi$, for some continuous nonzero $g\in C(\N^*)$.
However, in the context of $\N^*$ all continuous functions are ``locally constant" (\ref{constantfunction})
so in the context of this paper there is no sense of introducing such a property.} 
 along $\psi:\N^*\rightarrow\N^*$
 if, and only if, $\psi$ is a {\bf surjective} continuous mapping and there is a  
{\bf nonzero} real $r$ such that for all $f^*\in C(\N^*)$ we have 
$$\str{T}(f^*)=rf^*\circ \psi.$$
\end{enumerate}
In the case of liftable and matrix operators we will be using more complex phrases like
automorphic liftable operator, embedding matrix operator etc., meaning that the
operator is liftable or matrix respectively and it has the additional property.
\end{definition}
In contrast with the case of $\wp(\N)/\text{\emph{Fin}}$ our
 results show that the relationships among these notions are far from equivalences: 
{\tiny
\begin{table}[h]
\begin{tabular}{|c|l|c|l|c|l|c|l|l|l}
\cline{1-1} \cline{3-3} \cline{5-5} \cline{7-7} \cline{9-9}
\begin{tabular}[c]{@{}c@{}} trivial \\ automorphism\end{tabular} &
 $\stackrel{\Rightarrow}{\nLeftarrow}$ & \begin{tabular}[c]{@{}c@{}}
automorphic\\  
matrix \\ operator\end{tabular} & $\Leftrightarrow$ & \begin{tabular}[c]{@{}c@{}}
Automorphic\\  liftable\\ operator with \\ a lifting continuous \\ on $B_{\ell_\infty}$ \end{tabular} 
& $\stackrel{\Rightarrow}{\nLeftarrow}$ & \begin{tabular}[c]{@{}c@{}}Automorphic \\ liftable\\ operator\end{tabular}
 & $\stackrel{\Rightarrow}{\nLeftarrow}$ & Automorphism & 
 \\ \cline{1-1} \cline{3-3} \cline{5-5} \cline{7-7} \cline{9-9}
\end{tabular}
\end{table}
}
None of the implications or counterexamples to the reverse implications require additional
set-theoretic axioms. The nontrivial parts of the above chart  are the following  
facts:
\begin{itemize}
\item There are automorphisms of $\ell_\infty/c_0$ which are not liftable to a linear operator 
on $\ell_\infty$ (\ref{nonliftable});
\item There are automorphisms of $\ell_\infty/c_0$ which are liftable  but they are not
matrix operators and none of their liftings are  continuous on $B_{\ell_\infty}$ in the product topology
(\ref{discontinuouslifting}).
\item Automorphisms of $\ell_\infty/c_0$ which have liftings to $\ell_\infty$ continuous
in the product topology are exactly the automorphic matrix operators  (\ref{summary_matrices}).
\end{itemize}
Note that the question of canonizing globally  all automorphisms other than trivial is outright excluded by
the clear fact that there are many matrices of isomorphisms on $c_0$  which are not matrices of
almost  permutations modulo $c_0$.

In the light of the above absolute results and the exclusion of the possibility of
a global canonization or matricization we will concentrate on ``local" versions of the above properties
 of the operators in the sense that they hold in some sense for  copies of $\ell_\infty/c_0$
of the form $\ell_\infty(A)/c_0(A)$ for an infinite $A\subseteq \N$. 
Since the above properties depend on the link between $\ell_\infty/c_0$ and $\N^*$ or $\N$
we choose the 
approach of Drewnowski and Roberts from \cite{drewnowski} which has functional analytic motivations and
applications:
\begin{definition}
Suppose $A\subseteq \N$ is infinite. We define
$P_A: \ell_\infty/c_0\rightarrow \ell_\infty(A)/c_0(A)$ and
$I_A: \ell_\infty(A)/c_0(A)\rightarrow \ell_\infty/c_0$
 by
$$P_A([f]_{c_0})=[f|A]_{c_0(A)}, \ \ \  I_A([g]_{c_0(A)})=[g\cup 0_{\N\setminus A}]_{c_0}$$
for all $f\in \ell_\infty$ and all $g\in \ell_\infty(A)$.
Suppose that $T: \ell_\infty/c_0\rightarrow \ell_\infty/c_0$ is a
linear bounded operator and $A, B\subseteq \N$ two infinite sets. The localization of
$T$ to $(A, B)$ is the operator $T_{B, A}: \ell_\infty(A)/c_0(A)\rightarrow \ell_\infty(B)/c_0(B)$ given
by $$ T_{B, A}=P_B\circ T\circ I_A.$$
\end{definition}
It was proved by Drewnowski and Roberts in \cite{drewnowski} that for every operator
$T:\ell_\infty/c_0\rightarrow \ell_\infty/c_0$ and every infinite $A\subseteq \N$ there is
an infinite $A_1\subseteq A$ such that for all $[f]_{c_0}\in \ell_\infty(A_1)/c_0(A_1)$ we have 
$T_{A_1,A_1}([f]_{c_0})=[rf]_{c_0}$ for some real $r\in \R$. However this does not exclude the possibility of 
$T_{A_1, A_1}=0$, which actually
is quite common. Thus the focus of this paper is to obtain localizations which
are isomorphic embeddings or isomorphisms
and the ultimate goal (not completely achieved) is to localize somewhere   any automorphism
to a canonical operator along a homeomorphism (which turned out
to be impossible in ZFC by \ref{nowherecanonizable}) and to a trivial automorphism under
OCA+MA. However if one wants to iterate the use of several localization results (like
in the case of \cite{drewnowski}) it is
useful to have right-local or left-local results and not just somewhere local results:
\begin{definition} Suppose that $T: \ell_\infty/c_0\rightarrow \ell_\infty/c_0$ is a
linear bounded operator. Let $\PP$ be one of the properties `` liftable'', `` matrix operator'',
`` trivial'', ``canonizable''. 
\begin{enumerate}
\item We say that $T$ is somewhere $\PP$ if, and only if, there are infinite $A\subseteq \N$
 and $B\subseteq \N$ such that $T_{B, A}$ has $\PP$.
\item We say that $T$ is right-locally $\PP$ if, and only if, for every infinite $A\subseteq \N$
there are infinite $A_1\subseteq A$ and $B\subseteq \N$ such that $T_{B, A_1}$ has $\PP$.
\item We say that $T$ is left-locally $\PP$ if, and only if, for every infinite $B\subseteq \N$
there are infinite $B_1\subseteq B$ and $A\subseteq \N$ such that $T_{B_1, A}$ has $\PP$.
\end{enumerate}
\end{definition}
To hope for isomorphic left-local properties one needs to assume that the image of $T$ is
big, for example that $T$ is surjective. Similarly, for nontrivial right-local properties we need 
to assume that the kernel is small, for example that $T$ is injective. 
In contrast to the global versions, the local versions of the notions from Definition \ref{trivializations}
behave like the Boolean counterparts:
 \begin{proposition} Suppose that $T: \ell_\infty/c_0\rightarrow \ell_\infty/c_0$ is an
automorphism. Then the following are equivalent
\begin{enumerate}
\item $T$ is somewhere a liftable isomorphism,
\item  $T$ is somewhere   an isomorphic matrix operator,
\item $T$ is somewhere a liftable isomorphism with a lifting which is continuous in the product topology,
\item $T$ is somewhere trivial.
\end{enumerate}
\end{proposition}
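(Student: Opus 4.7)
The implications $(4) \Rightarrow (3)$, $(3) \Leftrightarrow (2)$, and $(3) \Rightarrow (1)$ are inherited from the corresponding global implications in the chart preceding the proposition, applied to the localization $T_{B,A}$ instead of to $T$. For instance, if $T_{B,A}$ is trivial with witnesses $r, \sigma$, then $f \mapsto rf \circ \sigma$ is simultaneously a product-continuous and a matrix lifting; and the equivalence $(2) \Leftrightarrow (3)$ in the ``somewhere'' form is nothing but \ref{summary_matrices} applied to the isomorphism $T_{B,A}$ between $\ell_\infty(A)/c_0(A)$ and $\ell_\infty(B)/c_0(B)$. Therefore only the closing implication $(1) \Rightarrow (4)$ requires genuine work.

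For this direction, fix infinite $A,B \subseteq \N$ such that $T_{B,A}$ is a liftable isomorphism, and let $S : \ell_\infty(A) \to \ell_\infty(B)$ be a lifting. Since $T_{B,A}^{-1}$ is bounded, there is $C > 0$ with $\|T_{B,A}([f])\| \geq C\|[f]\|$ for every $f \in \ell_\infty(A)$; applied to $f = e_j$ this yields $\limsup_{i \in B}|S(e_j)(i)| \geq C$ for every $j \in A$, so for each $j$ one may pick $i(j) \in B$ with $|S(e_j)(i(j))| \geq C/2$.

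\textbf{Step 1} (diagonal extraction). Performing a Drewnowski--Roberts style refinement simultaneously on rows $(S(e_j))_{j\in A}$ and columns, I extract infinite $A' \subseteq A$ and $B' \subseteq B$ together with a bijection $\sigma : B' \to A'$ with $\sigma^{-1}(j) = i(j)$ for $j \in A'$, and a nonzero $r \in \R$, such that $S(e_j)(i(j)) \to r$ as $j \to \infty$ in $A'$, while the off-diagonal entries $S(e_j)(i)$ decay summably along both rows and columns. Distinctness of the peaks $i(j)$ is the delicate point: it uses injectivity of $T_{B,A}$, since otherwise two basis vectors $e_j, e_{j'}$ sharing a common peak would combine into a nonzero element of the kernel.

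\textbf{Step 2} (promotion to all $f$). For arbitrary $f \in \ell_\infty(A')$ and $i = i(j) \in B'$, decompose
$$S(f)(i) - r\, f(j) \;=\; \bigl(S(e_j)(i) - r\bigr)f(j) \;+\; \sum_{k \in A' \setminus \{j\}} S(e_k)(i)\, f(k),$$
and bound the right side by $\|f\|_\infty$ times the row-summable decay of Step 1. This shows $[S(f)]_{c_0(B')} = [rf \circ \sigma]_{c_0(B')}$, so $T_{B',A'}$ is trivial and (4) holds.

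The main obstacle is Step 1: coupling the row-wise summable decay needed for the $c_0$ conclusion in Step 2 with a column-wise decay sufficient to prevent two distinct $j$'s from sharing a peak. This is exactly where the \emph{isomorphism} hypothesis of (1) enters — mere liftability would not suffice — and in practice one handles it by running the Drewnowski--Roberts refinement in a back-and-forth fashion, applying it alternately to $S$ and to a lifting of $T_{A,B}^{-1}$.
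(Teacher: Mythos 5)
Your handling of the easy implications is fine, but your proof of $(1)\Rightarrow(4)$ breaks down at its very first quantitative step. Since $e_j\in c_0(A)$, we have $[e_j]_{c_0(A)}=0$, so the lower bound $\|T_{B,A}([f])\|\geq C\|[f]\|$ applied to $f=e_j$ says nothing; worse, any lifting $S$ maps $c_0(A)$ into $c_0(B)$ (because $[S(e_j)]=T_{B,A}(0)=0$), so in fact $\limsup_{i\in B}|S(e_j)(i)|=0$, not $\geq C$. Consequently the ``peaks'' $i(j)$ with $|S(e_j)(i(j))|\geq C/2$ need not exist, and the whole column-wise extraction of Step 1 has no starting point. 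The correct quantitative consequence of being bounded below is a \emph{row-wise} statement: for the matrix $(b_{ij})$ one shows $\lim_{i\to\infty}\max\{|b_{ij}|:j\in\tilde A\}\neq 0$, i.e.\ infinitely many rows have a large entry somewhere in $\tilde A$ (this is the Claim in \ref{canonembedding}, proved by building a sparse characteristic function whose image would violate the lower bound). The peaks are then chosen one per \emph{row} along a carefully blocked sequence $k_n\leq j_n<k_{n+1}$ with small tails on both sides; their distinctness comes from this blocking, not from injectivity of $T_{B,A}$ as you suggest.

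There is a second gap: your Step 2 expansion $S(f)(i)=\sum_k S(e_k)(i)f(k)$ is only valid when $S$ is given by a $c_0$-matrix, equivalently when $S$ is $w^*$-$w^*$-continuous (\ref{summary_matrices}); an arbitrary lifting of a liftable isomorphism need not have this form, and by \ref{discontinuouslifting} there are liftable automorphisms none of whose liftings are matrix operators. So before any matrix computation you must decompose the lifting as a $c_0$-matrix part plus an antimatrix part (\ref{decompositionmatrix}), observe that the antimatrix part induces a locally null operator on the quotient (\ref{antimatrixlocallynull}), and pass to a further infinite set on which it vanishes; this is exactly how the paper reduces $(1)$ to the matrix case (\ref{corcanonembedding}, giving $(1)\Rightarrow(2)$ via \ref{liftableembedding->somewherematrixiso}) before applying the row-wise peak argument of \ref{canonembedding} to get triviality. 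As written, your argument omits both the antimatrix reduction and the actual content of the refinement, so the implication $(1)\Rightarrow(4)$ is not established.
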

\begin{proof} The implication from (1) to (2) follows from \ref{liftableembedding->somewherematrixiso};
the equivalence of (2) and (3) is \ref{summary_matrices}; the implication
from (2) to (4) follows from \ref{canonembedding}; the fact that (4)
implies (1) is clear.
\end{proof}
In fact the above equivalences hold (with the same proof) in the case of   $T$
being an isomorphic embedding\footnote{By isomorphic embedding we mean an operator
which is an isomorphism onto its closed range. Sometimes these operators are called bounded below.}
and for right-localizations which are isomorphic embeddings.
However, a surjective operator can be globally liftable but nowhere a matrix operator
(\ref{liftablenonmatrix}) or can be globally a matrix operator but nowhere trivial (\ref{matrixnowheretrivial}).
Another reason why the above local notions make sense is the following:
\begin{proposition}
 Suppose that $T: \ell_\infty/c_0\rightarrow \ell_\infty/c_0$ is a
linear bounded operator and $A, B\subseteq \N$ are two infinite sets. 
Suppose that $T_{B,A}$ is canonical  along a homeomorphism.
Then, $T$ fixes a complemented copy of $\ell_\infty/c_0$ whose
image under $T$ is complemented in $\ell_\infty/c_0$.
\end{proposition}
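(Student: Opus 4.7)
The obvious candidate for the complemented copy is $X:=I_A[\ell_\infty(A)/c_0(A)]$. The plan is to verify three facts: (a) $X$ is a complemented copy of $\ell_\infty/c_0$ inside $\ell_\infty/c_0$; (b) $T|_X$ is an isomorphic embedding (so that $T$ fixes $X$ in the standard Banach-space sense); (c) $T[X]$ is complemented. Fact (a) is immediate: because $A$ is infinite one has $A^*\cong\N^*$, so $\ell_\infty(A)/c_0(A)\cong\ell_\infty/c_0$; and since $I_A$ is an isometric embedding with $P_A\circ I_A=\mathrm{id}$, the operator $E:=I_A\circ P_A$ is a bounded projection onto $X$.

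For (b), I would use the hypothesis that $T_{B,A}$ is canonical along a homeomorphism $\psi:B^*\to A^*$ with nonzero constant $r$. Passing to the $C(\cdot)$ picture, $\widehat{T_{B,A}}$ is the map $f^*\mapsto r\,f^*\circ\psi$, which is an isomorphism of $C(A^*)$ onto $C(B^*)$ since $\psi$ is a homeomorphism and $r\ne 0$. Thus $T_{B,A}=P_B\circ T\circ I_A$ is bounded below, and the inequality $\|T\circ I_A(x)\|\ge \|P_B\circ T\circ I_A(x)\|=\|T_{B,A}(x)\|$ forces $T\circ I_A$ itself to be bounded below, which, together with fact (a), is exactly (b).

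For (c), I would define
$$Q:=T\circ I_A\circ (T_{B,A})^{-1}\circ P_B : \ell_\infty/c_0\longrightarrow\ell_\infty/c_0.$$
Then $Q$ is bounded with image contained in $T[X]$, and for any $y=T\circ I_A(x)\in T[X]$ a direct computation gives $Qy=T\circ I_A\circ (T_{B,A})^{-1}\circ P_B(T\circ I_A(x))=T\circ I_A\circ (T_{B,A})^{-1}\circ T_{B,A}(x)=T\circ I_A(x)=y$. Hence $Q$ is a bounded projection of $\ell_\infty/c_0$ onto $T[X]$, so $T[X]$ is complemented.

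The argument is essentially formal once one notices that canonization along a \emph{homeomorphism}, as opposed to a mere continuous surjection, turns $T_{B,A}$ into an honest isomorphism; after that the only thing to do is bookkeeping with the factorization $T_{B,A}=P_B\circ T\circ I_A$, together with the candidates $X=I_A[\ell_\infty(A)/c_0(A)]$ and $Q=T\circ I_A\circ (T_{B,A})^{-1}\circ P_B$. Consequently I do not foresee a serious obstacle in the proof.
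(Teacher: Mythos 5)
Your proof is correct, and it is essentially the same argument that the paper defers to (the paper's ``proof'' is simply a pointer to Corollary 2.4 of Drewnowski--Roberts, which proceeds along exactly these lines). The three verifications you carry out --- $I_A\circ P_A$ is a bounded projection onto $X=I_A[\ell_\infty(A)/c_0(A)]$; $\|T\circ I_A(x)\|\ge\|T_{B,A}(x)\|$ together with the fact that canonization along a homeomorphism with $r\ne 0$ makes $T_{B,A}$ an isomorphism forces $T\circ I_A$ to be bounded below; and $Q=T\circ I_A\circ(T_{B,A})^{-1}\circ P_B$ is a bounded projection onto $T[X]$ --- are precisely the required bookkeeping, and you correctly note that closedness of $T[X]$ comes for free once $T|_X$ is an embedding. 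One small remark, which the paper also makes just after the statement: your construction of $Q$ uses only $P_B$ and $(T_{B,A})^{-1}$, so it would go through if $B^*$ were merely a closed copy of $\N^*$ rather than clopen, whereas the projection $I_A\circ P_A$ onto $X$ genuinely needs $A^*$ to be clopen --- you implicitly use this asymmetry, and it is worth being aware of it.
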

\begin{proof} 
See the proof of Corollary 2.4 of \cite{drewnowski}.
\end{proof}
In fact, the above proposition would also be true with the same proof if we weakened the
hypothesis on $B$ from clopen to closed subset of $\N^*$ homeomorphic to $\N^*$.
But to make sure that $A$ induces a subspace not just a quotient which is to be fixed 
we must insist on $A^*$ to be clopen.
This approach in the context of other spaces $C(K)$ is quite fruitful for obtaining
complemented copies of the entire $C(K)$ inside any isomorphic copy of the $C(K)$
(for example, for $C(K)$ with $K$ metrizable see \cite{pelczynskiC(S)}, for
$\ell_\infty$ see \cite{haydon}, and for $C([0,\omega_1])$ see \cite{niels}), see
problems in Section \ref{sec:problems}. 

One should note, however, that the notion of e. g., somewhere trivial automorphism
on $\ell_\infty/c_0$ has quite a different character than being somewhere trivial
automorphism of $\wp(\N)/Fin$, this is because the images of subspaces of the form
$\{[f]\in \ell_\infty/c_0: f|A=0\}$ 
for $A\subseteq \N$ are usually not of the form $\{[f]\in \ell_\infty/c_0: f|B=0\}$
for $B\subseteq N$, even if $T_{B,A}$ is trivial. Also, trivialization
or canonization of $T_{B,A}$ does not yield any information about $T^{-1}_{A,B}$
as in the case of automorphisms of $\wp(\N)/Fin$.

Having proven the equivalence of the local versions of the above notions one is left
with deciding if automorphisms of $\ell_\infty/c_0$ are somewhere
canonizable along homeomorphisms. If this happens their local structure is similar to
that of homeomorphisms of $\N^*$ i.e., assuming OCA+MA they would be trivial
and, for example, under CH not. 

Canonization of automorphisms  $T: \ell_\infty/c_0\rightarrow \ell_\infty/c_0$
(or corresponding $\str{T}:C(\N^*)\rightarrow C(\N^*)$) encounters, however, problems
at least as difficult as understanding continuous maps defined on closed subsets of $\N^*$ with ranges in
$\N^*$ (not only automorphisms of $\N^*$). To better understand why this is so, 
let us recall that linear bounded operators
on $C(\N^*)$ can be represented as weakly$^*$ continuous mappings
$\tau: \N^*\rightarrow M(\N^*)$ (see Theorem 1 in VI.7 of \cite{dunford}),
 where $M(\N^*)$ denotes the Banach space of all Radon measures on $\N^*$ 
with the total variation norm identified by
the Riesz representation theorem with the dual to $C(\N^*)$ with the weak$^*$ topology
(see \cite{semadeni}). Often the points
of $\N^*$ (identified with the Dirac measures) are sent by this map to measures that do not have atoms, and if they have atoms they may have many
of them giving rise to partial multivalued functions into $\N^*$. One obtains $\tau(x)$ as
$\dual{T}(\delta_x)$ for each $x\in \N^*$ and the representation is given by
$$\str{T}(f^*)(x)=\int f^*\,\mathrm{d}\tau(x)$$
for every $f^*\in C(\N^*)$. The multifunctions, possibly of empty values, are given by
$$\varphi_\varepsilon^T(y)=\{x\in \N^*: |\dual{T}(\delta_y)(\{x\})|\geq\varepsilon\}$$
for any $\varepsilon>0$ or by $\varphi^T(y)=\bigcup_{\varepsilon>0}\varphi^T_\varepsilon.$
An equivalent condition for $T$ being somewhere canonizable along a homeomorphism is the
existence of infinite $A, B\subseteq \N$ and a homeomorphism $\psi:B^*\rightarrow A^*$
such that 
$$\dual{T}(\delta_y)|A^*=r\delta_{\psi(y)}$$
for some nonzero $r\in \R$, which in particular means that $\varphi^T(y)\cap A^*=\{\psi(y)\}$,
or in other words  that $\psi$ is a homeomorphic selection
from $\varphi^T$. Right up front there could be two basic obstacles for the existence of 
such a selection,
namely $\bigcup_{y\in B^*}\varphi^T(y)$ could have empty interior 
or 
$\{y\in B^*: \varphi^T(y)\not=\emptyset\}$ could have empty interior  for an infinite $B\subseteq \N^*$.
We call these obstacles (in stronger versions including nonatomic measures) fountains and funnels
respectively
and introduce two classes of operators (fountainless operators, Definition \ref{locallydetermined},
 and  funnelless operators, Definition \ref{funnelless})  for which by 
definition the above obstacles cannot arise, respectively,
and we obtain some reasonable sufficient conditions for the canonization:
\begin{itemize}
\item Every automorphism  on $\ell_\infty/c_0$ which is fountainless is left-locally canonizable along a 
quasi-open mapping (\ref{surcanonization});
\item Every automorphism on $\ell_\infty/c_0$ which is funnelless is right-locally canonizable along
a quasi-open mapping (\ref{injcanonization});
\end{itemize}
where quasi-open means that the image of every open set has nonempty interior (\ref{quasi-open}).
The second result is in fact a  consequence of a study by G. Plebanek \cite{plebanekisrael},
however the proof of the first takes a considerable part of this paper.  
The possibility of obtaining these results is based on special properties of isomorphic embeddings
and surjections. One ingredient is
an improvement of a theorem of Cengiz (``P" in \cite{cengiz}) obtained by Plebanek
 (Theorem 3.3. in \cite{plebanekisrael}) which guarantees that the range of $\varphi^T_{\|T\|\|T^{-1}\|}$
covers $\N^*$ if $T$ is an isomorphic embedding. However, in this result the set of $y$'s where $\varphi^T(y)$
is nonempty could be nowhere dense, so we exclude this possibility by assuming that $T$ has no funnels. 
On the other hand we prove that if $T$ is surjective, then either for each $y$ the set $\varphi^T(y)$
is nonempty or else there is a infinite $A\subseteq \N^*$ such that $\bigcup\{\varphi^T(y): y\in A^*\}$
is nowhere dense, the second possibility being excluded if $T$ has no fountains.

Then one is still left with the problem of reducing a quasi-open  map to
a homeomorphism between two clopen sets. The results of I. Farah \cite{ilijasauto}
 allow us to conclude that OCA+MA implies that a quasi-open mapping defined on a clopen subset of $\N^*$
and being onto a clopen subset of $\N^*$ is 
somewhere a homeomorphism and so by results of Velickovic \cite{boban}  it is somewhere induced by a 
bijection between two infinite subsets  of $\N$. Hence we obtain:
\begin{itemize}
\item (OCA+MA) Every  fountainless automorphism
of $\ell_\infty/c_0$ is left-locally trivial (\ref{ocatrivialization});
\item (OCA+MA) Every funnelless automorphism
of $\ell_\infty/c_0$ is right-locally trivial (\ref{ocatrivialization}).
\end{itemize}
The continuum hypothesis shows that the above results are optimal in
many directions. First, an obstacle to
improving our above-mentioned  ZFC selection results (\ref{surcanonization}, \ref{injcanonization})
 by replacing quasi-open to a homeomorphism between
clopen sets is the following example:
\begin{itemize}
 \item (CH) There is a fountainless and funnelless everywhere present isomorphic embedding
globally canonizable along 
quasi-open map which
is nowhere canonizable along a homeomorphism (\ref{quasi-opench}).
\end{itemize}
Here everywhere present is a weak version of a surjective operator ($P_A\circ T\not=0$
for any infinite $A\subseteq \N$ see \ref{everywhere-non0}). Automorphisms  $T$ have the property that
$P_A\circ T$ is everywhere present and $T\circ I_A$ is an isomorphic embedding
for any infinite $A\subseteq \N$.
Moreover we have the following:
\begin{itemize}
\item (CH) There is an automorphism of $\ell_\infty/c_0$ which is nowhere canonizable
along a quasi-open map, in particular along a  homeomorphism (\ref{nowherecanonizable}).
\end{itemize}
The above example is not a direct construction, but we have more concrete and slightly weaker
examples (\ref{notlocalstrongcanon})
based on the existence in $\N^*$ of nowhere dense $P$-sets which are retracts
of $\N^*$, due to van Douwen and van Mill (\cite{vandouwen}).
It is not excluded by our results (see Section \ref{sec:problems}) that consistently all isomorphic embeddings on $\ell_\infty/c_0$ are funnelless,
however there are ZFC surjective operators which are not fountainless (\ref{locallynullsurjective}). 
And, of course, assuming CH  there are
well familiar nowhere trivial homeomorphisms of $\N^*$ which provide examples of globally
canonizable operator which is nowhere liftable (\ref{nowheretrivialch}).

The are many basic  problems concerning the automorphisms of $\ell_\infty/c_0$ left
open, some of them are listed in Section \ref{sec:problems}. A breakthrough in developing
the methods of direct applications of PFA in the space $\ell_\infty/c_0$ which was recently
obtained by A. Dow in \cite{dowpfa} may be especially useful in attacking these problems.
The structure of the paper is as follows:
\tableofcontents
Notation and conventions:
\begin{itemize}
\item\emph{Fin} - Ideal of finite subsets of $\mathbb{N}$

\item\emph{FinCofin} - The Boolean algebra of finite and cofinite subsets of $\N$

\item$[A]=[A]_{\text{\emph{Fin}}}$ - The equivalence class of $A$ with respect to \emph{Fin}

\item$A=_*B$ - $A\triangle B\in$\emph{Fin}

\item$A\subseteq_* B$ - $A\setminus B\in$\emph{Fin}

\item$\beta\N$ - The \v Cech-Stone compactification of the integers and the Stone space of $\wp(\N)$

\item$\N^*$ - The \v Cech-Stone remainder $\N^*=\beta\N\setminus\N$ and
the Stone space of $\wp(\N)/\text{\emph{Fin}}$

\item$\beta A$ - The clopen set in $\beta\N$ defined by $\{x\in \beta\N:A\in x\}$

\item$A^*$ - The clopen set in $\N^*$ defined by $\beta A\setminus A$

\item$\beta f$ - The element of $C(\beta\N)$ which extends $f\in\ell_\infty$

\item$f^*$ - The element of $C(\N^*)$ obtained by restricting $\beta f$
to $\N^*$, for some $f\in\ell_\infty$

\item$[f]=[f]_{c_0}$ - The equivalence class of $f\in\ell_\infty$ with respect to $c_0$
\end{itemize}
Since any element of $C(\N^*)$ or $C(\beta\N)$ is of the form $f^*$ or $\beta f$,
for some $f\in \ell_\infty$ respectively,
we may use this convention when talking about general elements of these spaces.
However, not all continuous functions on $\N^*$ or linear operators on $C(\N^*)$
are induced by corresponding objects in $\N$ or $\ell_\infty$. So for 
the passage from an endomorphism $h$ of $\wp(\N)/\text{\emph{Fin}}$ to a continuous
self-mapping on $\N^*$ or from a linear operator $T$
on $\ell_\infty/c_0$ to a linear operator on $C(\N^*)$ we will use $\str{h}$ and $\str{T}$,
respectively.

\begin{itemize}

\item $[T]$ - The operator on $\ell_\infty/c_0$ induced by
an operator $T:\ell_\infty\rightarrow \ell_\infty$ which preserves $c_0$ (i.e., $T[c_0]\subseteq c_0$)
that is $[T]([f]_{c_0})=[T(f)]_{c_0}$ for any $f\in\ell_\infty$

\item $\beta T$ - The operator on $C(\beta\N)$ induced by
an operator $T:\ell_\infty\rightarrow \ell_\infty$ which preserves $c_0$ (i.e., $T[c_0]\subseteq c_0$)
that is $\beta T(\beta f)=\beta(T(f))$ for any $f\in\ell_\infty$

\item $\wstar{T}$  - The operator on $C(\N^*)$ induced by
an operator $T:\ell_\infty\rightarrow \ell_\infty$ which preserves $c_0$ (i.e., $T[c_0]\subseteq c_0$)
that is $\wstar{T}(f^*)=(T(f))^*$ for any $f\in\ell_\infty$

\item $\str{T}$ - The operator from $C(\N^*)$ into itself which corresponds 
to $T:\ell_\infty/c_0\rightarrow\ell_\infty/c_0$, i.e., $\str{T}(f^*)=g^*$ where $[g]=T([f])$

\item $\str{h}$ - The continuous selfmap of $\N^*$ which corresponds via the Stone duality
to an endomorphism $h$ of $\wp(\N)/\text{\emph{Fin}}$, i.e.,  $\str{h}(x)=h^{-1}[x]$ when
we identify points of $\N^*$ with the ultrafilters of $\wp(\N)/\text{\emph{Fin}}$

\item$T_{\psi}$ - The operator $T_{\psi}:C(\N^*)\rightarrow C(\N^*)$ which maps $f$ to $f\circ\psi$,
for some continuous $\psi:\N^*\rightarrow\N^*$

\end{itemize}

The remaining often used symbols are:
\begin{itemize}
\item $M(\N^*)$ - the Banach space of Radon measures on $\N^*$ with the total
variation norm, identified with the dual space to $C(\N^*)$ 
or the dual space to $\ell_\infty/c_0$ via the Riesz representation theorem

\item$\dual{T}$ - The dual or adjoint operator of $T$, i.e., $\dual{T}(\mu)(f)=\mu(T(f))$.
$\dual{T}$ acts on the spaces of Radon measures if $T$ acts on a space of continuous functions

\item$\delta_x$ - The Dirac measure concentrated on $x$

\item $\mu|F$ - the restriction of  a measure $\mu\in M(\N^*)$ to
a Borel subset $F\subseteq \N^*$, i.e.,  $\mu|F$ is an element of $M(\N^*)$ such 
that $(\mu|F)(G)=\mu(G\cap F)$ for any Borel $G\subseteq \N^*$

\item$\chi_a$ - The characteristic function of $a$

\item$B_X$ - The unit ball of the Banach space $X$

\item$\varphi_{\varepsilon}^T(y)$ - The set $\{x\in\N^*:|\dual{T}(\delta_y)(\{x\})|>\varepsilon\}$,
 where $T$ is an operator on $C(\N^*)$

\item$\varphi^T(y)$ - The set $\bigcup_{\varepsilon>0}\varphi_{\varepsilon}^T(y)$, where $T$ is an operator
on $C(\N^*)$

\end{itemize}
\section{Operators on $\ell_\infty$ preserving $c_0$}

\subsection{Operators given by $c_0$-matrices}
A linear operator $R$ on $\ell_\infty$ which preserves $c_0$ (i.e., $R[c_0]\subseteq c_0$)
defines, of course, an operator on $c_0$.
In the case of the Boolean  algebra $\wp(\N)$, any Boolean automorphism preserves $\text{\emph{FinCofin}}(\N)$ and
its restriction to $\text{\emph{FinCofin}}(\N)$  completely determines the automorphism. The analogous 
fact does not hold for linear automorphisms on $\ell_\infty$, for example there are many distinct
automorphisms of $\ell_\infty$ which do not move $c_0$ (see \ref{discontinuousauto}). 
However, the restrictions  to $c_0$ of operators on $\ell_\infty$ which preserve $c_0$ will
play an important role, and in some cases will determine a given operator.
So let us establish a transparent representation of operators on $c_0$:

\begin{proposition}\label{charac_matrix}
 $R:c_0\rightarrow c_0$ is a linear bounded operator if, and only if,
there exists an $\mathbb{N}\times\mathbb{N}$ matrix $(b_{ij})_{i,j\in\mathbb{N}}$
such that
\begin{enumerate}
\item every row is in $\ell_1$,
\item if we write $b_i=(b_{ij})_j$, then $\{\|b_i\|_{\ell_1}:i\in\mathbb{N}\}$ is a bounded set, 
\item every column is in $c_0$,
\end{enumerate}
and such that for every $f\in c_0$ we have
$$R(f)=\left(\begin{array}{lll}
                   b_{00} & b_{01} & \dots \\
			  b_{10} & b_{11}&\dots \\
			  \vdots & \vdots & \ddots  
                  \end{array}\right)
\left(\begin{array}{c}
       f(0)\\
	f(1)\\
	\vdots
      \end{array}
\right).$$

\end{proposition}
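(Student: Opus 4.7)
The plan is to read off the matrix from the action of $R$ on the standard unit vectors $e_j\in c_0$ and then translate the defining properties of a bounded operator $R:c_0\to c_0$ into conditions on that matrix, with all the content concentrated in the coordinatewise duality $c_0^*=\ell_1$.

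For the forward direction, set $b_{ij}:=R(e_j)(i)$, so that the $j$-th column $(b_{ij})_i$ is exactly $R(e_j)$; condition (3) is then immediate from $R[c_0]\subseteq c_0$. For each fixed $i$, the functional $\phi_i:c_0\to\R$ defined by $\phi_i(f)=R(f)(i)$ is linear and bounded with $\|\phi_i\|\le\|R\|$, hence by $c_0^*=\ell_1$ it is represented by the $\ell_1$-sequence $b_i=(b_{ij})_j$ and $\|b_i\|_{\ell_1}=\|\phi_i\|\le\|R\|$; this gives (1) and (2) simultaneously. To obtain the matrix formula, use that finitely supported sequences are dense in $c_0$: for $f\in c_0$ one has $\sum_{j<n}f(j)e_j\to f$ in the sup norm, so by continuity of $R$ and of each evaluation functional,
\[
R(f)(i)=\lim_n\sum_{j<n}f(j)R(e_j)(i)=\sum_{j\in\N}b_{ij}f(j).
\]

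For the reverse direction, given a matrix satisfying (1)–(3), define $R(f)(i):=\sum_j b_{ij}f(j)$. Row summability (1) combined with $f\in\ell_\infty\supseteq c_0$ makes each series absolutely convergent, and (2) yields $|R(f)(i)|\le M\|f\|_\infty$ with $M:=\sup_i\|b_i\|_{\ell_1}$, so $R$ is a bounded operator from $c_0$ into $\ell_\infty$ with $\|R\|\le M$. It remains to check $R[c_0]\subseteq c_0$: by (3), $R(e_j)=(b_{ij})_i\in c_0$ for every $j$, so $R$ sends every finitely supported sequence into $c_0$; by density of the finitely supported sequences in $c_0$, closedness of $c_0$ in $\ell_\infty$, and boundedness of $R$, the inclusion extends to all of $c_0$.

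The only step needing any care is this last one, and it is also where condition (2) is essential: without a uniform bound on $\|b_i\|_{\ell_1}$ the finite-section approximations $R(\sum_{j<n}f(j)e_j)$ need not converge to $R(f)$ in the sup norm, so the density argument would collapse. Everything else is a direct coordinatewise transcription of $c_0^*=\ell_1$, with the three conditions corresponding respectively to rows representing functionals, the common bound being the operator norm, and the columns recording the images of the basis.
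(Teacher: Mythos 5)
Your proof is correct and follows essentially the same route as the paper, which simply says to use $\dual{c_0}=\ell_1$ and take $b_i=\dual{R}(\delta_i)$ — your functionals $\phi_i(f)=R(f)(i)$ are exactly these $\dual{R}(\delta_i)$, and your verification of both directions fills in the details the paper leaves implicit.
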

\begin{proof} Use the fact that $\dual{c_0}=\ell_1$ and put $b_i=\dual{R}(\delta_i)$, where
$\delta_i$ is the functional corresponding to the $i$-th coordinate for each $i\in \N$.
\end{proof}

This representation corresponds to representing endomorphisms of
\emph{FinCofin}$(\N)$ by finite-to-one functions from $\N$ into itself. 
Such endomorphisms induce operators on $c_0$ whose matrix satisfies
the above characterization and where every row has one entry equal to $1$ and 
the remaining entries equal to $0$.
Matrices define some operators on $\ell_\infty$ as well, of course:

\begin{proposition}\label{l_infty-matrix}
 Let $(b_{ij})_{i,j\in\mathbb{N}}$ be a matrix and let
$b_i=(b_{ij})_j$ be the $i$-th row, for every $i\in\mathbb{N}$. Then, 
$$R(f)=\left(\begin{array}{lll}
                   b_{00} & b_{01} & \dots \\
			  b_{10} & b_{11}&\dots \\
			  \vdots & \vdots & \ddots  
                  \end{array}\right)
\left(\begin{array}{c}
       f(0)\\
	f(1)\\
	\vdots
      \end{array}
\right)$$
defines an linear bounded operator $R:\ell_{\infty}\rightarrow\ell_{\infty}$
 if, and only if, $b_i\in\ell_1$, for all $i\in\mathbb{N}$, and
 $\{\|b_i\|_{\ell_1}:i\in\mathbb{N}\}$ is a bounded set.
\end{proposition}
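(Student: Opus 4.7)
The plan is to verify the two implications by direct computation, essentially mirroring Proposition \ref{charac_matrix} but with the crucial difference that rows no longer need to be in $c_0$ and columns are unrestricted, because $\ell_\infty$ (unlike $c_0$) already contains the characteristic functions that will be used to probe the matrix.

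For the sufficiency direction, I would assume that each row $b_i$ lies in $\ell_1$ and that $M=\sup_i\|b_i\|_{\ell_1}<\infty$. For any $f\in\ell_\infty$ and any $i\in\N$, the estimate $|b_{ij}f(j)|\leq\|f\|_\infty|b_{ij}|$ together with $b_i\in\ell_1$ shows that the series $\sum_j b_{ij}f(j)$ converges absolutely, so $R(f)(i)$ is well defined and satisfies
\[
|R(f)(i)|\leq\|f\|_\infty\|b_i\|_{\ell_1}\leq M\|f\|_\infty.
\]
Hence $R(f)\in\ell_\infty$ with $\|R(f)\|_\infty\leq M\|f\|_\infty$, and linearity of $R$ is immediate from linearity of absolutely convergent series.

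For the necessity direction, I would assume that the matrix action yields a bounded linear operator $R:\ell_\infty\to\ell_\infty$. Fix $i\in\N$ and define $f\in\ell_\infty$ by $f(j)=1$ if $b_{ij}\geq 0$ and $f(j)=-1$ if $b_{ij}<0$; then $\|f\|_\infty\leq 1$. Since the series $\sum_j b_{ij}f(j)=\sum_j|b_{ij}|$ must converge (the matrix formula requires $R(f)(i)$ to exist as a real number, and a series of nonnegative terms converges iff its partial sums are bounded), we conclude $b_i\in\ell_1$. Moreover, for this particular $f$ we have $R(f)(i)=\|b_i\|_{\ell_1}$, so
\[
\|b_i\|_{\ell_1}=|R(f)(i)|\leq\|R(f)\|_\infty\leq\|R\|\,\|f\|_\infty\leq\|R\|.
\]
Taking the supremum over $i$ gives $\sup_i\|b_i\|_{\ell_1}\leq\|R\|<\infty$.

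There is no real obstacle here; the only minor subtlety is ensuring that the sign-test function lies in the unit ball of $\ell_\infty$, which is handled by setting $\mathrm{sign}(0)=1$ (or any value in $[-1,1]$) so that $\|f\|_\infty\leq 1$ holds. Unlike the $c_0$ case in Proposition \ref{charac_matrix}, no column condition is needed because the image $R(f)$ is only required to be bounded, not vanishing at infinity, and indeed no column condition is imposed.
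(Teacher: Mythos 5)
Your proof is correct and takes essentially the same approach as the paper: the paper's one-line argument (use $\ell_1\subseteq\dual{\ell_\infty}$ and identify the $i$-th row with $\dual{R}(\delta_i)$) is exactly what you carry out explicitly, the sign-vector test being the standard way to see that this functional lies in $\ell_1$ with $\|b_i\|_{\ell_1}\leq\|R\|$, and the sufficiency direction being the same routine estimate $|R(f)(i)|\leq\|b_i\|_{\ell_1}\|f\|_\infty$.
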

\begin{proof} Use the fact that $\ell_1\subseteq \dual{\ell_\infty}$ and put $b_i=\dual{R}(\delta_i)$, where
$\delta_i$ is the functional corresponding to the $i$-th coordinate for each $i\in \N$.
\end{proof}

\begin{definition}
\begin{enumerate}
 \item[(i)] We say that a matrix is  a $c_0$-matrix if it satisfies conditions (1)--(3) 
of Proposition \ref{charac_matrix}.
\item[(ii)] We say that a linear bounded operator $R:\ell_{\infty}\rightarrow\ell_{\infty}$
is given by a $c_0$-matrix if there exists a $c_0$-matrix $(b_{ij})_{i,j\in\mathbb{N}}$
 such that
$$R(f)=\left(\begin{array}{lll}
                   b_{00} & b_{01} & \dots \\
			  b_{10} & b_{11}&\dots \\
			  \vdots & \vdots & \ddots  
                  \end{array}\right)
\left(\begin{array}{c}
       f(0)\\
	f(1)\\
	\vdots
      \end{array}
\right),$$
for every $f\in\ell_{\infty}$.

\end{enumerate}

\end{definition}

\begin{corollary}\label{matrixoperators} 
Suppose that $R:\ell_\infty\rightarrow\ell_\infty$
is a linear bounded operator which preserves $c_0$ and is given by

$$R(f)=\left(\begin{array}{lll}
                   b_{00} & b_{01} & \dots \\
			  b_{10} & b_{11}&\dots \\
			  \vdots & \vdots & \ddots  
                  \end{array}\right)
\left(\begin{array}{c}
       f(0)\\
	f(1)\\
	\vdots
      \end{array}
\right),$$
where $(b_{ij})_{i,j\in\mathbb{N}}$ is a real matrix. Then $(b_{ij})_{i,j\in\mathbb{N}}$
is a $c_0$-matrix.
\end{corollary}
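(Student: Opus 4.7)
The plan is to derive the three defining properties of a $c_0$-matrix directly from the two hypotheses available: that $R$ is a bounded operator on $\ell_\infty$ given by the matrix $(b_{ij})$, and that $R$ preserves $c_0$.

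First, I would obtain properties (1) and (2) of the definition of a $c_0$-matrix as an immediate consequence of Proposition \ref{l_infty-matrix}. Indeed, since $R$ is by hypothesis a linear bounded operator on $\ell_\infty$ represented by $(b_{ij})$, that proposition tells us that each row $b_i = (b_{ij})_j$ lies in $\ell_1$ and that the sequence of $\ell_1$-norms $\{\|b_i\|_{\ell_1} : i \in \N\}$ is bounded. So nothing is required here beyond quoting the earlier result.

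The only remaining property is (3): every column is in $c_0$. For this I would use the hypothesis that $R$ preserves $c_0$. Consider the $j$-th standard unit vector $e_j$, which belongs to $c_0$. Evaluating the matrix-vector product at $e_j$ gives $R(e_j) = (b_{ij})_{i \in \N}$, that is, the $j$-th column of the matrix. Since $R[c_0] \subseteq c_0$, we conclude $(b_{ij})_{i \in \N} \in c_0$, which is precisely the statement that the $j$-th column tends to zero.

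There is no real obstacle: the argument is a direct bookkeeping check combining Proposition \ref{l_infty-matrix} with testing $R$ on the canonical basis of $c_0$. The only subtlety worth flagging is that one must verify that the matrix entries coming from applying $R$ to $e_j$ indeed coincide with the $j$-th column of the given matrix, which is immediate from the definition of matrix-vector multiplication.
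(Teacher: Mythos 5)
Your proof is correct and uses essentially the same approach as the paper: Proposition \ref{l_infty-matrix} gives conditions (1) and (2) on the rows, and evaluating $R$ at the unit vectors $e_j\in c_0$ gives the columns, which must lie in $c_0$ because $R$ preserves $c_0$. The paper phrases this last step contrapositively, but the underlying argument is identical.
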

\begin{proof} If such an operator on $\ell_\infty$ was not given by a $c_0$-matrix, 
then some of the columns of the corresponding matrix would not be in $c_0$ by \ref{l_infty-matrix} and by \ref{charac_matrix}.
Then the operator would not preserve $c_0$.
\end{proof}


\begin{proposition}\label{doubleadjoint}
If a linear  bounded operator $R:\ell_\infty\rightarrow\ell_\infty$ is
 given by a  $c_0$-matrix, then $R=\ddual{(R|c_0)}$.
\end{proposition}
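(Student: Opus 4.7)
The plan is to verify the identity coordinate-by-coordinate by computing what $(R|c_0)^{**}$ does to an arbitrary $f \in \ell_\infty$, using the standard identifications $\dual{c_0} = \ell_1$ and $\dual{\ell_1} = \ell_\infty$ (under the latter, $\ell_\infty$ is naturally $\ddual{c_0}$, so $\ddual{(R|c_0)}$ is genuinely a map $\ell_\infty \to \ell_\infty$).

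First I would compute the adjoint $\dual{(R|c_0)} : \ell_1 \to \ell_1$ on the canonical basis. For the $i$-th coordinate functional $\delta_i \in \ell_1$ and any $f \in c_0$,
\[
\dual{(R|c_0)}(\delta_i)(f) \;=\; \delta_i\bigl(R(f)\bigr) \;=\; \sum_{j} b_{ij} f(j),
\]
so $\dual{(R|c_0)}(\delta_i)$ is precisely the $i$-th row $b_i = (b_{ij})_j \in \ell_1$ of the matrix (an element of $\ell_1$ by condition (1)).

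Next, for $f \in \ell_\infty$ regarded as an element of $\dual{\ell_1}$, I evaluate $\ddual{(R|c_0)}(f)$ at each $\delta_i$:
\[
\ddual{(R|c_0)}(f)(\delta_i) \;=\; f\bigl(\dual{(R|c_0)}(\delta_i)\bigr) \;=\; f(b_i) \;=\; \sum_j b_{ij} f(j) \;=\; R(f)(i).
\]
Thus $\ddual{(R|c_0)}(f)$ and $R(f)$ have identical $i$-th coordinate for every $i \in \N$, hence coincide as elements of $\ell_\infty$. Taking $f$ arbitrary, $R = \ddual{(R|c_0)}$.

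The only things to double-check are that $R|c_0$ really is represented by the same matrix (so that the computation of $\dual{(R|c_0)}(\delta_i)$ really yields $b_i$), and that $b_i$ genuinely belongs to $\ell_1$ so the pairing $f(b_i)$ makes sense; both follow immediately from the definition of a $c_0$-matrix and from Proposition \ref{charac_matrix} applied to $R|c_0$. There is no real obstacle here, only a careful bookkeeping of the canonical identifications between $c_0$, $\ell_1$, and $\ell_\infty$ and their duals.
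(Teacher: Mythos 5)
Your proof is correct and follows essentially the same route as the paper: the paper's Appendix argument (via Propositions \ref{T-dual-transpose} and \ref{S-dual-transpose}) also identifies $\dual{(R|c_0)}$ with the transpose matrix and then $\ddual{(R|c_0)}$ with the original matrix, which is exactly what your direct computation of $\dual{(R|c_0)}(\delta_i)=b_i$ and $\ddual{(R|c_0)}(f)(\delta_i)=f(b_i)=R(f)(i)$ does, only without invoking the intermediate lemmas. The coordinatewise identification is legitimate since under the canonical identification $\ddual{c_0}=\ell_\infty$ an element is determined by its values on the $\delta_i$'s, so no gap remains.
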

\begin{proof} Appendix \ref{doubleadjoint_appendix}.
\end{proof}




\subsection{Falling and weakly compact operators}
Let us recall the following characterization of weakly compact operators on
$c_0$:

\begin{theorem}\label{matrixweaklycompact}
Let $R:c_0\rightarrow c_0$ be a linear bounded operator and let $(b_{ij})_{i,j\in\N}$
be the corresponding matrix. The following are equivalent:

\begin{enumerate}
 \item $R$ is weakly compact.
\item $\ddual{R}[\ell_{\infty}]\subseteq c_0$.
\item $\|b_i\|_{\ell_1}\rightarrow 0$. 

\end{enumerate}
 \end{theorem}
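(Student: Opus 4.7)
The plan is to derive $(1)\Leftrightarrow(2)$ from the classical Gantmacher theorem (an operator $T\colon X\to Y$ is weakly compact iff $T^{**}(X^{**})\subseteq Y$ under the canonical embedding): applied to $R\colon c_0\to c_0$ with $c_0^{**}=\ell_\infty$ this is exactly the statement. The bulk of the work is the equivalence $(2)\Leftrightarrow(3)$, which I would prove by first giving a coordinatewise formula for $R^{**}$ and then running a standard sliding-hump construction.

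For the coordinate formula, let $e_i^*\in\ell_1=c_0^*$ be the evaluation at $i$. By \ref{charac_matrix} one has $R^*(e_i^*)=b_i\in\ell_1$, so for every $f\in\ell_\infty$,
$$R^{**}(f)_i=\langle R^{**}(f),e_i^*\rangle=\langle f,R^*(e_i^*)\rangle=\sum_{j\in\N}b_{ij}f(j),$$
a sum that converges absolutely since $b_i\in\ell_1$. The implication $(3)\Rightarrow(2)$ is then immediate: if $\|b_i\|_{\ell_1}\to 0$ and $f\in B_{\ell_\infty}$, then $|R^{**}(f)_i|\leq\|b_i\|_{\ell_1}\|f\|_\infty\to 0$, so $R^{**}(f)\in c_0$.

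For $(2)\Rightarrow(3)$ I argue contrapositively. Suppose $\|b_i\|_{\ell_1}\not\to 0$ and fix $\varepsilon>0$ with $\|b_{i}\|_{\ell_1}>\varepsilon$ for infinitely many $i$. Build inductively indices $i_1<i_2<\cdots$, finite sets $F_k\subseteq\N$, and thresholds $N_0<N_1<\cdots$ with $\max F_k<N_k$ so that
\begin{enumerate}
\item $F_k\subseteq(N_{k-1},\infty)$;
\item $\sum_{j\in F_k}|b_{i_k,j}|>\varepsilon/2$;
\item $\sum_{j<N_{k-1}}|b_{i_k,j}|<\varepsilon/4$;
\item $\sum_{j\geq N_k}|b_{i_k,j}|<\varepsilon/8$.
\end{enumerate}
Condition (3) is arranged by choosing $i_k$ large, using that each column of the matrix lies in $c_0$, so the finite sum over $j<N_{k-1}$ tends to $0$ as $i\to\infty$; condition (4) is arranged by choosing $N_k$ large, using $b_{i_k}\in\ell_1$; condition (2) is then possible since $\|b_{i_k}\|_{\ell_1}>\varepsilon$ forces $\sum_{j>N_{k-1}}|b_{i_k,j}|>3\varepsilon/4$. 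Now define $f(j)=\mathrm{sgn}(b_{i_k,j})$ for $j\in F_k$ and $f(j)=0$ otherwise. Since the $F_k$ are disjoint, $\|f\|_\infty\leq 1$, and
$$|R^{**}(f)_{i_k}|\geq\sum_{j\in F_k}|b_{i_k,j}|-\sum_{j<N_{k-1}}|b_{i_k,j}|-\sum_{j\geq N_k}|b_{i_k,j}|\geq\tfrac{\varepsilon}{2}-\tfrac{\varepsilon}{4}-\tfrac{\varepsilon}{8}=\tfrac{\varepsilon}{8},$$
so $R^{**}(f)\notin c_0$, contradicting (2).

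The main obstacle is balancing the sliding hump: one must simultaneously dominate the ``past'' contribution (from $F_1,\ldots,F_{k-1}$) and the ``future'' contribution (from $F_{k+1},F_{k+2},\ldots$) against the ``present'' mass on $F_k$. Both halves of the $c_0$-matrix hypothesis from \ref{charac_matrix} play a role here---the rows-in-$\ell_1$ condition tames the future tail and the columns-in-$c_0$ condition tames the past---so neither can be dispensed with.
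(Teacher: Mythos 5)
Your proof is correct and follows essentially the same route as the paper: (1)$\Leftrightarrow$(2) via Gantmacher, the coordinatewise identification of $\ddual{R}$ with the matrix (the paper's \ref{doubleadjoint_appendix}), the trivial estimate for (3)$\Rightarrow$(2), and a sliding-hump construction for (2)$\Rightarrow$(3), which the paper merely packages as the separate Lemma \ref{pseudo_diagonal2} rather than running inline. Only a cosmetic off-by-one (whether $N_{k-1}$ itself may be put into $F_k$) needs adjusting in your hump bookkeeping; otherwise nothing to change.
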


\begin{proof} Appendix \ref{c_0-matrix--weakly_compact_appendix}
\end{proof}

\begin{proposition}\label{rangeinc_0} 
Let $R:\ell_\infty\rightarrow\ell_\infty$ be
an operator given by a  $c_0$-matrix. Then, $R$ is weakly compact if, and only if,
 $R[\ell_\infty]\subseteq c_0$.
\end{proposition}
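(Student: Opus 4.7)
The plan is to bridge between $R$ acting on $\ell_\infty$ and $R|c_0$ acting on $c_0$, using the representation $R=(R|c_0)^{\star\star}$ from Proposition \ref{doubleadjoint}, and then invoke the characterization of weak compactness on $c_0$ from Theorem \ref{matrixweaklycompact}.

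For the forward direction, suppose $R:\ell_\infty\rightarrow\ell_\infty$ is weakly compact. Since $B_{c_0}\subseteq B_{\ell_\infty}$, the restriction $R|c_0$ sends $B_{c_0}$ into a weakly relatively compact subset of $\ell_\infty$; this subset lies in $c_0$ (which is weakly closed in $\ell_\infty$ when intersected with any norm-bounded set, and here it is already inside $c_0$ since $R$ preserves $c_0$), so $R|c_0$ is weakly compact as an operator into $c_0$. By Theorem \ref{matrixweaklycompact}, $(R|c_0)^{\star\star}[\ell_\infty]\subseteq c_0$. But Proposition \ref{doubleadjoint} identifies $R=(R|c_0)^{\star\star}$, so $R[\ell_\infty]\subseteq c_0$.

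For the reverse direction, suppose $R[\ell_\infty]\subseteq c_0$. Using Proposition \ref{doubleadjoint} again, $(R|c_0)^{\star\star}[\ell_\infty]\subseteq c_0$, hence by Theorem \ref{matrixweaklycompact} the restriction $R|c_0$ is weakly compact, which via the same theorem forces $\|b_i\|_{\ell_1}\to 0$. Now I would argue that $R$ is in fact norm-compact. For every $f\in B_{\ell_\infty}$,
\[
 |R(f)(i)|=\Bigl|\sum_{j}b_{ij}f(j)\Bigr|\leq \|b_i\|_{\ell_1}\|f\|_\infty\leq \|b_i\|_{\ell_1},
\]
so $\sup_{f\in B_{\ell_\infty}}|R(f)(i)|\to 0$ as $i\to\infty$. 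A bounded subset of $c_0$ whose coordinate functions tend to $0$ uniformly is norm relatively compact (the standard $c_0$ analogue of Arzelà--Ascoli); hence $R(B_{\ell_\infty})$ is norm relatively compact in $c_0$, and therefore in $\ell_\infty$. Thus $R$ is compact, in particular weakly compact.

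The proof is essentially a bookkeeping exercise once Proposition \ref{doubleadjoint} and Theorem \ref{matrixweaklycompact} are available; the only subtle point is the reverse direction, where one has to know that the integrability condition $\|b_i\|_{\ell_1}\to 0$ extracted from $R|c_0$ being weakly compact in fact suffices to guarantee norm compactness of the ambient operator $R$ on all of $\ell_\infty$, not merely on $c_0$. I do not expect a real obstacle, because the uniform decay of the row norms bounds $|R(f)(i)|$ independently of $f\in B_{\ell_\infty}$.
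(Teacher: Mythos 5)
Your forward direction is essentially the paper's: restrict $R$ to $c_0$, invoke Theorem~\ref{matrixweaklycompact} to get $\ddual{(R|c_0)}[\ell_\infty]\subseteq c_0$, and identify $R=\ddual{(R|c_0)}$ via Proposition~\ref{doubleadjoint}. The reverse direction, however, is genuinely different. The paper dispenses with the matrix entirely and cites the Grothendieck property of $\ell_\infty$: every operator from a Grothendieck space into a separable space (here $c_0$) is weakly compact. You instead extract $\|b_i\|_{\ell_1}\to 0$ from Theorem~\ref{matrixweaklycompact} and then observe that the image of $B_{\ell_\infty}$ under $R$ has uniformly vanishing tails in $c_0$, hence is norm relatively compact. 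Your argument is more elementary (it avoids the nontrivial structural fact that $\ell_\infty$ is Grothendieck) and yields the strictly stronger conclusion that $R$ is norm compact; the price is that it is tied to the matrix representation, whereas the paper's argument applies to any operator on $\ell_\infty$ whose range lies in a separable subspace. Incidentally, a still shorter route to the reverse direction, once $R|c_0$ is known to be weakly compact, is Gantmacher's theorem applied twice to conclude $R=\ddual{(R|c_0)}$ is weakly compact, but your compactness observation is a nice bonus that the paper does not record.
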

\begin{proof} If $R$ is weakly compact, then $R|c_0$ must be as well, and so
by \ref{matrixweaklycompact} we have $\ddual{(R|c_0)}[\ell_\infty]\subseteq c_0$
but $\ddual{(R|c_0)}=R$ by \ref{doubleadjoint}.  In the other direction,
use the fact that every operator defined on a Grothendieck Banach space into a separable
Banach space is weakly compact (Theorem 1 (v) of \cite{diestelgrothen}).
\end{proof}

\begin{definition}\label{falling} 
A $c_0$-matrix operator $R:\ell_\infty\rightarrow \ell_\infty$ is called falling
if, and only if, for every $\varepsilon>0$ there is a partition $A_0, ..., A_{k-1}$ of $\N$ such that
$$\sum_{j\in A_m}|b_{ij}|<\varepsilon$$
for all $m< k$ and $i\in \N$ sufficiently large.
\end{definition}

\begin{proposition}Every operator on $\ell_\infty$ which is given by a $c_0$-matrix and is weakly compact
is falling.
\end{proposition}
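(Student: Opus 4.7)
My plan is to reduce the statement to Theorem \ref{matrixweaklycompact} and then observe that the falling condition follows in a completely trivial way. First I would note that since $R$ preserves $c_0$ (being given by a $c_0$-matrix) and $R$ is weakly compact on $\ell_\infty$, the restriction $R|c_0$ is a weakly compact operator on $c_0$, represented by the same matrix $(b_{ij})_{i,j\in\N}$. Then I would invoke the equivalence $(1)\Leftrightarrow(3)$ of Theorem \ref{matrixweaklycompact} applied to $R|c_0$ to obtain $\|b_i\|_{\ell_1}\to 0$, where $b_i$ denotes the $i$-th row of the matrix.

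With this in hand, the falling condition is immediate: given $\varepsilon > 0$, I would use the one-piece partition $k=1$, $A_0=\N$. Since $\|b_i\|_{\ell_1}\to 0$, there exists $N$ with $\|b_i\|_{\ell_1}<\varepsilon$ for all $i\geq N$, and then
$$\sum_{j\in A_0}|b_{ij}|=\|b_i\|_{\ell_1}<\varepsilon$$
for all such $i$. This is precisely the condition of Definition \ref{falling}, so $R$ is falling.

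There is essentially no obstacle here; the substantive work has already been done in Theorem \ref{matrixweaklycompact}. The only ancillary point worth checking is that weak compactness of $R$ descends to $R|c_0$, but this is a general fact: weak compactness is defined by the image of the unit ball being relatively weakly compact, and this property passes to restrictions onto closed subspaces (the image of the unit ball of $c_0$ is a subset of the image of the unit ball of $\ell_\infty$). Thus the conceptual content of the proposition is only that \emph{falling} is a strictly weaker property than weak compactness, and weak compactness witnesses it via the most trivial partition possible; the interest of the notion of falling operator must therefore lie in the non-weakly-compact examples.
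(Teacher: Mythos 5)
Your proof is correct and takes the same approach as the paper, whose proof is simply the terse instruction ``Use Theorem \ref{matrixweaklycompact}.'' You have merely supplied the (entirely routine) details: weak compactness descends to $R|c_0$, the equivalence $(1)\Leftrightarrow(3)$ gives $\|b_i\|_{\ell_1}\to 0$, and the one-piece partition then witnesses the falling condition.
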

\begin{proof}
Use Theorem \ref{matrixweaklycompact}.
\end{proof}

\begin{proposition}\label{densityoperator} There is a falling, non-weakly compact operator on $\ell_\infty$
given by a $c_0$-matrix. 
\end{proposition}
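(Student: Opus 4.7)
The plan is to exhibit the Cesàro averaging operator and verify the three required properties by elementary calculations. Concretely, I would define $R:\ell_\infty\to \ell_\infty$ by
$$R(f)(i)=\frac{1}{i+1}\sum_{j=0}^{i}f(j),$$
i.e.\ by the matrix $b_{ij}=\frac{1}{i+1}$ for $0\le j\le i$ and $b_{ij}=0$ otherwise.

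First I would check that $(b_{ij})$ is a $c_0$-matrix: each row $b_i$ is finitely supported so lies in $\ell_1$, the norms $\|b_i\|_{\ell_1}=1$ are uniformly bounded, and for each fixed $j$ the column entries are $\frac{1}{i+1}$ for $i\ge j$, hence vanish at infinity. By Proposition \ref{l_infty-matrix} this matrix defines a bounded operator on $\ell_\infty$, and since the columns lie in $c_0$ one sees immediately that $R$ preserves $c_0$ (the image of any $e_j$ is the $j$-th column). To show that $R$ is \emph{not} weakly compact, I would apply Theorem \ref{matrixweaklycompact} to $R|c_0$: since $\|b_i\|_{\ell_1}=1$ does not tend to $0$, $R|c_0$ is not weakly compact, and therefore neither is $R$ on $\ell_\infty$.

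The core point is to verify that $R$ is falling. Given $\varepsilon>0$, I would pick an integer $k$ with $2/k<\varepsilon$ and partition $\N$ into the $k$ arithmetic progressions $A_m=\{j\in\N:j\equiv m\pmod k\}$ for $m<k$. Then for every $m<k$ and every $i\in\N$,
$$\sum_{j\in A_m}|b_{ij}|=\frac{|A_m\cap[0,i]|}{i+1}\le \frac{\lceil (i+1)/k\rceil}{i+1}\le \frac{1}{k}+\frac{1}{i+1},$$
which is smaller than $\varepsilon$ for all $i$ sufficiently large. This matches the definition in \ref{falling} verbatim.

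There is no real obstacle — the example is essentially forced by the requirements that $\|b_i\|_{\ell_1}\not\to 0$ (to prevent weak compactness via \ref{matrixweaklycompact}) while the mass of each row can be spread arithmetically over finitely many pieces. Cesàro averaging is the canonical operator that achieves both simultaneously, and the name ``density operator'' in the statement reflects precisely the fact that $R(\chi_A)(i)$ tends to the asymptotic density of $A$ whenever the latter exists.
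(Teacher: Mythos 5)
Your proof is correct and uses exactly the same example as the paper: the Ces\`aro averaging matrix $b_{ij}=1/(i+1)$ for $j\le i$, with the same partition of $\N$ into arithmetic progressions mod $k$ to verify the falling property and the same appeal to Theorem \ref{matrixweaklycompact} for non-weak-compactness. Your bound $\lceil (i+1)/k\rceil/(i+1)\le 1/k + 1/(i+1)$ is in fact a touch more careful than the paper's (which informally writes $\le 1/k$), and your explicit check that the matrix is a $c_0$-matrix is a small detail the paper leaves implicit.
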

\begin{proof}
Let $R:\ell_\infty\rightarrow\ell_\infty$ be given by the matrix 
$$b_{ij}=\left\{\begin{array}{ll}
                 1/(i+1), \text{ if }j\leq i\\
		  0, \text{ otherwise}
                \end{array}\right.$$ 
for all $i\in \N$. By \ref{matrixweaklycompact}
this is not a weakly compact operator. Given $k\in\N$
if we consider $A_m=\{lk+m: l\in \N\},$ for $m<k$ then,
$$\sum_{j\in A_m}|b_{ij}|\leq \left(\frac{i+1}{k}\right)\left(\frac{1}{i+1}\right)=1/k,$$ so the operator is falling.
\end{proof}

\subsection{Antimatrix operators}

The  behaviour opposite to operators given by a $c_0$-matrix
 is the subject of the following:

\begin{definition}\label{antimatrix}
 A linear bounded operator $R:\ell_{\infty}\rightarrow\ell_{\infty}$ will be called
an \emph{antimatrix} operator if, and only if, $R[c_0]=\{0\}$.
\end{definition}

Using the isometry between $\ell_\infty$ and $C(\beta\N)$, an operator $R$ on $\ell_\infty$
can be associated with 
an operator $\beta R$ on $C(\beta\N)$  and these operators can be
associated with weak$^*$ continuous functions from $\beta\N$ into the Radon measures
$M(\beta\N)$ on $\beta\N$ (see Theorem 1 in VI 7. of \cite{dunford}). Since $\N$ is dense in $\beta\N$, 
such functions are determined by their values on $\N$.
The following characterizations will be useful later on:

\begin{lemma}\label{antimatrix_charac}
 Suppose $R:\ell_{\infty}\rightarrow\ell_{\infty}$ is a bounded linear operator such that 
$R[c_0]\subseteq c_0$. Then,
\begin{enumerate}
 \item[(a)] $R$ is given by a $c_0$-matrix if, and only if, 
$\dual{R}(\delta_n)$ is concentrated on $\N$ for all $n\in\mathbb{N}$, 
that is, $\dual{R}(\delta_n)\in\ell_1,\,\forall n\in\mathbb{N}$.
\item[(b)] $R$ is an antimatrix operator if, and only if, 
$\dual{R}(\delta_n)$ is concentrated on $\N^*$ for all $n\in\mathbb{N}$.
\end{enumerate}
\end{lemma}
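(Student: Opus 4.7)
The plan is to work through the identification $\ell_\infty \cong C(\beta\N)$ so that $\ell_\infty^\star = M(\beta\N)$ via Riesz representation, and to exploit the canonical decomposition $\mu = \mu|\N + \mu|\N^*$ for any $\mu\in M(\beta\N)$. Since $\N$ is an open discrete countable subset of $\beta\N$, the piece $\mu|\N$ is purely atomic and identifies with the element $(\mu(\{n\}))_{n\in\N}\in\ell_1$, so ``$\mu$ concentrated on $\N$'' means $\mu\in\ell_1$, and ``$\mu$ concentrated on $\N^*$'' means $\mu(\{n\})=0$ for every $n$. Throughout, I use the formula $R(f)(n)=\delta_n(R(f))=\dual{R}(\delta_n)(f)$ for every $n\in\N$ and $f\in\ell_\infty$.

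For part (a), if $R$ is given by a $c_0$-matrix $(b_{ij})$, then unraveling the matrix definition yields $\dual{R}(\delta_i)(f) = \sum_j b_{ij}f(j)$ for every $f\in\ell_\infty$, which means $\dual{R}(\delta_i)=\sum_j b_{ij}\delta_j\in\ell_1$. Conversely, if each $\dual{R}(\delta_n)$ lies in $\ell_1$, set $b_{nj}:=\dual{R}(\delta_n)(\{j\})$. Then
\[
R(f)(n)=\dual{R}(\delta_n)(f)=\sum_j b_{nj}f(j)
\]
for every $f\in\ell_\infty$, so $R$ is given by the matrix $(b_{nj})$. Since $R$ is assumed to preserve $c_0$, Corollary \ref{matrixoperators} forces this matrix to be a $c_0$-matrix.

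For part (b), the key observation is that under the isometry $\ell_\infty\cong C(\beta\N)$, the subspace $c_0$ corresponds exactly to those continuous functions on $\beta\N$ vanishing on $\N^*$. If $R$ is antimatrix, then for every $n\in\N$ and every $f\in c_0$,
\[
\dual{R}(\delta_n)(f)=\delta_n(R(f))=0,
\]
and applying this to $f=\chi_{\{m\}}\in c_0$ gives $\dual{R}(\delta_n)(\{m\})=0$ for every $m\in\N$, so $\dual{R}(\delta_n)|\N=0$, i.e., $\dual{R}(\delta_n)$ is concentrated on $\N^*$. Conversely, if each $\dual{R}(\delta_n)$ is concentrated on $\N^*$ and $f\in c_0$, then $\beta f$ vanishes on $\N^*$, so
\[
R(f)(n)=\dual{R}(\delta_n)(\beta f)=\int_{\N^*}\beta f\,\ud\dual{R}(\delta_n)=0,
\]
and hence $R(f)=0$, showing that $R$ is antimatrix.

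The proof contains no genuine obstacle; the only nontrivial ingredient is Corollary \ref{matrixoperators} used in the converse direction of (a) to upgrade a plain matrix representation to a $c_0$-matrix, and the identification of $c_0$ with the vanishing-at-$\N^*$ functions used in (b). Everything else is direct bookkeeping via the adjoint formula $R(f)(n)=\dual{R}(\delta_n)(f)$.
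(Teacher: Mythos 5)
Your proof is correct and follows essentially the same route as the paper: both directions of (a) and (b) are handled via the adjoint identity $R(f)(n)=\dual{R}(\delta_n)(f)$, with the converse of (a) closed off by Corollary \ref{matrixoperators} (the paper invokes the same fact without naming it) and (b) argued through $\chi_{\{m\}}$ and the vanishing of $\beta f$ on $\N^*$ (you state the forward direction directly where the paper gives its contrapositive, which is immaterial).
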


\begin{proof}
\textbf{(a)}\hspace{3pt} Assume $R$ is given by a $c_0$-matrix $(b_{ij})_{i,j\in\mathbb{N}}$. Then, 
for every $f\in\ell_{\infty}$ we have $\dual{R}(\delta_n)(f)=R(f)(n)=b_n(f)$,
where $b_n$ is the $n$th row of $(b_{ij})_{i,j\in\mathbb{N}}$. So $\dual{R}(\delta_n)=b_n$
and by definition of $c_0$-matrix, we have that $b_n\in\ell_1$.

Conversely, assume $\dual{R}(\delta_n)\in\ell_1$. Let $M$
be the matrix formed by putting $\dual{R}(\delta_n)$ as the $n$th row. Then,
$R$ is induced by $M$. Moreover, since $R[c_0]\subseteq c_0$,
we know that $M$ is a $c_0$-matrix.\\

\noindent\textbf{(b)}\hspace{3pt} Suppose $\dual{R}(\delta_n)$ is not concentrated on $\mathbb{N}^*$
for some $n\in\mathbb{N}$. Then, there exists an $m\in\mathbb{N}$ such that 
$\dual{R}(\delta_n)(\{m\})\neq0$. Then, 
$R(\chi_{\{m\}})(n)=\dual{R}(\delta_n)(\chi_{\{m\}})
\neq 0$. Therefore, $\chi_{\{m\}}\in c_0$ is a witness to the 
fact that $R[c_0]\neq\{0\}$, so $R$ is not an antimatrix operator.

Conversely, assume $\dual{R}(\delta_n)$ is concentrated on $\mathbb{N}^*$,
for every $n\in\mathbb{N}$.
Fix $f\in c_0$. Then, for every $n\in\mathbb{N}$ we have 
$R(f)(n)=\dual{R}(\delta_n)(\beta f)=\int \beta f\,\mathrm{d}\dual{R}(\delta_n)=
\int_{\mathbb{N}^*}\beta f\,\mathrm{d}\dual{R}(\delta_n)=0$, because $\beta f| \mathbb{N}^*=0$.
\end{proof}

Thus a typical example of an antimatrix operator is one
given by $R(f)=((\beta f(x_i))_{i\in \N})$ where $(x_i)_{i\in \N}$ is any sequence
of nonprincipal ultrafilters.

\begin{proposition}\label{decompositionmatrix} If $R:\ell_{\infty}\rightarrow\ell_{\infty}$ is
such that $R[c_0]\subseteq c_0$, then $R=S_0+S_1$, where $S_0$ is given by a 
$c_0$-matrix and $S_1$ is an antimatrix operator. 
\end{proposition}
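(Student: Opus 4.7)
The plan is to use the characterizations in Lemma \ref{antimatrix_charac} together with the natural decomposition of a Radon measure on $\beta\N$ into its parts concentrated on $\N$ and on $\N^*$. For each $n\in \N$, the functional $\delta_n\in \ell_1\subseteq \dual{\ell_\infty}$ corresponds via the isometry $\ell_\infty\cong C(\beta\N)$ and Riesz representation to the Dirac measure at $n\in\beta\N$; hence $\mu_n:=\dual{R}(\delta_n)$ is a Radon measure on $\beta\N$ with $\|\mu_n\|\leq \|R\|$. Write $\mu_n=\nu_n+\lambda_n$ where $\nu_n=\mu_n|\N$ and $\lambda_n=\mu_n|\N^*$.

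Since $\N$ is countable and discrete in $\beta\N$, the measure $\nu_n$ is purely atomic and lies in $\ell_1$, with $\|\nu_n\|_{\ell_1}=\|\nu_n\|\leq \|\mu_n\|\leq \|R\|$. Define $S_0:\ell_\infty\rightarrow\ell_\infty$ by the matrix whose $n$-th row is $b_n:=\nu_n$, i.e.\ $b_{nj}=\mu_n(\{j\})$. By Proposition \ref{l_infty-matrix}, the uniform $\ell_1$-bound on the rows guarantees that $S_0$ is a well-defined bounded linear operator on $\ell_\infty$. I then check the three conditions of a $c_0$-matrix: rows are in $\ell_1$ (by construction), the row norms are uniformly bounded by $\|R\|$, and each column is in $c_0$ because for fixed $j\in\N$ we have
$$b_{nj}=\mu_n(\{j\})=\dual{R}(\delta_n)(\chi_{\{j\}})=R(\chi_{\{j\}})(n),$$
and $\chi_{\{j\}}\in c_0$, so $R(\chi_{\{j\}})\in c_0$ by hypothesis, which gives $b_{nj}\to 0$ as $n\to\infty$. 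By Corollary \ref{matrixoperators} (or the definition), $S_0$ is given by a $c_0$-matrix.

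Set $S_1:=R-S_0$. Since $\dual{S_0}(\delta_n)=\nu_n$ (the $n$-th row of the defining matrix, as computed in the proof of Lemma \ref{antimatrix_charac}(a)), we get
$$\dual{S_1}(\delta_n)=\mu_n-\nu_n=\lambda_n=\mu_n|\N^*,$$
which is concentrated on $\N^*$ for every $n\in\N$. Moreover $S_1[c_0]\subseteq c_0$ since both $R$ and $S_0$ preserve $c_0$. Therefore Lemma \ref{antimatrix_charac}(b) applies and shows $S_1$ is an antimatrix operator, completing the decomposition $R=S_0+S_1$.

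There is no real obstacle here; the only point requiring a small verification is the column-in-$c_0$ condition for $S_0$, which is precisely the place where the hypothesis $R[c_0]\subseteq c_0$ is used. Everything else is a direct translation of the measure-theoretic decomposition $\mu_n=\mu_n|\N+\mu_n|\N^*$ through the characterizations already recorded in Lemma \ref{antimatrix_charac}.
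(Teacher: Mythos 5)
Your proof is correct, and it yields the same operator $S_0$ as the paper's (both have matrix entries $b_{nj}=R(\chi_{\{j\}})(n)=\dual{R}(\delta_n)(\{j\})$), but the route is organized differently. The paper simply restricts $R$ to $c_0$, invokes Proposition \ref{charac_matrix} to get the $c_0$-matrix, sets $S_0=\ddual{(R|c_0)}$ using Proposition \ref{doubleadjoint}, and then notes $S_1=R-S_0$ kills $c_0$ because $S_0$ agrees with $R$ on $c_0$; the hypothesis $R[c_0]\subseteq c_0$ is consumed at the very first step, and no measure-theoretic bookkeeping is needed. You instead work on the dual side, splitting each $\mu_n=\dual{R}(\delta_n)$ into $\mu_n|\N$ and $\mu_n|\N^*$, building $S_0$ from the $\ell_1$-parts via Proposition \ref{l_infty-matrix}, verifying the three $c_0$-matrix conditions by hand (the column condition being exactly where $R[c_0]\subseteq c_0$ is used), and then certifying that $S_1$ is antimatrix through Lemma \ref{antimatrix_charac}(b) after checking $S_1[c_0]\subseteq c_0$. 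What your version buys is transparency: it realizes the decomposition literally as $\mu_n=\mu_n|\N+\mu_n|\N^*$, makes visible where the hypothesis enters, and shows $S_0$ agrees with the paper's bidual construction without ever invoking \ref{doubleadjoint}; what the paper's version buys is brevity, since \ref{doubleadjoint} packages all the matrix verifications at once and makes $S_1[c_0]=\{0\}$ immediate rather than an appeal to \ref{antimatrix_charac}. Both arguments are sound.
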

\begin{proof} As $R[c_0]\subseteq c_0$ there is a matrix $(b_{ij})_{i,j\in \N}$ which
 satisfies \ref{charac_matrix}.  Define $S_0$ as multiplication by this matrix, i.e.
 $S_0=\ddual{(R| c_0)}$ by \ref{doubleadjoint}.
Now $S_1=R-S_0$ is antimatrix, so we obtain the desired decomposition.
\end{proof}

\subsection{Product topology continuity of operators}

The importance of operators on $\ell_\infty$ given by $c_0$-matrices is 
expressed in the following theorem which exploits the fact that $\ell_\infty$
is the bidual space of $c_0$. In the theorem below, the weak$^*$ topology on $\ell_\infty$
is given by the duality $\dual{\ell_1}=\ell_\infty$ and $\tau_p$ 
denotes the product topology in $\R^\N$.

\begin{theorem}\label{summary_matrices}
 Let $R:\ell_{\infty}\rightarrow\ell_{\infty}$ be a linear bounded operator.
 The following are equivalent:
\begin{enumerate}
 \item $R=\ddual{(R|c_0)}$.
\item $R$ is given by a $c_0$-matrix.
\item $R$ is $w^*$-$w^*$-continuous and $R[c_0]\subseteq c_0$.
\item $R| B_{\ell_{\infty}}:(B_{\ell_{\infty}},\tau_p)\rightarrow (\ell_{\infty},\tau_p)$
is continuous and $R[c_0]\subseteq c_0$.
\end{enumerate}
\end{theorem}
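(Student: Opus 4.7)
The plan is to close the cycle $(2)\Rightarrow(1)\Rightarrow(3)\Rightarrow(4)\Rightarrow(2)$, leaning on Proposition \ref{doubleadjoint} for one step and on the coincidence of the $w^*$-topology and the product topology on $B_{\ell_\infty}$ for the remaining nontrivial ones.

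The implication $(2)\Rightarrow(1)$ is exactly Proposition \ref{doubleadjoint}. For $(1)\Rightarrow(3)$ I would invoke the standard fact that any biadjoint operator $\ddual{S}$ is $w^*$-$w^*$-continuous; applied to $S=R|c_0$, this gives the $w^*$-$w^*$-continuity of $R=\ddual{(R|c_0)}$, and since $\ddual{(R|c_0)}$ extends $R|c_0$, we also have $R[c_0]\subseteq c_0$. The implication $(3)\Rightarrow(4)$ rests on the observation that, because $\ell_1$ is separable, $B_{\ell_\infty}$ is $w^*$-compact metrizable and $\tau_p|B_{\ell_\infty}=w^*|B_{\ell_\infty}$: the inclusion $\tau_p\subseteq w^*$ on $B_{\ell_\infty}$ is tautological since the coordinate evaluations $\delta_n$ belong to $\ell_1$, and equality follows because a $w^*$-compact topology refining a Hausdorff topology must agree with it. Since $R$ maps $B_{\ell_\infty}$ into $\|R\|\cdot B_{\ell_\infty}$, the $w^*$-$w^*$-continuity of $R$ converts into $\tau_p$-$\tau_p$-continuity on the ball.

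The substance of the proof lies in $(4)\Rightarrow(2)$. For each $n\in\N$ define $\phi_n\in\dual{\ell_\infty}$ by $\phi_n(f)=R(f)(n)$, so that $\|\phi_n\|\leq\|R\|$. Composing $R|B_{\ell_\infty}$ with the $\tau_p$-continuous $n$-th coordinate functional shows that $\phi_n$ is $\tau_p$-continuous on $B_{\ell_\infty}$, hence $w^*$-continuous there by the previous paragraph. A bounded linear functional on $\ell_\infty=\dual{\ell_1}$ whose restriction to the unit ball is $w^*$-continuous is $w^*$-continuous on all of $\ell_\infty$: its kernel meets every ball $rB_{\ell_\infty}$ in a $w^*$-closed set, so by the Krein--Smulian theorem the kernel itself is $w^*$-closed. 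Consequently there is $b_n\in\ell_1$ with $\phi_n(f)=\sum_{j}b_n(j)f(j)$ for all $f\in\ell_\infty$, and $\|b_n\|_{\ell_1}=\|\phi_n\|\leq\|R\|$. The resulting matrix $(b_{ij})_{i,j\in\N}$ thus satisfies conditions (1) and (2) of Proposition \ref{charac_matrix}, and its $j$-th column equals $R(\chi_{\{j\}})\in c_0$ by hypothesis, yielding condition (3). Therefore $R$ is given by a $c_0$-matrix, which closes the cycle.

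The main obstacle is the step $(4)\Rightarrow(2)$, specifically the passage from $\tau_p$-continuity of each $\phi_n$ on $B_{\ell_\infty}$ to the membership $\phi_n\in\ell_1$; once one has noted that $w^*$ and $\tau_p$ coincide on $B_{\ell_\infty}$, this is just the Krein--Smulian description of $w^*$-continuous functionals on a dual space, the remaining verifications being routine biadjoint and matrix manipulations.
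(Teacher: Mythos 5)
Your proof is correct and follows essentially the same route as the paper's Appendix argument: $(1)\Leftrightarrow(2)$ via the biadjoint/matrix identification, the coincidence of $\tau_p$ and the $w^*$-topology on bounded sets for $(3)\Leftrightarrow(4)$, and a Krein--Smulian argument (Corollary 4.46 in \cite{fabian}) to recover $w^*$-continuity from product-topology continuity on the ball. The only organizational difference is that you close the cycle with $(4)\Rightarrow(2)$ by applying Krein--Smulian directly to the coordinate functionals $\phi_n=\delta_n\circ R$ and reading off the rows $b_n\in\ell_1$, whereas the paper proves $(4)\Rightarrow(3)$ by applying the same argument to $R_a$ for arbitrary $a\in\ell_1$ and then invoking the general criterion for $w^*$-$w^*$-continuity of operators; your variant is a modest shortcut that bypasses the auxiliary lemma on $w^*$-continuous operators between dual spaces.
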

\begin{proof} Appendix \ref{summary_matrices_appendix}.
\end{proof}

Thus, the nonzero antimatrix operators are  discontinuous in the product topology. Such
discontinuities are not, however,  incompatible with being an automorphism or having
a nice behaviour on $c_0$.

\begin{theorem}\label{discontinuousauto} There are discontinuous automorphisms of
$\ell_\infty$ preserving $c_0$. There are different automorphisms on $\ell_\infty$
which agree on $c_0$. They can be the identity on $c_0$.
\end{theorem}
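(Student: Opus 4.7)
The plan is to prove all three assertions at once, via a single Hamel-basis construction producing linear bijections of $\ell_\infty$ that act as the identity on $c_0$.

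First I fix an algebraic complement $V$ of $c_0$ in $\ell_\infty$, i.e., a lifting to $\ell_\infty$ of a Hamel basis of $\ell_\infty/c_0$. Since $\ell_\infty/c_0$ is an infinite-dimensional Banach space of cardinality $\mathfrak{c}$, the standard Baire category argument --- an infinite-dimensional Banach space is never a countable union of finite-dimensional subspaces --- shows it has Hamel dimension exactly $\mathfrak{c}$. Hence $V$ has a Hamel basis $\{v_\alpha : \alpha<\mathfrak{c}\}$, which after rescaling I assume satisfies $\|v_\alpha\|=1$ for each $\alpha$. Every $f\in\ell_\infty$ then has a unique representation $f = g + \sum_{\alpha\in F}t_\alpha v_\alpha$ with $g\in c_0$, $F\subseteq\mathfrak{c}$ finite, and $t_\alpha\in\R$.

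Given any family $(\lambda_\alpha)_{\alpha<\mathfrak{c}}$ of nonzero reals, I define a linear map $T_\lambda:\ell_\infty\to\ell_\infty$ by
$$T_\lambda\Bigl(g + \sum_{\alpha\in F} t_\alpha v_\alpha\Bigr) \;=\; g + \sum_{\alpha\in F} t_\alpha \lambda_\alpha v_\alpha.$$
This is manifestly linear, with two-sided inverse $T_{\lambda^{-1}}$, so $T_\lambda$ is a vector-space automorphism of $\ell_\infty$ which acts as the identity on $c_0$ and stabilizes $V$, and in particular preserves $c_0$.

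The three claims are now obtained by specializing $\lambda$. Taking any $\lambda$ with some $\lambda_\alpha\neq 1$ yields a $T_\lambda$ distinct from the identity but agreeing with the identity on $c_0$; this proves simultaneously the assertion that an automorphism can equal the identity on $c_0$ without being the identity, and the assertion that two distinct automorphisms (namely $T_\lambda$ and $\mathrm{id}$) can agree on $c_0$. For the existence of a discontinuous automorphism preserving $c_0$, pick $\lambda$ with $\sup_\alpha|\lambda_\alpha|=\infty$: since $\|v_\alpha\|=1$ and $\|T_\lambda v_\alpha\|=|\lambda_\alpha|$, the operator $T_\lambda$ is unbounded, hence discontinuous on the normed space $\ell_\infty$. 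I do not foresee a genuine obstacle; the only non-formal step is the Hamel-dimension computation for $\ell_\infty/c_0$, which is the routine Baire category argument just indicated, and everything else is vector-space bookkeeping once a complementary Hamel basis is in hand.
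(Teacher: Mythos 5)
Your construction does not prove the statement the paper intends. Throughout the paper ``automorphism'' means a \emph{bounded} linear bijection of the Banach space (with bounded inverse), and the ``discontinuity'' asserted in this theorem is not norm-discontinuity: it is discontinuity in the product (equivalently weak$^*$) topology, i.e.\ the failure of the equivalent conditions of Theorem \ref{summary_matrices} --- the whole point, announced in the sentence preceding the theorem, is that a norm-bounded automorphism of $\ell_\infty$ preserving $c_0$ need not be a $c_0$-matrix operator, need not equal $\ddual{(R|c_0)}$, and hence need not be determined by its action on $c_0$. Your maps $T_\lambda$ are mere vector-space automorphisms: with $\sup_\alpha|\lambda_\alpha|=\infty$ they are unbounded by design, so they are not automorphisms in the paper's sense, and the claim becomes a routine Hamel-basis/axiom-of-choice fact with no bearing on the intended result. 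The same problem undermines your second and third claims: even for bounded families $(\lambda_\alpha)$, $T_\lambda$ is in general unbounded, because the algebraic projection of $\ell_\infty$ onto a complement of $c_0$ (and the individual coordinate functionals $f\mapsto t_\alpha(f)$) cannot be bounded --- $c_0$ is uncomplemented in $\ell_\infty$ by Phillips' theorem --- and you make no attempt to verify boundedness. So you have not produced two distinct \emph{bounded} automorphisms agreeing on $c_0$, which is what is needed; indeed the theorem is later used (in Proposition \ref{discontinuouslifting}) precisely to obtain bounded liftings $R_\sigma$ of automorphisms of $\ell_\infty/c_0$, a role your unbounded maps cannot play.

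The paper's proof is quite different: fix a partition $(A_i)_{i\in\N}$ of $\N$ into infinite sets, nonprincipal ultrafilters $x_i$ with $A_i\in x_i$, and for a permutation $\sigma$ of $\N$ set $R_\sigma(f)(n)=f(n)-\beta f(x_i)+\beta f(x_{\sigma(i)})$ for $n\in A_i$. Each $R_\sigma$ is a bounded operator (norm at most $3$) with $R_{\sigma^{-1}}$ as a two-sided inverse, hence a Banach-space automorphism; it is the identity on $c_0$ because $\beta f$ vanishes at nonprincipal ultrafilters for $f\in c_0$; and for $\sigma\neq\mathrm{id}$ the difference $R_\sigma-\mathrm{Id}$ is a nonzero antimatrix operator, so $R_\sigma$ is discontinuous in the product topology by Theorem \ref{summary_matrices}. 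If you want to salvage your approach you would have to produce a bounded operator equal to the identity on $c_0$ but not equal to $\ddual{\mathrm{Id}_{c_0}}=\mathrm{Id}_{\ell_\infty}$, which forces you to add a nonzero bounded antimatrix-type perturbation (built from functionals vanishing on $c_0$, e.g.\ evaluations at points of $\N^*$) rather than rescaling a Hamel basis; that is exactly what the ultrafilter construction does, while also arranging invertibility.
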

\begin{proof}
Let $(A_i)_{i\in \N}$ be a partition of $\N$ into infinite sets. Let $x_i$ be any
nonprincipal ultrafilter such that $A_i\in x_i$ for all $i\in \N$.
For a permutation $\sigma: \N\rightarrow \N$ define
$$R_\sigma(f)(n)= f(n)-\beta f(x_i)+\beta f(x_{\sigma(i)}),$$
where $i\in \N$ is such that  $n\in A_i$.
First note that $R_{\sigma^{-1}}\circ R_\sigma=R_\sigma\circ R_{\sigma^{-1}}=Id$ and so
 $R_\sigma$ is an automorphism.
One verifies that $R_\sigma|c_0$ is the identity for any permutation $\sigma$, in particular
$R_\sigma-Id\not=0$ is antimatrix for any permutation $\sigma$ different than
the identity  and hence $R_\sigma$ is discontinuous by
\ref{summary_matrices}.

\end{proof}

In this proof we really decompose $\ell_\infty$ as a direct sum $X\oplus Y$,
both factors necessarily isomorphic to $\ell_\infty$: the first of the functions constant on
each set $A_i$ and the second of the functions equal to zero in each point $x_i$.
Since the second factor contains $c_0$, the automorphisms of the first factor induce automorphisms of
$\ell_\infty$ which do not move $c_0$. This lack of continuity is also
present in homomorphisms of $\wp(\N)$ (3.2.3. of \cite{ilijas}) but not its automorphisms.

\section{Operators on $\ell_\infty/c_0$}

\subsection{Ideals of operators on $\ell_\infty/c_0$}

As usual by an (left, right) ideal we will mean a collection $\mathcal I$ of operators
such that $T+S\in \mathcal I$ whenever $T,S\in \mathcal I$
and $S\circ R, R\circ S\in \mathcal I$ ($R\circ S \in \mathcal I$,  $S\circ R\in \mathcal I$)
whenever $S\in\mathcal I$ and $R$ is any operator on $\ell_\infty/c_0$.
We say that an operator $T$ on $\ell_\infty/c_0$ factors through $\ell_\infty$ 
if, and only if, there are operators $R_1:\ell_\infty/c_0\rightarrow \ell_\infty$
and $R_2: \ell_\infty\rightarrow \ell_\infty/c_0$  such that $T=R_2\circ R_1$.
It is clear that operators which factor through $\ell_\infty$ form a 
two-sided ideal. Also it is well known that weakly compact
operators form a two-sided proper ideal (VI 4.5. of \cite{dunford}). We introduce another class
of operators:
\begin{definition}
An operator $T:\ell_\infty/c_0\rightarrow \ell_\infty/c_0$ is locally null if, and only if,
for every infinite $A\subseteq \N$ there is an infinite $A_1\subseteq A$ such that
$$T\circ I_{A_1}=0.$$
\end{definition}

Locally null should really be right-locally null, but left-locally null is just null, so 
there is no need of using the word ``right".

\begin{proposition}\label{idealsinclusions} Locally null operators form a proper left ideal which contains
all weakly compact operators and all operators which factor through $\ell_\infty$.
\end{proposition}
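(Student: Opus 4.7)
The plan is to verify the algebraic closure conditions for a left ideal, properness, and the two inclusions, reducing the two substantive inclusions to a single measure-theoretic lemma about Radon measures on $\N^*$. The algebraic part is essentially bookkeeping: given $T_1, T_2$ locally null and infinite $A \subseteq \N$, iterate the definition to obtain $A_2 \subseteq A_1 \subseteq A$ infinite with $T_1 \circ I_{A_1} = 0$ and $T_2 \circ I_{A_2} = 0$; since $I_{A_2}[\ell_\infty(A_2)/c_0(A_2)] \subseteq I_{A_1}[\ell_\infty(A_1)/c_0(A_1)]$ one also has $T_1 \circ I_{A_2} = 0$, so $(T_1+T_2) \circ I_{A_2} = 0$. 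The left-ideal property is immediate from $(R \circ T) \circ I_{A_1} = R \circ (T \circ I_{A_1}) = 0$, and properness is clear because $\mathrm{Id} \circ I_A = I_A \neq 0$ for every infinite $A$.

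The core step I would prove first is the following lemma: for every positive $\lambda \in M(\N^*)$ and every infinite $A \subseteq \N$ there is an infinite $A_1 \subseteq A$ with $\lambda(A_1^*) = 0$. The proof uses the ZFC existence of an uncountable almost disjoint family $\{A_i\}_{i \in I}$ of infinite subsets of $A$: the corresponding $\{A_i^*\}_{i \in I}$ are pairwise disjoint nonempty clopen subsets of $A^*$, and since $\lambda(A^*)<\infty$, finite additivity forces $\lambda(A_i^*)=0$ for all but countably many $i$.

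For weakly compact $T$, identify it with $\str{T}: C(\N^*) \to C(\N^*)$ and set $\mu_y = \dual{\str{T}}(\delta_y)$ for $y \in \N^*$. By Gantmacher's theorem $\dual{\str{T}}$ is weakly compact, so $\{\mu_y : y \in \N^*\}$ is relatively weakly compact in $M(\N^*)$; Bartle--Dunford--Schwartz then supplies a positive $\lambda \in M(\N^*)$ with respect to which the family is uniformly absolutely continuous, so in particular $\lambda(E) = 0 \Rightarrow |\mu_y|(E) = 0$ for every $y$. Applying the lemma to $\lambda$ and $A$ yields infinite $A_1 \subseteq A$ with $\lambda(A_1^*) = 0$, hence $|\mu_y|(A_1^*) = 0$ for every $y$. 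For any $h \in \ell_\infty(A_1)$, the function $(I_{A_1}(h))^* \in C(\N^*)$ is supported in $A_1^*$, so $\str{T}((I_{A_1}(h))^*)(y) = \int_{A_1^*} (I_{A_1}(h))^* \, \ud\mu_y = 0$ for every $y$; hence $T \circ I_{A_1} = 0$.

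For $T = R_2 \circ R_1$ with $R_1 : \ell_\infty/c_0 \to \ell_\infty$, it suffices to kill $R_1 \circ I_{A_1}$. The operator $R_1$ is represented by the countable family $\mu_n := \dual{R_1}(\delta_n) \in M(\N^*)$, $n \in \N$, via $R_1([f])(n) = \int f^* \, \ud\mu_n$; there need not, however, be a single dominating measure. I would therefore apply the lemma sequentially, constructing a decreasing chain $A = B_0 \supseteq B_1 \supseteq \dots$ of infinite sets with $|\mu_n|(B_{n+1}^*) = 0$, and take $A_1$ to be a pseudo-intersection; then $A_1 \subseteq_* B_{n+1}$ gives $A_1^* \subseteq B_{n+1}^*$, so $|\mu_n|(A_1^*) = 0$ for every $n$, and hence $R_1 \circ I_{A_1} = 0$ and $T \circ I_{A_1} = 0$. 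The main obstacle is the measure-theoretic lemma itself; once it is in hand, Gantmacher plus Bartle--Dunford--Schwartz packages the weakly compact case, while the factoring case needs only a countable pseudo-intersection diagonal on top of the lemma.
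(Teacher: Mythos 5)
Your proof is correct, and the algebraic part (sums, left-composition, properness via the identity) is the same routine verification as in the paper. Where you differ is in how the two inclusions are obtained. For operators factoring through $\ell_\infty$ your argument is essentially the paper's: both kill the countably many measures $\mu_n=\dual{R_1}(\delta_n)$ on some clopen set $A_1^*$; the paper does it in one step with an uncountable almost disjoint family inside $A$ (each $\mu_n$ can charge only countably many of the disjoint sets $A_\xi^*$), while you run the single-measure lemma sequentially and take a pseudo-intersection --- a cosmetic difference. For weakly compact operators the routes genuinely diverge: the paper stays on the $C(K)$ side and uses the disjoint-sequence characterization of weak compactness (Corollary VI--17 of Diestel--Uhl) together with an almost disjoint family $\{A_\xi\}_{\xi<\omega_1}$, observing that $T\circ I_{A_\xi}\neq 0$ can happen for only countably many $\xi$; you pass to the adjoint via Gantmacher and invoke the Bartle--Dunford--Schwartz control-measure theorem to dominate the whole family $\{\dual{\str T}(\delta_y):y\in\N^*\}$ by a single positive $\lambda$, and then apply your measure-killing lemma to $\lambda$. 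Your version buys a uniform statement (every infinite $A_1\subseteq A$ with $\lambda(A_1^*)=0$ works, so the weakly compact case and the factoring case are both reduced to the same one-measure lemma) at the price of heavier classical machinery; the paper's argument is more elementary and self-contained, needing only the disjoint-sequence criterion and an almost disjoint family. One small point worth recording if you write this up: the control measure can be taken of the form $\sum_n 2^{-n}|\dual{\str T}(\delta_{y_n})|$, so it is again a Radon measure and your lemma applies verbatim (in any case the lemma only uses finite additivity and finiteness on the disjoint clopen sets $A_i^*$).
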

\begin{proof}
It is clear that locally null operators form a proper left ideal.

 Let us prove that every weakly compact operator  on $\ell_\infty/c_0$ is locally null.
We will use the fact that an operator $T$ 
on a $C(K)$-space is weakly compact if, and only if,  $\|T(f_n)\|\rightarrow 0$
whenever $(f_n)_{n\in\N}\subseteq C(K)$ is a bounded pairwise disjoint sequence 
(i.e., $f_n\cdot f_m=0$ for $n\neq m$) (see Corollary VI--17 of \cite{diestel-ulh}). 

Let $A\subseteq \N$ be infinite. Consider $\{A_{\xi}:\xi<\omega_1\}$, a family 
of almost disjoint infinite subsets  of $A$. Notice that by the weak compactness
of $T$ we have that the set of $\alpha\in\omega_1$ such that
$T\circ I_{A_\alpha}\not=0$ must be at  most countable, so take $\alpha$
outside this set.

Now let $T=R_2\circ R_1$ where $R_1:\ell_\infty/c_0\rightarrow \ell_\infty$
and $R_2: \ell_\infty\rightarrow \ell_\infty/c_0$. Let $\mu_n=\dual{R_1}(\delta_n)$.
Let $A\subseteq \N$ be infinite. As the supports of $\mu_n$'s are c.c.c.
and there are continuum many pairwise disjoint clopen subsets of $A^*$, there is an infinite
$A_1\subseteq A$ such that $|\mu_n|(A_1^*)=0$ for every $n\in\N$.
It follows that $R_1\circ I_{A_1}=0$, which completes the proof.

\end{proof}

\begin{proposition}\label{locallynullsurjective} 
There is a locally null operator
on $\ell_\infty/c_0$ which factors through $\ell_\infty$ and is surjective. 
There is no surjective weakly  compact operator.
\end{proposition}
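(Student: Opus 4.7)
My plan for the first assertion is to exploit Proposition \ref{idealsinclusions}: any operator on $\ell_\infty/c_0$ that factors through $\ell_\infty$ is automatically locally null, so it is enough to exhibit a surjective $T$ of the form $T=R_2\circ R_1$ with $R_1:\ell_\infty/c_0\to\ell_\infty$ and $R_2:\ell_\infty\to\ell_\infty/c_0$. For $R_2$ I will take the quotient map $q:\ell_\infty\to\ell_\infty/c_0$, which is already surjective; then the task reduces to producing a surjective $R_1=S:\ell_\infty/c_0\to\ell_\infty$, and $T=q\circ S$ will be surjective as a composition of two surjections and locally null by \ref{idealsinclusions}.

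To build $S$ I will fix a partition $\{A_n:n\in\N\}$ of $\N$ into infinite sets, choose a free ultrafilter $x_n$ on $\N$ with $A_n\in x_n$ for each $n$, and define
\[
S([f]_{c_0})=\bigl(f^*(x_n)\bigr)_{n\in\N}.
\]
This will be well-defined because elements of $c_0$ vanish on $\N^*$, and bounded by $\|f\|_\infty$. For surjectivity, given $(a_n)\in\ell_\infty$ I take $f=\sum_n a_n\chi_{A_n}\in\ell_\infty$; since $f$ is constantly $a_n$ on the dense subset $A_n$ of $\beta A_n$, continuity of $\beta f$ gives $\beta f(x_n)=a_n$, hence $S([f])=(a_n)$. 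Conceptually this is the observation that $\{x_n:n\in\N\}$ is a countable discrete subset of $\N^*$ whose closure is homeomorphic to $\beta\N$, so restriction along this embedding is a bounded surjection $C(\N^*)\to C(\beta\N)=\ell_\infty$.

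For the second assertion I will argue by reflexivity. Suppose $T:\ell_\infty/c_0\to\ell_\infty/c_0$ is both weakly compact and surjective. The open mapping theorem gives some $\varepsilon>0$ with $\varepsilon\,B_{\ell_\infty/c_0}\subseteq T(B_{\ell_\infty/c_0})$, while weak compactness makes $T(B_{\ell_\infty/c_0})$ relatively weakly compact. Therefore $B_{\ell_\infty/c_0}$ itself is relatively weakly compact and $\ell_\infty/c_0$ is reflexive — contradicting the well-known fact that $C(\N^*)$, as a $C(K)$-space over an infinite compact Hausdorff $K$, contains an isometric copy of $c_0$ and hence is non-reflexive.

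I do not anticipate any serious obstacle. The only step requiring a moment of thought is the existence of a surjection $\ell_\infty/c_0\to\ell_\infty$, but once one observes that $\N^*$ contains a topological copy of $\beta\N$ the operator $S$ writes itself down, and Proposition \ref{idealsinclusions} handles locally nullness for free.
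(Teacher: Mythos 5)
Your proposal is correct and essentially the same as the paper's proof: the key operator is the same evaluation map $[f]\mapsto (f^*(x_n))_{n\in\N}$ at a discrete sequence of free ultrafilters, composed with the quotient map $\ell_\infty\to\ell_\infty/c_0$. The small deviations are all legitimate: you verify surjectivity by exhibiting the explicit preimage $\sum_n a_n\chi_{A_n}$ instead of invoking that the closure of $\{x_n\}$ is a copy of $\beta\N$, you get local nullness by citing Proposition \ref{idealsinclusions} (which the paper proves beforehand) rather than repeating the separability argument, and for the second assertion you use the open mapping theorem plus non-reflexivity of $C(\N^*)$ where the paper uses a Baire category argument.
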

\begin{proof} Let $(x_n)_{n\in\N}$ be a discrete subset of $\N^*$. Define
$R:\ell_\infty/c_0\rightarrow\ell_\infty$ by $R([f]_{c_0})=(f^*(x_n))_{n\in\N}$.
It is well-known that the closure of $\{x_n:n\in \N\}$ in $\N^*$ is homeomorphic to $\beta\N$.
So by the Tietze extension theorem $R$ is onto $\ell_\infty$. Furthermore, $Q\circ R$ is surjective, where 
$Q:\ell_\infty\rightarrow\ell_\infty/c_0$ is the quotient map.
As no clopen subset $A^*$  of $\N^*$ is separable, below every infinite $A$ 
there is an infinite $A_1\subseteq A$ such that no $x_n$ belongs to $A_1^*$. Then,
$R\circ I_{A_1}=0$ which proves that $R$ is locally null, and so $Q\circ R$ as well.

Weakly compact operators on an infinite dimensional  $C(K)$ cannot be surjective because
weakly compact subsets of an infinite dimensional Banach space have empty interior if the space is not reflexive.
So countable unions of them are of the first Baire category, and in particular,
the images of the balls cannot cover an infinite dimensional Banach space $C(K)$.
\end{proof}

See \ref{notrightidealCH} for more information on the ideal of locally null operators under CH.

\subsection{Local behaviour of functions associated with  the adjoint operator}

In general, for a linear bounded operator $T$ acting
on the Banach space $C(K)$ for a compact $K$, 
the function which sends $x\in K$ to $\|\dual{T}(\delta_x)\|$ is lower semicontinuous
(e.g., Lemma 2.1. of \cite {plebanekisrael}) and may be quite discontinuous.  

\begin{proposition}\label{discontinuousnorm}
Suppose $F\subseteq \N^*$ is a nowhere dense retract
of $\N^*$. There is a linear bounded operator $T$ on $C(\N^*)$ such that
the function $\alpha:\N^*\rightarrow \mathbb{R}$ defined by
$$\alpha(y)=\|\dual{T}(\delta_y)\|$$
for every $y\in\N^*$, is discontinuous in every point of $F$.
\end{proposition}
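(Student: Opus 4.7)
The plan is to build $T$ directly from the retraction $r:\N^*\to F$ and exploit the fact that $F$ being nowhere dense forces $\N^*\setminus F$ to be dense, so that every point of $F$ is in the closure of the complement. Since $\alpha$ is automatically lower semicontinuous, to get discontinuity at each $y\in F$ it will be enough to make $\alpha(y)=0$ on $F$ and $\alpha$ bounded below by a positive constant on $\N^*\setminus F$.

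First I would define $T:C(\N^*)\to C(\N^*)$ by $T(f)=f-f\circ r$. This is well-defined in $C(\N^*)$ because $r$ is continuous, and clearly linear with $\|T\|\leq 2$. A direct computation then yields the adjoint on Dirac measures: for every $y\in\N^*$ and every $f\in C(\N^*)$,
\[
\dual{T}(\delta_y)(f)=T(f)(y)=f(y)-f(r(y)),
\]
so $\dual{T}(\delta_y)=\delta_y-\delta_{r(y)}$ as elements of $M(\N^*)$.

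Next I would compute the total variation norm. Since $r|F=\mathrm{id}_F$, we have $r(y)=y$ for $y\in F$ and therefore $\dual{T}(\delta_y)=0$ on $F$, giving $\alpha(y)=0$ there. For $y\notin F$, the points $y$ and $r(y)\in F$ are distinct, hence $\|\delta_y-\delta_{r(y)}\|=2$. Thus $\alpha=2\cdot\chi_{\N^*\setminus F}$.

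Finally, since $F$ is nowhere dense, the open set $\N^*\setminus F$ is dense in $\N^*$, so every $y\in F$ lies in the closure of $\{\alpha=2\}$. Hence for every $y\in F$ and every open neighborhood $U$ of $y$ there is $z\in U$ with $\alpha(z)=2$, while $\alpha(y)=0$, proving discontinuity at $y$. I expect no genuine obstacle here: the only subtlety worth noting is the verification that $T$ really maps into $C(\N^*)$ (which follows from continuity of $r$) and that $\|\delta_y-\delta_{r(y)}\|_{M(\N^*)}=2$ whenever $y\neq r(y)$, which is standard since the two points are separated by a clopen subset of $\N^*$.
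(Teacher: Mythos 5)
Your proposal is correct and coincides with the paper's own proof: the paper uses exactly the operator $T(f)=f-f\circ r$, computes $\dual{T}(\delta_y)=\delta_y-\delta_{r(y)}$, and concludes $\alpha=2\chi_{\N^*\setminus F}$, which is discontinuous precisely on the nowhere dense set $F$. Your additional verifications (that $\|\delta_y-\delta_{r(y)}\|=2$ for $y\neq r(y)$ and that $T$ maps into $C(\N^*)$) are just the routine details the paper leaves implicit.
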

\begin{proof} Define $T$ by putting 
$$T(f)=f-f\circ r,$$
where $r:\N^*\rightarrow F$ is the retraction onto $F$.  Then $T(f)(y)=f(y)-f(r(y))$
and so $\dual{T}(\delta_y)=\delta_y-\delta_{r(y)}$. Hence $\alpha=2\chi_{\N^*\setminus F}$.
Since $F$ is nowhere dense, the set of discontinuities of $\alpha$ is $F$. 
\end{proof}

By Lemma 4.1. of \cite{plebanekisrael}, for every lower semicontinuous 
function, every  $\varepsilon>0$ and
every open $U\subseteq \N^*$ there is an open $V\subseteq U$ such that
the function's oscillation on $V$ is smaller than $\varepsilon$. Hence, by \ref{gdelta}
there is a dense open subset of $\N^*$ where  the function which sends $y\in \N^*$
to $\|\dual{T}(\delta_y)\|$ is locally constant.
In the case of $\N^*$ we have not only
the local stabilization of the values of $\|\dual{T}(\delta_y)\|$ but 
the local stabilization of  the Hahn decompositions of the measures
 $\dual{T}(\delta_y)$:

\begin{lemma}\label{norm_stability} 
Suppose $T:C(\N^*)\rightarrow C(\N^*)$ is bounded linear and $B\subseteq \N$ is
infinite. Then, there are an infinite $B_1\subseteq_* B$, a real number $s$,  
and partitions $\N=C_n\cup D_n$ into infinite sets, such that for every $y\in B_1^*$ 
 we have:
\begin{enumerate}
\item[(i)] $s=\|\dual{T}(\delta_y)\|$
\item[(ii)] if $\dual{T}(\delta_y)=\mu^+-\mu^-$ is the Jordan decomposition of the measure,
 then $\mu^-(C^*_n)<1/4(n+1)$ and $\mu^+(D^*_n)<1/4(n+1)$.
\end{enumerate}
\end{lemma}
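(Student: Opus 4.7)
The plan is to reduce the lemma to a one-step refinement and iterate. The refinement reads: for every infinite $B\subseteq \N$ and $\varepsilon>0$ there exist an infinite $B'\subseteq_* B$ and a partition $\N=C\cup D$ into infinite sets such that $g_C(y):=\mu_y^-(C^*)+\mu_y^+(D^*)<\varepsilon$ for every $y\in B'^*$, where $\dual T(\delta_y)=\mu_y^+-\mu_y^-$ is the Jordan decomposition. For clause~(i) I first pass to an infinite $B_0\subseteq_* B$ on which $y\mapsto \|\dual T(\delta_y)\|$ is constantly equal to some $s$: this function is lower semicontinuous on $\N^*$ as $\sup\{T(f)(y):\|f\|\le 1\}$, and by the discussion immediately preceding the lemma every lsc function on $\N^*$ is locally constant on a dense open set, so I take $B_0$ to be any infinite set with $B_0^*$ inside such a clopen neighborhood. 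Iterating the refinement inside $B_0$ at stage $n$ with $\varepsilon=1/(4(n+1))$ and diagonalizing will then deliver clause~(ii).

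The refinement rests on the lower semicontinuity of $g_C$. Since $\chi_{C^*}\in C(\N^*)$, the function $y\mapsto \dual T(\delta_y)(C^*)=T(\chi_{C^*})(y)$ is continuous; since $\N^*$ is zero-dimensional one has $|\dual T(\delta_y)|(C^*)=\sup\{T(f)(y):f\in C(\N^*),\ |f|\le\chi_{C^*}\}$, which is lsc as a supremum of continuous functions. Using $\mu_y^{\pm}(C^*)=\tfrac{1}{2}\bigl(|\dual T(\delta_y)|(C^*)\pm\dual T(\delta_y)(C^*)\bigr)$ and the analogous identity on $D^*=\N^*\setminus C^*$ shows that $g_C$ is lsc, so $\{y:g_C(y)<\varepsilon\}$ is open in $\N^*$.

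To execute one refinement step inside $B$, pick any $y_0\in B^*$ and Jordan-decompose $\dual T(\delta_{y_0})=\mu^+-\mu^-$ with positive Hahn set $H$. Radon regularity yields compact $F\subseteq H$ and open $U\supseteq H$ with $\mu^+(H\setminus F)<\varepsilon/4$ and $\mu^-(U\setminus H)<\varepsilon/4$; by zero-dimensionality of $\N^*$ there is a clopen $V=C^*$ with $F\subseteq V\subseteq U$, so that $g_C(y_0)<\varepsilon/2$. If both $C$ and $\N\setminus C$ are infinite we take $D=\N\setminus C$ and are finished. Otherwise, if $C$ is finite then $V=\emptyset$, forcing $\mu^+(\N^*)<\varepsilon/4$; partitioning $\N=\bigsqcup_i E_i$ into infinite sets and using $\sum_i\mu^-(E_i^*)\le\|\mu^-\|<\infty$, I pick $E_{i_0}$ with $\mu^-(E_{i_0}^*)<\varepsilon/2$ and take $C=E_{i_0}$, $D=\N\setminus E_{i_0}$, both infinite. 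The case where $\N\setminus C$ is finite is symmetric (swap $\mu^+\leftrightarrow\mu^-$ and $C\leftrightarrow D$). In every case $g_C(y_0)<\varepsilon$, and by the lsc above the open set $\{g_C<\varepsilon\}$ contains a clopen neighborhood $B'^*$ of $y_0$ for some infinite $B'\subseteq_* B$, since the clopens $A^*$ form a basis of $\N^*$.

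Iterating inside $B_0$ at stage $n$ with $\varepsilon=1/(4(n+1))$ produces a $\subseteq_*$-decreasing chain $B_0\supseteq_* B_1\supseteq_* B_2\supseteq_*\cdots$ of infinite sets together with partitions $\N=C_n\cup D_n$. Any infinite pseudo-intersection of this chain (which exists for every countable $\subseteq_*$-decreasing sequence) satisfies $B_{\infty}^*\subseteq B_n^*$ for all $n$ and plays the role of $B_1$ in the statement. The genuinely non-routine step is the single-step construction, in particular the bookkeeping to guarantee that \emph{both} halves of the partition remain infinite in the degenerate cases where the natural clopen Hahn approximant is empty or all of $\N^*$; the remaining ingredients are lower semicontinuity and a standard diagonal pseudo-intersection.
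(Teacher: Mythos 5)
Your route differs from the paper's and is arguably cleaner in structure, but it contains a concrete error in the semicontinuity reasoning that, as stated, breaks the one-step refinement lemma. The paper runs a single induction: at stage $n$ it picks $y_n\in A_n^*$ to be a near-maximizer of $\|\dual{T}(\delta_y)\|$ over $A_n^*$, then chooses $C_n,D_n$ to approximate the Hahn decomposition of $\dual{T}(\delta_{y_n})$, and finally shrinks to $A_{n+1}$ by weak$^*$-continuity; the real number $s$ only emerges at the end as a limit, and the near-maximizer is precisely what controls the possible upward jumps of the norm on the shrinking clopens. You instead decouple (i) from (ii): you first isolate $B_0\subseteq_* B$ on which the norm is constantly $s$ (via lower semicontinuity, Plebanek's oscillation lemma and Lemma~\ref{gdelta}, exactly as recalled immediately before the lemma in the paper), and only then build the partitions by a much simpler refinement. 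This decoupling is a genuine and valid simplification of the argument.

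The gap is in the justification of the refinement step. You argue that $g_C$ is lower semicontinuous and conclude that $\{y:g_C(y)<\varepsilon\}$ is open. Lower semicontinuity gives that the \emph{superlevel} sets $\{g_C>a\}$ are open; the sublevel set $\{g_C<\varepsilon\}$ need not be open (a lsc function may jump up at a limit point of the sublevel set). Concretely, $g_C(y)=\tfrac12\|\dual{T}(\delta_y)\|+\tfrac12\,T(\chi_{D^*}-\chi_{C^*})(y)$, with the first summand lsc but in general not usc, so the refinement lemma as you phrase it for an arbitrary infinite $B$ is not established by your argument. The fix is exactly what your reduction to $B_0$ makes available but you do not state: on $B_0^*$ the first summand equals the constant $s/2$, so $g_C|B_0^*$ is \emph{continuous}, and the sublevel set is open in $B_0^*$. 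You should therefore state and apply the refinement only for $B\subseteq_* B_0$, invoking continuity (not lower semicontinuity) of $g_C$ on $B_0^*$. With that correction the remaining ingredients — the Hahn decomposition, Radon inner/outer regularity, the clopen squeeze $F\subseteq V\subseteq U$, the bookkeeping in the degenerate cases to keep both halves of the partition infinite, and the diagonal pseudo-intersection — are all sound, and your proof does establish the lemma.
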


\begin{proof} 
We construct by induction a $\subseteq_*$-decreasing sequence of infinite
sets $(A_n)_{n\in\N}$, $y_n\in A_n^*$, and 
partitions $\N=C_n\cup D_n$ into infinite sets 
such that for every $n\in\N$ we have:
\begin{enumerate}
\item $\sup\{\|\dual{T}(\delta_y)\|: y\in A_n^*\}-\|\dual{T}(\delta_{y_n})\|<1/6(n+1)$
\item $\|\dual{T}(\delta_{y_n})\|-\dual{T}(\delta_{y_n})(C_n^*)+\dual{T}(\delta_{y_n})(D_n^*)<2/6(n+1)$
\item For all $y\in A_{n+1}^*$ we have 
$|\dual{T}(\delta_y)(C_n^*)-\dual{T}(\delta_{y_n})(C_n^*)|<1/6(n+1)$ and 
$|\dual{T}(\delta_y)(D_n^*)-\dual{T}(\delta_{y_n})(D_n^*)|<1/6(n+1)$.
\end{enumerate}
This is arranged as follows. Put $A_0=B$ and assume we have constructed $A_n$. Take
$y_n\in A_n^*$ such that $\|\dual{T}(\delta_{y_n})\|>\sup\{\|\dual{T}(\delta_y)\|: y\in A_n^*\}-1/6(n+1)$.
Take a Hahn decomposition $\N^*=H^+_n\cup H_n^-$ for the measure $\dual{T}(\delta_{y_n})$.
By the regularity, we may choose an infinite $C_n\subseteq\N$ such that 
$|\dual{T}(\delta_{y_n})(H^+_n)-\dual{T}(\delta_{y_n})(C_n^*)|<1/6(n+1)$. If we put 
$D_n=\N\setminus C_n$, we obtain $|\dual{T}(\delta_{y_n})(H_n^-)-\dual{T}(\delta_{y_n})(D_n^*)|<1/6(n+1)$.
Therefore,
$$\|\dual{T}(\delta_{y_n})\|=\dual{T}(\delta_{y_n})(H^+_n)-\dual{T}(\delta_{y_n})(H^-_n)
<\dual{T}(\delta_{y_n})(C_n^*)-\dual{T}(\delta_{y_n})(D_n^*)+2/6(n+1),$$
and so (2) holds.

By the weak$^*$ continuity of $\dual{T}$, the set of points which satisfy the condition in (3) is an 
open neighbourhood of $y_n$, so we may take $A_{n+1}\subseteq_* A_n$ satisfying (3).
This ends the induction.

Notice that $|\dual{T}(\delta_{y_n})(C_n^*)|\leq\|T\|$ for every $n\in\N$, and so
there exists a convergent subsequence of $(\dual{T}(\delta_{y_n})(C_n^*))_{n\in\N}$. 
The same is true for the $D_n$'s and so we may assume that both of these 
sequences converge. Let us define
$$s^+=\lim_{n\rightarrow\infty}\dual{T}(\delta_{y_n})(C_n^*)
\qquad s^-=\lim_{n\rightarrow\infty}\dual{T}(\delta_{y_n})(D_n^*).$$
Now let $B_1\subseteq \N$ be infinite such that $B_1\subseteq_* A_n$, for all $n\in\N$.
We will show that for every $y\in B_1^*$ we have $\|\dual{T}(\delta_y)\|=s$, where $s=s^+-s^-$.

So let us fix $y\in B_1^*$. Notice that from (3) we obtain that
$$s=\lim_{n\rightarrow\infty}(\dual{T}(\delta_y)(C_n^*)-\dual{T}(\delta_y)(D_n^*))
=\lim_{n\rightarrow\infty}\dual{T}(\delta_y)(\chi_{C_n^*}-\chi_{D_n^*}).$$
Therefore, $s\leq \|\dual{T}(\delta_y)\|$.

Now, by (1) and (2) the following holds for every $n\in\N$:
$$\begin{array}{rcl}
   \dual{T}(\delta_{y_n})(C_n^*)-\dual{T}(\delta_{y_n})(D_n^*)&>&\|\dual{T}(\delta_{y_n})\|-2/6(n+1)\\
    &>&\sup\{\|\dual{T}(\delta_z)\|:z\in A_n^*\}-1/2(n+1)\\
    &\geq& \|\dual{T}(\delta_y)\|-1/2(n+1).
  \end{array}$$

Therefore, 
$s=\lim_{n\rightarrow\infty}\dual{T}(\delta_{y_n})(C_n^*)-\dual{T}(\delta_y)(D_n^*)
\geq \|\dual{T}(\delta_y)\|$. This proves the first statement of the lemma.

To check that (ii) holds, let us fix $y\in B_1^*$ and the Jordan decomposition 
for the measure $\dual{T}(\delta_y)=\mu^+-\mu^-$. 
By going to a subsequence if necessary, we may assume that 
$|s-(\dual{T}(\delta_{y_n})(C_n^*)-\dual{T}(\delta_{y_n})(D_n^*))|<1/6(n+1)$, for every $n\in\N$.

Observe that since $C_n^*\cup D_n^*=\N^*$, we
have that 
$$\begin{array}{rcl}
\dual{T}(\delta_y)(C_n^*)-\dual{T}(\delta_y)(D_n^*)&=&
|\dual{T}(\delta_y)|(C_n^*)+|\dual{T}(\delta_y)|(D_n^*)-2\mu^-(C_n^*)-2\mu^+(D_n^*)\\
&=&\|\dual{T}(\delta_y)\|-2\mu^-(C_n^*)-2\mu^+(D_n^*), 
\end{array}$$

for every $n\in\N$. Then, by (3) we obtain
$$\begin{array}{rcl}
2(\mu^-(C_n^*)+\mu^+(D_n^*))&=&s-(\dual{T}(\delta_y)(C_n^*)-\dual{T}(\delta_y)(D_n^*))\\
&\leq &s-(\dual{T}(\delta_{y_n})(C_n^*)-\dual{T}(\delta_{y_n})(D_n^*)-2/6(n+1))\\
&<&1/6(n+1)+2/6(n+1)
=1/2(n+1).\end{array}$$

 Since both $\mu^-(C_n^*)$ and $\mu^+(D_n^*)$
are non negative, they are both strictly less than $1/4(n+1)$ and (ii) is proved.
\end{proof}

\begin{corollary} 
Suppose $T:C(\N^*)\rightarrow C(\N^*)$ is bounded linear and $B\subseteq \N$ is
infinite. Then, there are an infinite $B_1\subseteq_* B$ and a Borel partition
$\N^*=X\cup Y$ such that $X$ and $Y$ form a Hahn decomposition of $\dual{T}(\delta_y)$,
for every $y\in B_1^*$.
\end{corollary}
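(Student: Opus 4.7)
The plan is to derive the corollary directly from Lemma~\ref{norm_stability} by performing a Borel--Cantelli-style extraction on the partitions $\N = C_n \cup D_n$ that it produces. The essential content is already packaged in the uniform-in-$y$ estimates $\mu^-(C_n^*), \mu^+(D_n^*) < 1/4(n+1)$, so the only work left is to turn this decay into a single Borel set that is simultaneously positive for every $\dual{T}(\delta_y)$ with $y \in B_1^*$.

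First I would apply Lemma~\ref{norm_stability} to the given $B$ to obtain an infinite $B_1 \subseteq_* B$ and partitions $\N = C_n \cup D_n$ with the properties listed there. Then I would pass to a rapidly growing subsequence $(n_k)$, say $n_k = 2^k$, so that $\sum_k 1/4(n_k+1) < \infty$, and set
$$X = \bigcup_{m \in \N} \bigcap_{k \geq m} C_{n_k}^*, \qquad Y = \N^* \setminus X = \bigcap_{m \in \N} \bigcup_{k \geq m} D_{n_k}^*.$$
Both are Borel, indeed $F_\sigma$ and $G_\delta$ respectively, since each $C_{n_k}^*$ is clopen and $C_{n_k}^* \sqcup D_{n_k}^* = \N^*$. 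Note that $X$ and $Y$ are fixed once and for all, independently of $y$.

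To verify that $(X,Y)$ is a Hahn decomposition of $\dual{T}(\delta_y)$ for arbitrary $y \in B_1^*$, I would write the Jordan decomposition $\dual{T}(\delta_y) = \mu^+_y - \mu^-_y$ and check each side. For $X$, setting $X_m = \bigcap_{k \geq m} C_{n_k}^*$, one has $\mu^-_y(X_m) \leq \mu^-_y(C_{n_k}^*) < 1/4(n_k+1)$ for every $k \geq m$, so $\mu^-_y(X_m) = 0$ and hence $\mu^-_y(X) = 0$ by countable subadditivity. For $Y$, the estimate $\mu^+_y(Y) \leq \sum_{k \geq m} \mu^+_y(D_{n_k}^*) < \sum_{k \geq m} 1/4(n_k+1)$ tends to $0$ as $m \to \infty$, giving $\mu^+_y(Y) = 0$. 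Thus $X$ is a $\mu^+_y$-positive set and $Y$ a $\mu^-_y$-negative set, as required. I do not anticipate any genuine obstacle: the crucial uniformity in $y$ is already delivered by Lemma~\ref{norm_stability}, and the only maneuver needed beyond it is the passage to a summable subsequence so that Borel--Cantelli applies on the negative side.
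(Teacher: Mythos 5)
Your argument is correct and is essentially the paper's own proof: both apply Lemma \ref{norm_stability}, pass to a subsequence $(n_k)$ making $\sum_k 1/4(n_k+1)$ summable, define $X=\bigcup_m\bigcap_{k\geq m}C_{n_k}^*$ and $Y=\N^*\setminus X$, and verify $\mu^-_y(X)=0$ and $\mu^+_y(Y)=0$ exactly as you do. No gaps.
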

\begin{proof}
 Let $B_1\subseteq_* B$ and $C_n,D_n\subseteq\N$ be as in \ref{norm_stability}.
Let $(n_k)_{k\in\N}$ be a strictly increasing sequence of positive integers 
such that $\frac{1}{4(n_k+1)}<1/2^k$, $\forall k\in\N$.

Let $F_i=\bigcap_{k\geq i} C_{n_k}^*$, $X=\bigcup_{i\in\N}F_i$ and $Y=\N^*\setminus X$.
Fix $y\in B_1^*$ and let $\dual{T}(\delta_y)=\mu^+-\mu^-$ be a Jordan decomposition of 
the measure. Since $F_i\subseteq F_{i+1}$ and $F_i\subseteq C_{n_i}^*$, for every $i\in\N$,
by \ref{norm_stability} we have that 
$$\mu^-(F_{i_0})\leq\mu^-(F_i)\leq\mu^-(C_{n_i})<\frac{1}{4(n_i+1)},$$
for every $i_0\in\N$ and every $i\geq i_0$. Therefore, $\mu^-(F_{i_0})=0$, for every 
$i_0\in\N$, and so $\mu^-(X)=0$.

On the other hand, we have $Y\subseteq \N^*\setminus F_i=\bigcup_{k\geq i}D_{n_k}^*$,
for every $i\in\N$. Therefore, 
$\mu^+(Y)\leq \sum_{k\geq i}\mu^+(D_{n_k}^*)<\sum_{k\geq i}\frac{1}{4(n_k+1)}
<\sum_{k\geq i}1/2^k$, for every $i\in\N$. It follows that $\mu^+(Y)=0$.
\end{proof}

\begin{corollary}
Suppose that $T:C(\N^*)\rightarrow C(\N^*)$ 
is a linear bounded operator and $B\subseteq \N$ is infinite. Then, there is an
infinite $B_1\subseteq B$ such that the functions which map $y\in B_1^*$ to
the positive part, to the negative part, and to the total
variation measure of the measure $\dual{T}(\delta_y)$, respectively,
are all weak$^*$ continuous. In particular, $T$ is left-locally a regular operator.
\end{corollary}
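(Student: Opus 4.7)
The plan is to combine the uniform Hahn decomposition produced by the previous corollary with a lower semicontinuity argument and the minimality of the Jordan decomposition. First, I would apply the previous corollary together with \ref{norm_stability} to obtain an infinite $B_1\subseteq B$, a Borel partition $\N^*=X\cup Y$ that is a Hahn decomposition of every $\nu_y:=\dual{T}(\delta_y)$ with $y\in B_1^*$, and a constant $s$ with $\|\nu_y\|=s$ on $B_1^*$. By regularity of Radon measures,
$$\|\nu_y^+\|=\sup\{\nu_y(f):f\in C(\N^*),\ 0\le f\le 1\}$$
is a supremum of continuous functions of $y$ (each $y\mapsto \nu_y(f)=T(f)(y)$ is continuous because $T(f)\in C(\N^*)$), hence lower semicontinuous, and the same holds for $\|\nu_y^-\|$. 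Since $\|\nu_y^+\|+\|\nu_y^-\|=s$ on $B_1^*$, two lower semicontinuous functions with constant sum must both be continuous, so $y\mapsto \|\nu_y^{\pm}\|$ are continuous on $B_1^*$.

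The heart of the argument is upgrading this numerical continuity to weak$^*$-continuity of $y\mapsto \nu_y^+$. Given a net $y_\beta\to y$ in $B_1^*$, the bounded net $(\nu_{y_\beta}^+)$ has, by Banach--Alaoglu, a weak$^*$-convergent subnet $\nu_{y_\gamma}^+\to\eta$ with $\eta\ge 0$. Since $\nu_{y_\gamma}\to \nu_y$ weak$^*$ (by weak$^*$-continuity of $\dual{T}$), also $\nu_{y_\gamma}^-=\nu_{y_\gamma}^+-\nu_{y_\gamma}\to \eta-\nu_y$, which is therefore $\ge 0$ as well. Thus $\nu_y=\eta-(\eta-\nu_y)$ is a decomposition of $\nu_y$ into positive measures, so the minimality of the Jordan decomposition gives $\eta\ge \nu_y^+$. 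Since $1\in C(\N^*)$, weak$^*$-convergence of positive measures preserves total mass, so $\|\eta\|=\lim\|\nu_{y_\gamma}^+\|=\|\nu_y^+\|$ by the continuity just established. A positive measure dominating $\nu_y^+$ with equal total mass must equal $\nu_y^+$, hence $\eta=\nu_y^+$. As every weak$^*$ cluster point of $(\nu_{y_\beta}^+)$ coincides with $\nu_y^+$, the full net converges to $\nu_y^+$; the same argument handles $\nu_y^-$, and weak$^*$-continuity of $y\mapsto |\nu_y|=\nu_y^++\nu_y^-$ is then automatic.

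For the final statement I would define $T^{\pm}:C(\N^*)\to C(B_1^*)$ by $T^{\pm}(f)(y)=\nu_y^{\pm}(f)$; these are well-defined positive operators by the weak$^*$-continuity just proved, and $P_{B_1}\circ T=T^+-T^-$ exhibits $T_{B_1,\N}$ as a regular operator. The main obstacle is the weak$^*$-continuity step: the Hahn set $X$ is only Borel, so weak$^*$-continuity of $\nu_y$ does not directly transfer to its restriction $\nu_y|X$ via any clopen approximation of $X$. The key is to bypass $X$ entirely in the continuity argument, relying instead on the extremality of the Jordan decomposition together with the preservation of total mass under weak$^*$-convergence of positive measures on the compact space $\N^*$.
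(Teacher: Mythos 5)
Your argument is correct, but it follows a genuinely different route from the paper's. The paper, after applying \ref{norm_stability} to make $\|\dual{T}(\delta_y)\|$ constant on $B_1^*$, simply normalizes and invokes Lemma~2.2 of Plebanek's paper \cite{plebanekstudia}, which asserts that $\mu\mapsto|\mu|$ is weak$^*$-continuous on the dual sphere; the positive and negative parts are then recovered algebraically from $\mu$ and $|\mu|$. You instead give a self-contained argument: lower semicontinuity of $y\mapsto\|\nu_y^{\pm}\|$ (as suprema of the continuous functions $y\mapsto T(f)(y)$), upgraded to continuity because their sum is constant; then a Banach--Alaoglu subnet argument combined with the minimality of the Jordan decomposition and preservation of total mass under weak$^*$-convergence of positive measures to identify the cluster point as $\nu_y^+$. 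This is essentially an in-place proof of the special case of Plebanek's lemma that is actually needed, which buys independence from the external reference at the cost of some length. One small inefficiency: you invoke the preceding Hahn-decomposition corollary at the start but then explicitly bypass the Borel set $X$, so that corollary is not actually used---\ref{norm_stability} alone suffices, which is also all the paper uses. Your treatment of the final ``regular operator'' clause is briefer than the paper's (which spells out boundedness of $T^{\pm}$ via a nested-closed-sets argument), but your norm bound $\|\nu_y^{\pm}\|\le s$ makes boundedness immediate, so nothing is missing.
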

\begin{proof} 
Let $B_1\subseteq B$ and $s$ be as in \ref{norm_stability}. If $s=0$, then 
the first part of the corollary is trivially true and $P_{B_1}\circ T=0$ is positive.
 Otherwise, consider the operator
$\frac{1}{s}T$. We have that $\|\dual{(\frac{1}{s}T)}(\delta_y)\|=1$ for all
$y\in B_1^*$. Now apply the second part of Lemma 2.2 of \cite{plebanekstudia} which says that
on the dual sphere sending the measure $\mu$ to its total variation $|\mu|$ is weakly$^*$ continuous.
Since the positive and the negative parts of $\mu$ can be obtained from $\mu$ and $|\mu|$,
and $|s\mu|=s|\mu|$ for nonnegative $s$, using the weak$^*$ continuity of
$\dual{T}$ we conclude the first part of the corollary.

For the second part we will define two positive operators $T^+$ and $T^-$ such
that $T^+-T^-=P_{B_1}\circ T$. For every $y\in B_1^*$ we have that 
$\dual{(P_{B_1}\circ T)}(\delta_y)=\dual{T}(\dual{P_{B_1}}(\delta_y))=\dual{T}(\delta_y)$.
So for every $y\in B_1^*$ define 
$$T^+(f)(y)=\int f\ud(\dual{T}(\delta_y))^+, \qquad T^-(f)(x)=\int f\ud(\dual{T}(\delta_y))^-.$$
It is clear that $P_{B_1}\circ T=T^+-T^-$.
 The linearity of $T^+$ and $T^-$ follows from general properties of the integral.
To see that they are bounded, notice that for every 
$f\in C(\N^*)$ with $\|f\|\leq 1$ we have that $\|T^+(f)\|\leq\sup_{y\in B_1^*}(\dual{T}(\delta_y))^+(\N^*)$.
If $\{(\dual{T}(\delta_y))^+(\N^*):y\in B_1^*\}$ were unbounded, 
the set $F_n=\{y\in B_1^*:(\dual{T}(\delta_y))^+(\N^*)\geq n\}$ would be nonempty for every $n\in\N$. 
But by the $w^*$-continuity of the map $y\mapsto (\dual{T}(\delta_y))^+$,
each $F_n$ is closed.
Since the $F_n$'s form a decreasing chain of nonempty closed sets,  there exists 
$y\in\bigcap_{n\in\N}F_n$. But this is impossible. 

The same argument shows that $T^-$ is bounded.

\end{proof}

\begin{definition}
Let $X$, $Y$ be topological spaces. A function $\varphi:X\rightarrow\wp(Y)$
is called \emph{upper semicontinuous} if for every open set $V\subseteq Y$
the following set is open in $X$
$$\{x\in X:\varphi(x)\subseteq V\}.$$
\end{definition}

Our main interest in multifunctions is related  to the following

\begin{definition} Suppose that $T:\ell_\infty/c_0\rightarrow \ell_\infty/c_0$
is a linear bounded operator and $\varepsilon>0$. We define
$$\varphi_\varepsilon^T(y)=\{x\in \N^*: |\dual{T}(\delta_y)(\{x\})|\geq\varepsilon\},$$
$$\varphi^T(y)=\bigcup_{\varepsilon>0}\varphi^T_\varepsilon(y).$$
\end{definition}

\begin{proposition}
 There is a linear bounded $T:\ell_\infty/c_0\rightarrow \ell_\infty/c_0$
such that $\varphi^T_{1/2}$ is not upper semicontinuous.
\end{proposition}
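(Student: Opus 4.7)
The plan is to exhibit a rank-one perturbation operator and observe that the empty open set $V=\emptyset$ itself witnesses the failure of upper semicontinuity. Concretely, I would fix any point $x_0\in\N^*$ and define $T:C(\N^*)\to C(\N^*)$ by
$$T(f)=f-f(x_0)\mathbf{1},$$
where $\mathbf{1}$ denotes the constant function $1$ on $\N^*$. This map is visibly linear with $\|T\|\leq 2$, so via the canonical isometry $C(\N^*)\cong\ell_\infty/c_0$ it induces a bounded operator on $\ell_\infty/c_0$.

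The next step is the straightforward adjoint computation: for every $y\in\N^*$ and $f\in C(\N^*)$,
$$\dual{T}(\delta_y)(f)=T(f)(y)=f(y)-f(x_0)=(\delta_y-\delta_{x_0})(f),$$
so $\dual{T}(\delta_y)=\delta_y-\delta_{x_0}$. When $y\neq x_0$, this measure has exactly two atoms of absolute mass $1$, giving $\varphi^T_{1/2}(y)=\{y,x_0\}$; when $y=x_0$, the adjoint image is the zero measure, giving $\varphi^T_{1/2}(x_0)=\emptyset$.

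To witness failure of upper semicontinuity, I would take the open test set $V=\emptyset$. Then
$$\{y\in\N^*:\varphi^T_{1/2}(y)\subseteq V\}=\{x_0\},$$
which is not open because $\N^*$ has no isolated points. Hence, by the very definition of upper semicontinuity, $\varphi^T_{1/2}$ is not upper semicontinuous. I do not foresee any real obstacle in this argument; the only conceptual subtlety is recognizing that $V=\emptyset$ is a legitimate choice in the definition, so the example requires no deeper structural facts about $\N^*$ (such as the existence of nowhere dense retracts used in Proposition \ref{discontinuousnorm}) beyond the elementary observation that $\N^*$ is perfect.
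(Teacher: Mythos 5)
Your argument is correct. Both you and the paper realize the operator as $T(f)=f-f\circ\psi$ for a continuous $\psi:\N^*\to\N^*$ whose fixed-point set $F$ has empty interior, so that $\dual{T}(\delta_y)=\delta_y-\delta_{\psi(y)}$ vanishes precisely when $y\in F$ and otherwise has two atoms of mass $\pm1$; the failure of upper semicontinuity is then witnessed by the fact that $\{y:\varphi^T_{1/2}(y)\subseteq V\}$ equals the closed-but-not-open set $F$ for a suitable open test set $V$. The paper instantiates this with the retraction $r:\N^*\to F$ from Proposition \ref{discontinuousnorm} onto a nowhere dense retract $F$, and tests against $V=\N^*\setminus F$; you instantiate with the constant map $\psi\equiv x_0$ (i.e. $F=\{x_0\}$, which is trivially a nowhere dense retract since any constant map is a retraction onto its image and $\N^*$ is perfect), and test against $V=\emptyset$. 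Your version is a clean special case that sidesteps even the mild appeal to the existence of nontrivial nowhere dense retracts, relying only on the elementary fact that $\N^*$ has no isolated points; either test set works, since for your $T$ the preimage $\{y:\varphi^T_{1/2}(y)\subseteq V\}$ equals $\{x_0\}$ whenever $x_0\notin V$.
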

\begin{proof} 
Consider the operator from the proof of \ref{discontinuousnorm}.
We have $\varphi_{1/2}(y)=\emptyset$ if $y\in F$ and $\varphi_{1/2}(y)=\{r(y), y\}$
whenever $y\in \N^*\setminus F$. So for example taking $V=\N^*\setminus F$
we obtain that $\{y: \varphi_{1/2}(y)\subseteq V\}=F$ which is not open, but
closed nowhere dense.
\end{proof}

However we obtain the left-local upper semicontinuity:

\begin{lemma}\label{upper_semicontinuity}
Let $T: C(\N^*)\rightarrow C(\N^*)$ be a bounded linear operator and let 
$B\subseteq \N$ be infinite. 
Then, there exists $B_1\subseteq_*B$ such that $\varphi^T_{\varepsilon}| B_1^*$ is 
upper semicontinuous for every $\varepsilon>0$.
\end{lemma}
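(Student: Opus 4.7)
The plan is to reduce to an infinite $B_1 \subseteq_* B$ on which the total-variation map $y \mapsto |\dual{T}(\delta_y)|$ is weak$^*$ continuous, and then to combine this continuity with the zero-dimensionality of $\N^*$ to read off upper semicontinuity of $\varphi_\varepsilon^T$ directly, uniformly in $\varepsilon$.

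First, I would apply Lemma \ref{norm_stability} together with the corollary preceding \ref{upper_semicontinuity} to obtain an infinite $B_1 \subseteq_* B$ on which the map $y \mapsto |\dual{T}(\delta_y)|$ is weak$^*$ continuous. If the stabilized value $s = \|\dual{T}(\delta_y)\|$ on $B_1^*$ is $0$ then every $\varphi_\varepsilon^T(y)$ is empty and there is nothing to prove, so I would assume $s > 0$. Crucially, the choice of $B_1$ does not involve $\varepsilon$, so the same $B_1$ serves every $\varepsilon$ simultaneously.

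Now fix $\varepsilon > 0$ and an open $V \subseteq \N^*$; I want to show that $\{y \in B_1^* : \varphi_\varepsilon^T(y) \subseteq V\}$ is open in $B_1^*$. I would argue by contradiction with nets: suppose $(y_\alpha)$ is a net in $B_1^*$ converging to some $y_0 \in B_1^*$ and $x_\alpha \in \varphi_\varepsilon^T(y_\alpha) \cap (\N^* \setminus V)$ for each $\alpha$. Since $\N^* \setminus V$ is closed and hence compact, after passing to a subnet I may assume $x_\alpha \to x_0 \in \N^* \setminus V$. The task reduces to showing $x_0 \in \varphi_\varepsilon^T(y_0)$, which contradicts $\varphi_\varepsilon^T(y_0) \subseteq V$. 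For any clopen neighborhood $W$ of $x_0$, eventually $x_\alpha \in W$, and since $\{x_\alpha\}$ is an atom of $\dual{T}(\delta_{y_\alpha})$ of mass at least $\varepsilon$, one has $|\dual{T}(\delta_{y_\alpha})|(W) \geq \varepsilon$ eventually. Because $W$ is clopen and $y \mapsto |\dual{T}(\delta_y)|$ is weak$^*$ continuous on $B_1^*$, $|\dual{T}(\delta_{y_\alpha})|(W) \to |\dual{T}(\delta_{y_0})|(W)$, so $|\dual{T}(\delta_{y_0})|(W) \geq \varepsilon$.

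To finish, I would invoke outer regularity of the Radon measure $|\dual{T}(\delta_{y_0})|$ together with zero-dimensionality of $\N^*$: every open neighborhood of $x_0$ contains a clopen one, so
\[
|\dual{T}(\delta_{y_0})|(\{x_0\}) = \inf\bigl\{|\dual{T}(\delta_{y_0})|(W) : W \ni x_0 \text{ clopen}\bigr\} \geq \varepsilon.
\]
On singletons $|\dual{T}(\delta_{y_0})(\{x_0\})| = |\dual{T}(\delta_{y_0})|(\{x_0\})$, so $x_0 \in \varphi_\varepsilon^T(y_0)$, the desired contradiction.

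The principal obstacle has already been absorbed by the previous results: it is the weak$^*$ continuity of the total-variation map, which rests on the nontrivial fact that on the dual sphere $\mu \mapsto |\mu|$ is weak$^*$ continuous (Plebanek's Lemma 2.2), applied after the Hahn-stabilization given by \ref{norm_stability}. Once that continuity is in hand, the zero-dimensionality of $\N^*$ converts a potentially subtle statement about atoms of the measures $\dual{T}(\delta_y)$ as $y$ varies into a clean infimum of measures of clopen sets, which behaves well under weak$^*$ limits.
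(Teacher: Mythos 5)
Your argument is correct, and it is a genuinely different route from the one in the paper. The paper's proof works directly from Lemma \ref{norm_stability}: given $y$ with $\varphi^T_\varepsilon(y)\subseteq V$, it covers $\N^*\setminus V$ by finitely many clopen sets $U_{x_i}$ of small $|\dual{T}(\delta_y)|$-measure, each assumed contained in some $C_{N_{x_i}}^*$ or $D_{N_{x_i}}^*$ from the Hahn-type stabilization; it then uses the weak$^*$ continuity of $\dual{T}$ to find a clopen neighbourhood $E^*$ of $y$ on which $|\dual{T}(\delta_z)(U_{x_i})|$ stays small, and finally converts this control on $\dual{T}(\delta_z)(U_{x_i})$ into control on $|\dual{T}(\delta_z)|(U_{x_i})$ via the stabilized Hahn decomposition, yielding $\varphi^T_\varepsilon(z)\subseteq V$. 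You instead package the same stabilization through the later corollary (which invokes Plebanek's weak$^*$ continuity of $\mu\mapsto|\mu|$ on the dual sphere) to get weak$^*$ continuity of $y\mapsto|\dual{T}(\delta_y)|$ on $B_1^*$, and then run a soft compactness-and-nets argument; the key computation (passing from $|\dual{T}(\delta_z)(U)|$ to $|\dual{T}(\delta_z)|(U)$) is thereby outsourced to the corollary rather than redone. This buys a cleaner, more conceptual proof that makes the role of zero-dimensionality (clopen neighbourhood base plus outer regularity) transparent; the paper's version is more self-contained and avoids appealing to Plebanek's lemma. Two details you handle correctly and that are worth noting: the choice of $B_1$ is $\varepsilon$-free, as required by the statement, and the equality $|\dual{T}(\delta_{y_0})(\{x_0\})| = |\dual{T}(\delta_{y_0})|(\{x_0\})$ for singletons is justified because the positive and negative parts of a Jordan decomposition are mutually singular, so at most one of them charges $\{x_0\}$.
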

\begin{proof}
Let $B_1\subseteq_* B$ and $C_n$, $D_n$ be as in \ref{norm_stability}.
Fix $y\in B_1^*$ and an open $V\subseteq \N^*$ such that 
$\varphi^T_{\varepsilon}(y)\subseteq V$. Then, for every $x\in\N^*\setminus V$
we find a clopen neighbourhood $U_x$ of $x$ as follows. First notice that $|\dual{T}(\delta_y)(\{x\})|<\varepsilon$. 
Let $N_x\in\N$ be such that 
$|\dual{T}(\delta_y)(\{x\})|<\varepsilon-1/N_x$. Now, using
 the regularity of the measure $\dual{T}(\delta_y)$, 
find $U_x$ such that $|\dual{T}(\delta_y)|(U_x)<\varepsilon-1/N_x$. We may assume that 
$U_x$ is included in either $C_{N_x}^*$ or $D_{N_x}^*$. 

Since $\N^*\setminus V$ is compact, we may find $x_0,\dots,x_{k-1}\in\N^*\setminus V$ 
such that $\N^*\setminus V\subseteq\bigcup_{i<k} U_{x_i}$.
Using the weak$^*$-continuity of $\dual{T}$, we now find a clopen neighbourhood of 
$y$, say $E^*$, which we may assume to be included in $B_1^*$, such that for every $z\in E^*$ we have 
$$|\dual{T}(\delta_z)(U_{x_i})|<\varepsilon-1/N_{x_i}, \text{ for each }i<k.$$
This is possible because $|\dual{T}(\delta_y)(U_x)|\leq|\dual{T}(\delta_y)|(U_x)<\varepsilon-1/N_x$.

We claim that for every $z\in E^*$ and every  $x\in\bigcup_{i<k} U_{x_i}$ we have 
$|\dual{T}(\delta_z)(\{x\})|<\varepsilon$, that is $\varphi^T_{\varepsilon}(z)\subseteq V$.
 So fix $z\in E^*$ and $x\in U_{x_i}$. Let $\dual{T}(\delta_z)=\mu_z^+-\mu_z^-$
be a Jordan decomposition of the measure.
Notice that $|\dual{T}(\delta_z)|(U_{x_j}) \leq |\dual{T}(\delta_z)(U_{x_j})|+2\mu_z^-(U_{x_j})$
and $|\dual{T}(\delta_z)|(U_{x_j}) \leq |\dual{T}(\delta_z)(U_{x_j})|+2\mu_z^+(U_{x_j})$.
So if $U_{x_i}\subseteq C_{N_{x_i}}^*$, since 
$\mu_z^-(U_{x_i})\leq \mu_z^-(C^*_{N_{x_i}})<1/4(N_{x_i}+1)$, we have that  

$$\begin{array}{rcl}
|\dual{T}(\delta_z)(\{x\})|&\leq&|\dual{T}(\delta_z)(U_{x_j})|+2\mu_z^-(U_{x_j})\\
&<&\varepsilon-1/N_{x_j}+1/2(N_{x_j}+1)\\
&<&\varepsilon \end{array}$$

If $U_{x_i}\subseteq D_{N_{x_i}}^*$, we use the fact that 
$\mu_z^+(U_{x_i})\leq \mu_z^+(D^*_{N_{x_i}})<1/4(N_{x_j}+1)$ to obtain the same result.
 
\end{proof}

\subsection{Fountains and funnels}

The property of being locally null can be expressed using a  topological
property of $\dual{T}$.

\begin{proposition}\label{locallydeterminednwd} A bounded linear operator $T:C(\N^*)\rightarrow C(\N^*)$  is
locally null if, and only if, there is a nowhere dense set $F\subseteq\N^*$
such that $\dual{T}(\delta_y)$ is concentrated on $F$ for every $y\in \N^*$.
\end{proposition}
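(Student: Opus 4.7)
The plan is to translate the condition $T\circ I_{A_1}=0$ into a measure-theoretic statement about $\dual{T}(\delta_y)$, and then use the fact that clopen sets of the form $A_1^*$ form a basis of $\N^*$ to move between nowhere denseness and vanishing of measures on such clopens.

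First I would observe that, under the canonical identification $\ell_\infty/c_0\cong C(\N^*)$, the image $I_{A_1}[\ell_\infty(A_1)/c_0(A_1)]$ is precisely the ideal of $f^*\in C(\N^*)$ that vanish off the clopen set $A_1^*$. By Riesz representation, $\str{T}$ kills every such $f^*$ if and only if $\int f^*\,\mathrm{d}\dual{T}(\delta_y)=0$ for every $y\in\N^*$ and every continuous $f^*$ supported in $A_1^*$. Since $A_1^*$ is clopen, this is equivalent to $|\dual{T}(\delta_y)|(A_1^*)=0$ for every $y\in\N^*$, and equivalently to $A_1^*\cap\operatorname{supp}(\dual{T}(\delta_y))=\emptyset$ for every $y\in\N^*$.

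For the $(\Leftarrow)$ direction, suppose $F\subseteq\N^*$ is nowhere dense and each $\dual{T}(\delta_y)$ is concentrated on $F$; replacing $F$ by its closure, we may assume $F$ is closed. Given an infinite $A\subseteq\N$, the set $A^*\setminus F$ is nonempty and open in $A^*$, hence contains a nonempty basic clopen of the form $A_1^*$ with $A_1\subseteq A$ infinite and $A_1^*\cap F=\emptyset$. Since every $\dual{T}(\delta_y)$ is concentrated on $F$, we obtain $|\dual{T}(\delta_y)|(A_1^*)=0$ for all $y$, and by the translation above $T\circ I_{A_1}=0$.

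For the $(\Rightarrow)$ direction, define
$$F=\overline{\bigcup_{y\in\N^*}\operatorname{supp}(\dual{T}(\delta_y))}.$$
Every $\dual{T}(\delta_y)$ is concentrated on $F$ by construction, so it suffices to check $F$ is nowhere dense. If not, $F$ has nonempty interior and therefore contains a basic clopen $A^*$ for some infinite $A\subseteq\N$. Local nullness yields an infinite $A_1\subseteq A$ with $T\circ I_{A_1}=0$, and the translation above gives $A_1^*\cap\operatorname{supp}(\dual{T}(\delta_y))=\emptyset$ for every $y$. Since $A_1^*$ is open, this forces $A_1^*\cap F=\emptyset$, contradicting $\emptyset\neq A_1^*\subseteq A^*\subseteq F$. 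There is no substantive obstacle here; the only care needed is in the dictionary between ``$\mu$ is concentrated on $F$'', vanishing of $|\mu|$ on clopens, and emptiness of intersections of supports with open sets.
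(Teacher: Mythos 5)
Your proof is correct, and your translation lemma (that $T\circ I_{A_1}=0$ is equivalent to $|\dual{T}(\delta_y)|(A_1^*)=0$ for all $y$, hence to $A_1^*$ missing every support) is exactly the right dictionary, used by both you and the paper. Where you genuinely diverge is the forward direction. The paper builds the open dense set $D=\bigcup\{A^*:T\circ I_A=0\}$ and then argues by inner regularity and compactness (covering a compact $G\subseteq D$ with finitely many $A_i^*$) that each $\dual{T}(\delta_y)$ vanishes on $D$, so $F=\N^*\setminus D$ works. You instead take $F=\overline{\bigcup_y\operatorname{supp}(\dual{T}(\delta_y))}$, which trivially supports all the measures, and show $F$ has empty interior by one application of local nullness: a clopen $A^*\subseteq F$ would yield $A_1\subseteq A$ with $A_1^*$ disjoint from every support, hence from the closure $F$, contradicting $A_1^*\subseteq F$. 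This is cleaner: it replaces the regularity--covering step with the elementary fact that an open set disjoint from a set is disjoint from its closure. The trade-off is that the paper's argument exhibits the perhaps more informative candidate $\N^*\setminus D$, whereas yours produces the canonical (and a priori smaller) closed set generated by the supports; both satisfy the statement.
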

\begin{proof}
Suppose $T$ is locally null. If we set $D=\bigcup\{A^*:T\circ I_A=0\}$, then
$D$ is an open dense set. Suppose $|\dual{T}(\delta_y)|(D)>\varepsilon$ for some $y\in\N^*$
and some $\varepsilon>0$. By the regularity of the measure we may find a compact
$G\subseteq D$ such that $|\dual{T}(\delta_y)|(G)>\varepsilon$. We may further find
finitely many $A_0,\dots,A_{n-1}\subseteq \N$ such that $T\circ I_{A_i}=0$ for all $i<n$
and $\sum_{i<n}|\dual{T}(\delta_y)|(A_i^*)>\varepsilon$. Choose $i<n$ such that
$|\dual{T}(\delta_y)|(A_i^*)>\varepsilon/n$ and a function $f$ with support included in $A_i^*$
such that $\dual{T}(\delta_y)(f)>\varepsilon/n$. Then, $T(f)(y)\neq 0$, which
contradicts the hypothesis. Therefore, $\dual{T}(\delta_y)$ is concentrated on $F=\N^*\setminus D$,
for every $y\in\N^*$.

Conversely, suppose $F$ is a nowhere dense set such that for every $y\in\N^*$ the measure $\dual{T}(\delta_y)$ is 
concentrated on $F$. 
Given an infinite $A\subseteq\N$, take $A_1\subseteq_*A$ infinite
such that $A_1^*\cap F=\emptyset$. Then, $|\dual{T}(\delta_y)|(A_1^*)=0$ and it follows 
that $T\circ I_{A_1}=0$.
\end{proof}

As in the previous proposition, many results in the following parts of the paper
will show the important role played by nowhere dense sets of $\N^*$
in the context of operators on $C(\N^*)$. It is this fact that 
leads to the definitions of fountains, funnels and fountainless and funnelless operators:

\begin{definition}\label{locallydetermined}
An operator $T:C(\N^*)\rightarrow C(\N^*)$  is called 
 fountainless or without fountains if, and only if,
 for every nowhere dense set $F\subseteq \N^*$  the set 
$$G=\{y\in\N^*:\dual{T}(\delta_y)\ \hbox{is nonzero and concentrated on}\ F\}$$ is nowhere dense.
A fountain for $T$ is 
 a pair $(F, U)$ with $F\subseteq \N^*$ nowhere dense and $U\subseteq \N^*$
 open such that 
 all the measures 
$\dual{T}(\delta_y)$ for $y\in U$ are concentrated on $F$. 
\end{definition}

\begin{lemma}\label{locallydet+somewherelocallynull->null}
 Let $T$ be fountainless and let $B\subseteq\N$ be infinite.
If $P_B\circ T$ is locally null, then $P_B\circ T=0$.
\end{lemma}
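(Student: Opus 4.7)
The plan is to combine an analogue of Proposition \ref{locallydeterminednwd} for $P_B\circ T$ with the definition of fountainlessness, then exploit the $w^*$-continuity of $\dual{T}$ together with the trivial fact that an open set contained in a nowhere dense set must be empty.

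First I would identify the concrete effect of local nullity on the measures $\dual{T}(\delta_y)$ for $y\in B^*$. Writing $\dual{P_B}$ for the inclusion of $M(B^*)$ into $M(\N^*)$ as measures concentrated on $B^*$, we have $\dual{(P_B\circ T)}(\delta_y)=\dual{T}(\delta_y)$ for every $y\in B^*$, and $P_B\circ T\circ I_A=0$ is equivalent to $|\dual{T}(\delta_y)|(A^*)=0$ for all $y\in B^*$. Running the argument of Proposition \ref{locallydeterminednwd} in this setting, let $D=\bigcup\{A^* : P_B\circ T\circ I_A=0\}$. Local nullity of $P_B\circ T$ makes $D$ a dense open subset of $\N^*$; set $F=\N^*\setminus D$, a nowhere dense closed set. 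If $|\dual{T}(\delta_y)|(D)>0$ for some $y\in B^*$, then by regularity a compact piece of positive mass is covered by finitely many $A_i^*$ with $P_B\circ T\circ I_{A_i}=0$, and each such $A_i^*$ carries no mass of $\dual{T}(\delta_y)$ by the observation above, a contradiction. Hence $\dual{T}(\delta_y)$ is concentrated on $F$ for every $y\in B^*$.

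Now fountainlessness of $T$ applied to this nowhere dense $F$ gives that
\[
G=\{y\in\N^*:\dual{T}(\delta_y)\neq 0\text{ and is concentrated on }F\}
\]
is nowhere dense in $\N^*$. By the previous paragraph,
\[
\{y\in B^*:\dual{T}(\delta_y)\neq 0\}\;\subseteq\; G.
\]

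It remains to observe that the set on the left is open. Indeed, $\dual{T}(\delta_y)\neq 0$ means $T(f)(y)\neq 0$ for some $f\in C(\N^*)$, and $\{y : T(f)(y)\neq 0\}$ is open because $T(f)$ is continuous; taking the union over $f$ keeps the set open. An open subset of a nowhere dense set is necessarily empty, so $\dual{T}(\delta_y)=0$ for every $y\in B^*$, which is exactly $P_B\circ T=0$.

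I do not foresee a serious obstacle: all the inputs (the adapted form of \ref{locallydeterminednwd}, the $w^*$-continuity of $\dual{T}$, and the definition of fountainless) fit together cleanly. The only minor care needed is the verification that $P_B\circ T\circ I_A=0$ translates correctly into $|\dual{T}(\delta_y)|(A^*)=0$ for $y\in B^*$, which is just the dual description of the composition $P_B\circ T\circ I_A$ via the identification $\dual{P_B}(\delta_y)=\delta_y$ for $y\in B^*$ and $\dual{I_A}(\mu)=\mu|A^*$.
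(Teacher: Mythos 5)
Your argument is correct and follows essentially the same route as the paper: adapt Proposition \ref{locallydeterminednwd} to $P_B\circ T$ to get all measures $\dual{T}(\delta_y)$, $y\in B^*$, concentrated on a single nowhere dense $F$, invoke fountainlessness to make $\{y\in B^*:\dual{T}(\delta_y)\neq 0\}$ nowhere dense, and kill it using continuity of the functions $T(f)$. Your final step (the set is open, hence empty) is just a rephrasing of the paper's observation that $T(f)$ vanishes on the dense set $B^*\setminus G$, so there is nothing further to add.
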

\begin{proof}
 By \ref{locallydeterminednwd} there is a nowhere dense $F\subseteq \N^*$
such that for every $y\in B^*$ we have that $\dual{(P_B\circ T)}(\delta_y)$,
which is equal to $\dual{T}(\delta_y)$, is concentrated on $F$.
By \ref{locallydetermined} the set $G=\{y\in B^*: \dual{T}(\delta_y)\not=0\}$ is nowhere
dense. But this means that for every $f\in C(\N^*)$ we have $T(f)(x)=0$ 
if $x\in B^*\setminus G$. Since $B^*\setminus G$ is dense in $B^*$ 
we conclude that $P_B\circ T=0$.

\end{proof}

\begin{corollary}\label{locallynull-det-null}
Suppose that $T$ is locally null and has no fountains, then $T=0$.
\end{corollary}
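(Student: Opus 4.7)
The plan is to observe that this corollary is an immediate consequence of the preceding lemma \ref{locallydet+somewherelocallynull->null}, applied with the choice $B = \N$. Specifically, since $P_\N$ is the identity operator on $\ell_\infty/c_0$ (indeed $\N$ is cofinite in $\N$, so $P_\N([f]_{c_0}) = [f|\N]_{c_0} = [f]_{c_0}$), we have $P_\N \circ T = T$.

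Now the hypotheses feed directly into the lemma: by assumption $T$ is locally null, hence $P_\N \circ T = T$ is locally null in the sense required by \ref{locallydet+somewherelocallynull->null}; and $T$ has no fountains by assumption, i.e., is fountainless. The conclusion of the lemma then yields $P_\N \circ T = 0$, which is to say $T = 0$.

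There is no real obstacle to overcome here, since all the work has been done in \ref{locallydeterminednwd} (to produce a nowhere dense $F$ on which all $\dual{T}(\delta_y)$ concentrate, using local nullity) and in \ref{locallydet+somewherelocallynull->null} (to combine with the fountainless hypothesis and deduce that the set $\{y : \dual{T}(\delta_y) \neq 0\}$ is nowhere dense in $B^*$, hence vanishes on a dense set and thus identically). The only thing to check is that the previous lemma is indeed applicable with $B = \N$, which is immediate.
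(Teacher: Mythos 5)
Your proposal is correct and is exactly the paper's proof: the paper also simply puts $B=\N$ in Lemma \ref{locallydet+somewherelocallynull->null}, using that $P_\N$ is the identity so $P_\N\circ T=T$. No further comment is needed.
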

\begin{proof} 
Put $B=\N$ in \ref{locallydet+somewherelocallynull->null}.
\end{proof}




\begin{definition}\label{everywhere-non0}
We say that an operator $T: C(\N^*)\rightarrow C(\N^*)$ is everywhere present
if, and only if, for every infinite $B\subseteq \N$ we have that $P_B\circ T\not=0$ 
\end{definition}

In the following lemma we obtain a kind of left dual to an improvement of
 a theorem of Cengiz (``P" in \cite{cengiz}) obtained by Plebanek
 (Theorem 3.3. in \cite{plebanekisrael}) which implies  that if $T$
is an isomorphic embedding then
every $x\in \N^*$ is in $\varphi^T(y)$ for some $y\in \N^*$.

\begin{lemma}\label{nonempty} 
Suppose that  $T: C(\N^*)\rightarrow C(\N^*)$
is an everywhere present fountainless operator. Then, for every infinite $B\subseteq\N$
there exists an infinite $B_1\subseteq_* B$ such that $\varphi^T(y)\neq\emptyset$,
for every $y\in B_1^*$.
\end{lemma}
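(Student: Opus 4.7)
The plan is to argue by contraposition: assuming the conclusion fails for some infinite $B\subseteq \N$, produce a fountain for $T$ and thereby contradict the fountainless hypothesis. The failure of the conclusion for $B$ is equivalent to the statement that $E:=\{y\in B^*:\varphi^T(y)=\emptyset\}$ is dense in $B^*$, since the clopens $B_1^*$ with $B_1\subseteq_*B$ infinite form a basis of $B^*$.

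First I would refine $B$ to an infinite $B_1\subseteq_* B$ by successively invoking Lemma \ref{norm_stability} and its subsequent corollary together with Lemma \ref{upper_semicontinuity}: on $B_1^*$, $\|\dual{T}(\delta_y)\|=s$ is constant, the Jordan decomposition is controlled by a fixed Borel Hahn partition $\N^*=X\cup Y$, the map $y\mapsto|\dual{T}(\delta_y)|$ is weak$^*$-continuous, and each $\varphi^T_{1/n}|B_1^*$ is upper semicontinuous. The everywhere present hypothesis forces $s>0$, since otherwise $P_{B_1}\circ T=0$. Density of $E\cap B_1^*$ in $B_1^*$ is inherited from $B^*$, and upper semicontinuity makes each closed set $W_n:=\{y\in B_1^*:\varphi^T_{1/n}(y)\neq\emptyset\}$ nowhere dense (their union is the complement of the dense $E\cap B_1^*$ and hence has empty interior). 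Baire category then makes $E\cap B_1^*$ a dense $G_\delta$ of $y$'s for which $\dual{T}(\delta_y)$ is a non-atomic Radon measure of norm $s$.

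The main step is to produce a nowhere dense closed $F\subseteq\N^*$ and an infinite $B_2\subseteq_* B_1$ such that $\dual{T}(\delta_y)$ is concentrated on $F$ for every $y\in B_2^*$; then $(F,B_2^*)$ is a fountain, and because every $y\in B_2^*$ has $\|\dual{T}(\delta_y)\|=s>0$, the set $\{y\in B_2^*:\dual{T}(\delta_y)\neq 0\text{ concentrated on }F\}$ equals $B_2^*$, which is not nowhere dense, contradicting fountainlessness. Two ingredients should drive the construction. First, every Radon measure on $\N^*$ has nowhere dense support, which follows from the failure of ccc in $\N^*$: inside any clopen one finds $2^{\aleph_0}$ pairwise disjoint nonempty clopens, only countably many of which can carry positive mass for a given measure. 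Second, combining the weak$^*$-continuity of $y\mapsto|\dual{T}(\delta_y)|$ with the uniform non-atomicity supplied by density of $E\cap B_1^*$ and the partitions $C_n\cup D_n$ of Lemma \ref{norm_stability}, one arranges a tree-like refinement: at stage $n$ one picks a finite clopen partition $\pi_n$ of $\N^*$ together with a clopen $V_n^*\subseteq V_{n-1}^*\subseteq B_1^*$ so that each piece of $\pi_n$ has $|\dual{T}(\delta_y)|$-measure less than $1/n$ for every $y\in V_n^*$. This forces concentration on a common nowhere dense limiting set $F$ for all $y\in B_2^*$, where $B_2^*$ is obtained along a suitable decreasing sequence of infinite subsets.

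The hard part will be this simultaneous localization in both the domain and the range. Pointwise nowhere-density of the individual supports is not enough: as $y$ varies over $B_1^*$ their union can easily cover $\N^*$, so simply taking the union does not yield a fountain. The resolution is to exploit the density of non-atomic $y$'s inside $B_1^*$ together with weak$^*$-continuity to carry out the tree construction uniformly, the Hahn partitions of Lemma \ref{norm_stability} supplying the combinatorial scaffolding and the upper semicontinuity of the $\varphi^T_{1/n}$ guaranteeing that the refinements survive to the limit.
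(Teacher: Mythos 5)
Your plan is essentially the paper's own proof: the paper likewise fixes the $B_1$, $C_n$, $D_n$ of Lemma \ref{norm_stability}, takes a point of $B_1^*$ whose measure has no atoms, covers $\N^*$ at each level $n$ by finitely many clopen sets of small variation (upgrading the signed bounds coming from weak$^*$-continuity to total-variation bounds via the Hahn control, the pieces being placed inside $C_n^*$ or $D_n^*$), shrinks the domain to a clopen set, and diagonalizes to a dense family of clopen sets which are null for all the relevant measures, finally playing fountainlessness against everywhere presence. The one step you compress -- that the construction ``forces concentration on a common nowhere dense limiting set $F$'' -- is exactly the inner-regularity/compactness argument of Lemma \ref{locallydeterminednwd} (the paper routes this through Lemma \ref{locallydet+somewherelocallynull->null}, concluding $P_V\circ T=0$ and contradicting everywhere presence instead of contradicting fountainlessness directly via the constant norm $s>0$), so your version is a correct repackaging rather than a genuinely different argument.
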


\begin{proof} 
Given an infinite $B\subseteq \N$, let $B_1\subseteq_* B$ and $C_n$, $D_n\subseteq \N$ 
be as in  \ref{norm_stability}. 
Suppose that $y_0\in B_1^*$ is such  that $\varphi^T(y_0)=\emptyset$. 
For every $n\in\N$ we find an open covering of $\N^*$ as follows. Given $x\in\N^*$,
find by the regularity of the measure $\dual{T}(\delta_{y_0})$ a clopen neighbourhood 
of $x$, say $U_x$, such that $|\dual{T}(\delta_{y_0})|(U_x)<1/2(n+1)$ and
$U_x$ is included in either $C_n^*$ or $D_n^*$.

By the compactness of $\N^*$ obtain
for each $n\in\N$ an open covering $\{U_{n,i}:i<j_n\}$ of  $\N^*$  such that
for each $i<j_n$ we have that 
\begin{enumerate}
 \item $|\dual{T}(\delta_{y_0})(U_{n,i})|\leq|\dual{T}(\delta_{y_0})|(U_{n,i})<1/2(n+1)$, and
\item either $U_{n,i}\subseteq C_n^*$  or $U_{n,i}\subseteq D_n^*$.
\end{enumerate}
 By the weak$^*$ continuity of
$\dual{T}$ there are open neighbourhoods $V_n$ of $y_0$ such that
$|\dual{T}(\delta_{y})(U_{n,i})|<1/2(n+1)$ holds for all $y\in V_n$ and all $i<j_n$.
 Let $V^*$ be a clopen subset of $\bigcap_{n\in \N}V_n\cap B_1^*$ and 
consider the family $\mathcal A\subseteq \wp(\N)$ of those sets $A$ such that 
 for each $n\in \N$ we have $A^*\subseteq U_{n,{i_n}}$ for some $i_n<j_n$.
We claim that $|\dual{T}(\delta_{y})|(A^*)=0$ for every $y\in V^*$ and every $A\in\mathcal A$.

So fix $y\in V^*$, $A\in\mathcal A$ and $n\in\N$. We will show that $|\dual{T}(\delta_{y})|(A^*)<1/(n+1)$. 
Let $\dual{T}(\delta_y)=\mu^+-\mu^-$ be a Jordan decomposition of the measure.
By  \ref{norm_stability} we have that $\mu^-(C_n^*)<1/4(n+1)$ and 
$\mu^+(D^*_n)<1/4(n+1)$.
Assume without loss of generality that $U_{n,i_n}\subseteq C_n^*$. Then,

$$\begin{array}{rcl}
    |\dual{T}(\delta_y)|(A^*)&\leq& |\dual{T}(\delta_y)|(U_{n,i_n})\\
    &=&\dual{T}(\delta_y)(U_{n,i_n})+2\mu^-(U_{n,i_n})\\
    &\leq& |\dual{T}(\delta_y)(U_{n,i_n})|+2\mu^-(C^*_n)\\
    &<& 1/2(n+1)+2/4(n+1)\\
    &=& 1/(n+1)
  \end{array}$$

So the claim is proved.

Notice that this implies that $(P_V\circ T)(f)=0$ for every $f\in C(\N^*)$ 
whose support is included in $A^*$, for some $A\in\mathcal A$. Therefore,
if $\mathcal A$ is a dense family,
by  \ref{locallydet+somewherelocallynull->null} we would have that $(P_V\circ T)(g)=0$
for all $g\in C(\N^*)$, but this would contradict the hypothesis that $T$
is everywhere present.

We prove that $\mathcal A$ is a dense family. For a fixed infinite 
$E_0\subseteq \N$, we may define by induction a $\subseteq_*$- decreasing
sequence $(E_n)$ of infinite sets by choosing $\emptyset\neq E^*_{n+1}\subseteq E_n^*\cap U_{n,i_n}$,
for some $i_n<j_n$ (this is possible because $\{U_{n,i}:i<j_n\}$ is an open covering
of $\N^*$ for each $n\in\N$). Take $A$ such that $A\subseteq_* E_n$, for all $n\in\N$.
It is clear that $A\subseteq_* E_0$ and $A\in\mathcal A$.

\end{proof}


Let us introduce a dual notion to a  fountain:

\begin{definition}\label{funnelless}
An operator $T:C(\N^*)\rightarrow C(\N^*)$  is called funnelless or without funnels if, and only if, 
 for every nowhere dense set $F\subseteq \N^*$  there is a nowhere dense $G\subseteq\N^*$
such that for all $y\in F$ the measure $\dual{T}(\delta_y)$ is concentrated on $G$.
A funnel for $T$ is a pair $(U,F)$ with $F\subseteq \N^*$ nowhere dense and $U\subseteq \N^*$ open
 such that there is no proper closed subset of $U$ where all the measures 
$\dual{T}(\delta_y)|U$ for $y\in F$ are concentrated. 
\end{definition}

\subsection{Operators induced by continuous maps and nonatomic operators}

\begin{definition} Suppose that  $\psi: \N^*\rightarrow \N^*$ is a continuous map.
Then $T_\psi: C(\N^*)\rightarrow C(\N^*)$ is given
for every $f\in C(\N^*)$  by $$T_\psi(f)=f\circ \psi.$$
\end{definition}

\begin{definition}\label{quasi-open} A continuous map $\psi: \N^*\rightarrow \N^*$ is called quasi-open if, and only if, 
the image of every nonempty open set under $\psi$ has nonempty interior.
\end{definition}

\begin{proposition} \label{locallydet=quasi-open}
Suppose that $\psi: \N^*\rightarrow \N^*$ is a  continuous map.
Then $T_\psi$ is fountainless if, and only if, $\psi$ is quasi-open.
\end{proposition}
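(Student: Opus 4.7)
The plan is first to compute the adjoint $T_\psi^*$ on Dirac measures and thereby turn the fountainless condition into a purely topological property of $\psi$, and then verify that this property is exactly quasi-openness. For any $y \in \N^*$ and $f \in C(\N^*)$, the definition of the adjoint gives $T_\psi^*(\delta_y)(f) = \delta_y(f \circ \psi) = f(\psi(y))$, so $T_\psi^*(\delta_y) = \delta_{\psi(y)}$. This measure is never zero, and it is concentrated on a set $F \subseteq \N^*$ if and only if $\psi(y) \in F$. Therefore, applying Definition \ref{locallydetermined} to $T_\psi$, the set $G$ coincides with $\psi^{-1}(F)$, and $T_\psi$ is fountainless if and only if $\psi^{-1}(F)$ is nowhere dense for every nowhere dense $F \subseteq \N^*$.

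For the direction from fountainless to quasi-open I would argue by contrapositive. If $\psi$ is not quasi-open, there is a nonempty open $U$ with $\psi(U)$ having empty interior. Since the clopen sets form a basis of $\N^*$, I can shrink $U$ to a nonempty clopen $V \subseteq U$. Then $\psi(V)$ is compact, hence closed, with empty interior, so $\psi(V)$ is nowhere dense. Setting $F = \psi(V)$ gives $V \subseteq \psi^{-1}(F)$, so $\psi^{-1}(F)$ has nonempty interior and $T_\psi$ fails to be fountainless.

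For the converse, suppose $\psi$ is quasi-open and $F \subseteq \N^*$ is nowhere dense, and aim for a contradiction assuming $\psi^{-1}(F)$ is not nowhere dense. Then there is a nonempty open $U$ in which $\psi^{-1}(F)$ is dense. Quasi-openness provides a nonempty open $W \subseteq \psi(U)$. Since $F$ is nowhere dense, $\N^* \setminus \overline{F}$ is dense open, so $V := W \setminus \overline{F}$ is nonempty and open. Now $V \subseteq \psi(U)$ ensures that $\psi^{-1}(V) \cap U$ is nonempty (any $v \in V$ has some $u \in U$ with $\psi(u) = v$), and continuity makes it open. But $V \cap F = \emptyset$ forces $\psi^{-1}(V) \cap U$ to be disjoint from $\psi^{-1}(F)$, contradicting the density of $\psi^{-1}(F)$ in $U$.

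The only step requiring genuine attention is the converse direction, in particular producing a nonempty open subset of $\psi(U)$ disjoint from $F$ and checking that its preimage actually meets $U$; these are immediate from quasi-openness and nowhere density respectively. No additional features of $\N^*$ beyond compactness, Hausdorffness, and having a basis of clopen sets enter the argument.
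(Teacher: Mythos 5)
Your proof is correct and follows essentially the same route as the paper: compute $\dual{T_\psi}(\delta_y)=\delta_{\psi(y)}$, observe that the set $G$ in Definition \ref{locallydetermined} is exactly $\psi^{-1}[F]$, and translate fountainlessness into the statement that preimages of nowhere dense sets are nowhere dense. The only difference is that you spell out the details the paper leaves implicit (shrinking to a clopen set so the image is closed, and the density argument for the converse), which in fact tidies up the step where one needs the image of an open set to have nonempty interior rather than merely fail to be nowhere dense.
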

\begin{proof}

 Notice that for every $y\in\N^*$  we have $\dual{T_\psi}(\delta_y)=\delta_{\psi(y)}$. 
Notice also that for every subset $X\subseteq\N^*$ the following holds
$$|\delta_{\psi(y)}|(X)\neq 0 \qquad\text{iff} \qquad\psi(y)\in X\qquad\text{iff}\quad y\in\psi^{-1}[X].$$
Therefore, if $\psi$ is quasi-open and $F\subseteq \N^*$ is nowhere dense, we have that 
 $\{y\in\N^*:|\dual{T_\psi}(\delta_y)|(\N^*\setminus F)= 0\}=\psi^{-1}[F]$ is nowhere dense, 
and so $T_\psi$ is fountainless.
On the other hand if $T_\psi$ is fountainless, consider $\psi[U]$ where $U$ is open.
If $\psi[U]$ were nowhere dense, then 
$\{y\in\N^*:|\dual{T_\psi}(\delta_y)|(\N^*\setminus \psi[U])= 0\}=\psi^{-1}[\psi[U]]$
would be nowhere dense, which contradicts the fact that $U\subseteq\psi^{-1}[\psi[U]]$.

\end{proof}

\begin{proposition}\label{funnelless=nwdp}
Let $\psi: \N^*\rightarrow \N^*$ be a  continuous map.
Then $T_\psi$ is funnelless if, and only if, $\psi$ sends nowhere dense sets into
nowhere dense sets.
\end{proposition}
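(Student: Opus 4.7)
The plan is to translate both sides of the equivalence into statements purely about the image $\psi[F]$, using the fact that $\dual{T_\psi}(\delta_y)=\delta_{\psi(y)}$ is a Dirac measure, and that $\delta_{\psi(y)}$ is concentrated on a set $G$ if and only if $\psi(y)\in G$.

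First I would handle the easy direction. Suppose $\psi$ sends nowhere dense sets to nowhere dense sets. Given a nowhere dense $F\subseteq \N^*$, take $G=\overline{\psi[F]}$; by hypothesis $G$ is nowhere dense (the closure of a set with empty-interior closure is itself nowhere dense, trivially). Clearly $\psi(y)\in G$ for every $y\in F$, so $\dual{T_\psi}(\delta_y)=\delta_{\psi(y)}$ is concentrated on $G$, witnessing that $T_\psi$ is funnelless.

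For the converse, assume $T_\psi$ is funnelless and let $F\subseteq \N^*$ be nowhere dense. The funnelless hypothesis furnishes a nowhere dense $G\subseteq \N^*$ such that for every $y\in F$ the measure $\delta_{\psi(y)}$ is concentrated on $G$, i.e., $\psi(y)\in G$. Hence $\psi[F]\subseteq G$, and therefore $\overline{\psi[F]}\subseteq \overline{G}$, which has empty interior since $G$ is nowhere dense. Thus $\psi[F]$ is nowhere dense.

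There is no real obstacle here; the whole content is the observation that the measure associated with a point $y$ under $\dual{T_\psi}$ is the Dirac measure $\delta_{\psi(y)}$, which reduces ``concentration on $G$'' to ``membership in $G$''. The only minor care needed is to choose the witness $G$ in the forward direction as a closed set (or just the closure of $\psi[F]$), so that being nowhere dense is inherited without any further argument.
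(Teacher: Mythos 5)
Your proof is correct and takes essentially the same approach as the paper: both directions hinge on the observation that $\dual{T_\psi}(\delta_y)=\delta_{\psi(y)}$, so "concentrated on $G$" reduces to "$\psi(y)\in G$", and the paper likewise reads off $\psi[F]\subseteq G$ in one direction and takes $G=\psi[F]$ (implicitly, its closure) in the other. Your explicit choice of $G=\overline{\psi[F]}$ is a harmless cosmetic tightening.
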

\begin{proof} 
Suppose $T_\psi$ is funnelless and let $F\subseteq\N^*$ be nowhere dense. 
Let $G\subseteq\N^*$ be nowhere dense such that $\dual{T_\psi}(\delta_y)$ is 
concentrated on $G$ for every $y\in F$.
Then, as in the proof of \ref{locallydet=quasi-open}, we have that 
 $F\subseteq\{y\in\N^*:|\dual{T_\psi}(\delta_y)|(\N^*\setminus G)= 0\}=\psi^{-1}[G]$.

Now suppose $\psi$ sends nowhere dense sets into nowhere dense sets and let 
 $F\subseteq\N^*$ be nowhere dense. Then, 
$F\subseteq\psi^{-1}[\psi[F]]=\{y\in\N^*:|\dual{T_\psi}(\delta_y)|(\N^*\setminus \psi[F])= 0\}$,
which means that $\dual{T_\psi}(\delta_y)$ is concentrated on $\psi[F]$, for
 every $y\in F$.
\end{proof}

\begin{definition}\label{nonatomic}
An operator $T:\ell_\infty/c_0\rightarrow \ell_\infty/c_0$ is nonatomic if, and only if,
for every $y\in \N^*$ the measure $\dual{T}(\delta_y)$ is nonatomic.
\end{definition}

\begin{proposition}\label{nonatomiclocallynull} Every positive nonatomic operator 
on $\ell_\infty/c_0$ is locally null.
\end{proposition}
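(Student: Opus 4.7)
The plan is to show that for every infinite $A\subseteq\N$ there is an infinite $A_1\subseteq A$ such that $\dual{T}(\delta_y)(A_1^*)=0$ for every $y\in\N^*$, which by positivity of $T$ forces $T\circ I_{A_1}=0$. Write $\mu_y:=\dual{T}(\delta_y)$; by assumption these are nonnegative nonatomic Radon measures on $\N^*$ depending weak$^*$ continuously on $y$, and their family is weak$^*$ compact as the image of $\N^*$ under $y\mapsto\dual{T}(\delta_y)$.

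The core step is the following uniformization claim: for every $\varepsilon>0$ there is a partition $\N=C_1\cup\dots\cup C_\ell$ (into finitely many sets, some possibly finite) such that $\mu_y(C_i^*)<\varepsilon$ for every $i\leq\ell$ and every $y\in\N^*$. To prove this, fix $y$; by nonatomicity of $\mu_y$ together with regularity of Radon measures on the zero-dimensional space $\N^*$, one can refine any clopen cover to obtain a finite clopen partition $\{B_{y,1}^*,\dots,B_{y,k_y}^*\}$ of $\N^*$ with every $\mu_y(B_{y,i}^*)<\varepsilon$. Since each $B_{y,i}^*$ is clopen, $z\mapsto\mu_z(B_{y,i}^*)$ is weak$^*$ continuous, so $U_y=\{z\in\N^*:\mu_z(B_{y,i}^*)<\varepsilon\text{ for all }i\leq k_y\}$ is an open neighbourhood of $y$. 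Compactness of $\N^*$ gives a finite subcover $U_{y_1},\dots,U_{y_m}$, and the common refinement of the associated partitions works for every $z\in\N^*$, since any $z$ lies in some $U_{y_j}$ and each refining piece is contained in one of $B_{y_j,i}^*$.

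With the claim in hand, given infinite $A\subseteq\N$, one of the pieces $C_i$ meets $A$ in an infinite set $A'\subseteq A$, and $\mu_y(A'^*)<\varepsilon$ for all $y$. Iterating with $\varepsilon_n=2^{-n}$ produces a $\subseteq_*$-decreasing sequence $A\supseteq A_1\supseteq A_2\supseteq\dots$ with $\mu_y(A_n^*)<2^{-n}$ uniformly in $y$. A standard diagonal argument (picking the $n$-th element of $A_n$) yields an infinite $A_\infty\subseteq A$ with $A_\infty\subseteq_* A_n$ for every $n$, so $A_\infty^*\subseteq A_n^*$ and hence $\mu_y(A_\infty^*)=0$ for every $y\in\N^*$. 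Positivity then gives $T\circ I_{A_\infty}=0$, as required.

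The main obstacle is the uniformization claim. Without weak$^*$ continuity of $\dual{T}$ one would have no way to pass from the $y$-dependent partitions provided by nonatomicity of a single $\mu_y$ to one partition working simultaneously for all $y\in\N^*$; it is the interplay of weak$^*$ continuity with the compactness of $\N^*$ that turns a pointwise property into a uniform one. Nonatomicity is used only inside a single $\mu_y$ to guarantee that the clopen pieces can be made of arbitrarily small measure, after which everything is a compactness-plus-diagonalization routine.
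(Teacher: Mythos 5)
Your proof is correct and follows essentially the same strategy as the paper: use nonatomicity plus regularity to get a finite clopen cover with small pieces at each $y$, propagate to a neighbourhood by weak$^*$ continuity of $\dual{T}$, uniformize by compactness of $\N^*$, and then diagonalize across $\varepsilon_n=2^{-n}$ to manufacture an infinite $A_\infty\subseteq A$ with $\dual{T}(\delta_y)(A_\infty^*)=0$ for every $y$. Your presentation is a bit tighter than the paper's at one spot: you take the common refinement of the finitely many partitions associated with the finite subcover, which hands you a single finite partition of $\N$ whose every piece has $\mu_y$-measure $<\varepsilon$ \emph{uniformly} in $y$; the paper instead keeps the finitely many covers separate and directly describes the dense family $\mathcal A$ of sets $A$ that sit inside one piece of each cover. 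These are the same idea, and positivity plays the same role in both (it turns $\mu_z(B^*)=|\mu_z|(B^*)$, so controlling $\mu_z$ controls the total variation, and it makes the final $\mu_y(A_\infty^*)=0$ equivalent to $\mu_y|A_\infty^*=0$). No gaps.
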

\begin{proof}
Since for every 
$y\in \N^*$ the measure $\dual{T}(\delta_y)$ has no atoms,
 by the regularity of $\dual{T}(\delta_y)$ and the compactness of $\N^*$
we may find for each $n\in\N$ a finite open covering $(U_i(y,n))_{i<j(y,n)}$
of $\N^*$ by clopen sets such that $|\dual{T}(\delta_y)|(U_i(y,n))<1/2(n+1)$ holds
for all $i<j(y,n)$. 

Now  by the weak$^*$ continuity of $\dual{T}$, we may choose for each $n\in\N$ an
open neighbourhood $V_n(y)$ of $y$ such that for all $z\in V_n(y)$ we have
$$|\dual{T}(\delta_z)|(U_i(y,n))=|\dual{T}(\delta_z)(U_i(y,n))|<1/2(n +1)$$
for all $i<j(y,n)$. The first equality follows from the hypothesis that $T$ is positive.

We have thus constructed for each $n\in\N$ an open covering $\{V_n(y):y\in \N^*\}$ of $\N^*$.
By the compactness of $\N^*$, for each $n \in\N$ take $y_0(n), ..., y_{m(n)-1}(n)\in \N^*$
such that 
$$\N^*\subseteq \bigcup_{l< m(n)}V_n(y_l).$$

Now consider the family $\mathcal A$ of those sets $A\subseteq \N$
such that given $n\in \N$,  for each $l< m(n)$ there is
$i<j(y_l,n)$ such that $A^*$ is included in $U_i(y_l,n)$.
As in the proof of \ref{nonempty}, it is easy to see that 
 $\mathcal A$ is dense and that for every $z\in\N^*$ and every
$A\in\mathcal A$ we have that $|\dual{T}(\delta_z)|(A^*)=0$. 
Therefore if $f\in C(\N^*)$ is $A^*$- supported we have $T(f)=0$, as required.

\end{proof}

\section{Operators on $\ell_\infty/c_0$ and operators on $\ell_\infty$}

\subsection{Operators induced by operators on $\ell_\infty$}

\begin{definition} Suppose that $R: \ell_\infty\rightarrow \ell_\infty$ is a
linear operator which preserves $c_0$, then $[R]: \ell_\infty/c_0\rightarrow \ell_\infty/c_0$
is a linear operator defined by
$$[R]([f]_{c_0})=[R(f)]_{c_0},$$
for every $f\in\ell_\infty$.
If $T: \ell_\infty/c_0\rightarrow \ell_\infty/c_0$ is a linear operator, then a 
lifting $R:\ell_\infty\rightarrow\ell_\infty$ is any linear operator such that 
$[R]=T$.  
\end{definition}

Note that our terminology is slightly different than the one used in the literature concerning the
trivialization of endomorphisms of $\wp(\N)/\text{\emph{Fin}}$. This is due to the fact that we do not use
nonlinear liftings of linear operators.

\begin{lemma} 
Let $R_0,R_1:\ell_\infty\rightarrow\ell_\infty$ be linear
operators which preserve $c_0$. Then,
\begin{enumerate}\label{combinations'}
\item $[R_0+\alpha R_1]=[R_0]+\alpha [R_1]$, for every real $\alpha$.
\item $[R_1\circ R_0]=[R_1]\circ [R_0]$.
\end{enumerate}
\end{lemma}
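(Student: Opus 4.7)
The plan is to verify each equality by evaluating both sides on an arbitrary equivalence class $[f]_{c_0}$ with $f \in \ell_\infty$, and applying the definition $[R]([f]_{c_0}) = [R(f)]_{c_0}$ together with the fact that the quotient map $Q : \ell_\infty \to \ell_\infty/c_0$ sending $f$ to $[f]_{c_0}$ is linear. No set-theoretic or analytic machinery is needed; this is a bookkeeping lemma ensuring that $R \mapsto [R]$ is a homomorphism on the subalgebra of operators preserving $c_0$.

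For statement (1), first I would note that $R_0 + \alpha R_1$ preserves $c_0$ since both $R_0$ and $R_1$ do, so $[R_0 + \alpha R_1]$ is well-defined. Then for any $f \in \ell_\infty$,
\[
[R_0+\alpha R_1]([f]_{c_0}) = [(R_0+\alpha R_1)(f)]_{c_0} = [R_0(f) + \alpha R_1(f)]_{c_0},
\]
and by linearity of $Q$ this equals $[R_0(f)]_{c_0} + \alpha [R_1(f)]_{c_0} = [R_0]([f]_{c_0}) + \alpha[R_1]([f]_{c_0})$.

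For statement (2), the composition $R_1\circ R_0$ clearly preserves $c_0$ (since $R_0[c_0] \subseteq c_0$ and then $R_1[c_0] \subseteq c_0$), so $[R_1\circ R_0]$ is well-defined. Then for any $f \in \ell_\infty$,
\[
[R_1\circ R_0]([f]_{c_0}) = [(R_1\circ R_0)(f)]_{c_0} = [R_1(R_0(f))]_{c_0} = [R_1]([R_0(f)]_{c_0}) = [R_1]\bigl([R_0]([f]_{c_0})\bigr),
\]
which is exactly $([R_1]\circ [R_0])([f]_{c_0})$.

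There is no genuine obstacle here; the only thing to be careful about is checking that $R_0+\alpha R_1$ and $R_1\circ R_0$ preserve $c_0$ so that the left-hand sides make sense, after which both identities follow directly from the definitions.
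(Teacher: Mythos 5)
Your proof is correct and follows essentially the same route as the paper: evaluate both sides on an arbitrary class $[f]_{c_0}$ and unwind the definition of $[R]$. The only addition is your explicit check that $R_0+\alpha R_1$ and $R_1\circ R_0$ preserve $c_0$, which the paper leaves implicit.
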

\begin{proof}
Fix $f\in\ell_\infty$. Then, 
$$[R_0+\alpha R_1)]([f]_{c_0})=[(R_0+\alpha R_1)(f)]_{c_0}=[R_0(f)]_{c_0}+\alpha[R_1(f)]_{c_0}=([R_0]+\alpha [R_1])([f]_{c_0})$$
$$\text{and }\ [R_1]\circ [R_0]([f]_{c_0})=[R_1]([R_0(f)]_{c_0})=[R_1(R_0(f))]_{c_0}=[R_1\circ R_0]([f]_{c_0}).$$
\end{proof}

\begin{proposition}\label{weaklycompact'}
 Let $R_0,R_1:\ell_{\infty}\rightarrow \ell_{\infty}$ be linear operators which preserve $c_0$.
Then,
\begin{enumerate}
 \item If  $[R_0]=0$ then,  $R$ is weakly compact.
\item If  $[R_0]=[R_1]$ then, $R_0-R_1$ is weakly compact.
\end{enumerate} 
\end{proposition}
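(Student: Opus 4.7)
The plan is to prove (2) as an immediate consequence of (1) and Lemma \ref{combinations'}: if $[R_0]=[R_1]$, then by linearity in that lemma $[R_0-R_1]=[R_0]-[R_1]=0$, and then (1) applied to $R_0-R_1$ yields weak compactness. So the real content is (1).

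For (1), I would first unpack what $[R_0]=0$ means: for every $f\in\ell_\infty$, $[R_0(f)]_{c_0}=0$, i.e.\ $R_0(f)\in c_0$. Thus $R_0$, viewed as a map $\ell_\infty\to\ell_\infty$, actually takes values in the subspace $c_0$. Equivalently, $R_0$ factors through the inclusion $c_0\hookrightarrow\ell_\infty$ as $\iota\circ\tilde R_0$, where $\tilde R_0:\ell_\infty\to c_0$ is bounded and linear. Since weak compactness of $R_0:\ell_\infty\to\ell_\infty$ is equivalent to weak compactness of $\tilde R_0:\ell_\infty\to c_0$ (the inclusion is continuous and a subset of $c_0$ is relatively weakly compact in $c_0$ iff it is so in $\ell_\infty$), it suffices to show that $\tilde R_0$ is weakly compact.

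For this I would invoke, as in the proof of \ref{rangeinc_0}, the classical fact that every bounded linear operator from a Grothendieck space into a separable Banach space is weakly compact (Theorem 1(v) of \cite{diestelgrothen}). Since $\ell_\infty$ is a Grothendieck space and $c_0$ is separable, $\tilde R_0$ is weakly compact, hence so is $R_0$. This gives (1), and then (2) follows as indicated.

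There is essentially no obstacle; the only thing to be careful about is checking that invoking the Grothendieck/separability result is legitimate here, which it is because the hypothesis $R_0[c_0]\subseteq c_0$ is not even needed beyond the fact that, under $[R_0]=0$, we automatically have $R_0[\ell_\infty]\subseteq c_0$ and in particular $R_0[c_0]\subseteq c_0$.
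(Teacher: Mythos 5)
Your proposal is correct and takes essentially the same approach as the paper: observe that $[R_0]=0$ forces $R_0[\ell_\infty]\subseteq c_0$, invoke the Grothendieck property of $\ell_\infty$ together with separability of $c_0$ to get weak compactness, and deduce (2) from (1) and the linearity lemma \ref{combinations'}.
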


\begin{proof}
$[R_0]=0$ means that the image of $R_0$ is included in $c_0$. However,
$\ell_\infty$ is a Grothendieck space and all operators
from such spaces into separable spaces are weakly compact (Theorem 1 of \cite{diestelgrothen}).
For part (2)  apply \ref{combinations'} and part (1) to $R_0-R_1$.
\end{proof}

So there  could be many liftings of the same operator but they all differ by
a weakly compact perturbation. When we look at $\ell_\infty/c_0$ as $C(\N^*)$, then liftings correspond
to extensions.

\begin{lemma}\label{*lifting} 
Let $T:C(\N^*)\rightarrow C(\N^*)$ be liftable to
 $R:C(\beta\N)\rightarrow C(\beta\N)$. Then, for every $y\in \N^*$ we have
$$\dual{R}(\delta_y)|\N^*=\dual{T}(\delta_y).$$
\end{lemma}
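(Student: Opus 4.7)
The plan is to unravel definitions and show that $\dual{R}(\delta_y)$ places no mass on the integer points $\N \subseteq \beta\N$, so that testing against continuous functions on $\beta\N$ reduces to testing against their restrictions to $\N^*$, which by the lifting property recovers $\dual{T}(\delta_y)$.

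Set $\mu = \dual{R}(\delta_y)|\N^* \in M(\N^*)$; the goal is $\mu = \dual{T}(\delta_y)$. The first step, and the heart of the argument, is to show that $\dual{R}(\delta_y)$ vanishes on $\N$. For every $n\in\N$, the characteristic function $\chi_{\{n\}}$ lies in $c_0$, and since $R$ lifts $T$ we have $R[c_0]\subseteq c_0$, so $R(\chi_{\{n\}})\in c_0$. Evaluating at $y\in\N^*$ gives
$$\dual{R}(\delta_y)(\{n\}) = R(\chi_{\{n\}})(y) = 0.$$
By countable additivity, $\dual{R}(\delta_y)(\N)=0$, and so $\dual{R}(\delta_y)$ is concentrated on $\N^*$.

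Next, fix $f^*\in C(\N^*)$. Pick any continuous extension $\tilde f\in C(\beta\N)$ of $f^*$ (which exists by Tietze, as $\N^*$ is closed in $\beta\N$); note that any two such extensions differ by an element of $C(\beta\N)$ vanishing on $\N^*$, i.e., by an element of $c_0$, and since $R[c_0]\subseteq c_0$ the restriction $R(\tilde f)|\N^*$ does not depend on the chosen extension. The lifting condition then reads $T(f^*) = R(\tilde f)|\N^*$. Evaluating at $y$ yields
$$\dual{T}(\delta_y)(f^*) = T(f^*)(y) = R(\tilde f)(y) = \dual{R}(\delta_y)(\tilde f) = \int_{\beta\N} \tilde f \,\ud\dual{R}(\delta_y).$$
Because $\dual{R}(\delta_y)(\N)=0$, this integral equals the integral over $\N^*$; and on $\N^*$ the function $\tilde f$ coincides with $f^*$. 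Hence
$$\dual{T}(\delta_y)(f^*) = \int_{\N^*} f^* \,\ud\mu = \mu(f^*).$$
Since $f^*\in C(\N^*)$ was arbitrary, the Riesz representation theorem gives $\mu = \dual{T}(\delta_y)$, which is precisely the claim.

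There is no real obstacle here: once one observes that the lifting hypothesis kills all atomic mass of $\dual{R}(\delta_y)$ on $\N$, the identity $T(f^*)(y)=R(\tilde f)(y)$ (which is just a restatement of what it means for $R$ to lift $T$) does the rest.
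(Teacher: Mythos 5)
Your proof is correct, and in substance it rests on the same fact as the paper's proof, namely the lifting relation $T\circ Q=Q\circ R$ for the restriction map $Q:C(\beta\N)\rightarrow C(\N^*)$; but the routes differ in presentation. The paper simply dualizes this relation to $\dual{Q}\circ \dual{T}=\dual{R}\circ \dual{Q}$, observes that $\dual{Q}$ extends a measure on $\N^*$ by zero on $\N$, and evaluates at $\delta_y$, which gives the identity (and, implicitly, the concentration of $\dual{R}(\delta_y)$ on $\N^*$) in two lines. You instead unwind this at the level of test functions: first you show by hand that $\dual{R}(\delta_y)$ charges no point of $\N$, using $R[c_0]\subseteq c_0$ applied to $\chi_{\{n\}}$, and then you test against an arbitrary $f^*\in C(\N^*)$ via a continuous extension and invoke the uniqueness part of the Riesz representation. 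This is perfectly valid and has the merit of making explicit the slightly stronger fact that $\dual{R}(\delta_y)$ is concentrated on $\N^*$ (which the paper gets for free from the formula for $\dual{Q}$), at the cost of a couple of extra steps; the paper's version is shorter and works as a clean operator identity. One small point of precision: for a signed measure, ``concentrated on $\N^*$'' means $|\dual{R}(\delta_y)|(\N)=0$, which does not follow from $\dual{R}(\delta_y)(\N)=0$ alone; it does follow from your argument, since you show every singleton of $\N$ (hence, by countable additivity, every subset of $\N$) is null, so you should phrase that inference accordingly.
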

\begin{proof} 
If $Q:C(\beta\N)\rightarrow C(\N^*)$ is the restriction map, then
the dual of the lifting relation $T\circ Q=Q\circ R$ is
$\dual{Q}\circ \dual{T}=\dual{R}\circ \dual{Q}$. Notice that 
$\dual{Q}$ acts on measures on $\N^*$
by extending them to $\beta\N$ with $\N$ having measure zero.  
 So for every $y\in\N^*$ we have 
$\dual{T}(\delta_y)=(\dual{Q}\circ \dual{T})(\delta_y)|\N^*=
(\dual{R}\circ \dual{Q})(\delta_y)|\N^*=\dual{R}(\delta_y)|\N^*$.
\end{proof}

\subsection{Local properties of liftable operators on $\ell_\infty/c_0$}

\begin{proposition}
\label{fallinglocallynull} 
If $R: C(\beta\N)\rightarrow C(\beta\N)$ 
is a positive falling operator, then the  operator $[R]: C(\N^*)\rightarrow C(\N^*)$
is nonatomic and locally null. 
\end{proposition}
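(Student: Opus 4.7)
The plan is to establish nonatomicity of $[R]$ by a direct computation that exploits the falling condition, and then invoke Proposition~\ref{nonatomiclocallynull} to deduce local nullity.

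Since $R$ is positive, the induced operator $[R]$ on $\ell_\infty/c_0$ is also positive, so for every $y\in\N^*$ the functional $\dual{[R]}(\delta_y)$ is a positive Radon measure on $\N^*$. By Lemma~\ref{*lifting}, this measure agrees with $\dual{R}(\delta_y)|\N^*$. Hence to prove nonatomicity it suffices to show that $\dual{R}(\delta_y)(\{x\})=0$ for every pair $x,y\in\N^*$.

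To do this, fix $x,y\in\N^*$ and $\varepsilon>0$. By the falling condition (Definition~\ref{falling}) choose a partition $A_0,\dots,A_{k-1}$ of $\N$ and $N\in\N$ such that $\sum_{j\in A_m} b_{ij}<\varepsilon$ for all $m<k$ and all $i\geq N$ (positivity allows us to drop the absolute values). There is a unique $m_0<k$ with $x\in A_{m_0}^*$, and since $\dual{R}(\delta_y)$ is a positive measure,
$$\dual{R}(\delta_y)(\{x\})\;\leq\;\dual{R}(\delta_y)(A_{m_0}^*)\;\leq\;\dual{R}(\delta_y)(\beta A_{m_0})\;=\;\beta R(\chi_{A_{m_0}})(y).$$
The sequence $R(\chi_{A_{m_0}})=\bigl(\sum_{j\in A_{m_0}} b_{ij}\bigr)_{i\in\N}$ is eventually bounded above by $\varepsilon$, so its Stone extension at $y$ is also bounded by $\varepsilon$. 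As $\varepsilon>0$ was arbitrary, $\dual{R}(\delta_y)(\{x\})=0$, and nonatomicity is established.

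Local nullity then follows immediately from Proposition~\ref{nonatomiclocallynull}, since $[R]$ is positive and nonatomic. The only substantive step is the first one: converting the uniform bound on column sums (holding for all sufficiently large $i$) into a bound at the point $y\in\N^*$. This conversion rests on the identity $\dual{R}(\delta_y)(\beta A_{m_0})=\beta R(\chi_{A_{m_0}})(y)$ together with the basic fact that Stone extensions preserve bounds that hold on cofinite subsets of $\N$. Positivity is essential, both to drop the absolute values in the falling condition and to guarantee that $\dual{R}(\delta_y)$ is a positive measure so that monotonicity can be applied on the nested sets $\{x\}\subseteq A_{m_0}^*\subseteq\beta A_{m_0}$.
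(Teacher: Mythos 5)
Your proof is correct and follows essentially the same route as the paper: use the falling partition to make $\dual{R}(\delta_y)(\beta A_m)\leq\varepsilon$ for every $y\in\N^*$, combine this with positivity and Lemma \ref{*lifting} to see that $\dual{[R]}(\delta_y)$ gives arbitrarily small mass to the clopen piece containing any prescribed point, hence is nonatomic, and then quote Proposition \ref{nonatomiclocallynull}. The only cosmetic difference is that the paper transfers the estimate from the $\delta_i$, $i$ large, to $\delta_y$ via weak$^*$ continuity of $\dual{R}$, while you do the same thing by evaluating the Stone extension of $R(\chi_{A_m})$ at $y$.
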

\begin{proof}
By the definition (\ref{falling}), given $\varepsilon>0$ we have a cofinite set $B\subseteq \N$
and a partition $\{A_1, ..., A_k\}$ of $\N$ such that 
$$\dual{R}(\delta_i)(\beta A_m)=|\dual{R}(\delta_i)|(\beta A_m)<\varepsilon$$
for every $m\leq k$ and every $i\in B$. As any $\delta_y$, for $y\in \N^*$, is in the weak$^*$
closure of $\{\delta_n:n\in B\}$, it follows 
 by the weak$^*$ continuity of $\dual{R}$  that $\dual{R}(\delta_y)(\beta A_m)< \varepsilon$,
for every $y\in\N^*$ and every $m\leq k$.
 But by  \ref{*lifting} and the positivity of $R$ we have $\dual{[R]}(\delta_y)(A_m^*)=\dual{R}(\delta_y)(A_m^*)
\leq \dual{R}(\delta_y)(\beta A_m)< \varepsilon$, 
for every every $y\in\N^*$ and every $m\leq k$. 
 As $\{A_1^*, ..., A_k^*\}$ is a partition of $\N^*$, 
we conclude that $\dual{[R]}(\delta_y)$ is nonatomic for every $y\in \N^*$.
By \ref{nonatomiclocallynull}, $[R]$ is locally null.

\end{proof}

\begin{corollary}\label{matrixnowheretrivial}
There is a matrix operator $T$ which has  fountains and such that
whenever $T\circ I_A\not=0$, we have that $T\circ I_A$ is not canonizable along any continuous map.
In particular $T$ is nowhere trivial.
\end{corollary}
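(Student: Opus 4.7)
The plan is to build $T$ from the density (Cesàro) operator $R:\ell_\infty\to\ell_\infty$ introduced in Proposition~\ref{densityoperator}. By construction $R$ is given by a positive $c_0$-matrix, so $T:=[R]$ is a matrix operator in the sense of Definition~\ref{trivializations}. Since $R$ fixes the constant function $\mathbf{1}$ (the $i$th Cesàro average of $\mathbf{1}$ is $1$), we have $T([\mathbf{1}])=[\mathbf{1}]\neq 0$.

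Next I would invoke Proposition~\ref{fallinglocallynull}: $R$ is positive and falling, hence $T$ is both nonatomic (every $\dual{T}(\delta_y)$ is a nonatomic measure) and locally null. Since $T$ is locally null and nonzero, Corollary~\ref{locallynull-det-null} forces $T$ to have fountains; this gives the first half of the conclusion.

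For the canonization obstruction, fix an infinite $A\subseteq\N$ with $T\circ I_A\neq 0$ and suppose, for contradiction, that $T\circ I_A$ is canonizable along some continuous $\psi$ with nonzero scalar $r$. Taking adjoints, for each $y\in\N^*$ one would have
$$\dual{T}(\delta_y)|A^{*}\;=\;\dual{(T\circ I_A)}(\delta_y)\;=\;r\,\delta_{\psi(y)},$$
a nonzero atomic measure on $A^*$. But $\dual{T}(\delta_y)$ is nonatomic, so its restriction to $A^*$ is nonatomic as well; contradiction. The ``nowhere trivial'' conclusion follows at once, since any trivial localization $T_{B,A}$ is canonizable along the homeomorphism $\sigma^*:B^*\to A^*$ induced by its defining bijection, and the identical adjoint computation (using $\dual{T_{B,A}}(\delta_y)=\dual{T}(\delta_y)|A^*$) yields the same contradiction.

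The only mildly subtle step is the domain/codomain bookkeeping that identifies $\dual{(T\circ I_A)}(\delta_y)$ with the restriction $\dual{T}(\delta_y)|A^*$; every other ingredient is already packaged in the structure theorems for falling and locally null operators, so no further work is required.
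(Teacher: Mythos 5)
Your proposal is correct and follows the paper's own argument essentially step by step: take $T=[R]$ for the density operator $R$ of Proposition~\ref{densityoperator}, use Proposition~\ref{fallinglocallynull} to get that $T$ is nonatomic and locally null, derive fountains via Corollary~\ref{locallynull-det-null}, and rule out canonization by observing that $\dual{(T\circ I_A)}(\delta_y)=\dual{T}(\delta_y)|A^*$ is nonatomic or zero, hence never equal to a nonzero multiple of a Dirac measure. The only (harmless) cosmetic difference is that you certify $T\neq 0$ via $R(\mathbf 1)=\mathbf 1$, whereas the paper invokes \ref{rangeinc_0} and a characteristic function of positive density.
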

\begin{proof} 
The operator $R$ from \ref{densityoperator} is a non-weakly compact, positive, falling operator
on $\ell_\infty$. Its range is not included in $c_0$
by \ref{rangeinc_0} (actually, the characteristic function of a subset
of $\N$ of positive density is sent to an element not in $c_0$). So $T=[R]\not=0$.
On the other hand, by \ref{fallinglocallynull} we know that $T$ is locally null, 
so by \ref{locallynull-det-null} it follows that $T$ has fountains.

Now note that by \ref{fallinglocallynull} we have that 
 $\dual{(T\circ I_A)}(\delta_y)= \dual{T}(\delta_y)|A^*$ is nonatomic 
or zero for every $y\in \N^*$, so the second part of the corollary follows.
\end{proof}




\begin{proposition}\label{antimatrixlocallynull} 
If $R:\ell_\infty\rightarrow\ell_\infty$ 
is an antimatrix operator, then the  operator $[R]: \ell_\infty/c_0\rightarrow \ell_\infty/c_0$
factors through $\ell_\infty$ and so is locally null.
\end{proposition}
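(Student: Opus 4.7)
The plan is to exhibit an explicit factorization of $[R]$ through $\ell_\infty$ and then invoke Proposition \ref{idealsinclusions}, which says that operators factoring through $\ell_\infty$ are automatically locally null.

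First I would use the defining property of an antimatrix operator, namely $R[c_0]=\{0\}$ (Definition \ref{antimatrix}). Since $R$ vanishes on $c_0$, it descends through the quotient map $Q:\ell_\infty\to \ell_\infty/c_0$: there is a (unique) bounded linear operator $\bar R:\ell_\infty/c_0\to\ell_\infty$ such that $R=\bar R\circ Q$. Composing with $Q$ on the left gives
\[
Q\circ R = Q\circ\bar R\circ Q.
\]
On the other hand, by the very definition of $[R]$ we have $[R]\circ Q=Q\circ R$. Since $Q$ is surjective, one concludes $[R]=Q\circ \bar R$.

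The decomposition $[R]=Q\circ\bar R$ with $\bar R:\ell_\infty/c_0\to\ell_\infty$ and $Q:\ell_\infty\to\ell_\infty/c_0$ is exactly a factorization of $[R]$ through $\ell_\infty$ in the sense introduced before \ref{idealsinclusions}. Applying \ref{idealsinclusions}, which states that every operator factoring through $\ell_\infty$ is locally null, yields at once that $[R]$ is locally null.

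There is no real obstacle here; the only point that needs a brief check is the boundedness of $\bar R$, which is standard (it follows from the open mapping theorem applied to $Q$, or more directly from the fact that $Q$ is a quotient map and $R$ vanishes on $\ker Q = c_0$, so that the induced map on $\ell_\infty/c_0$ has norm $\le \|R\|$). Everything else is formal composition with the quotient map and the already-established inclusion of ``factors through $\ell_\infty$'' in ``locally null.''
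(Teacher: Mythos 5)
Your proof is correct and is essentially the paper's argument: the paper also factors $[R]=Q\circ S$ with $S:\ell_\infty/c_0\rightarrow\ell_\infty$ (given there concretely by $S([f]_{c_0})=(\dual{R}(\delta_n)(\beta f))_{n\in\N}$, which is exactly your $\bar R$) and then invokes \ref{idealsinclusions}. Your derivation of this factorization from the universal property of the quotient map, using only that $R$ vanishes on $c_0=\ker Q$, is a slightly more streamlined route to the same decomposition than the paper's description via the adjoint measures.
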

\begin{proof}

Let $\mu_n=\dual{R}(\delta_n)$. Since $R$ is antimatrix (Definition \ref{antimatrix})  we may consider $\mu_n$
as a measure on $\N^*$. 
Consider $S:\ell_\infty/c_0\rightarrow \ell_\infty$ given by $S([f]_{c_0})=(\mu_n(\beta f))_{n\in \N^*}$
for every $f\in\ell_\infty$, 
and $Q:\ell_\infty\rightarrow \ell_\infty/c_0$ the quotient map. $S$ is well-defined
since the measures $\mu_n$ are null on $\N$.
We have for every $f\in\ell_\infty$ that 
$$(Q\circ S)([f]_{c_0})=[(\mu_n(\beta f))_{n\in\N}]_{c_0}=[R(f)]_{c_0}=[R]([f]_{c_0}),$$
so $(Q\circ S)$ is $[R]$. To conclude that $[R]$ is locally null use \ref{idealsinclusions}.
\end{proof}

\begin{theorem}\label{canonembedding}

If $T:\ell_{\infty}/c_0\rightarrow\ell_{\infty}/c_0$ is a matrix operator which is an isomorphic
embedding, then it is right-locally trivial.
 
 \end{theorem}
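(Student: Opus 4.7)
The plan is to construct, via a gliding-hump argument followed by a diagonal fusion, infinite sets $A_1\subseteq A$ and $B\subseteq \N$, a bijection $\sigma:B\to A_1$, and a nonzero scalar $r$, such that along the indices $i\in B$ the restriction of the matrix $(b_{ij})$ to columns in $A_1$ becomes, asymptotically, the diagonal $r$ along $\sigma$. Let $c=1/\|T^{-1}\|>0$ so that $\|T([f])\|\geq c\|[f]\|$ for every $[f]\in\ell_\infty/c_0$.

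First I would show that the columns carry uniformly large entries on an infinite subset of $A$. Set $M_j:=\sup_i|b_{ij}|$ and suppose, towards a contradiction, that $\liminf_{j\in A}M_j=0$. Choose $j_1<j_2<\cdots$ in $A$ with $M_{j_n}<c\cdot 2^{-n-1}$ and put $A_1':=\{j_n:n\in\N\}$. For every $i\in\N$ one has
$$\Bigl|\sum_{j\in A_1'}b_{ij}\Bigr|\leq\sum_n|b_{i,j_n}|\leq\sum_n M_{j_n}<c/2,$$
so $\|T([\chi_{A_1'}])\|\leq c/2<c\cdot\|[\chi_{A_1'}]\|$, contradicting the bound below. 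Hence there exist $\delta>0$ and an infinite $A'\subseteq A$ with $\sup_i|b_{ij}|\geq\delta$ for every $j\in A'$.

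Next, enumerate $A'$ as $j_1<j_2<\cdots$ and, for each $n$, pick $i_n$ with $|b_{i_n,j_n}|\geq\delta$. Because each row $b_i$ lies in $\ell_1$, for every fixed $i$ the set $\{n:|b_{i,j_n}|\geq\delta\}$ is finite, so a greedy extraction yields a subsequence along which the $i_n$'s are all distinct and strictly increasing. I then perform a diagonal fusion: inductively choose $n_1<n_2<\cdots$ so that for every $k\geq 2$ and every $l<k$ both
$$|b_{i_{n_k},j_{n_l}}|<2^{-k-l}\qquad\text{and}\qquad|b_{i_{n_l},j_{n_k}}|<2^{-k-l}.$$
The first batch is achievable because each column $(b_{i,j_{n_l}})_i\in c_0$, so $|b_{i,j_{n_l}}|<2^{-k-l}$ for $i$ sufficiently large; the second because each row $b_{i_{n_l}}\in\ell_1$, so $|b_{i_{n_l},j}|<2^{-k-l}$ for $j$ sufficiently large---both conditions are met once $n_k$ is chosen large enough, since $i_n$ and $j_n$ are strictly increasing. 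A further passage to a subsequence allows us to require $b_{i_{n_k},j_{n_k}}\to r$ for some $r$ with $|r|\geq\delta$.

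Finally, set $B:=\{i_{n_k}:k\in\N\}$, $A_1:=\{j_{n_k}:k\in\N\}$ and $\sigma(i_{n_k}):=j_{n_k}$. For every $f\in\ell_\infty(A_1)$ and every $i=i_{n_k}\in B$,
$$(T\circ I_{A_1})([f])(i)=\sum_{j\in A_1}b_{ij}f(j)=b_{i,\sigma(i)}f(\sigma(i))+\varepsilon_k,$$
where $|\varepsilon_k|\leq\|f\|\sum_{l\neq k}|b_{i_{n_k},j_{n_l}}|\leq\|f\|\cdot 2^{-k}\to 0$. Combined with $b_{i,\sigma(i)}\to r$, this gives $T_{B,A_1}([f])=[rf\circ\sigma]_{c_0(B)}$, so $T_{B,A_1}$ is trivial with nonzero scalar $r$ and bijection $\sigma$. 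The main obstacle is the diagonal fusion: one must simultaneously suppress entries far from the diagonal in \emph{both} directions, which hinges precisely on the two dual conditions defining a $c_0$-matrix---rows in $\ell_1$ and columns in $c_0$.
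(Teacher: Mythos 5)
Your proof is correct and uses the same gliding-hump diagonal-extraction strategy as the paper's argument: in both cases the lower bound on $T$ forces a diagonal $\{(i_n,j_n)\}$ of entries separated from zero, a fusion/gliding-hump argument shows the off-diagonal contributions vanish along the extracted rows, and Bolzano--Weierstrass gives the limiting scalar $r$. The two proofs differ only in bookkeeping: you establish the preliminary lower bound via column maxima $\sup_i|b_{ij}|$ (with a simpler $\chi_{A_1'}$ test function) instead of row maxima $\max_{j\in\tilde A}|b_{ij}|$, and you suppress off-diagonal entries through pointwise $2^{-k-l}$ bounds rather than by confining the $\ell_1$ mass of each row $b_{i_n}$ to a window $[k_n,k_{n+1})$, but both variants rely on exactly the same two $c_0$-matrix properties (rows in $\ell_1$, columns in $c_0$).
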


\begin{proof}

Let $R:\ell_{\infty}\rightarrow\ell_{\infty}$ be 
given by a $c_0$-matrix such that $[R]=T$.
Let $(b_{ij})_{i,j\in\N}$ be the matrix corresponding to $R$. Let $M>0$ be such 
that $\|T([f]_{c_0})\|\geq M\|[f]_{c_0}\|$, for every $f\in \ell_\infty\setminus c_0$.
Notice that this condition  is equivalent to the statement that
$\limsup_{n\rightarrow\infty}|R(f)(n)|\geq M$, for every $f\in\ell_{\infty}$ such that
$\limsup_{n\rightarrow\infty}|f(n)|=1$.

Fix an infinite $\tilde{A}\subseteq\N$.

 \textsc{Claim:} $\lim_{i\rightarrow\infty}\max\{|b_{ij}|:j\in\tilde{A}\}\neq 0$.

Assume otherwise. We will construct an $f\in\ell_{\infty}$ such that
$\limsup_{n\rightarrow\infty}|f(n)|=1$ and $\limsup_{n\rightarrow\infty}|R(f)(n)|< M$.

Let $m_i=\min\{k\in\N:\sum_{j\geq k}|b_{ij}|<1/(i+1)\}$, for every $i\in\N$.
We shall construct  by induction two strictly increasing sequences of integers 
$(i_n)_{n\in\N}$ and $(j_n)_{n\in\N}$, with $j_n\in\tilde{A}$ for every $n\in\N$.
Let $i_0=0$ and $j_0=\min \tilde{A}$. If we have constructed $i_l$, $j_l$, for $l\leq n$,
 take $i_{n+1}>i_n$ such that $\max\{|b_{ij}|:j\in\tilde{A}\}<\frac{1}{(n+2)^2}$,
for every $i\geq i_{n+1}$; take $j_{n+1}\in\tilde{A}$ such that 
$j_{n+1}>\max\{m_l:l<i_{n+1}\}$ and $j_{n+1}>j_n$.

Now let $f$ be the characteristic function of $\{j_n:n\in\N\}$ and 
let $N\in\N$ be such that $\frac{N}{(N+1)^2}<M/4$ and $1/N<M/4$.
Fix $k\geq i_N$. Then, $k\geq N$ and also, $i_n\leq k<i_{n+1}$, for some $n\geq N$. Consider the 
following:

$$\begin{array}{rcl}
  |R(f)(k)|=|\sum_{j\in\N}b_{kj}f(j)|&\leq&\sum_{j<m_k}|b_{kj}f(j)|+\sum_{j\geq m_k}|b_{kj}|\\
&\leq&\sum_{j<m_k}|b_{kj}f(j)|+1/k\\
&\leq&\sum_{l<n}|b_{kj_l}|+1/N \qquad(\text{because } j_{n+1}>m_k)\\
&\leq& n\cdot\max\{|b_{kj}|:j\in\tilde{A}\}+1/N\\
&\leq&\frac{n}{(n+1)^2}+1/N\qquad(\text{because }k\geq i_n)\\
&<&M/2\end{array}$$

This contradicts the definition of $M$, and so the claim is proved.\\

Let $\delta>0$ and let $B_0\subseteq \N$ be infinite such that 
$\max\{|b_{ij}|:j\in\tilde{A}\}>\delta$, for every $i\in B_0$.

We shall construct by induction three strictly increasing sequences of integers,
$(i_n)_{n\in\N}$, $(j_n)_{n\in\N}$, $(k_n)_{n\in\N}$, satisfying the following
for every $n\in\N$:
\begin{enumerate}
 \item $|b_{i_nj_n}|>\delta$
\item $j_n\in \tilde{A}$
\item $k_n\leq j_n<k_{n+1}$
\item $\sum_{j<k_n}|b_{i_nj}|<\frac{1}{2(n+1)}$
\item $\sum_{j\geq k_{n+1}}|b_{i_nj}|<\frac{1}{2(n+1)}$
\end{enumerate}

Let $k_0=0$ and $i_0=\min( B_0)$. Let $j_0\in\tilde{A}$ be such that
$|b_{i_0j_0}|>\delta$. Let $k_1>j_0$ be such that $\sum_{j\geq k_1}|b_{i_0j}|<1$.

Assume we have constructed $i_l$, $j_l$ and $k_{l+1}$, satisfying 1--5 for every
$l\leq n$. Let $N$ be such that $\sum_{j<k_{n+1}}|b_{ij}|<\min\{\delta,\frac{1}{2(n+2)}\}$,
for every $i\geq N$ (it exists because it is a $c_0$-matrix). 
Let $i_{n+1}\in B_0\setminus N$. Let $j_{n+1}\in\tilde{A}$ be such that
$|b_{i_{n+1}j_{n+1}}|>\delta$ (it exists because $i_{n+1}\in B_0$).
Notice that $j_{n+1}\geq k_{n+1}$ because $|b_{i_{n+1}j}|<\delta$, for every
$j< k_{n+1}$. Let $k_{n+2}>j_{n+1}$ be such that $\sum_{j\geq k_{n+2}}|b_{i_{n+1}j}|<\frac{1}{2(n+2)}$.
This ends the inductive construction.

Now, $\delta<|b_{i_nj_n}|\leq\sup\{|b_{ij}|:i,j\in\N\}$, for every $n\in\N$.
Therefore, by going to a subsequence we may assume that $b_{i_nj_n}$
converges to some $r$ with $|r|\geq \delta$. 
Let $A=\{j_n: n\in\N\}$ and $B=\{i_n: n\in\N\}$. Let 
$\sigma:B\rightarrow A$ be given by $\sigma(i_n)=j_n$, for each $n\in\N$.

\textsc{Claim:} 
$(P_B\circ T\circ I_A)([f]_{c_0(A)})=[rf\circ \sigma]_{c_0(B)}$, for every $f\in\ell_\infty(A)$.

Note that what we need to show is that 
$\lim_{n\rightarrow\infty}|R(f)(i_n)-rf(\sigma(i_n))|=0$, for every $f\in\ell_\infty(A)$.
So fix $f\in\ell_{\infty}(A)$
and fix an arbitrary $\varepsilon>0$. Let $M'$ be such that $\|\dual{T}(\delta_n)\|\leq M'$, 
for every $n\in\N$ (it exists by definition of $c_0$-matrix).
Let $N_0$ be such that 
$|b_{i_nj_n}-r|<\frac{\varepsilon}{3\|f\|}$, for all $n\geq N_0$, and let $N_1$ be such that 
$1/(N_1+1)<\frac{\varepsilon}{3\|f\|}$. 
Then, for every $n \geq N_0+N_1$ we have

$$\begin{array}{rcl}
   |R(f)(i_n)-rf(\sigma(i_n))|&=&|\sum_{j\in\N}b_{i_nj}f(j)-rf(j_n)|\\
&\leq&\sum_{j<k_n}|b_{i_nj}f(j)|+\sum_{\stackrel{k_n\leq j<k_{n+1}}{j\neq j_n}}|b_{i_nj}f(j)|\\
&&\qquad\qquad+\sum_{j\geq k_{n+1}}|b_{i_nj}f(j)|+|b_{i_nj_n}f(j_n)-rf(j_n)|\\
&<&\frac{\|f\|}{2(n+1)}+0
+\frac{\|f\|}{2(n+1)}+\|f\||b_{i_nj_n}-r|\\
&<&\frac{\|f\|}{N_1+1}+
\|f\|\frac{\varepsilon}{3\|f\|}\\
&<&\varepsilon/3+\varepsilon/3
<\varepsilon.\end{array}$$

This concludes the proof.

\end{proof}

\begin{corollary}\label{corcanonembedding}
Every 
liftable isomorphic embedding $T:\ell_\infty/c_0\rightarrow\ell_\infty/c_0$
is right-locally 
trivial.
\end{corollary}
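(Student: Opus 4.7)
The plan is to reduce to Theorem \ref{canonembedding} by peeling off the antimatrix part of a lifting. Given a liftable isomorphic embedding $T$, fix any lifting $R:\ell_\infty\rightarrow\ell_\infty$ of $T$ and apply \ref{decompositionmatrix} to write $R=S_0+S_1$, where $S_0$ is given by a $c_0$-matrix and $S_1$ is antimatrix. Passing to the quotient yields $T=[S_0]+[S_1]$, with $[S_0]$ a matrix operator and, by \ref{antimatrixlocallynull}, $[S_1]$ locally null.

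Now, given any infinite $A\subseteq\N$, local nullness of $[S_1]$ provides an infinite $A_1\subseteq A$ such that $[S_1]\circ I_{A_1}=0$, whence $T\circ I_{A_1}=[S_0]\circ I_{A_1}$. Because $T$ is an isomorphic embedding and $I_{A_1}$ is an isometric embedding, this composition is an isomorphic embedding from $\ell_\infty(A_1)/c_0(A_1)$ into $\ell_\infty/c_0$; concretely, there exists $M>0$ such that $\limsup_n|S_0(f)(n)|\geq M$ for every $f\in\ell_\infty$ supported on $A_1$ with $\limsup_n|f(n)|=1$. Note that $[S_0]$ itself need not be an isomorphic embedding, which is why we cannot invoke Theorem \ref{canonembedding} verbatim.

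The final step is to rerun the proof of Theorem \ref{canonembedding} for the matrix operator $[S_0]$ with distinguished set $\tilde A:=A_1$. Inspection of that proof shows that the isomorphic-embedding hypothesis is used in exactly one place, namely to derive the Claim $\lim_i\max_{j\in\tilde A}|b_{ij}|\neq 0$; and the contradicting function produced there is a characteristic function $\chi_{\{j_n:n\in\N\}}$ with $j_n\in\tilde A$, hence supported on $A_1$, which is precisely the class for which our weakened hypothesis applies. The remainder of the argument is a purely matrix-theoretic construction yielding infinite $A_2=\{j_n\}\subseteq A_1$, infinite $B=\{i_n\}\subseteq\N$, a bijection $\sigma:B\rightarrow A_2$, and a nonzero real $r$ with $(P_B\circ[S_0]\circ I_{A_2})([f]_{c_0(A_2)})=[r\,f\circ\sigma]_{c_0(B)}$ for every $f\in\ell_\infty(A_2)$. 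Since $A_2\subseteq A_1$ we still have $[S_1]\circ I_{A_2}=0$, so $T_{B,A_2}=P_B\circ[S_0]\circ I_{A_2}$ is trivial, and the arbitrariness of $A$ gives right-local triviality.

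The main obstacle, as already flagged, is verifying that the proof of Theorem \ref{canonembedding} uses the isomorphic-embedding hypothesis only on functions supported on $\tilde A$; once this localization is confirmed, combining it with the decomposition $R=S_0+S_1$ and the local nullness of $[S_1]$ delivers the corollary.
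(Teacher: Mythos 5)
Your proposal is correct and follows essentially the paper's own route: decompose a lifting into a $c_0$-matrix part plus an antimatrix part (\ref{decompositionmatrix}), use \ref{antimatrixlocallynull} to kill the antimatrix part on some $A_1\subseteq A$, and then conclude via Theorem \ref{canonembedding}. The only difference is cosmetic: instead of re-running the proof of \ref{canonembedding} with the embedding hypothesis localized to $\tilde A=A_1$, the paper applies that theorem directly to $[S_0]\circ I_{A_1}=T\circ I_{A_1}$, which is a matrix operator on $\ell_\infty(A_1)/c_0(A_1)\cong\ell_\infty/c_0$ and an isomorphic embedding, so the localization you carefully verify is already built into the operator to which the theorem is applied.
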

\begin{proof} 
Since $T$ is liftable, there exist $R_0,R_1:\ell_\infty\rightarrow\ell_\infty$
an antimatrix operator and one given by a $c_0$-matrix, respectively,
such that $T=[R_0+R_1]=[R_0]+[R_1]$. Fix an infinite $A\subseteq\N$.
By \ref{antimatrixlocallynull}, take an infinite $A_0\subseteq A$ such that 
$T\circ I_{A_0}=[R_1]\circ I_{A_0}$. Then, $[R_1]\circ I_{A_0}$ is a matrix operator 
which is an isomorphic embedding, so by \ref{canonembedding} there exist 
infinite $A_1\subseteq A_0$ and $B\subseteq\N$
such that $P_B\circ T\circ I_{A_1}=P_B\circ [R_1]\circ I_{A_1}$ is trivial.
\end{proof}

\begin{corollary}\label{liftableembedding->somewherematrixiso}
 Every liftable isomorphic embedding $T:\ell_\infty/c_0\rightarrow\ell_\infty/c_0$
is right-locally an isomorphic matrix operator.
\end{corollary}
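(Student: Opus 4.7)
My plan is to reuse the decomposition already exploited in the proof of Corollary~\ref{corcanonembedding}, but to stop one step short of invoking Theorem~\ref{canonembedding}: the statement I want is strictly weaker than right-local triviality (any trivial operator is tautologically an isomorphic matrix operator), so I do not need the delicate combinatorial construction inside \ref{canonembedding}. In fact the argument I have in mind is essentially already present in the first two sentences of the proof of \ref{corcanonembedding}.

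First, since $T$ is liftable, pick any lifting $R:\ell_\infty\rightarrow\ell_\infty$ and apply Proposition~\ref{decompositionmatrix} to write $R=R_0+R_1$, where $R_0$ is antimatrix and $R_1$ is given by a $c_0$-matrix $(c_{ij})_{i,j\in\N}$. By Lemma~\ref{combinations'} we then have $T=[R_0]+[R_1]$. Given an arbitrary infinite $A\subseteq\N$, Proposition~\ref{antimatrixlocallynull} says that $[R_0]$ is locally null, so there is an infinite $A_1\subseteq A$ with $[R_0]\circ I_{A_1}=0$; hence $T\circ I_{A_1}=[R_1]\circ I_{A_1}$.

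Second, I will set $B=\N$, so that
\[
T_{B,A_1}\;=\;P_\N\circ T\circ I_{A_1}\;=\;T\circ I_{A_1}\;=\;[R_1]\circ I_{A_1}.
\]
Reading $f\in\ell_\infty(A_1)$ as its extension by zero to $\N$, the formula $R_1(f)(i)=\sum_{j\in A_1}c_{ij}f(j)$ shows that the column restriction $(c_{ij})_{i\in B,\,j\in A_1}$ is a $c_0$-matrix representing $T_{B,A_1}$ in the sense of Definition~\ref{trivializations}(2). Finally, $I_{A_1}$ is an isometric embedding and $T$ is an isomorphic embedding by hypothesis, so $T_{B,A_1}=T\circ I_{A_1}$ is an isomorphic embedding. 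Thus $A_1\subseteq A$ and $B$ witness the required property, and since $A$ was arbitrary, $T$ is right-locally an isomorphic matrix operator.

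No real obstacle arises: the heavy lifting has already been packaged into Propositions~\ref{decompositionmatrix} (splitting off the antimatrix part of any lifting) and~\ref{antimatrixlocallynull} (killing that antimatrix part along an infinite subset). The only bookkeeping point is that the restriction of a $c_0$-matrix to a column set $A_1\subseteq\N$ is still a $c_0$-matrix, which is immediate from Definition of $c_0$-matrix since all three defining conditions pass to submatrices indexed by a subset of the columns.
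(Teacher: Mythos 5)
Your proof is correct, and it is genuinely more direct than the paper's. The paper proves this corollary by citing Corollary~\ref{corcanonembedding} (right-local triviality) and then observing that a trivial operator is an isomorphic matrix operator; since \ref{corcanonembedding} relies on Theorem~\ref{canonembedding}, the paper's route invokes the full combinatorial trivialization machinery. You instead stop at the decomposition $T=[R_0]+[R_1]$ from Proposition~\ref{decompositionmatrix}, kill the antimatrix part along $A_1\subseteq A$ via Proposition~\ref{antimatrixlocallynull}, and observe that $T_{\N,A_1}=[R_1]\circ I_{A_1}$ is already an isomorphic matrix operator: restricting a $c_0$-matrix to a subset of columns preserves all three defining conditions, and $T\circ I_{A_1}$ is an isomorphic embedding because $T$ is one and $I_{A_1}$ is isometric. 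In fact, this observation appears verbatim as an intermediate sentence inside the proof of \ref{corcanonembedding} (``Then, $[R_1]\circ I_{A_0}$ is a matrix operator which is an isomorphic embedding''), so you have correctly identified that the result being proved is a by-product available before \ref{canonembedding} is applied, not a consequence of it. The trade-off is a purely logical one: your argument makes the dependence graph cleaner (the present corollary no longer sits downstream of Theorem~\ref{canonembedding}), while the paper's version economizes on text by reusing \ref{corcanonembedding}.
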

\begin{proof}
By \ref{corcanonembedding} it suffices to recall that a trivial operator
 is an isomorphic matrix operator.
\end{proof}

\begin{corollary}\label{locallynullsum} 
Let $\PP$ be one of the following properties: 
isomorphically liftable, 
isomorphically matrix, 
trivial, 
canonizable along $\psi$.
Suppose that  $S: \ell_\infty/c_0\rightarrow \ell_\infty/c_0$
is locally null. If $T: \ell_\infty/c_0\rightarrow \ell_\infty/c_0$ is 
right-locally $\PP$ (left-locally $\PP$, somewhere $\PP$), then  $S+T$ is right-locally
$\PP$ (left-locally $\PP$, somewhere $\PP$).
\end{corollary}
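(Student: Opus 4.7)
The plan is to exploit the local nullness of $S$: below any infinite $A$ we can find an infinite $A_1\subseteq A$ with $S\circ I_{A_1}=0$, and then $(S+T)_{B,A_1}=T_{B,A_1}$ for every $B$. The three locality versions differ only in the order in which $A$, $A_1$, $B$ are picked, and in how much freedom we have at each stage.

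For right-locally $\PP$, given infinite $A$, first use local nullness of $S$ to shrink to $A'\subseteq A$ with $S\circ I_{A'}=0$. Note that then $S\circ I_{A_1}=0$ for every infinite $A_1\subseteq A'$, because $I_{A_1}$ factors through $I_{A'}$ via the natural isometric embedding $\ell_\infty(A_1)/c_0(A_1)\to \ell_\infty(A')/c_0(A')$. Now apply right-locality of $T$ to $A'$ to obtain infinite $A_1\subseteq A'$ and $B$ with $T_{B,A_1}$ having $\PP$, whence $(S+T)_{B,A_1}=T_{B,A_1}$ has $\PP$.

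For left-locally and somewhere $\PP$ we must first obtain the $\PP$-data $(B,A)$ of $T$ and only then shrink $A$, which forces us to verify that $\PP$ survives such a restriction (with a possible compensating shrinkage of $B$). For liftable and matrix operator this is automatic: a lifting $R:\ell_\infty(A)\to\ell_\infty(B)$ restricts to $\ell_\infty(A_1)$, a $B\times A$ $c_0$-matrix restricts to its $B\times A_1$ sub-matrix, and the isomorphic embedding property persists because $T_{B,A_1}=T_{B,A}\circ J_{A_1,A}$ with $J_{A_1,A}$ an isometry. For trivial with witnessing bijection $\sigma:B\to A$, set $B_1=\sigma^{-1}[A_1]$; then $\sigma|B_1:B_1\to A_1$ is a bijection, and a direct computation shows that $T_{B_1,A_1}$ is trivial via $\sigma|B_1$. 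For canonizable along a continuous surjection $\psi:B^*\to A^*$, choose $B_1\subseteq B$ with $B_1^*=\psi^{-1}[A_1^*]$ (possible because clopen subsets of $B^*$ correspond to classes of $\wp(B)/\mathit{Fin}$); then $\psi|B_1^*:B_1^*\to A_1^*$ is a continuous surjection canonizing $T_{B_1,A_1}$. In both the left-locally and somewhere settings the resulting $B_1\subseteq B$ is an admissible witness, and $(S+T)_{B_1,A_1}=T_{B_1,A_1}$ holds since $P_{B_1}\circ S\circ I_{A_1}=0$ regardless of the choice of $B_1$.

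The main obstacle is the bookkeeping in the trivial and canonizable cases, where shrinking $A$ to $A_1$ forces the matching shrinking $B_1=\sigma^{-1}[A_1]$ (respectively $B_1^*=\psi^{-1}[A_1^*]$) of $B$, and one must verify by direct computation with the definition of $T_{B,A}$ that $T_{B_1,A_1}$ is indeed induced by the restricted bijection or the restricted map. No set-theoretic hypothesis is used.
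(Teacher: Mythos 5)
Your proof is correct and its overall strategy matches the paper's: locate a $\PP$-witness $(B,A)$ for $T$, shrink $A$ to $A_1$ so that $S\circ I_{A_1}=0$, and verify that $\PP$ survives the restriction (with a compensating shrinkage of $B$ where needed), so that $(S+T)_{B_1,A_1}=T_{B_1,A_1}$. The one place you genuinely depart from the paper is in the liftable and matrix cases. The paper handles these by appealing to Corollary \ref{corcanonembedding} and Theorem \ref{canonembedding} -- the nontrivial results that liftable/matrix isomorphic embeddings are right-locally trivial -- and then noting that trivial operators are liftable/matrix. You instead argue directly: restrict the lifting $R$ to $\ell_\infty(A_1)$ (equivalently, take the $B\times A_1$ submatrix of the $c_0$-matrix), which keeps $B$ unchanged, and observe that $T_{B,A_1}=T_{B,A}\circ[J_{A_1,A}]$ remains an isomorphic embedding because $[J_{A_1,A}]$ is an isometry. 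Your route is more elementary and self-contained, avoiding the two canonization theorems; it also establishes the sharper form of the shrinking lemma in which $T_{B,A_1}$ itself retains $\PP$ rather than merely some further localization of it. The trivial and canonizable cases, and the observation that local nullness of $S$ passes to subsets (which justifies the change in quantifier order for the right-local version), are the same as the paper's.
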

\begin{proof} 
First we will note that if the localization $T_{B, A}$ of $T$ to $(A, B)$ 
has $\PP$, then for every infinite $A'\subseteq A$ there is an infinite
$B'\subseteq B$ such that  the localization $T_{B', A'}$ of $T$ to $(A', B')$ 
has $\PP$.

In the case where $T_{B, A}$ is isomorphically liftable, by \ref{corcanonembedding}
it is enough to notice that a trivial operator is isomorphically liftable.
Similarly, if $T_{B, A}$ is isomorphically matrix, by \ref{canonembedding}
it is enough to notice that a trivial operator is isomorphically matrix.

If $T_{B, A}$ is trivial, it is enough to take $B'=\sigma^{-1}[A']$, where
$\sigma:B\rightarrow A$ is the bijection witnessing the triviality of $T_{B, A}$.
Similarly, if $T_{B, A}$ is canonizable along $\psi$, we take $B'\subseteq B$
such that $(B')^*=\psi^{-1}[(A')^*]$.

Now, given a localization $T_{B, A}$ with property $\PP$, take an infinite 
$A'\subseteq A$ such that $S\circ I_{A'}=0$. By the above, there exists $B'\subseteq B$
such that $T_{B', A'}=(S+T)_{B', A'}$ has $\PP$. 

\end{proof}

If we do not assume that the operator is bounded below, then there is no hope
of obtaining local trivialization anywhere:

\begin{proposition}\label{liftablenonmatrix} There is a surjective operator $T: \ell_\infty/c_0\rightarrow\ell_\infty/c_0$
which is globally liftable but is nowhere a nonzero matrix operator.
\end{proposition}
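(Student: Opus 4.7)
The plan is to use an antimatrix lifting whose induced quotient operator is surjective, and to exploit the tension between antimatrix and $c_0$-matrix behaviour to forbid any nonzero localized matrix.

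Concretely, fix a discrete sequence $(x_n)_{n\in\N}\subseteq \N^*$ and define $\widetilde R\colon\ell_\infty\to\ell_\infty$ by $\widetilde R(f)(n)=\beta f(x_n)$. Since each $x_n\in\N^*$, the map $\widetilde R$ annihilates $c_0$, so $\widetilde R$ is an antimatrix operator (Definition \ref{antimatrix}) and in particular preserves $c_0$. Set $T=[\widetilde R]$, which is globally liftable by construction.

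For surjectivity, I would use the standard fact that the closure in $\N^*$ of a discrete countable set is homeomorphic to $\beta\N$ via $x_n\mapsto n$. Given $g\in\ell_\infty$, $g$ corresponds via this homeomorphism to a continuous function on $\overline{\{x_n:n\in\N\}}$, which by Tietze extends to some $h\in C(\N^*)$. Writing $h=f^*$ with $f\in\ell_\infty$ yields $\widetilde R(f)(n)=f^*(x_n)=g(n)$ for every $n$, hence $T([f]_{c_0})=[g]_{c_0}$.

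To show $T$ is nowhere a nonzero matrix operator, fix infinite $A,B\subseteq\N$ and suppose $T_{B,A}$ is a matrix operator, induced by a $c_0$-matrix $(b_{ij})_{i\in B,\,j\in A}$ and the associated operator $R_M\colon\ell_\infty(A)\to\ell_\infty(B)$ (Proposition \ref{l_infty-matrix}). Let $p_B\colon\ell_\infty\to\ell_\infty(B)$ be restriction and $i_A\colon\ell_\infty(A)\to\ell_\infty$ zero-extension. Both $R_M$ and $p_B\circ\widetilde R\circ i_A$ lift $T_{B,A}$, so by Proposition \ref{weaklycompact'} their difference is weakly compact. Since $\widetilde R$ is antimatrix, $(p_B\circ\widetilde R\circ i_A)|c_0(A)=0$, so $R_M|c_0(A)$ is weakly compact. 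By Theorem \ref{matrixweaklycompact} its bidual $(R_M|c_0(A))^{\star\star}$ maps $\ell_\infty(A)$ into $c_0(B)$, and by Proposition \ref{doubleadjoint} this bidual equals $R_M$ itself. Hence $R_M[\ell_\infty(A)]\subseteq c_0(B)$, i.e., $T_{B,A}=[R_M]=0$, as required.

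The main subtlety—rather than a genuine difficulty—is producing two clearly distinguishable liftings of each $T_{B,A}$ so that the weakly-compact-perturbation principle of Proposition \ref{weaklycompact'} has bite. Once that is in place, the dichotomy between the antimatrix character of the canonical lifting $p_B\circ\widetilde R\circ i_A$ and the $c_0$-matrix character of $R_M$, combined with the bidual characterization of weakly compact $c_0$-matrix operators, collapses any matrix representation to zero. The construction itself is soft, and global liftability is automatic from the choice of $\widetilde R$.
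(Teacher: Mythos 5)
Your proof is correct and follows essentially the same route as the paper: the same antimatrix operator $R(f)(n)=\beta f(x_n)$ over a discrete sequence, surjectivity via the $\beta\N$-copy given by the closure of $\{x_n\}$ plus Tietze extension (which is exactly the argument in the paper's cited Proposition \ref{locallynullsurjective}), and the collapse of any matrix localization via Proposition \ref{weaklycompact'}, Theorem \ref{matrixweaklycompact}, and Proposition \ref{doubleadjoint}. The only cosmetic difference is that you spell out the surjectivity argument inline rather than citing the earlier proposition.
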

\begin{proof}
Let $(x_n)_{n\in\N}$ be a discrete sequence of nonprincipal ultrafilters
and consider the typical antimatrix operator $R:\ell_\infty\rightarrow\ell_\infty$
given by $R(f)=((\beta f)(x_n))_{n\in \N}$. Let $T=[R]$. By \ref{locallynullsurjective}
we know that $T$ is surjective. Suppose for some infinite $A, B\subseteq\N$ 
there is $S:\ell_\infty(A)\rightarrow\ell_\infty(B)$
given by a $c_0$-matrix and such that $[S]=T_{B,A}$. 
Let us denote by $R_{B,A}$ the operator which maps $f\in\ell_\infty(A)$ into 
$R(f\cup 0_{\N\setminus A})|B$. By \ref{weaklycompact'}
we have that $S-R_{B,A}$ is weakly compact, and since $R$ is an antimatrix operator
we have that $R|c_0=0$, so $S|c_0(A)$ is weakly compact.
Therefore, by \ref{matrixweaklycompact} and \ref{doubleadjoint} we have 
that the image of $S$ is included in $c_0(B)$ and so $T_{B,A}=[S]=0$. 

\end{proof}

\subsection{Lifting operators on $\ell_\infty/c_0$}

In the case  of the Boolean algebra $\wp(\N)/\text{\emph{Fin}}$,  any endomorphism which
can be lifted to  a homomorphism of $\wp(\N)$ is induced by
a homomorphism of $\text{\emph{FinCofin}}(\N)$. However, in the case of
$\ell_\infty/c_0$, like for $\ell_\infty$ (\ref{discontinuousauto}),
 there exist automorphisms which are not determined by its values on $c_0$:

\begin{proposition}\label{discontinuouslifting} 
There are liftable operators 
such that all their liftings are discontinuous 
and are not induced by its action on $c_0$, i.e., are not matrix operators. Moreover,
 such operators can be automorphisms of $\ell_\infty/c_0$.
\end{proposition}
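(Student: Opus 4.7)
The plan is to take $T = [R_\sigma]$, where $R_\sigma\colon\ell_\infty\to\ell_\infty$ is the discontinuous automorphism of \ref{discontinuousauto} associated with a non-identity permutation $\sigma$ of $\N$. First I would check that $T$ is an automorphism of $\ell_\infty/c_0$: since $R_\sigma$ and $R_{\sigma^{-1}}$ are inverse automorphisms of $\ell_\infty$ that both preserve $c_0$, \ref{combinations'} gives that $[R_\sigma]$ and $[R_{\sigma^{-1}}]$ are inverse on $\ell_\infty/c_0$. Already $R_\sigma$ itself is a discontinuous, non-matrix lifting of $T$, so the real content is to rule out any matrix lifting whatsoever.

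By \ref{weaklycompact'}, every lifting of $T$ has the form $R_\sigma + W$ with $W\colon\ell_\infty\to\ell_\infty$ satisfying $[W]=0$, i.e., $W[\ell_\infty]\subseteq c_0$ (so $W$ is automatically weakly compact). Suppose toward contradiction that some $R_\sigma+W$ is given by a $c_0$-matrix. By \ref{summary_matrices} this is equivalent to $R_\sigma+W = \ddual{((R_\sigma+W)|c_0)}$. The crucial ingredient is that $R_\sigma|c_0 = \mathrm{id}_{c_0}$ (exactly the point of the construction in \ref{discontinuousauto}: $\beta f(x_i)=0$ for $f\in c_0$ and any non-principal ultrafilter $x_i$), so $(R_\sigma+W)|c_0 = \mathrm{id}_{c_0} + W|c_0$ and hence
$$R_\sigma + W \;=\; \mathrm{id}_{\ell_\infty} + \ddual{(W|c_0)}, \quad\text{i.e.,}\quad R_\sigma - \mathrm{id}_{\ell_\infty} \;=\; \ddual{(W|c_0)} - W.$$

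The right-hand side maps into $c_0$: $W$ does by assumption, and $W|c_0$ is weakly compact so $\ddual{(W|c_0)}[\ell_\infty]\subseteq c_0$ by \ref{matrixweaklycompact}. The central step is then to verify that the left-hand side does \emph{not} map into $c_0$ whenever $\sigma\neq\mathrm{id}$. Pick $i_0$ with $\sigma(i_0)\neq i_0$ and set $j=\sigma^{-1}(i_0)$, so $j\neq i_0$; then for every $n\in A_j$ a direct computation gives
$$(R_\sigma - \mathrm{id})(\chi_{A_{i_0}})(n) \;=\; -\beta\chi_{A_{i_0}}(x_j)+\beta\chi_{A_{i_0}}(x_{\sigma(j)}) \;=\; 0+1 \;=\; 1,$$
so this image equals $1$ on the infinite set $A_j$ and is not in $c_0$, a contradiction. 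Thus no lifting of $T$ is a $c_0$-matrix operator, and by the equivalence (2)$\Leftrightarrow$(4) in \ref{summary_matrices} no lifting is continuous on $B_{\ell_\infty}$ in the product topology either, giving both halves of the statement at once. I do not foresee a serious obstacle beyond spotting the right rearrangement; the key fact that makes things work is precisely that $R_\sigma$ is the identity on $c_0$ while failing to be the identity on $\ell_\infty$ in a manner visible on characteristic functions of the pieces $A_i$.
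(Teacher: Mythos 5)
Your proposal is correct, and it follows the paper's proof in its choice of example and in the main reduction: both take $T=[R_\sigma]$, observe that any lifting differs from $R_\sigma$ by an operator $W$ with range in $c_0$, and use $R_\sigma|c_0=Id_{c_0}$ together with \ref{doubleadjoint}/\ref{summary_matrices} to get, for a hypothetical $c_0$-matrix (equivalently, product-continuous) lifting, the identity $R_\sigma-Id=\ddual{(W|c_0)}-W$. Where you diverge is the final contradiction. The paper passes to adjoints: it invokes Gantmacher to see that $\dual{S}-Id$ is weakly compact, hence that $Id_{M(\beta\N^*)}-\dual{R_\sigma}$ would be weakly compact, and then refutes this via the Dieudonn\'e--Grothendieck criterion applied to the images $\delta_{x_i}-\delta_{x_{\sigma(i)}}$ of the discrete sequence $(\delta_{x_i})_{i\in\N}$ (which, as stated there, requires choosing $\sigma$ moving infinitely many indices). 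You instead stay on the primal side: the right-hand side $\ddual{(W|c_0)}-W$ maps $\ell_\infty$ into $c_0$ (using $[W]=0$ and \ref{matrixweaklycompact} for the weakly compact $W|c_0$), while a direct evaluation shows $(R_\sigma-Id)(\chi_{A_{i_0}})$ equals $1$ on the infinite set $A_{\sigma^{-1}(i_0)}$, so it is not in $c_0$. This is more elementary (no Gantmacher, no Dieudonn\'e--Grothendieck, no measure-theoretic discreteness argument) and works for every permutation $\sigma\neq\mathrm{id}$, not just those with infinite support; what the paper's dual argument buys in exchange is the slightly stronger conclusion that $Id-\dual{R_\sigma}$ is not even weakly compact, which is more information than the failure of the range condition but is not needed for the statement. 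Your handling of the two conclusions (no $c_0$-matrix lifting, hence no lifting continuous on $B_{\ell_\infty}$ via (2)$\Leftrightarrow$(4) of \ref{summary_matrices}) matches the intended reading of ``discontinuous'' in the proposition.
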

\begin{proof}
Let $(A_i)_{i\in \N}$ be a partition of $\N$ into infinite sets. For each $i\in\N$,
let $x_i$ be any nonprincipal ultrafilter such that $A_i\in x_i$.
For a permutation $\sigma:\N\rightarrow\N$ consider the automorphism
$R_\sigma:\ell_\infty\rightarrow\ell_\infty$ from the proof of \ref{discontinuousauto}  which is given by
$$R_\sigma(f)(n)= f(n)-\beta f(x_i)+\beta f(x_{\sigma(i)}),$$
where $i\in \N$ is such that  $n\in A_i$.
Recall that $R_\sigma\circ R_{\sigma^{-1}}=Id_{\ell_\infty}$, so by \ref{combinations'} 
we have that
$[R_\sigma]\circ [R_{\sigma^-1}]=[Id_{\ell_\infty}]=Id_{\ell_\infty/c_0}$. 
It follows that the operators $[R_\sigma]$ are automorphisms of $\ell_\infty/c_0$. 

 Now suppose that $S:\ell_\infty\rightarrow \ell_\infty$ is a 
continuous lifting of $[R_\sigma]$. By \ref{summary_matrices}
the operator  $S$ is given by a $c_0$-matrix, and by \ref{weaklycompact'} 
we have that $S-R_\sigma$ is a weakly compact operator into $c_0$.
Note that $R_\sigma|c_0=Id_{c_0}$, therefore $S|c_0=Id_{c_0}+W$,
where $W:c_0\rightarrow c_0$ is the restriction of $S-R_\sigma$ to $c_0$
 and so is weakly compact. 
By \ref{doubleadjoint}  we have
$$S=\ddual{(S|c_0)}=\ddual{Id_{c_0}}+\ddual{W}=Id_{\ell_\infty}+\ddual{W},$$
and so $\dual{S}=Id_{M(\beta\N^*)}+U$, where $U$ is weakly compact by the
Gantmacher theorem. 
Therefore,  $Id_{M(\beta\N^*)}+U-\dual{R_\sigma}$ is a weakly compact operator,
and so is $Id_{M(\beta\N^*)}-\dual{R_\sigma}$.
We will show that this is impossible by showing that the bounded sequence 
of measures $(\delta_{x_i})_{i\in \N}$ is not mapped onto a relatively weakly compact set.

A simple calculation gives that 
$\dual{R_\sigma}(\delta_{x})=\delta_x-\delta_{x_i}+\delta_{x_{\sigma(i)}},$
if $x\in A_i^*$. It follows that
$\dual{R_\sigma}(\delta_{x_i})=\delta_{x_{\sigma(i)}}$, for every $i\in\N$.
So $(Id_{M(\beta\N^*)}-\dual{R_\sigma})(\delta_{x_i})=\delta_{x_i}-\delta_{x_{\sigma(i)}}$,
which by the Dieudonne-Grothendieck theorem implies that $Id_{M(\beta\N^*)}-\dual{R_\sigma}$ 
is not weakly compact unless
$\sigma$ moves only finitely many $i\in \N$, as 
the sequence $(x_i)_{i\in\N}$ is discrete.

\end{proof}

 Unlike in the case of the algebra $\wp(\N)/\text{\emph{Fin}}$, nonliftable
automorphisms of $\ell_\infty/c_0$ exist in ZFC. Before proving this
we need one:

\begin{lemma}\label{R'notweaklycompact}
Suppose  $R:\ell_\infty\rightarrow \ell_\infty$ preserves $c_0$. 
If  $R$ is not weakly compact, then $[R]$ is not weakly compact either.
\end{lemma}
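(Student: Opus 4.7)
The plan is to prove the contrapositive: assuming $[R]$ is weakly compact, we derive that $R$ is weakly compact. The key tool is Rosenthal's theorem, which asserts that a bounded operator on $\ell_\infty$ fails to be weakly compact if and only if it is an isomorphic embedding when restricted to some subspace of the form $\ell_\infty(A)$ for infinite $A\subseteq\N$. Assume for contradiction that $R$ is not weakly compact, so there exist an infinite $A\subseteq\N$ and $\kappa>0$ with $\|R(f)\|\geq\kappa\|f\|$ for every $f\in\ell_\infty(A)$. Set $T=[R]\circ I_A:\ell_\infty(A)/c_0(A)\to\ell_\infty/c_0$; since $[R]$ is weakly compact, so is $T$. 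The contradiction will follow by exhibiting an infinite $B\subseteq A$ such that $[R]\circ I_B$ is an isomorphic embedding of $\ell_\infty(B)/c_0(B)\cong\ell_\infty/c_0$ into $\ell_\infty/c_0$, which is impossible for a weakly compact operator.

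First, $\ker T$ is separable: an element $[f]\in\ker T$ arises from $f\in\ell_\infty(A)$ with $R(f)\in c_0$, and since $R|\ell_\infty(A)$ is an isomorphic embedding, the space of such $f$ injects via $R$ into the separable space $c_0$, hence is separable, so $\ker T$ is separable after quotienting by $c_0(A)$. Fix an almost disjoint family $\{B_\alpha:\alpha<\cc\}$ of infinite subsets of $A$. For distinct $\alpha,\beta$, the subspaces $\ell_\infty(B_\alpha)/c_0(B_\alpha)$ and $\ell_\infty(B_\beta)/c_0(B_\beta)$ of $\ell_\infty(A)/c_0(A)$ intersect trivially (because $B_\alpha\cap B_\beta$ is finite), and any unit vectors in them are at distance $1$ because their supports $B_\alpha^*,B_\beta^*\subseteq\N^*$ are disjoint clopen sets. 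Were $\ker T$ to intersect each such subspace nontrivially, picking unit vectors in each intersection would produce an uncountable $1$-separated family inside the separable space $\ker T$, a contradiction. Hence for some infinite $B=B_\alpha\subseteq A$, the operator $[R]\circ I_B$ is injective, and $R|\ell_\infty(B)$ remains an isomorphic embedding.

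The final step, which I expect to be the main technical obstacle, is upgrading injectivity to a bounded-below map. By the injectivity of $\ell_\infty$ as a Banach space, the isomorphic copy $R[\ell_\infty(B)]\cong\ell_\infty$ is complemented in $\ell_\infty$; one can moreover choose a bounded projection $P:\ell_\infty\to R[\ell_\infty(B)]$ satisfying $P[c_0]\subseteq R[c_0(B)]$, by first extending the Pelczynski projection from $c_0$ onto the isomorphic copy $R[c_0(B)]\cong c_0$ using the injectivity of $\ell_\infty$. For $f\in\ell_\infty(B)$ and arbitrary $g\in c_0$, let $h\in c_0(B)$ be the unique element with $R(h)=P(g)$, well-defined by injectivity of $[R]\circ I_B$. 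Since $R(f)\in R[\ell_\infty(B)]$ implies $P(R(f))=R(f)$,
\begin{align*}
\|R(f)-g\| &\geq \|P\|^{-1}\|P(R(f)-g)\| = \|P\|^{-1}\|R(f-h)\| \\
&\geq (\kappa/\|P\|)\|f-h\| \geq (\kappa/\|P\|)\|[f]\|,
\end{align*}
where the last inequality uses $h\in c_0$. Taking the infimum over $g\in c_0$ yields $\|[R(f)]\|\geq(\kappa/\|P\|)\|[f]\|$, so $[R]\circ I_B$ is an isomorphic embedding. But then $[R]\circ I_B$ inherits weak compactness from $[R]$, so its image of the unit ball is weakly relatively compact; being weakly homeomorphic to $B_{\ell_\infty(B)/c_0(B)}$, this forces $\ell_\infty(B)/c_0(B)\cong\ell_\infty/c_0$ to be reflexive, which is absurd.
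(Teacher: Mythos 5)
The first two-thirds of your argument is fine and in fact runs parallel to the paper's proof: Rosenthal's theorem gives an $A$ on which $R$ is bounded below, and the separability of $\{f\in\ell_\infty(A):R(f)\in c_0\}$ together with an almost disjoint family (the paper uses a countable Boolean algebra and pairs $u_\xi,v_\xi$ it cannot separate, but the mechanism is the same) produces plenty of directions outside the kernel. The problem is the final step, which you yourself flag as the main obstacle: the existence of a bounded projection $P:\ell_\infty\rightarrow Z$, $Z=R[\ell_\infty^0(B)]$, satisfying $P[c_0]\subseteq R[c_0(B)]$. The construction you sketch does not produce such a $P$: extending the Sobczyk/Pe\l czy\'nski projection $Q:c_0\rightarrow R[c_0(B)]$ through the injectivity of $Z\cong\ell_\infty$ yields an operator $\tilde Q:\ell_\infty\rightarrow Z$ with $\tilde Q|c_0=Q$, but nothing forces $\tilde Q$ to restrict to the identity on $Z$, so it is not a projection onto $Z$; conversely, a projection onto $Z$ obtained from injectivity of $\ell_\infty$ carries no control over its action on $c_0$. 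Getting one operator to do both jobs is not a soft injectivity argument. Indeed, your own computation shows that such a $P$ yields $\mathrm{dist}(z,c_0)\geq\|P\|^{-1}\mathrm{dist}(z,Z\cap c_0)$ for all $z\in Z$, which is exactly the standard criterion for $Z+c_0$ to be closed, i.e., for $[R]\circ I_B$ to be bounded below --- the very statement you are trying to prove. So as written the argument is circular at its crux, and you give no independent reason why injectivity of $[R]\circ I_B$ (which is all that step 2 delivers) should upgrade to bounded-below-ness for that particular $B$.

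Note also that this heavy machinery is not needed for the lemma. The paper never tries to show that $[R]$ fixes a copy of $\ell_\infty/c_0$; after producing, for uncountably many $\xi$, disjointly supported norm-one $g_\xi\in\ell_\infty^0(A)$ with $R(g_\xi)\notin c_0$, it extracts infinitely many with $\|[R(g_\xi)]_{c_0}\|>1/n$ by pigeonhole and concludes at once from the characterization of weak compactness of operators on $C(K)$-spaces via bounded pairwise disjoint sequences (Corollary VI--17 of \cite{diestel-ulh}). If you want to salvage your route, you would either have to prove the projection claim (equivalently, closedness of $Z+c_0$) by some genuinely new argument, or abandon the embedding statement and finish, as the paper does, with the disjoint-sequence criterion applied to disjointly supported vectors avoiding the kernel.
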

\begin{proof}
If $R$ is not weakly compact, then there is an infinite $A\subseteq \N$  such that
$R$ restricted to $\ell_\infty^0(A)=\{f\in \ell_\infty: f|(\N\setminus A)=0\}$
 is an isomorphism onto its range (see Prop. 1.2. from \cite{rosenthal1970} and Corollary VI--17 of \cite{diestel-ulh}).
 Consider $X=R^{-1}[c_0]$, a closed subspace 
of $\ell_\infty$ containing $c_0$. Note that $X\cap \ell_\infty^0(A)$ is separable as
$R[X\cap \ell_\infty^0(A)]\subseteq c_0$ and $R$ is an isomorphism on $\ell_\infty^0(A)$.
By the standard argument using the Stone-Weierstrass theorem with respect to simple functions
one can find a countable Boolean algebra $\mathfrak B$ of subsets of $A$ such that
$X\cap \ell_\infty^0(A)$ is included in the closure of the span of $\{\chi_B: B\in\mathfrak B\}$. 

 Let $(D_\xi)_{\xi<\omega_1}$ be a family  of 
pairwise almost disjoint infinite subsets of $A$. For each $\xi<\omega_1$ 
take $x\in D_\xi^*$ and let $E_\xi$ be infinite such that 
$E_\xi^*\subseteq \bigcap\{B^*:B\in\mathfrak B\cap x\}\cap
\bigcap\{\N^*\setminus B^*:B\in\mathfrak B\setminus x\}$ (it exists by \ref{gdelta}
and because $\mathfrak B$ is countable). 
Now take  $u_\xi, v_\xi\in E^*_\xi$ distinct. It follows that no element of 
$\mathfrak B$ separates any of the pairs $(u_\xi, v_\xi)$. Therefore, $\beta f(u_n)=\beta f(v_n)$  for every
$f\in X\cap \ell_\infty^0(A)$. 

For every $\xi<\omega_1$ choose $g_\xi\in\ell_\infty^0(A)$ with support in $D_\xi$ such that
$\|g_\xi\|=1$, $g_\xi(u_\xi)=1$ and $g_\xi(v_\xi)=-1$. 
Notice that $R(g_\xi)\notin c_0$, for every $\xi<\omega$. This implies that 
$\|[R(g_\xi)]_{c_0}\|>0$ for every $\xi<\omega_1$, so there exists $n\in\N$
such that for infinitely many $\xi<\omega_1$ we have $\|[R(g_\xi)]_{c_0}\|>1/n$.
Since the 
$g_\xi^*$ are pairwise disjoint, $[R]$ is not weakly compact 
by Corollary VI--17 of \cite{diestel-ulh}.
\end{proof}

\begin{proposition} Every weakly compact operator on $\ell_\infty/c_0$
with nonseparable range is nonliftable. Such operators exist.
\end{proposition}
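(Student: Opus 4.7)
The plan is to separate the proof into the nonliftability of any such $T$ and the existence of a concrete example.

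For nonliftability, suppose toward contradiction that $T:\ell_\infty/c_0\rightarrow \ell_\infty/c_0$ is weakly compact with nonseparable range and admits a lifting $R:\ell_\infty\rightarrow\ell_\infty$ preserving $c_0$, i.e., $T=[R]$. The contrapositive of Lemma \ref{R'notweaklycompact} forces $R$ itself to be weakly compact. I then invoke the classical result that every weakly compact operator from $\ell_\infty$ into a Banach space has separable range (a consequence of the structure of $\ell_\infty$ as a Grothendieck $\mathcal{L}_\infty$-space; see \cite{rosenthal1970}). Consequently $R[\ell_\infty]$ is norm-separable. Combined with the lifting identity $T\circ Q=Q\circ R$, where $Q:\ell_\infty\rightarrow\ell_\infty/c_0$ denotes the quotient, this yields
$$T[\ell_\infty/c_0]=Q[R[\ell_\infty]],$$
which is separable as a continuous image of a separable set, contradicting the hypothesis on $T$.

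For existence, I would construct $T$ as a composition through a nonseparable reflexive subspace of $\ell_\infty/c_0$. Fix an almost disjoint family $(A_\alpha)_{\alpha<\omega_1}$ of infinite subsets of $\N$; the elements $e_\alpha=[\chi_{A_\alpha}]_{c_0}$ have pairwise disjoint supports in $\N^*$ and their closed linear span is isomorphic to $c_0(\omega_1)$. Using appropriate (possibly block-wise) linear combinations of the $e_\alpha$, one exhibits an isomorphic embedding $j:Z\hookrightarrow\ell_\infty/c_0$ of a nonseparable reflexive space $Z$ (for instance $\ell_2(\omega_1)$), and dually a bounded operator $U:\ell_\infty/c_0\rightarrow Z$ with nonseparable range, obtained from a suitable Bessel-type family of measures in $M(\N^*)$ indexed by $\omega_1$. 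The composition $T=j\circ U$ then factors through the reflexive space $Z$, which makes $T$ weakly compact, while the nonseparable range of $U$ passes through to $T$ via the isomorphic embedding $j$.

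The principal obstacle is the existence construction itself: arranging simultaneously the nonseparable reflexive embedding $j$ and a bounded $U$ with nonseparable range from $\ell_\infty/c_0$ onto (a subspace of) $Z$ requires delicate combinatorial and measure-theoretic choices inside $\N^*$. A secondary input is the classical Rosenthal-type theorem on separable range of weakly compact operators from $\ell_\infty$, which is deep but well-known; the nonliftability direction itself, once that theorem is granted, is a short citation-plus-diagram chase.
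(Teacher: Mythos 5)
Your nonliftability argument follows the paper's strategy: contrapose Lemma \ref{R'notweaklycompact} to deduce that any lifting $R$ of $T$ must itself be weakly compact, then conclude separability of the range and hence of $T[\ell_\infty/c_0]$. However the ``classical result'' you invoke --- that every weakly compact operator from $\ell_\infty$ into a Banach space has separable range --- is false, and in fact the paper's own existence argument furnishes a counterexample: there is a bounded surjection of $\ell_\infty$ onto $\ell_2(2^\omega)$, which is weakly compact (the target is reflexive) with nonseparable range. What is true, and what the paper actually uses, is that weakly compact \emph{subsets} of $\ell_\infty$ are norm separable (Corollary 4.6 of \cite{rosenthalacta}). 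This suffices here because a lifting $R$ maps into $\ell_\infty$, so the closure of $R[B_{\ell_\infty}]$ is a weakly compact subset of $\ell_\infty$ and hence separable. With that correction your first half is essentially the paper's proof.

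The existence half of your proposal has a genuine gap. You propose to produce a nonseparable reflexive $Z$, such as $\ell_2(\omega_1)$, inside $\ell_\infty/c_0$ by taking block-wise linear combinations of the disjointly supported vectors $e_\alpha=[\chi_{A_\alpha}]$. This cannot work: the $e_\alpha$ span an isometric copy of $c_0(\omega_1)$, any block basis is again disjointly supported and spans a copy of some $c_0(\Gamma)$, and more fundamentally $c_0(\Gamma)$ is $c_0$-saturated, so it contains no infinite-dimensional reflexive subspace whatsoever. A copy of a nonseparable Hilbert space in $C(\N^*)$ exists but requires functions with highly overlapping supports, not disjointly supported ones, and the known route is nonelementary. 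The paper obtains an isometric copy of $\ell_2(2^\omega)$ in $\ell_\infty/c_0$ by combining Avil\'es' theorem \cite{antoniol2} that $(B_{\ell_2(2^\omega)},w)$ is a continuous image of $A(2^\omega)^\N$ with the Bell--Shapiro--Simon theorem \cite{bell} that $A(2^\omega)^\N$ is a continuous image of $\N^*$, and then composes an embedding $S_1:\ell_2(2^\omega)\rightarrow\ell_\infty/c_0$ with a bounded surjection $S_2:\ell_\infty/c_0\rightarrow\ell_2(2^\omega)$ built from the complementation of $\ell_\infty$ in $\ell_\infty/c_0$ and Rosenthal's surjection of $\ell_\infty$ onto $\ell_2(2^\omega)$ \cite{rosenthalquasi}. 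Your ``Bessel-type family of measures'' gestures at something like $S_2$, but without the nontrivial embedding $S_1$ your construction does not get off the ground.
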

\begin{proof}
Suppose that $S:\ell_\infty/c_0\rightarrow\ell_\infty/c_0$ is weakly compact with nonseparable range
and $R:\ell_\infty\rightarrow \ell_\infty$  is such that $[R]=S$ and $R$ preserves $c_0$.
$R$ must be weakly compact itself by \ref{R'notweaklycompact}.
In particular, the image of the unit ball under $R$ is weakly compact. Since weakly compact
subsets of $\ell_\infty$ are norm separable (Corollary. 4.6 of \cite{rosenthalacta}), we have that
 the image of $R$ is separable. But this implies that the image of
$[R]=S$ is separable as well, contradicting the hypothesis.

Now we construct a weakly compact operator $S:\ell_\infty/c_0\rightarrow \ell_\infty/c_0$
with nonseparable range which is weakly compact. 
The construction is based on the fact that $\ell_\infty/c_0$ contains an isometric copy of $\ell_2(2^\omega)$.
This follows from the result of Avilés in \cite{antoniol2} which states that the unit ball in
$\ell_2(2^\omega)$ with the weak topology (equivalently weak$^*$ topology)  is a continuous image of $A(2^\omega)^\N$,
where $A(2^\omega)$ is the one point compactification of the discrete space of size $2^\omega$.
On the other hand, by Theorem 2.5 and Example 5.3. in \cite{bell} we
have that $A(2^\omega)^\N$ is a continuous image of $\N^*$. Hence $C(B_{\ell_2(2^\omega)})$ embeds
isometrically into $C(\N^*)$ and so does $\ell_2(2^\omega)$.
So let $S_1:\ell_2(2^\omega)\rightarrow \ell_\infty/c_0$ be an isomorphism onto its range.

To complete the construction, it is enough to take a surjective operator
 $S_2: \ell_\infty/c_0\rightarrow \ell_2(2^\omega)$ and consider
 $S=S_1\circ S_2$. This is because any 
operator into a reflexive Banach space is  weakly compact (Corollary VI.4.3 of \cite{dunford})
and  weakly compact operators form a two sided ideal (Theorem VI.4.5 of \cite{dunford}).

 The existence such of a surjective operator follows from
the complementation of $\ell_\infty$ in $\ell_\infty/c_0$ and the existence of a surjective operator
 $T: \ell_\infty\rightarrow \ell_2(2^\omega)$ which was proved in \cite{rosenthalquasi}
Proposition 3.4.  and remark 2 below it.  It is based on  a construction 
of an isomorphic copy of $\ell_2(2^\omega)$ inside $\dual{\ell_\infty}$ (proposition 3.4
of \cite{rosenthalquasi}). Once we have an isomorphic embedding $T:\ell_2(2^\omega)\rightarrow
\dual{\ell_\infty}$ we consider 
$$\dual{T}\circ J: \ell_\infty\rightarrow\ddual{\ell_\infty}\rightarrow\dual{\ell_2(2^\omega)},$$
where $J:\ell_\infty\rightarrow \ddual{\ell_\infty}$ is the canonical embedding. 
We have that $\dual{(\dual{T}\circ J)}=\dual{J}\circ \ddual{T}=T$ using the reflexivity of $\ell_2(2^\omega)$
to identify it with $\ddual{\ell_2(2^\omega)}$. But $T$ is one-to-one with closed
range, so $\dual{T}\circ J$ must be onto as required.

\end{proof}

\begin{theorem}\label{nonliftable} 
There is an automorphism $T: \ell_\infty/c_0\rightarrow \ell_\infty/c_0$ 
which cannot be lifted to a linear operator on $\ell_\infty$.
\end{theorem}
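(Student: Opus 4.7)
The plan is to exhibit $T$ as a small perturbation of the identity by a nonliftable operator. By the preceding proposition there exists a weakly compact operator $S_0\colon \ell_\infty/c_0\to\ell_\infty/c_0$ with nonseparable range, and every such $S_0$ is nonliftable. Liftability is invariant under nonzero scalar multiplication: if $R$ preserves $c_0$ and $[R]=S_0$, then $\alpha R$ preserves $c_0$ and $[\alpha R]=\alpha S_0$, and conversely. Hence I may replace $S_0$ with $S=\alpha S_0$ for some small $\alpha>0$ so that $\|S\|<1$, and $S$ is still weakly compact, has nonseparable range, and is nonliftable.

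Set $T = \mathrm{Id}_{\ell_\infty/c_0} + S$. Since $\|T-\mathrm{Id}\|=\|S\|<1$, the Neumann series $\sum_{n\ge 0}(-S)^n$ converges in operator norm and provides a bounded two-sided inverse for $T$, so $T$ is a Banach-space automorphism of $\ell_\infty/c_0$.

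To see that $T$ has no lifting, suppose towards contradiction that some linear bounded $R\colon\ell_\infty\to\ell_\infty$ preserves $c_0$ and satisfies $[R]=T$. Then $R-\mathrm{Id}_{\ell_\infty}$ also preserves $c_0$, and by Lemma~\ref{combinations'} we have
$$[R-\mathrm{Id}_{\ell_\infty}] \;=\; [R]-[\mathrm{Id}_{\ell_\infty}] \;=\; T - \mathrm{Id}_{\ell_\infty/c_0} \;=\; S.$$
Thus $R-\mathrm{Id}_{\ell_\infty}$ would be a lifting of $S$, contradicting the nonliftability of $S$. Since liftings in the sense of Definition~\ref{trivializations} on cofinite $A,B\subseteq\N$ give rise (by extending trivially off the cofinite domain) to honest liftings $\ell_\infty\to\ell_\infty$, the same contradiction rules out any lifting of $T$ in the more permissive sense as well. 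This produces the required nonliftable automorphism.

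The only real subtlety is ensuring that the nonliftable perturbation can be made norm-small while remaining nonliftable, which is immediate from the scalar invariance above; everything else reduces to the observation that additively twisting a lifting of $T$ by $-\mathrm{Id}_{\ell_\infty}$ transfers nonliftability from $S$ to $T$.
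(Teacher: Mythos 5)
Your proof is correct, and it takes a genuinely different --- and arguably cleaner --- route to invertibility than the paper's. Both arguments start from the same ingredient (a weakly compact $S_0$ with nonseparable range, which by the preceding proposition is nonliftable) and both conclude via the same observation that subtracting $\mathrm{Id}_{\ell_\infty}$ from a would-be lifting of $T$ would produce a lifting of the weakly compact part, a contradiction. The difference is in how one turns $\mathrm{Id}+S$ into an actual automorphism. The paper keeps $S$ as is, notes that it is strictly singular, invokes the Fredholm theory for strictly singular perturbations of the identity (Proposition 2.c.10 of Lindenstrauss--Tzafriri) to get a Fredholm operator of index $0$, and then corrects by a finite-rank operator $U$ so that $T=\mathrm{Id}+S+U$ is an automorphism; this requires checking that $S+U$ is still weakly compact with nonseparable range. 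You instead observe that nonliftability, weak compactness, and nonseparability of the range are all invariant under multiplication by a nonzero scalar, rescale $S$ to have norm $<1$, and then get invertibility for free from the Neumann series. Your route avoids Fredholm theory entirely and does not need the remark that a finite-rank perturbation preserves the relevant properties, so it is more elementary; the paper's route has the marginal advantage that it does not perturb the originally constructed $S$, but since a scalar multiple is about as mild a perturbation as one can imagine, that advantage is cosmetic. In short: correct proof, same skeleton, different and simpler mechanism for invertibility.
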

\begin{proof}

Consider $T_1=Id+S$ where $S$ is any weakly compact operator on $\ell_\infty/c_0$
from the previous proposition. 
 Since $S$ is strictly singular, $T_1$ is a Fredholm operator
 of Fredholm index $0$ (see Proposition 2.c.10 of \cite{tzafriri}),
i.e., its kernel is finite dimensional of dimension $n$
 and its range is of the same finite codimension $n$. 
Since finite dimensional subspaces of Banach spaces  are complemented we can write 
$$T_1:Ker(T_1)\oplus X\rightarrow Range(T_1)\oplus Y$$
where $Y$ is of finite dimension $n$ and $X$ has the same finite codimension $n$. 
Let $U: Ker(T_1)\rightarrow Y$ be an isomorphism 
and define $T: Ker(T_1)\oplus X\rightarrow Range(T_1)\oplus Y$  by
$T(z, x)=(T_1(x),U(z))$. It follows that $T$ satisfies
$$T=T_1+U=Id+S+U$$
Having null kernel and being surjective it is an automorphism of $\ell_\infty/c_0$.
Now let us show that $T$ cannot be lifted to an operator on $\ell_\infty/c_0$.
$S+U$ is weakly compact with nonseparable range as a sum of 
an operator with this property and a finite rank operator, so it cannot be lifted by
the previous proposition. Since the sum of two liftable operators is liftable and \emph{Id} is liftable,
it follows that $T$ is an automorphism which cannot be lifted.

\end{proof}
\begin{proposition}\label{noliftingnontrivial} If $T: \ell_\infty/c_0\rightarrow \ell_\infty/c_0$
is canonizable along a homeomorphism $\psi: \N^*\rightarrow \N^*$
and $\psi$ is a nontrivial homeomorphism (i.e., it is not induced by a bijection
of two coinfinite subsets of $\N$), then $T$ is not liftable.
\end{proposition}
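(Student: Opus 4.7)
The plan is to derive a contradiction from the assumption that $T$ admits a lifting, using Corollary~\ref{corcanonembedding}, which says that every liftable isomorphic embedding of $\ell_\infty/c_0$ is right-locally trivial. Since $T$ is canonizable along a homeomorphism it is an automorphism, hence in particular an isomorphic embedding, so the corollary applies.

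Applying right-local triviality (for instance with $A=\N$) produces infinite $B,A_1\subseteq\N$, a bijection $\sigma\colon B\to A_1$, and a nonzero $r'\in\R$ such that $T_{B,A_1}([f]_{c_0(A_1)})=[r'\,f\circ\sigma]_{c_0(B)}$ for all $f\in\ell_\infty(A_1)$.

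I would then pass to adjoints to compare two expressions for $\dual{T_{B,A_1}}(\delta_y)$ when $y\in B^*$. The triviality of $T_{B,A_1}$ yields $\dual{T_{B,A_1}}(\delta_y)=r'\delta_{\hat\sigma(y)}$, where $\hat\sigma\colon B^*\to A_1^*$ is the homeomorphism induced by $\sigma$. On the other hand, the identity $\dual{T_{B,A_1}}=\dual{I_{A_1}}\circ\dual{T}\circ\dual{P_B}$ combined with the canonicity $\dual{T}(\delta_y)=r\delta_{\psi(y)}$ gives $\dual{T_{B,A_1}}(\delta_y)=r\,\delta_{\psi(y)}|A_1^*$. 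Because $r'\neq 0$, matching these forces $\psi(y)\in A_1^*$, $\psi(y)=\hat\sigma(y)$, and $r=r'$ for every $y\in B^*$. Thus $\psi|B^*=\hat\sigma$, and by injectivity of $\psi$ also $\psi^{-1}(A_1^*)=B^*$, so $\psi$ is induced on the clopen $B^*$ by the bijection $\sigma\colon B\to A_1$ between infinite subsets of $\N$.

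To finish, I would verify that $B$ and $A_1$ may be taken coinfinite, immediately contradicting the hypothesis. If $B$ is cofinite, then $B^*=\N^*$, so $\psi=\hat\sigma$ globally, and surjectivity of $\psi$ forces $A_1$ to be cofinite too, making $\sigma$ an almost permutation. But then replacing $B$ by any proper infinite subset $B'\subsetneq B$, the restriction $\sigma|B'\colon B'\to\sigma(B')$ is a bijection between two coinfinite subsets of $\N$ that still induces $\psi$ on $(B')^*$, already witnessing the forbidden form. If $B$ is coinfinite, then $A_1$ is coinfinite as well (otherwise $A_1^*=\N^*$ would force $B^*=\psi^{-1}(A_1^*)=\N^*$), and $\sigma$ itself is the forbidden bijection. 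Either way the hypothesis that $\psi$ is not induced by a bijection of two coinfinite subsets of $\N$ is contradicted, so $T$ cannot have been liftable. The only delicate step is the adjoint comparison in the third paragraph; the remaining case analysis only unwinds the definition of nontriviality.
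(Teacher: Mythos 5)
Your first three paragraphs are correct: canonizability along a homeomorphism makes $T$ an automorphism, so Corollary \ref{corcanonembedding} applies once a lifting is assumed, and the adjoint comparison $\dual{T_{B,A_1}}(\delta_y)=r\,\delta_{\psi(y)}|A_1^*=r'\delta_{\sigma^*(y)}$ does force $r=r'$ and $\psi|B^*=\sigma^*$. The gap is in the final step. All you obtain is that $\psi$ is trivial on \emph{one} clopen set $B^*$, with $B$ and $A_1$ merely infinite, and this contradicts the hypothesis only if ``nontrivial'' is read as ``nowhere trivial''. That is not the intended meaning: a bijection between two coinfinite sets cannot induce a homeomorphism of all of $\N^*$ in the first place, and the parenthetical should be read with cofinite sets, as the paper's own proof makes explicit — its Claim 3 produces cofinite $A,B\subseteq\N$ and a bijection $J:B\rightarrow A$ such that the matrix part of the lifting satisfies $R_1^*=T_\phi$ with $\phi=J^*$ a (globally) trivial homeomorphism, and the contradiction is then drawn from a single point where $\psi^{-1}\neq\phi^{-1}$. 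With the intended hypothesis (``$\psi$ is not induced by an almost permutation of $\N$''), somewhere-triviality of $\psi$ is no contradiction at all: assuming CH, glue the identity on $D^*$ with a nowhere trivial homeomorphism of $(\N\setminus D)^*$; this $\psi$ is nontrivial yet trivial on every clopen subset of $D^*$, so for $T_\psi$ your argument terminates without any contradiction, although the proposition asserts that $T_\psi$ is not liftable. As written, your argument proves only the weaker statement that canonization along a \emph{nowhere} trivial homeomorphism precludes liftability (which does suffice for the CH application via \ref{nowheretrivialch}, but is not the proposition).

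What is missing is a way to extract \emph{global} information from the lifting, and this is where the paper's work lies: decompose the lifting as $R=R_1+R_2$ with $R_1$ given by a $c_0$-matrix and $R_2$ antimatrix (\ref{decompositionmatrix}); show that two disjointly supported functions cannot both carry mass $\geq\varepsilon$ under $\dual{R_1}(\delta_i)$ for infinitely many $i$ (Claim 1); deduce that each row of the matrix has a dominant entry bounded away from zero and that the remainder of $R_1$ is weakly compact, hence negligible modulo $c_0$ (Claim 2); and finally show the dominant entries define a bijection between cofinite sets (Claim 3), so that the lifting forces $\psi$ itself to coincide with a trivial homeomorphism. Note also that iterating your use of \ref{corcanonembedding} over all infinite $A$ would only give that $\psi$ is trivial over a dense family of clopen sets, which still does not obviously yield global triviality; so the purely local route cannot be patched without an argument of this global kind.
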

\begin{proof} We may assume that $\str{T}=T_\psi$, that is,  the constant
$r$ of Definition \ref{trivializations} is $1$. Suppose $R:\ell_\infty\rightarrow \ell_\infty$ is a lifting of $T$.
Let $R=R_1+R_2$ (see \ref{decompositionmatrix}),  where $R_1$ is an
operator given by a $c_0$-matrix and $R_2$ is an antimatrix operator. 

\textsc{Claim 1:} There cannot exist disjoint functions $f,\, g\in \ell_\infty$ (i.e.,
such that $f^.g=0$)  and $\varepsilon>0$ such that
$\int \beta f \ud\dual{R}_1(\delta_i),\, \int \beta g \ud\dual{R}_1(\delta_i)
>\varepsilon$ for infinitely many $i\in\N$.

Indeed, in such a case, we would find 
an infinite $\tilde{B}\subseteq \N$ such that for every $y\in \tilde{B}^*$, we would have 
$\int \beta f \ud(\dual{R}_1(\delta_y)),\, \int \beta g \ud\dual{R}_1(\delta_y)\geq\varepsilon$.
Since  $T=T_\psi$, for all $y\in \N^*$ we have that $\dual{T}(\delta_y)=\delta_{\psi(y)}$, which implies
that either $\int f^* \ud\dual{T}(\delta_y)=0$ or $\int g^* \ud\dual{T}_1(\delta_y)=0$.
Therefore, we will obtain a contradiction if we can free $\dual{T}$ from
the influence of $\dual{R_2}$ somewhere.
By \ref{matrixweaklycompact} and using an argument as in the proof of \ref{pseudo_diagonal2}, 
 we find an infinite $B\subseteq\tilde{B}$
and pairwise disjoint finite $F_i\subseteq \N$ such that 
$|\dual{R_1}(\delta_i)|(\beta\N\setminus F_i)<\varepsilon/3\max\{\|f\|,\|g\|\}$ for $i\in B$.
This implies that $\int_{F_i} \beta f \ud\dual{R}_1(\delta_i)$,
$ \int_{F_i} \beta g \ud\dual{R}_1(\delta_i)\geq2\varepsilon/3$.
Consider an uncountable almost
disjoint family $\{B_\xi: \xi<\omega_1\}$ of subsets of $B$ and sets 
$A_\xi=\bigcup\{F_i: i\in B_\xi\}$, for $\xi<\omega_1$. We have that 
$\int_{\beta A_\xi} \beta f \ud\dual{R}_1(\delta_i)$,
$ \int_{\beta A_\xi} \beta g \ud\dual{R}_1(\delta_i)
\geq\varepsilon/3$ for each $i\in B_\xi$, since the measures are concentrated on $\N$ (see \ref{antimatrix_charac}).
Now, the sets $A_\xi^*$ are pairwise disjoint and the measures 
$\dual{R_2}(\delta_i)$ are concentrated on $\N^*$, so there is a $\xi_0<\omega_1$
such that $|\dual{R_2}(\delta_i)|(\beta A_{\xi_0})=0$ for all $i\in \N$.
So by \ref{*lifting} and by the weak$^*$ continuity of $\dual{R_1}$, for every $y\in B_{\xi_0}^*$ we have 
$$\int (f^*|A_{\xi_0}^*)\ud\dual{T}(\delta_y)
=\int_{\N^*} (\beta f|\beta A_{\xi_0})\ud(\dual{R_1}(\delta_y)+\dual{R_2}(\delta_y))\geq
\varepsilon/3+0=\varepsilon/3.$$
A similar calculation works for $\beta g|\beta A_{\xi_0}$ which gives the desired
contradiction since the restrictions of disjoint functions are disjoint. So the claim is proved.

Let $(b_{ij})_{i,j\in\N}$ be the matrix of $R_1$, i.e., $\dual{R}_1(\delta_i)=\sum_{j\in \N}b_{ij}\delta_j$
for all $i\in \N$. Let $j_i\in \N$ be such that $b_{ij_i}$ has the biggest absolute value among
the numbers $\{b_{ij}: j\in \N\}$ for all $i\in \N$.

\textsc{Claim 2:} The $b_{ij_i}$'s are separated from $0$. 

Assume otherwise. Then, we can find
an infinite $B\subseteq \N$  such that for $i\in B$ the
numbers  $b_{ij_i}$'s converge to $0$. If the sequence 
 $(\|b_i\|_{\ell_1})_{i\in B}$ is not separated from zero, then by \ref{matrixweaklycompact}
there would be an infinite $B'\subseteq B$ such that the map $f\mapsto R_1(f)|B'$ is
weakly compact 
and so $P_{B'}\circ [R_1]=0$ by \ref{doubleadjoint} and \ref{matrixweaklycompact}. 
This would then imply by \ref{antimatrixlocallynull} that $P_{B'}\circ T$ is locally null, 
which is impossible since $P_{B'}\circ T_\psi$ is an automorphism on $\psi[(B')^*]$-supported functions.
Therefore, the sequence $(\|b_i\|_{\ell_1})_{i\in B}$ is separated from zero.

By \ref{pseudo_diagonal2}, there exist $\delta>0$, an infinite $B_0\subseteq B$ 
and finite $F_i\subseteq \N$ for $i\in B$ which 
are pairwise disjoint and
such that $\sum_{j\in F_i}|b_{ij}|>\delta$ 
for all $i\in B_0$. Since $b_{ij_i}$'s converge to $0$,  one
can partition  each $F_i$ into $H_i$ and $G_i$ such that
$\sum_{j\in G_i}|b_{ij}|>\delta/4$ and $\sum_{j\in H_i}|b_{ij}|>\delta/4$
for sufficiently large $i\in B$ (construct $G_i$ considering initial fragments $G_i(k)$ of $F_i$,
for $k\leq|F_i|$ starting with
$G_i(0)=\emptyset$ and increasing the previous fragment by one element. Since
the jumps between  $\sum_{j\in G_i(k)}|b_{ij}|$ and $\sum_{j\in G_i(k+1)}|b_{ij}|$
can be at most $|b_{i j_i}|$, which is eventually less than $\delta/4$, we can 
obtain the required $G_i$ and $H_i=F_i\setminus G_i$ at some stage $k\leq|F_i|$).
But then we can define two disjoint
functions, $f$ with support $\bigcup_{i\in B} G_i$ and $g$ with support $\bigcup_{i\in B} H_i$, 
which contradict claim 1. Therefore, the claim is proved.

Now consider the matrix $(c_{ij})_{i,j\in \N}$ such that $c_{ij_i}=b_{i j_i}$,
for $i\in \N$, and all other entries are zero. Write $R_1=R_3+R_4$ where 
$R_3$ is induced by $(c_{ij})_{i,j\in \N}$ 
and $R_4=R_1-R_3$.  If $R_4$ were not weakly compact, we would have
that the norms of its rows do not converge to zero (\ref{matrixweaklycompact}). 
Then, using \ref{pseudo_diagonal2} and an argument analogous to that of claim 2
 we can construct disjoint
 functions which contradict claim 1.
Thus $[R_4]$ must be zero by \ref{doubleadjoint} and \ref{matrixweaklycompact}
 and so $[R_1]=[R_3]$. Therefore, we may assume that $R_1$ is given by a matrix of
a function from $\N$ into $\N$, that is, all entries of the matrix are equal to zero
except for the $b_{ij_i}$'s, which are separated from zero
by some $\delta>0$. 

\textsc{Claim 3:}
There are cofinite sets $A, B\subseteq \N$ such that
$J:B\rightarrow A$ given by $J(i)=j_i$ is a bijection. 

If  $\{j_i:  i\in \N\}$ is coinfinite, say disjoint from an infinite $A\subseteq \N$, then $[R_1]\circ I_A=0$
which together with the fact that $[R_2]$ is locally null (see \ref{antimatrixlocallynull}) 
leads to a contradiction with the fact that $T$ is an automorphism.
Of course $J$ cannot send infinite sets into one value, because it would
give rise to a column of the matrix of $R_1$ which would not be in $c_0$, as the entries 
of the matrix are separated from $0$,
contradicting \ref{charac_matrix}. If there are infinitely many values of $J$ which are assumed
on distinct integers, then there are two disjoint infinite sets $B_1=\{i^1_n: n\in \N\}\subseteq \N$ and
$B_2=\{i^2_n: n\in \N\}\subseteq \N$  such that $j_{i^1_n}=j_{i^2_n}$.
Define $J'(n)=j_{i^1_n}=j_{i^2_n}$ and put $f(i^2_n)=b_{i^1_n J'(n)}/b_{i^2_n J'(n)}$ and otherwise put the value
 of $f$ to be $0$. Note that whenever $A'\subseteq \{J'(n): n\in \N\}=A$, 
we have that 
$$\delta_{i^1_n}(R_1(\chi_{ A'}))-f(i^2_n)\delta_{i^2_n}(R_1(\chi_{ A'}))=0,$$
for all $n\in\N$.
Let $\eta:B_1^*\rightarrow B_2^*$ be the extension of the bijection
from $B_1$ to $B_2$ sending $i^1_n$ to $i^2_n$ for all $n\in \N$. 
It follows that for every $A$-supported $g$ and every
$x\in B_1^*$ we have 
$$(\delta_x-(\beta f)(\eta(x))\delta_{\eta(x)})([R_1]([g]))=0.$$
Find $B_1'\subseteq B_1$ such that $\psi[(B_1')^*\cup \eta[(B_1')^*]]=(A')^*$
for some infinite $A'\subseteq A$  such that $[R_2]\circ I_{A'}=0$ (by \ref{antimatrixlocallynull}).
This can be achieved since $\psi$ is a homeomorphism.
Considering the  values $T_\psi(g^*)$ for $A'$-supported $g$'s
we obtain all functions supported by $B_1'\cup \eta[B_1']$. However, $[R_2]$ on such $g$'s is zero,
so we obtain a contradiction since the values of $[R_1]$ on such functions
have the above restrictions. The claim is proved.

Thus $R_1^*$ is $T_\phi$ where $\phi:\N^*\rightarrow \N^*$ is a trivial
homeomorphism of $\N^*$. Therefore, there is $x\in \N^*$ such that
$\psi^{-1}(x)\not=\phi^{-1}(x)$. It follows that there are infinite  $B_1, B_2, A\subseteq \N$
such that $A^*=\phi[B^*_1]$, $A^*=\psi[B_2^*]$ and $B_1\cap B_2=\emptyset$.
Using \ref{antimatrixlocallynull} take an infinite $A'\subseteq A$ such that $[R_2]\circ I_{A'}=0$.
Then, $T_\phi([\chi_{A'}])$ and $T_\psi([\chi_{A'}])$ 
have disjoint supports, so we cannot have $[R_1+R_2]=T$, which completes the proof.

\end{proof}

\section{Canonizing operators acting along a quasi-open mapping}

In  \cite{drewnowski} it was proved that for a linear bounded operator $T$
on $\ell_\infty/c_0$ and  an infinite $A\subseteq \N$ there 
is a real $r\in\R$ and an infinite $B\subseteq A$ such that 
$$T(f)|B^*=rf$$
for every $B$-supported $f$.
This  gives, for example, that if $P_1$ and $P_2$ are complementary projections
on $\ell_\infty/c_0$, then  at least one of them  canonizes as above
for a nonzero $r$, in other words we obtain a local canonization along the identity on $B^*$.
However, a big  disadvantage of this result  is that in general
we cannot guarantee that the constant $r$ is nonzero.
If one works with an automorphism, this kind of result is of no use.
For example, consider an infinite and coinfinite set $D\subseteq \N$ and
the bijection $\sigma: \N\rightarrow \N$ such that  $\sigma[D]=\N\setminus D$, 
$\sigma [\N\setminus D]=D$ and $\sigma^2$ is the identity. Define
an automorphism $T$ of $\ell_\infty/c_0$ by $T([f]_{c_0})=[f\circ \sigma]_{c_0}$.
The above result gives an infinite $B\subseteq D$ such that $T([f]_{c_0})|B=0[f]_{c_0}$
for every $B$-supported $f\in \ell_\infty$, which looses much of the information.
So in this section we embark on finding a surjective $\psi: B^*\rightarrow A^*$ along which
$T$ may canonize with $r$ nonzero as required in Definition \ref{trivializations}.
Note that a potential obstacle for finding such a canonization would be 
if  $\bigcup\varphi^T[B^*]$ were nowhere dense. Actually, we have examples 
such that $\bigcup\varphi^T[\N^*]$ is nowhere dense and $T$ is surjective 
(\ref{locallynullsurjective}, \ref{locallydeterminednwd}).
So it is natural to assume that the surjections we consider are fountainless
and that embeddings are  funnelless. Under these assumptions we obtain a
quasi-open $\psi$ such that $\dual{T}(\delta_x)(\{\psi(x)\})\not=0$ 
holds locally, which is sufficient
for the canonization by the following:

\begin{theorem}\label{quasi-opendrewnowski}
 Let $T:C(\N^*)\rightarrow C(\N^*)$ be a bounded linear operator and let
$\tilde{A}\subseteq \N$ be infinite. 
If $\psi:\N^*\rightarrow\N^*$ is a quasi-open continuous function such that 
$\tilde{A}^*\subseteq \psi[\N^*]$,
then there exist $r\in\mathbb{R}$ and clopen sets $A^*\subseteq\tilde{A}^*$ and 
$B^*=\psi^{-1}[A^*]$ such that 
$T_{B,A}(f^*)=r(f^*\circ \psi)| B^*$,
for every $f\in \ell_\infty(A)$.
\end{theorem}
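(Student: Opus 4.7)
The plan is to adapt the Drewnowski--Roberts canonization result quoted at the beginning of the section, which handles the case $\psi=\mathrm{id}$, to our situation where the ``diagonal'' is replaced by the graph of $\psi$. The quasi-openness of $\psi$, combined with the hypothesis $\tilde A^*\subseteq \psi[\N^*]$, will guarantee that at every inductive stage the preimage $\psi^{-1}[V^*]$ (for $V\subseteq \tilde A$ infinite) is a nonempty clopen subset of $\N^*$ and in particular contains $E^*$ for some infinite $E\subseteq \N$. This is what will allow a genuinely infinite $A\subseteq \tilde A$ to survive the shrinking procedure.

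The first step is to identify the candidate constant $r$. Using Lemma \ref{norm_stability} (applied to some infinite $B_0\subseteq \N$ with $B_0^*\subseteq \psi^{-1}[\tilde A^*]$), I would pass to an infinite $\tilde A_0\subseteq \tilde A$ on which, for every $y\in \psi^{-1}[\tilde A_0^*]$, the norm $\|\dual T(\delta_y)\|$ is constant and the positive/negative parts of the Jordan decomposition are tightly controlled by the partitions $(C_n,D_n)$ of $\N$. Next I choose points $y_n\in \psi^{-1}[\tilde A_n^*]$ along a decreasing chain of infinite subsets and, after passing to a subsequence, arrange that the atoms $r_n:=\dual T(\delta_{y_n})(\{\psi(y_n)\})$ converge to some $r\in\R$. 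Weak$^*$-continuity of $\dual T$, continuity of $\psi$, and the regularity of $\dual T(\delta_{y_n})$ then propagate the value $r$ to a full neighborhood in $\psi^{-1}[\tilde A_0^*]$ modulo an $\varepsilon$-error, exactly as in Lemma \ref{nonempty}.

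The heart of the argument is a Drewnowski--Roberts style diagonalization to construct $A=\{c_n:n\in\N\}\subseteq \tilde A_0$. At stage $n$, given the infinite $A_n\subseteq \tilde A_0$ already chosen, I would pick $c_n\in A_n$ and then, using upper semicontinuity of $\varphi_\varepsilon^T$ (Lemma \ref{upper_semicontinuity}) together with Lemma \ref{norm_stability}, shrink to an infinite $A_{n+1}\subseteq A_n$ so that for every $y\in \psi^{-1}[A_{n+1}^*]$ and every $V\subseteq\{c_0,\dots,c_n\}$ the quantities $T(\chi_V^*)(y)$ are within $1/(n+1)$ of $r\chi_V^*(\psi(y))$, and simultaneously
$$|\dual T(\delta_y)|\bigl(A_{n+1}^*\setminus U_n\bigr)<1/(n+1)$$
for a chosen clopen neighbourhood $U_n$ of $\psi(y_n)$ shrinking to the point. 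Quasi-openness is used precisely here: it guarantees that after cutting down to accommodate the $\varepsilon$-control, $\psi^{-1}[A_{n+1}^*]$ still contains $E_{n+1}^*$ for some infinite $E_{n+1}\subseteq\N$, so we can continue the induction.

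Finally, with $A=\{c_n:n\in\N\}$ and $B^*=\psi^{-1}[A^*]$, the diagonal estimate together with weak$^*$-continuity of $\dual T$ forces, for each $y\in B^*$, the identity $\dual T(\delta_y)|A^*=r\delta_{\psi(y)}$. Integrating any $A$-supported $f$ against this measure yields
$$T_{B,A}(f^*)(y)=\int_{A^*} f^*\,\mathrm d\dual T(\delta_y)=r f^*(\psi(y)),$$
which is exactly $r(f^*\circ\psi)|B^*$. The main obstacle I anticipate is the inductive step that kills the off-diagonal mass of $\dual T(\delta_y)$ inside $A^*\setminus\{\psi(y)\}$ uniformly in $y\in \psi^{-1}[A_{n+1}^*]$; this is where the combination of quasi-openness of $\psi$ and the measure-theoretic stabilization lemmas from Section 2 is essential, since without quasi-openness the preimages would collapse to nowhere-dense sets and the diagonal would be empty.
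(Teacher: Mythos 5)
There is a genuine gap, and it sits exactly at the step you yourself call ``the main obstacle'': the inductive requirement that is supposed to kill the off-diagonal mass is neither proved nor provable as stated. You ask for an infinite $A_{n+1}\subseteq A_n$ such that $|\dual{T}(\delta_y)|(A_{n+1}^*\setminus U_n)<1/(n+1)$ for \emph{every} $y\in\psi^{-1}[A_{n+1}^*]$, where $U_n$ is one fixed clopen neighbourhood of $\psi(y_n)$. Weak$^*$ continuity of $\dual{T}$ only gives such an estimate for $y$ in a neighbourhood of the single point $y_n$ (and only for a fixed clopen set), not uniformly on the whole clopen set $\psi^{-1}[A_{n+1}^*]$, where the relevant ``diagonal'' point $\psi(y)$ moves with $y$. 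Worse, the requirement is incompatible with the conclusion in the interesting case $r\neq 0$: already for the model operator $T=T_\psi$ (where the theorem holds with $r=1$) any $y$ with $\psi(y)\in A_{n+1}^*\setminus U_n$ satisfies $|\dual{T}(\delta_y)|(A_{n+1}^*\setminus U_n)=1$, and such $y$ exist because $\psi$ maps $\psi^{-1}[A_{n+1}^*]$ onto $A_{n+1}^*$ (here $\tilde{A}^*\subseteq\psi[\N^*]$ is used); the only way out is $A_{n+1}^*\subseteq U_n$, which makes the condition vacuous and then nothing in your construction controls $|\dual{T}(\delta_y)|(A^*\setminus\{\psi(y)\})$, so the final claim that the estimates ``force'' $\dual{T}(\delta_y)|A^*=r\delta_{\psi(y)}$ is unsupported. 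Two further problems: the control ``for every $V\subseteq\{c_0,\dots,c_n\}$'' is vacuous, since $\chi_V\in c_0$ and hence $\chi_V^*=0$ in $C(\N^*)$; and defining $r$ as $\lim_n\dual{T}(\delta_{y_n})(\{\psi(y_n)\})$ presupposes atoms at the points $\psi(y_n)$, which the hypotheses do not provide (and which is irrelevant to the statement, where $r=0$ is allowed).

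The paper's proof avoids this uniformity problem altogether. It first proves, by contradiction, that there are an infinite $A\subseteq\tilde{A}$ and a clopen $E^*\subseteq\psi^{-1}[A^*]$ such that $\dual{T}(\delta_y)|A^*=r_y\delta_{\psi(y)}$ for every $y\in E^*$: if this failed for all candidates, one could run a transfinite construction of length $\omega_1$, extracting at each stage $\xi$ a set $D_\xi^*$, disjoint from all previous ones, together with a clopen set $E_{\xi+1}^*$ of $y$'s whose measures give $D_\xi^*$ a value of fixed sign and absolute value at least $|a_\xi|>0$ (quasi-openness enters only at limit stages, to replace $\psi[E^*]$ by a clopen $A_\eta^*$ and keep $E_\eta^*=\psi^{-1}[A_\eta^*]\cap E^*$ nonempty and clopen); picking finitely many $\xi$'s with $|a_\xi|>1/n$ and equal signs and a single $y$ in the corresponding $E_{\xi+1}^*$'s produces $|\dual{T}(\delta_y)(\bigcup_i D_{\xi_i}^*)|>\|\dual{T}\|$, a contradiction. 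The constant $r$ is then obtained not as a limit of atoms but from the observation that $r_y=T(\chi_{A^*})(y)$ is a continuous function of $y\in E^*$, hence constant on a clopen $B^*$ by Lemma \ref{constantfunction}. If you wish to keep a countable Drewnowski--Roberts style diagonalization, you would have to supply precisely the uniform off-diagonal estimate that your sketch leaves open; as it stands, the proposal does not prove the theorem.
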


\begin{proof}
Fix  $\tilde{A}$ and $\psi$ as above.

 \textsc{Claim:} There exists an infinite $A\subseteq \tilde{A}$ and a clopen 
$E^*\subseteq \psi^{-1}[A^*]$ such that for every 
$y\in E^*$ there exists $r_y\in\mathbb{R}$ satisfying 
$$\dual{T}(\delta_y)| A^*=r_y\delta_{\psi(y)}.$$

Suppose this does not hold. We will construct recursively sequences $(A_{\xi})_{\xi<\omega_1}$,
$(D_{\xi})_{\xi<\omega_1}$  and $(E_{\xi})_{\xi<\omega_1}$ of infinite subsets of $\N$, and a sequence 
$(a_{\xi})_{\xi<\omega_1}$ of nonzero reals such that 
\begin{enumerate}
 \item $A_{\eta}\subseteq_* A_{\xi}\subseteq_*\tilde{A}$ and $D_{\xi}\subseteq_* A_{\xi}\setminus A_{\xi+1}$,
for every $\xi<\eta<\omega_1$;
\item $E_{\eta}\subseteq_* E_{\xi}$, for every $\xi<\eta<\omega_1$;
\item $E_{\xi}^*\subseteq \psi^{-1}[A_{\xi}^*]$, for all $\xi<\omega_1$;
\item either $\dual{T}(\delta_y)(D_{\xi}^*)>a_{\xi}>0$ for all $y\in E_{\xi+1}^*$, or
$\dual{T}(\delta_y)(D_{\xi}^*)<a_{\xi}<0$ for all $y\in E_{\xi+1}^*$.
\end{enumerate}

Let $A_0=\tilde{A}$ and $E_0^*=\psi^{-1}[A_0^*]$. Let $\eta<\omega_1$ and 
suppose we have constructed $A_{\xi}$, $D_{\xi}$,
$E_{\xi}$ and $a_{\xi}$
satisfying (1)--(4) for every $\xi<\eta$. 
If $\eta$ is a limit ordinal, take an infinite $E$ such that 
$E\subseteq_* E_{\xi}$ for every $\xi<\eta$. By hypothesis 
there exists a clopen $A_{\eta}^*\subseteq\psi[E^*]$. Put $E_{\eta}^*=\psi^{-1}[A_{\eta}^*]\cap E^*$.
 Now we may suppose
we have $A_{\xi}$ and $E_{\xi}$ for every $\xi\leq\eta$, and $D_{\xi}$ and $a_{\xi}$
for every $\xi<\eta$. 

Take an infinite $A'_{\eta}$ such that 
$(A'_{\eta})^*\subseteq \psi[E_{\eta}^*]$.
By our assumption, there exist $y\in \psi^{-1}[(A'_{\eta})^*]\cap E_{\eta}^*$ and 
$X\subseteq (A'_{\eta})^*\setminus\{\psi(y)\}$ such that $\dual{T}(\delta_y)(X)\neq 0$. 
By the regularity of $\dual{T}(\delta_y)$, there exists
an infinite $D_{\eta}\subseteq_* A'_{\eta}$ such that $\psi(y)\notin D_{\eta}^*$
and $\dual{T}(\delta_y)(D_{\eta}^*)\neq 0$. Let $a_{\eta}$ be such that
either $0<a_{\eta}<\dual{T}(\delta_y)(D_{\eta}^*)$ or $0>a_{\eta}>\dual{T}(\delta_y)(D_{\eta}^*)$.

By the weak$^*$ continuity of $\dual{T}$, 
there exists $V$ a clopen neighbourhood of $y$
such that either $\dual{T}(\delta_z)(D_{\eta}^*)>a_{\eta}$ for all $z\in V$, or
$\dual{T}(\delta_z)(D_{\eta}^*)<a_{\eta}$ for all $z\in V$.
Finally, choose $A_{\eta+1}=A'_{\eta}\setminus D_{\eta}$ and  
$E_{\eta+1}^*=\psi^{-1}[A_{\eta+1}^*]\cap V\cap E_{\eta}^*$ (notice that $y\in E_{\eta+1}^*$).
This ends the construction.

Since $|a_{\xi}|>0$ for every $\xi<\omega_1$, there must exist $n\in\N$
and an infinite $I\subseteq\omega_1$ such that $|a_{\xi}|>1/n$, for every $\xi\in I$.
Hence, we may choose $\xi_0, \dots, \xi_{k-1}\in I$, for some  $k\in\N$,
 such that  $a_{\xi_0},\dots,a_{\xi_{k-1}}$ are all of the same sign and
$|\sum_{i<k}a_{\xi_i}|>\|\dual{T}\|$. Assume $\xi_0\geq\xi_i$ for $i<k$.
Take $y\in E_{\xi_0+1}^*$. Then, since the $D_{\xi_i}^*$ are pairwise
disjoint and since $y\in E_{\xi_i+1}^*$, for every $i<k$, we have
$$|\dual{T}(\delta_y)(\bigcup_{i<k}D_{\xi_i}^*)|=
|\sum_{i<k}\dual{T}(\delta_y)(D_{\xi_i}^*)|>|\sum_{i<k}a_{\xi_i}|>\|\dual{T}\|.$$
This contradiction proves the claim.

Therefore, for every $A$-supported $f\in \ell_\infty$ and every $y\in E^*$ we have 
$T(f)(y)=\dual{T}(\delta_y)(f)=r_yf(\psi(y))$. In particular, 
$T(\chi_{A^*})(y)=r_y$, for every $y\in E^*$. This means that 
the function $y\mapsto r_y$ with domain $E^*$ is continuous.
Then, by \ref{constantfunction} it must be constant on some clopen $B^*\subseteq E^*$.
This means that for some $r\in\mathbb{R}$ we have
 $T_{B,A}(f)=r(f\circ\psi)| B$, for every $A$-supported $f\in \ell_\infty$.
By going to a subset of $A$ we may choose $A$ and $B$ so that $\psi^{-1}[A^*]=B^*$

\end{proof}

\begin{theorem}\label{irreducibledrewnowski}
 Let $T:C(\N^*)\rightarrow C(\N^*)$ be a bounded linear operator and let
$\tilde{A}\subseteq \N$ be infinite and $F\subseteq \N^*$ be closed. 
If $\psi:F\rightarrow\N^*$ is an irreducible continuous function, 
then there exist $r\in\mathbb{R}$ and an infinite $A\subseteq\tilde{A}$  such that 
$T(f^*)|\psi^{-1}[A^*]=r(f^*\circ \psi)| \psi^{-1}[A^*]$,
for every $A$-supported $f\in \ell_\infty$.
\end{theorem}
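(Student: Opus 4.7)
I would adapt the proof of Theorem~\ref{quasi-opendrewnowski} almost verbatim, with relative clopens of $F$ playing the role of clopens of $\N^*$. Note first that an irreducible continuous surjection $\psi\colon F\to\N^*$ is automatically quasi-open in the appropriate sense: for every nonempty relative open $U\subseteq F$, $F\setminus U$ is a proper closed subset, so $\psi[F\setminus U]\neq\N^*$ by irreducibility, and $W:=\N^*\setminus\psi[F\setminus U]$ is a nonempty open subset of $\N^*$ contained in $\psi[U]$ with $\psi^{-1}[W]\subseteq U$. Since $F$ is closed in the zero-dimensional space $\N^*$, relative clopens of $F$ are precisely the sets of the form $V\cap F$ for $V$ clopen in $\N^*$.

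The first step is the analogue of the intermediate Claim in Theorem~\ref{quasi-opendrewnowski}: there exist an infinite $A\subseteq\tilde{A}$, a nonempty relative clopen $E\subseteq F$ with $E\subseteq\psi^{-1}[A^*]$, and $r_y\in\R$ for each $y\in E$ such that $\dual{T}(\delta_y)|A^*=r_y\delta_{\psi(y)}$. Arguing by contradiction, I would build by transfinite recursion of length $\omega_1$ a $\subseteq_*$-decreasing sequence $(A_\xi)$ inside $\tilde{A}$, almost disjoint $(D_\xi)$ with $D_\xi\subseteq_* A_\xi\setminus A_{\xi+1}$, a $\subseteq$-decreasing chain $(E_\xi)$ of nonempty relative clopens of $F$ with $E_\xi\subseteq\psi^{-1}[A_\xi^*]$, and nonzero reals $(a_\xi)$ realising the sign dichotomy $\dual{T}(\delta_y)(D_\xi^*)>a_\xi>0$ (or $<a_\xi<0$) uniformly on $E_{\xi+1}$. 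Successor stages go through exactly as in~\ref{quasi-opendrewnowski}: the negation of the Claim applied to $(A_\xi,E_\xi)$ produces a witness $y\in E_\xi$ and a clopen $D_\xi^*\subseteq A_\xi^*\setminus\{\psi(y)\}$ of nonzero $\dual{T}(\delta_y)$-mass; weak$^*$-continuity of $\dual{T}$ gives a clopen neighbourhood in $\N^*$ realising the sign dichotomy, and intersecting with $F$ and with the relative clopen $\psi^{-1}[(A_\xi\setminus D_\xi)^*]$ yields $E_{\xi+1}$. The contradiction is extracted identically: pigeonholing the $\omega_1$ nonzero reals $a_\xi$ gives $\xi_0>\dots>\xi_{k-1}$ of common sign with $|\sum_i a_{\xi_i}|>\|T\|$, and any $y\in E_{\xi_0+1}\subseteq\bigcap_i E_{\xi_i+1}$ forces $|\dual{T}(\delta_y)(\bigcup_i D_{\xi_i}^*)|>\|T\|$, contradicting $\|\dual{T}(\delta_y)\|\le\|T\|$.

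The principal obstacle is the limit stage. At a countable limit $\eta$, I need a pseudo-intersection $A_\eta$ of $(A_\xi)_{\xi<\eta}$ and a nonempty relative clopen $E_\eta\subseteq\bigcap_{\xi<\eta}E_\xi\cap\psi^{-1}[A_\eta^*]$ of $F$. Provided there is a nonempty relative open $U\subseteq\bigcap_\xi E_\xi$, irreducibility supplies the rest cleanly: $U\subseteq\bigcap_\xi\psi^{-1}[A_\xi^*]$ so $\psi[U]\subseteq\bigcap_\xi A_\xi^*$, and $W:=\N^*\setminus\psi[F\setminus U]$ is a nonempty open subset of $\N^*$ lying inside $\psi[U]$, hence inside $\bigcap_\xi A_\xi^*$; picking any clopen $A_\eta^*\subseteq W$ gives $A_\eta\subseteq_* A_\xi$ for every $\xi<\eta$ automatically, and $E_\eta:=\psi^{-1}[A_\eta^*]\subseteq\psi^{-1}[W]\subseteq U$ is the desired nonempty relative clopen. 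The technical difficulty -- and the crux of the proof -- is ensuring that $\bigcap_\xi E_\xi$ has nonempty interior in $F$ at every countable limit; this is a ``P-point''-type question for the Boolean algebra of clopens of $F$, and I expect it has to be circumvented by diagonalising in the successor construction (for instance by recursively strengthening the sign dichotomy on a fixed auxiliary clopen of $F$ whose presence is preserved through countable intersections, or by exploiting irreducibility at each stage to refine $E_\xi$ into a set whose interior is not lost in the limit).

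Once the Claim is proved, the conclusion is immediate: $y\mapsto r_y$ equals $T(\chi_{A^*})(y)$ and is continuous on $E$, so by Lemma~\ref{constantfunction} there is a nonempty relative clopen $E'\subseteq E$ on which $r_y=r$ is constant; irreducibility applied to the nonempty relative open $E'$ yields a nonempty clopen $W\subseteq A^*$ with $\psi^{-1}[W]\subseteq E'$, and any infinite $A_1\subseteq A$ with $A_1^*\subseteq W$ satisfies $\psi^{-1}[A_1^*]\subseteq E'$ and gives the required canonisation $T(f^*)|\psi^{-1}[A_1^*]=r(f^*\circ\psi)|\psi^{-1}[A_1^*]$ for every $A_1$-supported $f\in\ell_\infty$.
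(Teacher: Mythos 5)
Your proposal correctly identifies the crux of the matter --- that a nested $\omega_1$-sequence of relative clopens of an arbitrary closed $F\subseteq\N^*$ may have empty interior at a countable limit --- but it does not fill that gap, and the speculative fixes you float at the end are precisely where the actual content of the proof lives. As it stands, your recursion maintains an auxiliary decreasing chain $(E_\xi)$ of relative clopens of $F$ chosen freely at each successor stage, and nothing in your construction forces $\bigcap_{\xi<\eta}E_\xi$ to have nonempty interior in $F$ at a limit $\eta$. Since $F$ is merely closed, it need not be an F-space and nonempty $G_\delta$'s of $F$ need not have interior, so the recursion genuinely breaks.

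The paper resolves this exactly by the second of your two guesses, carried through systematically: there is no separate chain $(E_\xi)$ at all. At every successor stage, after finding the clopen neighbourhood $V\subseteq F$ of $y$ realising the sign dichotomy and shrinking it to lie inside $\psi^{-1}[A_\eta^*\setminus D_\eta^*]$, one invokes irreducibility through Lemma~\ref{denseirreducible} (the clopens $\{\psi^{-1}[A^*]:A\subseteq\N\}$ form a dense subalgebra of the clopens of $F$) to choose $A_{\eta+1}\subseteq\N$ with $\psi^{-1}[A_{\eta+1}^*]\subseteq V$. Thus the ``relative clopen'' that matters at stage $\xi$ is always $\psi^{-1}[A_\xi^*]$, and the entire bookkeeping lives in $\wp(\N)/\text{\emph{Fin}}$, where countable pseudo-intersections exist. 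At a countable limit one simply takes a pseudo-intersection $A_\eta$ of the $A_\xi$'s; $\psi^{-1}[A_\eta^*]$ is then automatically a nonempty relative clopen of $F$ contained (modulo the $\subseteq_*$ relation) in all $\psi^{-1}[A_\xi^*]$, because $\psi$ is onto $\N^*$. The price is that the witness $y$ chosen at stage $\eta$ need not survive into $\psi^{-1}[A_{\eta+1}^*]$, but the sign dichotomy holds on all of $\psi^{-1}[A_{\eta+1}^*]$, which is all the final contradiction requires.

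A related, smaller issue in your concluding paragraph: you cannot invoke Lemma~\ref{constantfunction} to stabilise $y\mapsto r_y$, since that lemma lives on $\N^*$ and the function $y\mapsto r_y$ is defined only on a relative clopen of $F$. One needs Lemma~\ref{constantirreducible}, which is the same idea adapted to $F$ via \ref{denseirreducible}, and which produces the constant value directly on a set of the form $\psi^{-1}[B^*]$. With both uses of irreducibility made explicit, your outline becomes the paper's proof.
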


\begin{proof} The proof is similar to that of \ref{quasi-opendrewnowski}, so
we will skip identical parts. The main difference is that nonempty $G_\delta$'s of
$F$ do not need to have nonempty interior. However the irreducibility of the map
onto $\N^*$ gives through Lemma \ref{denseirreducible} that appropriate $G_\delta$'s
have nonempty interior.
Fix  $\tilde{A}$, $F$ and $\psi$ as above.

 \textsc{Claim:} There exists an infinite $A\subseteq \tilde{A}$  such that for every 
$y\in \psi^{-1}[A^*]$ there exists $r_y\in\mathbb{R}$ satisfying 
$$\dual{T}(\delta_y)| A^*=r_y\delta_{\psi(y)}.$$

Suppose this does not hold. We will construct recursively sequences $(A_{\xi})_{\xi<\omega_1}$ and
$(D_{\xi})_{\xi<\omega_1}$ of infinite subsets of $\N$, and a sequence 
$(a_{\xi})_{\xi<\omega_1}$ of nonzero reals such that 
\begin{enumerate}
 \item $A_{\eta}\subseteq_* A_{\xi}\subseteq_*\tilde{A}$ and $D_{\xi}\subseteq_* A_{\xi}\setminus A_{\xi+1}$,
for every $\xi<\eta<\omega_1$;
\item either $\dual{T}(\delta_y)(D_{\xi}^*)>a_{\xi}>0$ for all $y\in \psi^{-1}[A_{\xi+1}^*]$, or
$\dual{T}(\delta_y)(D_{\xi}^*)<a_{\xi}<0$ for all $y\in \psi^{-1}[A_{\xi+1}^*]$.
\end{enumerate}

Let $A_0=\tilde{A}$ . Let $\eta<\omega_1$ and 
suppose we have constructed $A_{\xi}$, $D_{\xi}$
and $a_{\xi}$
satisfying (1)--(2) for every $\xi<\eta$. 
If $\eta$ is a limit ordinal, take an infinite $A_\eta$ such that 
$A_\eta\subseteq_*  A_\xi$ for every $\xi<\eta$. 
 Now we may suppose
we have $A_{\xi}$  for every $\xi\leq\eta$, and $D_{\xi}$ and $a_{\xi}$
for every $\xi<\eta$.

By our assumption, there exist $y\in \psi^{-1}[A_{\eta}^*]$ and 
$X\subseteq A_{\eta}^*\setminus\{\psi(y)\}$ such that $\dual{T}(\delta_y)(X)\neq 0$. 
By the regularity of $\dual{T}(\delta_y)$, there exists
an infinite $D_{\eta}\subseteq_* A_{\eta}$ such that $\psi(y)\notin D_{\eta}^*$
and $\dual{T}(\delta_y)(D_{\eta}^*)\neq 0$. Let $a_{\eta}$ be such that
either $0<a_{\eta}<\dual{T}(\delta_y)(D_{\eta}^*)$ or $0>a_{\eta}>\dual{T}(\delta_y)(D_{\eta}^*)$.

By the weak$^*$ continuity of $\dual{T}$, 
there exists $V$ a clopen neighbourhood of $y$ in $F$
such that either $\dual{T}(\delta_z)(D_{\eta}^*)>a_{\eta}$ for all $z\in V$, or
$\dual{T}(\delta_z)(D_{\eta}^*)<a_{\eta}$ for all $z\in V$. $V$ may be assumed to be included
in $\psi^{-1}[A_\eta^*\setminus D_\eta^*]$ as $y\in V\cap \psi^{-1}[A_\eta^*\setminus D_\eta^*]$. Using the irreducibility
of $\psi$ and Lemma \ref{denseirreducible} there is an infinite $A_{\eta+1}\subseteq \N$
such that $\psi^{-1}[A_{\eta+1}^*]\subseteq V\subseteq \psi^{-1}[A_\eta^*\setminus D_\eta^*]$. In particular $A_{\eta+1}\subseteq_* A_\eta\setminus D_\eta$ (note that $y$ may not belong to $\psi^{-1}[A_{\eta+1}^*]$).
This ends the construction. We finish the proof of the claim as in Theorem \ref{quasi-opendrewnowski}.

Therefore, for every $A$-supported $f\in \ell_\infty$ and every $y\in \psi^{-1}[A^*]$ we have 
$T(f^*)(y)=\dual{T}(\delta_y)(f)=r_yf(\psi(y))$. In particular, 
$T(\chi_{A^*})(y)=r_y$, for every $y\in \psi^{-1}[A^*]$. This means that 
the function $y\mapsto r_y$ with domain $\psi^{-1}[A^*]$ is continuous.
Then, by \ref{constantirreducible} it must be constant on some clopen set of $F$ of the
 the form $\psi^{-1}[B^*]$ for an infinite $B\subseteq A$.
This means that for some $r\in\mathbb{R}$ we have
 $T(f)=r(f\circ\psi)| \psi^{-1}[B]$, for every $B$-supported $f\in \ell_\infty$.

\end{proof}

\subsection{Left-local canonization of fountainless operators}

\begin{lemma} \label{quasi-quasi-open}
Suppose that $B\subseteq \N$ is infinite, $T: C(\N^*)\rightarrow C(\N^*)$
is fountainless and everywhere present. Then,
$$F=\bigcup\{\varphi^T(y): y\in B^*\}$$
has nonempty interior.
\end{lemma}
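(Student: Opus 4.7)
The plan is to reduce, via Lemma \ref{nonempty} combined with Lemma \ref{upper_semicontinuity} and the Baire category theorem, to the case where a fixed $\varepsilon_0>0$ satisfies $\varphi^T_{\varepsilon_0}(y)\neq\emptyset$ for every $y\in B^*$, and then to derive a contradiction if the resulting closed set of atoms is nowhere dense. First, Lemma \ref{nonempty} lets us replace $B$ by an infinite subset with $\varphi^T(y)\neq\emptyset$ on $B^*$. By Lemma \ref{upper_semicontinuity}, after a further shrinking each $\varphi^T_{1/n}|B^*$ is upper semicontinuous, so the sets $U_n:=\{y\in B^*:\varphi^T_{1/n}(y)\neq\emptyset\}$ are closed in the compact Baire space $B^*=\bigcup_n U_n$. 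The Baire category theorem then supplies $n_0$ with $U_{n_0}$ of nonempty interior; shrinking once more, we may assume $\varphi^T_{\varepsilon_0}(y)\neq\emptyset$ on $B^*$ for $\varepsilon_0:=1/n_0$. The set $F_0:=\bigcup_{y\in B^*}\varphi^T_{\varepsilon_0}(y)\subseteq F$ is closed in $\N^*$ (image of an upper semicontinuous compact-valued multifunction on the compact space $B^*$), so it suffices to show $F_0$ has nonempty interior.

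Suppose for contradiction that $F_0$ is nowhere dense. We will show $P_B\circ T$ is locally null; together with the fountainlessness of $T$, Lemma \ref{locallydet+somewherelocallynull->null} then forces $P_B\circ T=0$, contradicting the everywhere-present hypothesis. Fix an infinite $A\subseteq\N$ and construct inductively a chain of infinite sets $A\supseteq_* A_0\supseteq_* A_1\supseteq_*\cdots$ and reals $\varepsilon_n\downarrow 0$ such that, for every $y\in B^*$, the measure $\dual{T}(\delta_y)|A_n^*$ has no atom of mass $\geq\varepsilon_n$: at stage $n+1$ the closed set $F_{n+1}:=\bigl(\bigcup_{y\in B^*}\varphi^T_{\varepsilon_{n+1}}(y)\bigr)\cap A_n^*$ must be nowhere dense, for otherwise $F\supseteq F_{n+1}$ would have nonempty interior, contradicting our standing assumption; hence we may choose $A_{n+1}\subseteq_* A_n$ with $A_{n+1}^*\cap F_{n+1}=\emptyset$. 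A standard diagonalization yields an infinite $A_\infty\subseteq_* A_n$ for every $n$, and then $\dual{T}(\delta_y)|A_\infty^*$ is nonatomic for every $y\in B^*$.

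To upgrade ``nonatomic on $B^*$'' to ``identically zero on a further subset'', apply the corollary to Lemma \ref{norm_stability} to pass to a further infinite subset of $B$ on which the Jordan decomposition $\dual{T}(\delta_y)=\mu_y^+-\mu_y^-$ depends weak$^*$ continuously on $y$. This provides two positive bounded operators $T_+,T_-$ on $C(\N^*)$ whose adjoints at $\delta_y$ equal $\mu_y^\pm$ for $y\in B^*$. The operator $f\mapsto\chi_{B^*}\cdot T_+(f\cdot\chi_{A_\infty^*})$ is then positive, bounded, and nonatomic (trivially off $B^*$, and by construction on $B^*$), so by Proposition \ref{nonatomiclocallynull} it is locally null, yielding an infinite $A^+\subseteq A_\infty$ with $\mu_y^+|(A^+)^*=0$ for every $y\in B^*$. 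Repeating the argument for $T_-$ inside $A^+$ gives an infinite $A''\subseteq A^+$ with $\mu_y^-|(A'')^*=0$; hence $\dual{T}(\delta_y)|(A'')^*=0$ on $B^*$, i.e., $P_B\circ T\circ I_{A''}=0$. Since $A$ was arbitrary, $P_B\circ T$ is locally null, producing the desired contradiction. The main technical obstacle lies in the iterative bookkeeping and the final Jordan-decomposition step; both closely follow but strengthen arguments already developed in the proof of Lemma \ref{nonempty}.
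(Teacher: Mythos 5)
Your overall strategy is sound and it is genuinely different from the paper's. The paper fixes $B_1$ once via Lemma \ref{norm_stability}, and, assuming $F$ nowhere dense, directly rebuilds the covering/Hahn-decomposition estimates (in the style of \ref{nonempty} and \ref{nonatomiclocallynull}) to produce a dense family of sets $E$ with $|\dual{T}(\delta_y)|(E^*)=0$ for all $y\in B_1^*$, and then concludes via \ref{locallydet+somewherelocallynull->null}. You instead peel off the atom sets level by level (using the upper semicontinuity from \ref{upper_semicontinuity} to make the sets $\bigcup_{y}\varphi^T_{\varepsilon}(y)$ closed), diagonalize to an $A_\infty$ on which every $\dual{T}(\delta_y)$ is nonatomic, and then delegate the hard work to the corollary of \ref{norm_stability} (weak$^*$ continuous Jordan decomposition, giving the positive operators $T^\pm$) and to Proposition \ref{nonatomiclocallynull}. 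This modular route does work: the one measure-theoretic point you gloss over — that nonatomicity of $\dual{T}(\delta_y)|A_\infty^*$ passes to its positive and negative parts — is fine, since the Hahn decomposition restricts, so the cut-down operators are indeed positive and nonatomic and \ref{nonatomiclocallynull} applies. What your route buys is that no $\varepsilon$-covering estimates need to be repeated; what it costs is the extra Jordan-decomposition step and more delicate bookkeeping, and that bookkeeping is where the write-up goes wrong in two places.

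First, the contradiction framing is off. Your standing assumption is ``$F_0$ is nowhere dense'', but in the induction you justify nowhere-density of $F_{n+1}=\bigl(\bigcup_{y\in B^*}\varphi^T_{\varepsilon_{n+1}}(y)\bigr)\cap A_n^*$ by saying that otherwise $F$ would have nonempty interior, ``contradicting our standing assumption'' --- it does not, since $F_{n+1}$ need not be contained in $F_0$ (the $\varepsilon_{n+1}$ are smaller than $\varepsilon_0$). As written this step is a non sequitur. The repair is easy and in fact simplifies the proof: drop the reduction to $F_0$ (the appeals to \ref{nonempty} and to Baire category are never used afterwards; the only ingredient you need from that paragraph is the \ref{upper_semicontinuity} shrinking, which makes the level sets closed), and argue the dichotomy directly: either some level set $\bigcup_{y\in B_1^*}\varphi^T_{\varepsilon_{n+1}}(y)$ has nonempty interior, in which case $F$ does and the lemma is proved, or all of them are nowhere dense and the induction proceeds. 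Second, there is a quantifier slip on $B$: you shrink $B$ several times (for \ref{nonempty}, \ref{upper_semicontinuity}, the Baire step, and again, inside the argument for a fixed $A$, for the Jordan-decomposition corollary), yet you then assert ``$\mu_y^{\pm}|(A^+)^*=0$ for every $y\in B^*$'' and conclude that $P_B\circ T$ is locally null for the original $B$. What the argument actually yields is $P_{B'}\circ T\circ I_{A''}=0$ for the shrunk set $B'$. Since none of these shrinkings depends on $A$, they must all be performed once, before $A$ is fixed; then you obtain that $P_{B'}\circ T$ is locally null, hence zero by \ref{locallydet+somewherelocallynull->null}, which still contradicts everywhere present (that hypothesis quantifies over all infinite sets). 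With these two repairs your proof is correct.
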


\begin{proof}
Suppose $F$ is nowhere dense. Take $B_1\subseteq_* B$ and $C_n$, $D_n\subseteq\N$
as in \ref{norm_stability}.
In view of applying \ref{locallydet+somewherelocallynull->null}, we will find a dense 
family $\mathcal A$ such that $T(f^*)| B_1^*=0$ whenever the support of 
$f$ is included in an element of $\mathcal A$. This would contradict the fact that $T$
is everywhere present.

Fix a nonempty clopen $U\subseteq \N^*$ disjoint from $F$. Notice that for every 
$y\in B_1^*$ the measure $\dual{T}(\delta_y)$ has no atoms in $U$.
Therefore, by the regularity of $\dual{T}(\delta_y)$ and the compactness of $U$
we may find for each $n\in\N$ an open covering $(U_i(y,n))_{i<j(y,n)}$
of $U$ by clopen sets such that $|\dual{T}(\delta_y)|(U_i(y,n))<1/2(n+1)$ holds
for all $i<j(y,n)$. We may further assume that 
either $U_i(y,n)\subseteq C_n^*$  or $U_i(y,n)\subseteq D_n^*$, for each 
$n\in\N$ and each $i<j(y,n)$.

Now  by the weak$^*$ continuity of $\dual{T}$, we may choose for each $n\in\N$ an
open neighbourhood $V_n(y)$ of $y$ such that for all $z\in V_n(y)$ we have
\begin{equation}\label{equation_small_measure'}
|\dual{T}(\delta_z)(U_i(y,n))|<1/2(n +1)
\end{equation}
for all $i<j(y,n)$.

We have thus constructed for each $n\in\N$ an open covering $\{V_n(y):y\in B_1^*\}$ of $B_1^*$.
By the compactness of $B_1^*$, for each $n \in\N$ take $y_0(n), ..., y_{m(n)-1}(n)\in B_1^*$
such that 
$$B_1^*\subseteq \bigcup_{l< m(n)}V_n(y_l).$$

Now consider the family $\mathcal A_U$ of those sets $E\subseteq \N$
such that given $n\in \N$,  for each $l< m(n)$ there is
$i<j(y_l,n)$ such that $E^*$ is included in $U_i(y_l,n)$.
We claim that if $E\in\mathcal A_U$, then for every $E^*$- supported 
$f^*\in C(\N^*)$ we have that $T(f^*)| B_1^*=0$.

Fix $E\in\mathcal A_U$ and $y\in B_1^*$. We show that for every $n\in\N$
we have $|\dual{T}(\delta_y)|(E^*)<1/(n+1)$. Let 
$\dual{T}(\delta_y)=\mu^+-\mu^-$ be a Jordan decomposition of the measure.
By \ref{norm_stability}  we have that $\mu^-(C_n^*)<1/4(n+1)$ and $\mu^+(D^*_n)<1/4(n+1)$.
 By construction there exists $l<m(n)$ such that $y\in V_n(y_l)$, and by the definition of $\mathcal A_U$,
there exists $i<j(y_l,n)$ such that $E^*\subseteq U_i(y_l,n)$. 
We may assume without loss of generality that 
$U_i(y_l,n)\subseteq C_n^*$. From this and from (\ref{equation_small_measure'}) above we obtain:
$$\begin{array}{rcl}
  |\dual{T}(\delta_y)|(E^*)&\leq& |\dual{T}(\delta_y)|(U_i(y_l,n))\\
&=&\dual{T}(\delta_y)(U_i(y_l,n))+2\mu^-(U_i(y_l,n))\\
&\leq&|\dual{T}(\delta_y)(U_i(y_l,n))|+2\mu^-(C^*_n)\\
&<&1/2(n+1)+2/4(n+1)\\
 \end{array}$$

Therefore, $|\dual{T}(\delta_y)|(E^*)=0$ for every $y\in B_1^*$. 
So if the support of $f^*\in C(\N^*)$ is included in $E^*$,
we have that $0=\int f^*\,\mathrm{d} \dual{T}(\delta_y)=\dual{T}(\delta_y)(f^*)=T(f^*)(y)$, 
for every $y\in B_1^*$, as claimed.

Now, notice that it is enough to show that $\mathcal A_U$ is dense under $U$,
as in that case $\mathcal A=\bigcup\{\mathcal A_U:U\subseteq\N^*\setminus F,\, U \text{ clopen}\}$
is the dense family we are after.

To see that $\mathcal A_U$ is dense under $U$, fix an infinite $E_{0,0}\subseteq \N$ 
such that $E_{0,0}^*\subseteq U$. We define by induction a $\subseteq_*$- decreasing sequence
of infinite sets $\{E_{n,l}:n\in\N,\,l<m(n)\}$ ordered lexicographically.
 Suppose $(n',l')$ is successor of $(n,l)$. Since 
$(U_i(y_l,n))_{i<j(y_l,n)}$ is an open covering
of $U$, we may choose $i<j(y_l,n)$ such that $E_{n,l}^*\cap U_i(y_l,n)\neq\emptyset$.
Take $\emptyset\neq E_{n',l'}^*\subseteq E_{n,l}^*\cap U_i(y_l,n)$.
Finally, if we take an infinite $E$ such that $E\subseteq_* E_{n,l}$ for every 
$n\in\N$, $l<m(n)$, then it is clear
that $E\subseteq_* E_{0,0}$ and $E\in\mathcal A_U$.

\end{proof}

\begin{lemma}\label{atombound} 
Let $B\subseteq\N$ be infinite. Suppose $T:C(\N^*)\rightarrow C(\N^*)$ is such that 
$\varphi^T(y)\not=\emptyset$ for each $y\in B^*$
and $\bigcup\{\varphi^T(y): y\in V\}$ has nonempty interior for every open $V\subseteq \N^*$.
Then, there is an infinite $B_1\subseteq_* B$  and $\varepsilon>0$
 such that 
\begin{enumerate}
 \item $\varphi^T_\varepsilon(y)\not=\emptyset$ for each $y\in B_1^*$, and 
\item $\bigcup\{\varphi^T_\varepsilon(y): y\in D^*\}$ has nonempty interior for
 every infinite $D\subseteq_* B_1$.
\end{enumerate}
\end{lemma}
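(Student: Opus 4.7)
The plan is to combine the upper semicontinuity of the multifunctions $\varphi^T_\varepsilon$ on a suitable localization with a Baire category argument and a diagonal/pseudointersection step to make $\varepsilon$ uniform in two stages.

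First I would apply Lemma \ref{upper_semicontinuity} to pass to an infinite $B_0\subseteq_* B$ on which $\varphi^T_\varepsilon| B_0^*$ is upper semicontinuous for every $\varepsilon>0$. Two closedness consequences will be used repeatedly: the set $G_\varepsilon:=\{y\in B_0^*:\varphi^T_\varepsilon(y)\neq\emptyset\}$ is closed in $B_0^*$, being the complement of the open set $\{y:\varphi^T_\varepsilon(y)\subseteq\emptyset\}$; and for every compact $K\subseteq B_0^*$ the image $\bigcup_{y\in K}\varphi^T_\varepsilon(y)$ is closed in $\N^*$, since an upper semicontinuous multifunction with finite (hence compact) values sends compact sets to compact sets.

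Next, the hypothesis $\varphi^T(y)\neq\emptyset$ on $B^*$ yields $B_0^*=\bigcup_{n}G_{1/n}$, a countable cover by closed sets. Baire category in the compact Hausdorff space $B_0^*$ produces some $n_0$ and an infinite $D_0\subseteq B_0$ with $D_0^*\subseteq G_{1/n_0}$, so $\varphi^T_{1/n_0}(y)\neq\emptyset$ for every $y\in D_0^*$; this is condition~(1) for the pair $(D_0,1/n_0)$.

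To secure condition~(2) as well, for each infinite $D\subseteq_* D_0$ define
\[
\varepsilon(D)=\sup\bigl\{\varepsilon>0 : \textstyle\bigcup_{y\in D^*}\varphi^T_\varepsilon(y)\ \text{has nonempty interior}\bigr\}.
\]
Applying the hypothesis to the open set $D^*$ gives that $\bigcup_{y\in D^*}\varphi^T(y)=\bigcup_n\bigcup_{y\in D^*}\varphi^T_{1/n}(y)$ has nonempty interior; each term on the right is closed by the observation above, so Baire forces some term to have nonempty interior, whence $\varepsilon(D)>0$. The function $\varepsilon$ is monotone: $D'\subseteq_* D$ implies $\varepsilon(D')\leq\varepsilon(D)$. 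I claim there is an infinite $B_1\subseteq_* D_0$ with $\eta:=\inf\{\varepsilon(D):D\subseteq_* B_1\ \text{infinite}\}>0$. If not, I recursively choose $D_0\supseteq_* B^{(1)}\supseteq_* B^{(2)}\supseteq_*\cdots$ with $\varepsilon(B^{(n)})<1/n$, and take an infinite pseudointersection $B^{(\infty)}\subseteq_* B^{(n)}$ for every $n$; monotonicity would give $\varepsilon(B^{(\infty)})\leq\varepsilon(B^{(n)})<1/n$ for all $n$, so $\varepsilon(B^{(\infty)})=0$, contradicting the previous sentence. Setting $\varepsilon=\min(1/n_0,\eta/2)$ then verifies (1) from $B_1\subseteq_* D_0$ and the inclusion $\varphi^T_{1/n_0}\subseteq\varphi^T_\varepsilon$, and (2) because for any infinite $D\subseteq_* B_1$ one has $\varepsilon<\varepsilon(D)$, so some $\varepsilon'>\varepsilon$ yields $\bigcup_{y\in D^*}\varphi^T_{\varepsilon'}(y)\subseteq\bigcup_{y\in D^*}\varphi^T_\varepsilon(y)$ of nonempty interior. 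The main obstacle I expect is this second uniformization: condition~(1) reduces to a single Baire argument on a countable cover, but $\varepsilon(D)$ may drop on refinements of $D_0$, so locking in a single positive $\varepsilon$ valid against every further subset of $B_1$ is precisely what the pseudointersection diagonalization achieves.
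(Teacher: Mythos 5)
Your proof is correct, and it reaches the conclusion by a somewhat different route than the paper. For part (1) the paper argues by contradiction: it recursively builds a $\subseteq_*$-decreasing sequence $B_n$ on which $\varphi^T_{1/(n+1)}$ is empty, propagating the smallness of atoms from a single point to a clopen neighbourhood by redoing the Hahn-decomposition estimates of Lemma \ref{norm_stability}, and then contradicts $\varphi^T(y)\neq\emptyset$ at a point of the pseudointersection; you instead invoke Lemma \ref{upper_semicontinuity} once, observe that $G_\varepsilon=\{y\in B_0^*:\varphi^T_\varepsilon(y)\neq\emptyset\}$ is closed (take $V=\emptyset$ in the definition of upper semicontinuity) and that images of compact sets under the finite-valued upper semicontinuous maps $\varphi^T_\varepsilon|B_0^*$ are compact, and then a single Baire argument on $B_0^*=\bigcup_n G_{1/n}$ gives (1); this is shorter because the measure-theoretic work is absorbed into the already proved Lemma \ref{upper_semicontinuity}. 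For part (2) the two arguments are parallel in spirit (a $\subseteq_*$-descending diagonalization plus a pseudointersection), but the supporting facts differ: the paper contradicts the interior hypothesis at an open $V\subseteq\bigcap_n D_n^*$ using Lemma \ref{countablenwd} (countable unions of nowhere dense sets in $\N^*$ are nowhere dense), while you introduce the monotone quantity $\varepsilon(D)$, prove its positivity from the closedness of $\bigcup_{y\in D^*}\varphi^T_{1/n}(y)$ together with ordinary Baire category, and then uniformize by diagonalizing against the decreasing sets $B^{(n)}$. What your version buys is that only the Baire property is needed (not the stronger nowhere-dense-union property of $\N^*$), and the explicit closedness of the $\varepsilon$-level unions makes the passage from the negation of (2) ("empty interior") to a category argument airtight; what the paper's version buys is self-containedness at the level of the measures $\dual{T}(\delta_y)$ — the same neighbourhood estimates are recycled verbatim in the proof of \ref{continuous_quasi-open} — and no appeal to the compact-image fact for multifunctions, which you should state (or prove in a line, as you indicate) since the paper never records it.
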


\begin{proof}
Suppose that (1) fails for all infinite $B'\subseteq_* B$  and all $\varepsilon>0$.
Let $B_0\subseteq_* B$, $C_n$, $D_n\subseteq\N$ be given by Lemma \ref{norm_stability}.
We will construct by induction a $\subseteq_*$- decreasing 
sequence $(B_n)_{n\in\N}$ of infinite subsets of $\N$ such that
$\varphi^T_{1/(n+1)}(y)=\emptyset$ for every $y\in B_{n+1}^*$. If we then take
any $y\in\bigcap_{n\in\N} B^*_n$, we will have that $\varphi^T(y)=\emptyset$, which
contradicts our hypothesis.

Assume we have already constructed $B_n$. Since we are assuming that (1)
fails, there exists $y\in B_n^*$ such that $\varphi^T_{1/2(n+1)}(y)=\emptyset$.
This means that $|\dual{T}(\delta_y)|(\{x\})=|\dual{T}(\delta_y)(\{x\})|<1/2(n+1)$, 
for every $x\in\N^*$. 

By the regularity of the measure $\dual{T}(\delta_y)$ and by the 
compactness of $\N^*$, we may cover $\N^*$ by finitely many clopen $(U_i)_{i<k}$
such that $|\dual{T}(\delta_y)|(U_i)<1/2(n+1)$, for each $i<k$. We may further assume
that each $U_i$ is included in either $C_n^*$ or $D_n^*$. Since $\dual{T}$ is weak$^*$
continuous, we may find an open neighbourhood of $y$, say $V$, such that 
for every $z\in V$ we have $|\dual{T}(\delta_z)(U_i)|<1/2(n+1)$, for each $i<k$.
Take $B_{n+1}$ such that $y\in B_{n+1}^*\subseteq B_n^*\cap V$.
We claim that $\varphi^T_{1/(n+1)}(z)=\emptyset$, for every $z\in B_{n+1}^*$.

Fix any $z\in B_{n+1}^*$ and take $\dual{T}(\delta_z)=\mu^+-\mu^-$ a Jordan decomposition
of the measure. Recall that by Lemma \ref{norm_stability} we have that 
$\mu^-(C_n^*)<1/4(n+1)$ and $\mu^+(D_n^*)<1/4(n+1)$.
 Now take any $x\in\N^*$. 
Let $i<k$ be such that $x\in U_i$ and assume without loss of 
generality that $U_i\subseteq C_n^*$. Then,
$$ |\dual{T}(\delta_z)(\{x\})|\leq|\dual{T}(\delta_z)|(U_i)\leq |\dual{T}(\delta_z)(U_i)|+2\mu^-(C_n^*)
<1/(n+1),$$
and the claim is proved. This finishes the proof of the first part, so let us assume 
that $\varepsilon_0>0$ and $B_0\subseteq_* B$ are such that (1) holds.

To prove the second part, let us assume that for every
$B'\subseteq_* B_0$ and every  $\varepsilon>0$ there exists an infinite $D\subseteq_* B_0$ 
with $\bigcup\{\varphi^T_\varepsilon(y): y\in D^*\}$ nowhere dense.
We may then find a $\subseteq_*$- descending sequence of infinite sets 
$(D_n)_{n\in\N}$ such that $D_n\subseteq_* B$ and $\bigcup\{\varphi^T_{1/n}(y): y\in D_n^*\}$
is nowhere dense, for every $n\in\N$.
Let $V\subseteq\bigcap_{n\in\N}D_n^*$ be a nonempty open.
Then, since $\varphi^T(y)=\bigcup_{n\in\N}\varphi^T_{1/n}(y)$, we have
$$\bigcup\{\varphi^T(y): y\in V\}= \bigcup_{n\in \N}\bigcup\{\varphi^T_{1/n}(y): y\in V\}
\subseteq \bigcup_{n\in \N}\bigcup\{\varphi^T_{1/n}(y): y\in D^*_n\},$$
which is nowhere dense by \ref{countablenwd}.
This contradicts the hypothesis of the lemma.
Therefore, there exist an infinite $B_1\subseteq_* B_0$ and $\varepsilon_1>0$ which satisfy (2).

The lemma holds for $B_1$ and $\varepsilon=\min\{\varepsilon_0,\varepsilon_1\}$.
\end{proof}

\begin{lemma} \label{continuous_quasi-open}
Suppose that  $T: C(\N^*)\rightarrow C(\N^*)$
is fountainless and everywhere present. Then, for every infinite $B\subseteq\N$
there is an infinite  $B_1\subseteq_* B$ and a continuous quasi-open $\psi: B_1^*\rightarrow \N^*$
such that 
$$\dual{T}(\delta_y)(\{\psi(y)\})\not=0$$
for all $y\in B_1^*$.
\end{lemma}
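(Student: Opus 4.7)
The plan is to combine the preparatory lemmas to reduce the situation to a well-behaved upper semicontinuous finite-valued multifunction, and then construct the selection $\psi$ via a refinement argument on clopens.

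First, iteratively apply the preparatory lemmas. Starting from $B$, Lemma \ref{nonempty} yields an infinite $B' \subseteq_* B$ with $\varphi^T(y) \neq \emptyset$ for every $y \in B'^*$. Since $T$ is fountainless and everywhere present, Lemma \ref{quasi-quasi-open} implies that $\bigcup\{\varphi^T(y) : y \in D^*\}$ has nonempty interior for every infinite $D \subseteq_* B'$. Apply Lemma \ref{atombound} to obtain $\varepsilon > 0$ and $B'' \subseteq_* B'$ such that $\varphi^T_\varepsilon(y) \neq \emptyset$ for every $y \in B''^*$ and $\bigcup\{\varphi^T_\varepsilon(y) : y \in D^*\}$ has nonempty interior for every infinite $D \subseteq_* B''$. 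Finally, Lemma \ref{upper_semicontinuity} provides an infinite $B_1 \subseteq_* B''$ on which $\varphi^T_\varepsilon$ is upper semicontinuous; moreover $|\varphi^T_\varepsilon(y)| \leq N := \lfloor \|T\|/\varepsilon \rfloor$ for every $y \in B_1^*$, since $\|\dual{T}(\delta_y)\| \leq \|T\|$.

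Next, construct $\psi$ by recursive refinement. The key observation, derived from upper semicontinuity, is that for every clopen $V \subseteq \N^*$ the set $\{y \in B_1^* : \varphi^T_\varepsilon(y) \cap V \neq \emptyset\}$ is closed in $B_1^*$, since its complement $\{y : \varphi^T_\varepsilon(y) \subseteq \N^* \setminus V\}$ is open. Starting from an arbitrary $y_0 \in B_1^*$, separate the finitely many points of $\varphi^T_\varepsilon(y_0)$ by disjoint clopen neighbourhoods; using upper semicontinuity, restrict to a clopen neighbourhood of $y_0$ where $\varphi^T_\varepsilon(y)$ remains inside this union. Iterating, and using that at each step the relevant closed set either has nonempty interior (allowing further restriction to a clopen subset of $B_1^*$) or is nowhere dense (allowing its removal), and using the bound $N$ on the cardinalities to guarantee termination, produce an infinite $B_1 \subseteq_* B$ (after shrinking) and a clopen $A^* \subseteq \N^*$ so that $|\varphi^T_\varepsilon(y) \cap A^*| = 1$ for every $y \in B_1^*$. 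Define $\psi(y)$ to be this unique point. Continuity is immediate: for a clopen $W \subseteq A^*$, $\psi^{-1}[W] = \{y \in B_1^* : \varphi^T_\varepsilon(y) \subseteq W \cup (\N^* \setminus A^*)\}$ is open by upper semicontinuity.

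The main obstacle is verifying quasi-openness, namely that $\psi[E^*]$ has nonempty interior for every infinite $E \subseteq_* B_1$. By construction, $\psi[E^*] = \bigcup\varphi^T_\varepsilon[E^*] \cap A^*$. While Lemma \ref{atombound}(2) ensures $\bigcup\varphi^T_\varepsilon[E^*]$ has nonempty interior, that interior could in principle be contained in $\N^* \setminus A^*$, in which case $\psi[E^*]$ would be nowhere dense. To rule this out, the choice of $A^*$ in the refinement step must be coordinated with the condition of Lemma \ref{atombound}(2): among the at most $N$ ``branches'' of $\varphi^T_\varepsilon$ passing through any clopen subset of $B_1^*$, at least one must realise a clopen's worth of target points, and $A^*$ must be selected to isolate such a branch. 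Concretely, one refines so that the closed set $\bigcup\varphi^T_\varepsilon[E^*] \cap A^*$ itself has nonempty interior for every infinite $E \subseteq_* B_1$; this demands an additional inductive argument interleaving the cardinality-reduction refinement with the nonempty-interior hypothesis, and constitutes the technically delicate core of the proof.
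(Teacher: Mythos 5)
Your setup is correct and matches the paper's: you combine Lemmas~\ref{nonempty}, \ref{quasi-quasi-open}, \ref{atombound}, and \ref{upper_semicontinuity} to get $\varepsilon>0$ and an infinite $B_1\subseteq_* B$ on which $\varphi^T_\varepsilon$ is nonempty, upper semicontinuous, finite-valued with $|\varphi^T_\varepsilon(y)|\leq\|T\|/\varepsilon$, and satisfies $\bigcup\varphi^T_\varepsilon[D^*]$ having nonempty interior for every infinite $D\subseteq_* B_1$. The continuity argument for $\psi$ at the end is also correct.

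However, there are two genuine gaps, and you have explicitly flagged the second without closing it. First, your termination claim for the refinement recursion is unsupported. You write that ``the bound $N$ on the cardinalities'' guarantees termination, but this is not enough: as you shrink the clopen neighbourhood of $B_1^*$, the number of ``branches'' of $\varphi^T_\varepsilon$ can both increase (a single $V_i^*$ may separate two atoms at a later witness $y_{n+1}$) and decrease (a $V_i^*$ may become empty of atoms at some witness), and there is no monotone quantity bounded by $N$ that decreases. The paper's Claim 1 handles this with a measure-theoretic bookkeeping argument: it tracks $|\dual{T}(\delta_z)(V_{n,i}^*)|$ in shrinking intervals $I_i^n$ of controlled length and shows that each splitting costs at least $\varepsilon$ of measure (so the ``split'' case (a) occurs finitely often, bounded by $\sum m_i$ with $m_i\varepsilon\geq\dual{T}(\delta_{y_0})(\{x^0_i\})$), and that each ``drop'' case (b) likewise strictly decreases a bounded quantity by $\varepsilon/2$. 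This needs the Hahn-decomposition stabilization from Lemma~\ref{norm_stability} to pass from $|\dual{T}(\delta_z)(V^*)|$ to $|\dual{T}(\delta_z)|(V^*)$; none of this is present in your sketch.

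Second, you correctly identify the quasi-openness obstacle --- that the isolating clopen $A^*$ might miss the interior guaranteed by Lemma~\ref{atombound}(2) --- but you stop at ``this demands an additional inductive argument \dots and constitutes the technically delicate core.'' This is exactly the part that must be proved. The paper's Claim 2 resolves it cleanly and without any interleaving: once Claim 1 gives finitely many almost disjoint $V_0,\dots,V_{k-1}$ with $\varphi^T_\varepsilon(z)\subseteq\bigcup_i V_i^*$ and $|\varphi^T_\varepsilon(z)\cap V_i^*|=1$ on $(B'_0)^*$, one argues: if for each $i<k$ there were a witness $A_{i+1}\subseteq_* A_i$ with $V_i^*\cap\bigcup\varphi^T_\varepsilon[A_{i+1}^*]$ nowhere dense, then taking the final $A_k$ one would have $\bigcup\varphi^T_\varepsilon[A_k^*]=\bigcup_{i<k}\bigl(V_i^*\cap\bigcup\varphi^T_\varepsilon[A_k^*]\bigr)$ a finite union of nowhere dense sets, contradicting Lemma~\ref{atombound}(2). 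Thus some $V_{i_0}$ works on a tail $B_1\subseteq_* B'_0$, and $\psi(y)=V_{i_0}^*\cap\varphi^T_\varepsilon(y)$ is quasi-open. Your proposed interleaving is not needed, but the finite-splitting structure of Claim 1 is essential to make this pigeonhole argument go through, and that is precisely what your termination gap leaves unestablished.
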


\begin{proof} 
Since $T$ is fountainless and everywhere present, by  \ref{nonempty} and \ref{quasi-quasi-open}
we know that the hypothesis of \ref{atombound} are satisfied. So
find $\varepsilon>0$ and an infinite
$B_0\subseteq_* B$ such that
$\varphi^T_\varepsilon(y)\not=\emptyset$ for every $y\in B_0$,
and $\bigcup\{\varphi^T_\varepsilon(y): y\in D^*\}$ has nonempty interior for
 every infinite $D\subseteq_* B_0$. We may also assume that there exist 
$C_n$, $D_n\subseteq\N$ for every $n\in\N$ such that the statement in \ref{norm_stability}
holds for $B_0$ and $C_n$, $D_n$.

\textsc{Claim 1:} There exists an infinite $B'_0\subseteq_* B_0$ and a finite collection
 $(V_i)_{i<k}$ of almost disjoint infinite subsets of $\N$ such that for every $z\in (B'_0)^*$
we have that $\varphi^T_{\varepsilon}(z)\subseteq \bigcup_{i<k} V_i^*$ and 
$|\varphi^T_{\varepsilon}(z)\cap V_i^*|=1$, for each $i<k$.

We will construct recursively a $\subseteq_*$-descending sequence $(A_n)$ of subsets of $\N$, 
$y_n\in A_n^*$, finite collections $(V_{n,i})_{i<k_n}$ of almost disjoint infinite subsets of $\N$
and open intervals $I_i^n\subseteq\mathbb{R}$, $i<k_n$, such that for every $n$ we have
\begin{enumerate}
\item $\varphi^T_{\varepsilon}(z)\subseteq\bigcup_{i<k_n}V^*_{n,i}$, for all $z\in A_{n+1}^*$
 \item For every $i<k_{n+1}$ there exists $j<k_n$ such that $V_{n+1,i}\subseteq_*V_{n,j}$
\item $|\dual{T}(\delta_z)(V_{n,i})|\in I_i^n$, for all $z\in A_{n+1}^*$ and all $i<k_n$
\item \emph{length}$(I_i^n)=\varepsilon(2^{n+1}k_n)^{-1}$, for all $i<k_n$ 
\end{enumerate}

Begin by noticing that for every $y\in\N^*$ the number of elements of 
$\varphi^T_\varepsilon(y)$ is finite, as it must be bounded by $\|T\|/\varepsilon$.
Let $A_0=B_0$ and fix any $y_0\in A_0^*$ and
let $\{x^0_i:i<k_0\}$ be an enumeration of $\varphi^T_{\varepsilon}(y_0)$ 
(note that $k_0\geq 1$).
Let $N_0\in\N$ be such that $1/N_0<\varepsilon/8k_0$.
By the regularity of the measure $\dual{T}(\delta_{y_0})$, we find for each $i<k_0$
a clopen neighbourhood $V^*_{0,i}$  of $x^0_i$ such that
$$
|\dual{T}(\delta_{y_0})|(V_{0,i}^*)<|\dual{T}(\delta_{y_0})(\{x^0_i\})|+\varepsilon/8k_0.$$
We may assume that the $V_{0,i}$'s are almost disjoint and that each of them is almost included in 
either $C_{N_0}$ or $D_{N_0}$. 

For each $i<k_0$ we define $I^0_i\subseteq \mathbb{R}$ to be the open
 interval with centre $|\dual{T}(\delta_{y_0})(\{x^0_i\})|$ and radius $\varepsilon/4k_0$.
By the above, $|\dual{T}(\delta_{y_0})|(V_{0,i}^*)$ lies in $I^0_i$,
and as we shall see,  $|\dual{T}(\delta_{y_0})(V_{0,i}^*)|$ does so as well. Indeed, take $i<k_0$ and
assume without loss of generality that $V^*_{0,i}\subseteq C_{N_0}^*$.
If $\dual{T}(\delta_{y_0})=\mu^*-\mu^-$ is a Jordan decomposition of the 
measure, then
$$|\dual{T}(\delta_{y_0})|(V_{0,i}^*)\leq \dual{T}(\delta_{y_0})(V_{0,i}^*)+2\mu^-(V_{0,i}^*)
\leq |\dual{T}(\delta_{y_0})(V_{0,i}^*)|+2\mu^-(C_{N_0}^*).$$
From this it follows that 
$|\dual{T}(\delta_{y_0})|(V_{0,i}^*)-|\dual{T}(\delta_{y_0})(V_{0,i}^*)|<1/2(N_0+1)<\varepsilon/8k_0$,
and so $|\dual{T}(\delta_{y_0})(V_{0,i}^*)|\in I^0_i$.

By the upper semicontinuity of $\varphi^T_{\varepsilon}$ (Lemma \ref{upper_semicontinuity})
and  the weak$^*$-continuity of $\dual{T}$, we now find a clopen neighbourhood of 
$y_0$, say $A_1^*$, which we may assume to be included in $A_0^*$, such that
for every $z\in A_1^*$ we have 
$$\varphi^T_{\varepsilon}(z)\subseteq\bigcup_{i<k_0}V^*_{0,i} \quad\text{ and }$$
$$|\dual{T}(\delta_z)(V_{0,i}^*)|\in I^0_i,\text{ for each }i<k_0.$$

If we have that $|\varphi^T_{\varepsilon}(z)\cap V_{0,i}^*|= 1$,
 for every $z\in A_1^*$ and for each $i<k_0$, then the recursion stops and 
the claim is proved. Otherwise, choose $y_1\in A_1^*$ a witness to this fact,
and repeat the procedure to obtain open intervals $I^1_i$ with centre 
$|\dual{T}(\delta_{y_1})(\{x^1_i\})|$ and radius $\varepsilon/2^3k_1$, 
and clopen sets $V_{1,i}^*$ such that both $|\dual{T}(\delta_{y_1})|(V_{1,i}^*)$ 
and $|\dual{T}(\delta_{y_1})(V_{1,i}^*)|$ lie inside $ I^1_i$, for each $i<k_1$. 
Notice that we may take each set $V_{1,i}$ as a subset of one of the $V_{0,j}$.
Then, by the same argument using the upper semicontinuity of $\varphi^T_\varepsilon$
 and the  weak$^*$-continuity of $\dual{T}$
we obtain an infinite $A_2\subseteq_* A_1$ such that for every $z\in A_2^*$ we have
$\varphi^T_{\varepsilon}(z)\subseteq \bigcup_{i<k_1} V_{1,i}^*$ and
$|\dual{T}(\delta_z)(V_{1,i}^*)|\in I^1_i$, for each $i<k_1$.

We claim that this process stops after
finitely many steps.
First notice that the failure to stop at step $n$ is due to one of two reasons:
\begin{itemize}
 \item [(a)] there exists $y_{n+1}\in A_{n+1}^*$ such that 
$|\varphi^T_{\varepsilon}(y_{n+1})\cap V_{n,i}^*|\geq 2$ for some $i<k_n$
\item [(b)] there exists $y_{n+1}\in A_{n+1}^*$ such that 
$\varphi^T_{\varepsilon}(y_{n+1})\cap V_{n,i}^*=\emptyset$ for some $i<k_n$
\end{itemize}
Notice also that once condition (a) fails, it continues to fail in subsequent steps.
So we may assume that we first only check for condition (a), and only after
it does not occur do we check for condition (b). 

By condition (2) in the construction, we have that every time (a) occurs there exists $i<k_0$ such that
$|\varphi^T_{\varepsilon}(y_{n+1})\cap V_{0,i}^*|\geq 2$.
So for each $i<k_0$ consider $m_i\in\N$  such that 
$m_i\cdot\varepsilon\geq \dual{T}(\delta_{y_0})(\{x_i^0\})$ and suppose (a) has 
occurred at $n=\sum_{i<k_0}m_i$ many steps. Suppose that still (a) happens once
more. Then, there exists $i_0<k_0$  and 
$n_0<\dots<n_{m_{i_0}}=n$ such that $|\varphi^T_{\varepsilon}(y_{n_j+1})\cap V_{0,i_0}^*|\geq 2$,
for every $j\leq m_{i_0}$. Hence, if
$x^{n+1}_0,\ x^{n+1}_1\in \varphi^T_{\varepsilon}(y_{n+1})\cap V_{n,i_n}^*$,
for certain $i_n<k_n$, then
$|\dual{T}(\delta_{y_{n+1}})|(\{x^{n+1}_0\})+\varepsilon\leq
|\dual{T}(\delta_{y_{n+1}})|(\{x^{n+1}_0,x^{n+1}_1\})
\leq|\dual{T}(\delta_{y_{n+1}})|(V_{n,i_n}^*)$
and we obtain
$$\begin{array}{rcl}
|\dual{T}(\delta_{y_{n+1}})|(\{x^{n+1}_0\})&<&\sup I^n_{i_n}-\varepsilon\\
&=&|\dual{T}(\delta_{y_n})|(\{x^n_{i_n}\})+\varepsilon(2^{n+2}k_n)^{-1}-\varepsilon\\
&<&\dots\\
&<&|\dual{T}(\delta_{y_0})|(\{x_{i_0}^0\})+\sum_{j<n+1}\varepsilon(2^{j+2}k_j)^{-1}-(m_{i_0}+1)\cdot\varepsilon\\
&\leq&(|\dual{T}(\delta_{y_0})|(\{x_{i_0}^0\})-m_{i_0}\cdot\varepsilon)+(\varepsilon\sum_{j<n+1} 1/2^{j+2}-\varepsilon)\\
&<&0-\varepsilon/2.
\end{array}$$
From this contradiction we conclude that (a) can occur at most at $(\sum_{i<k_0}m_i)$
many steps.

Now assume that $n_0$ is such that (a) does not hold at 
step $n$, for all $n\geq n_0$. 
Suppose the recursion does not stop at step $n\geq n_0$. 
 Assume without loss of generality that 
$\varphi^T_{\varepsilon}(y_{n+1})\cap V_{n,0}^*=\emptyset$.
Therefore $\varphi^T_{\varepsilon}(y_{n+1})\subseteq \bigcup_{0<i<k_n} V_{n,i}^*$.
Since we also have $|\dual{T}(\delta_{y_{n+1}})|(V_{n,i}^*)\in I^n_i$, for each $i<k_n$,
we obtain 
$$\begin{array}{rcl}
  |\dual{T}(\delta_{y_{n+1}})|(\{x^{n+1}_i:i<k_{n+1}\})
&\leq& \sum_{i<k_n}|\dual{T}(\delta_{y_{n+1}})|(V_{n,i}^*)-|\dual{T}(\delta_{y_{n+1}})|(V_{n,0}^*)\\
&<& \sum_{i<k_n}\sup I^n_i-\inf I^n_0\\
&\leq& \sum_{i<k_n}|\dual{T}(\delta_{y_n})(\{x^n_i\})|+k_n\varepsilon/(2^{n+2}k_n)\\
& &\qquad \qquad -(\varepsilon-\varepsilon/2k_n)\\
&\leq&|\dual{T}(\delta_{y_n})|(\{x^n_i:i<k_n\})-\varepsilon/2.  \end{array}$$
The last inequality holds because $k_n\geq 1$.
Since 
$|\dual{T}(\delta_{y_n})|(\{x^n_i:i<k_n\})\geq \varepsilon$, for every $n$,
we conclude that (b) cannot occur indefinitely. Hence the recursion
must stop after finitely many steps.

\textsc{Claim 2:} There exists $i_0<k$ and an infinite $B_1\subseteq_* B'_0$ such 
that $V_{i_0}^*\cap\bigcup\{\varphi^T_\varepsilon(y): y\in D^*\}$ has nonempty interior for
 every infinite $D\subseteq_* B_1$.

Suppose this is not the case. Then, we may find a sequence on infinite sets 
$B_0'=A_0\supseteq_*A_1\supseteq_*\dots\supseteq A_k$ such that 
$V_i^*\cap\bigcup\{\varphi^T_\varepsilon(y): y\in A_{i+1}^*\}$ is nowhere dense, for $i<k$.
By Claim 1, we know that
$\bigcup\{\varphi^T_\varepsilon(y): y\in A_k^*\}\subseteq\bigcup_{i<k}V_i^*$,
and so $\bigcup\{\varphi^T_\varepsilon(y): y\in A_k^*\}=
\bigcup_{i<k}V_i^*\cap\bigcup\{\varphi^T_\varepsilon(y): y\in A_k^*\}$ is 
also nowhere dense. But this contradicts the choice of $B_0$.

Now we may define $\psi:B_1^*\rightarrow \N^*$  by 
$\{\psi(y)\}=V_{i_0}^*\cap\varphi^T_{\varepsilon}(y)$. It is clear that $\psi$ is 
quasi-open by Claim 2, so we conclude the proof by 
showing that $\psi$ is  continuous. Let $U\subseteq V_{i_0}^*$ be any open set. Since $V_{i_0}^*$ is clopen
and $\varphi^T_{\varepsilon}$ is upper semicontinuous, 
 we have that 
$\psi^{-1}[U]=B_1^*\cap\{y\in \N^*:\varphi^T_{\varepsilon}(y)\subseteq U\cup \N^*\setminus V_{i_0}^*\}$
 is open.

\end{proof}

\begin{theorem}\label{surcanonization}
Suppose that $T:C(\N^*)\rightarrow C(\N^*)$ is a fountainless, everywhere present operator.
Then, $T$ is left-locally canonizable along a quasi-open map.
\end{theorem}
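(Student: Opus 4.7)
The plan is to combine Lemma \ref{continuous_quasi-open} with Theorem \ref{quasi-opendrewnowski}. Given an infinite $B\subseteq\N$, first apply \ref{continuous_quasi-open} to produce an infinite $B_1\subseteq_* B$ and a continuous quasi-open $\psi: B_1^*\to \N^*$ with $\dual{T}(\delta_y)(\{\psi(y)\})\neq 0$ for every $y\in B_1^*$. A closer look at that proof shows that $\psi(y)$ is actually chosen inside $\varphi_\varepsilon^T(y)$ for a uniform $\varepsilon>0$ provided by \ref{atombound}, so $|\dual{T}(\delta_y)(\{\psi(y)\})|\geq \varepsilon$ holds uniformly on $B_1^*$. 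This uniform lower bound is the ingredient that will ultimately force the canonization constant to be nonzero.

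Next, quasi-openness applied to the open set $B_1^*$ yields an infinite $\tilde A\subseteq \N$ with $\tilde A^*\subseteq \psi[B_1^*]$. I would then run the argument of Theorem \ref{quasi-opendrewnowski} with $B_1^*$ in place of $\N^*$ as the domain of $\psi$: the transfinite construction in that proof only uses the weak$^*$-continuity of $\dual T$, the regularity of the measures $\dual{T}(\delta_y)$, and the quasi-openness of $\psi$ to extract $A_\eta'$ with $(A_\eta')^*\subseteq\psi[E_\eta^*]$; all of these remain available after the substitution. (Alternatively, one may identify $B_1^*$ with $\N^*$ via a homeomorphism and apply \ref{quasi-opendrewnowski} directly to the operator $P_{B_1}\circ T$.) Since preimages of clopen sets under $\psi$ are clopen in $B_1^*$ and hence of the form $D^*$ for some $D\subseteq_* B_1$, this produces an infinite $A\subseteq \tilde A$, an infinite $B_2\subseteq_* B_1$ with $B_2^*=\psi^{-1}[A^*]\cap B_1^*$, and $r\in\R$ satisfying
\[
T_{B_2,A}(f^*)=r\,(f^*\circ\psi)|B_2^*
\]
for every $f\in\ell_\infty(A)$, with $\psi|B_2^*:B_2^*\to A^*$ a continuous surjection.

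The principal point still to check is that $r\neq 0$, so that the canonization is genuine in the sense of Definition \ref{trivializations}(4); this is where the uniform bound from the first paragraph is used. The proof of \ref{quasi-opendrewnowski} first establishes that $\dual{T}(\delta_y)|A^*=r_y\delta_{\psi(y)}$ on a clopen set, so that the real-valued function $y\mapsto r_y=\dual{T}(\delta_y)(\{\psi(y)\})$ equals $T(\chi_{A^*})$ there and is therefore continuous; Lemma \ref{constantfunction} then makes it constant $r$ on some clopen subset. In our situation $|r_y|\geq\varepsilon$ holds throughout $B_1^*$, hence the common value $r$ satisfies $|r|\geq\varepsilon>0$, which completes the canonization of $T_{B_2,A}$ along the quasi-open surjection $\psi|B_2^*$. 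The main obstacle is therefore essentially bookkeeping: verifying that the proof of \ref{quasi-opendrewnowski} carries over to the case where $\psi$ is only defined on a clopen subset $B_1^*$, and tracking the uniform $\varepsilon$ through to guarantee $r\neq 0$.
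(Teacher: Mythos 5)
Your proposal is correct and follows essentially the same route as the paper: apply Lemma \ref{continuous_quasi-open} to get $B_1\subseteq B$ and a quasi-open $\psi:B_1^*\rightarrow\N^*$ with $\dual{T}(\delta_y)(\{\psi(y)\})\neq 0$, then feed this $\psi$ (defined on the clopen set $B_1^*$, exactly as the paper does) into Theorem \ref{quasi-opendrewnowski} to get the canonization constant $r$. The only deviation is your bookkeeping of the uniform $\varepsilon$ from \ref{atombound}, which is harmless but unnecessary: once $T_{B_2,A}(f^*)=r(f^*\circ\psi)|B_2^*$ holds, one has $\dual{T}(\delta_y)|A^*=r\delta_{\psi(y)}$ for any $y\in B_2^*$, so $r=\dual{T}(\delta_y)(\{\psi(y)\})\neq 0$ already follows from the pointwise nonvanishing supplied by \ref{continuous_quasi-open}.
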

\begin{proof}
Fix an infinite $B\subseteq \N$ and  find using \ref{continuous_quasi-open}
 an infinite $B_1\subseteq B$ and a quasi-open
$\psi: B_1^*\rightarrow \N^*$ such that
$$\dual{T}(\delta_y)(\{\psi(y)\})\not=0$$
for all $y\in B_1^*$. Now use \ref{quasi-opendrewnowski} to find clopen sets 
$A^*\subseteq \psi[B_1^*]$ and $B_2^*\subseteq \psi^{-1}[A^*]$
and a real $r\in \R$ such that 
$T_{B_2,A}(f^*)=r(f^*\circ\psi)|B_2$ for every $f\in\ell_\infty(A)$. It follows that
$\dual{T}(\delta_y)|A^*=r\delta_{\psi(y)}$ for each $y\in B_2^*$,
 and so $0\not=\dual{T}(\delta_y)(\{\psi(y)\})=r$.
Hence $T_{B_2,A}$ is canonizable along $\psi$.
\end{proof}

\subsection{Right-local canonization of funnelless automorphisms}

The main result of this section is a consequence of  our generalization 
\ref{quasi-opendrewnowski} of the Drewnowski-Roberts canonization lemma and the
following result of Plebanek which is implicitly proved in Theorem 6.1 of \cite{plebanekisrael}.

\begin{theorem}\label{plebanekiso} Suppose that $T:C(K)\rightarrow C(K)$ is an automorphism.
Then, there is a $\pi$-base $\mathcal U$  of $\N^*$ such that for every $U\in \mathcal U$
there is  a closed $F\subseteq \N^*$ and a continuous surjection $\psi: F\rightarrow {\overline{U}}$
such that
$$|\dual{T}(\delta_y)|(\{\psi(y)\})\not=0$$
for all $y\in F$.
\end{theorem}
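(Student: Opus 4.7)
The strategy is to build $\mathcal{U}$ directly as the family of all nonempty open $U \subseteq \N^*$ admitting a witnessing pair $(F,\psi)$; showing $\mathcal{U}$ is a $\pi$-base then reduces to the following task: for every nonempty clopen $V \subseteq \N^*$, locate some $U \in \mathcal{U}$ with $U \subseteq V$. So fix such a $V$.

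First I apply the cited improvement by Plebanek of Cengiz's theorem (Theorem 3.3 of \cite{plebanekisrael}); since $T$ is in particular an isomorphic embedding, this produces $\varepsilon_0 > 0$ (one may take $\varepsilon_0 = \|T^{-1}\|^{-1}$) such that $\bigcup_{y \in \N^*} \varphi^T_{\varepsilon_0}(y) = \N^*$. Fix $x_0 \in V$ and a witness $y_0 \in \N^*$; write $c_0 := |\dual{T}(\delta_{y_0})(\{x_0\})| \geq \varepsilon_0$. By regularity of the measure $\dual{T}(\delta_{y_0})$, pick a clopen $W_0 \subseteq V$ containing $x_0$ with $|\dual{T}(\delta_{y_0})|(W_0 \setminus \{x_0\}) < \varepsilon_0/8$, so that $x_0$ is the only atom of $\dual{T}(\delta_{y_0})$ in $W_0$ of mass exceeding $c_0 - \varepsilon_0/4$.

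Next, by Lemma \ref{upper_semicontinuity} applied to a set in the ultrafilter $y_0$, together with the weak$^*$-continuity of the total-variation map established in the corollary following Lemma \ref{norm_stability}, I choose a clopen neighbourhood $N^* \ni y_0$ on which $\varphi^T_\varepsilon$ is upper semicontinuous for every $\varepsilon > 0$ and $|\dual{T}(\delta_y)|(W_0) < c_0 + \varepsilon_0/4$ for all $y \in N^*$. This second inequality forces $\varphi^T_{c_0 - \varepsilon_0/4}(y) \cap W_0$ to contain at most one point for each $y \in N^*$, for two such atoms would contribute total mass at least $2c_0 - \varepsilon_0/2 > c_0 + \varepsilon_0/4$ (using $c_0 \geq \varepsilon_0$). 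Set $N^*_0 := \{y \in N^* : \varphi^T_{c_0 - \varepsilon_0/4}(y) \cap W_0 \neq \emptyset\}$, which is closed in $N^*$ by upper semicontinuity and contains $y_0$, and define $\psi \colon N^*_0 \to W_0$ by declaring $\{\psi(y)\} = \varphi^T_{c_0 - \varepsilon_0/4}(y) \cap W_0$. Continuity of $\psi$ follows from the upper semicontinuity of $\varphi^T_{c_0 - \varepsilon_0/4}$ together with the uniqueness of the selected atom inside the clopen set $W_0$.

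Finally, the key step is to verify that $\psi(N^*_0)$ has nonempty interior inside $V$; granting this, pick a nonempty clopen $U \subseteq \operatorname{int}\psi(N^*_0) \cap V$, set $F = \psi^{-1}(\overline{U})$, and observe that $F$ is closed, $\psi|F \colon F \to \overline{U}$ is a continuous surjection, and $|\dual{T}(\delta_y)|(\{\psi(y)\}) \geq c_0 - \varepsilon_0/4 > 0$ for all $y \in F$, producing a member of $\mathcal{U}$ contained in $V$. The real substance, and the main obstacle, is this last ``nonempty interior'' step --- essentially the content of Theorem 6.1 of \cite{plebanekisrael} --- where one uses in an essential way that $T$ is bijective (not merely an isomorphic embedding) so that $\dual{T}$ is a weak$^*$-homeomorphism on bounded balls, combined with an uncountable almost-disjoint-refinement argument: were $\psi(N^*_0)$ nowhere dense in $V$, then an almost disjoint family of clopen subsets of $V$ would produce uncountably many pairwise disjoint atoms of uniformly positive mass across the measures $\dual{T}(\delta_y)$, contradicting $\|\dual T\| < \infty$. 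Without full bijectivity this conclusion can fail, as illustrated elsewhere in the paper (cf.\ \ref{discontinuousnorm}).
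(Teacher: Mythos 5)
The paper does not actually prove this theorem: it is stated as a direct attribution ("implicitly proved in Theorem 6.1 of \cite{plebanekisrael}"), so there is no internal argument to match yours against. Your proposal likewise defers the decisive step to the same Theorem 6.1, which is legitimate, but the scaffolding you build around the citation has two genuine gaps.

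First, the preliminary reduction misuses the left-local lemmas. Lemma~\ref{upper_semicontinuity} and the corollary following Lemma~\ref{norm_stability} produce an infinite $B_1\subseteq_* B$ on whose star the good behaviour (upper semicontinuity of $\varphi^T_\varepsilon$, weak$^*$-continuity of $y\mapsto|\dual{T}(\delta_y)|$) holds; they do \emph{not} produce such a $B_1$ with $y_0\in B_1^*$ for a prescribed $y_0$. Your $y_0$ is whatever witness the Cengiz--Plebanek covering theorem hands you for the chosen $x_0$, and there is no reason it lands inside the stabilized region. So the construction of the clopen $N^*\ni y_0$ on which $\psi$ is a well-defined continuous selection does not go through as written.

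Second, and more seriously, the sketch of the ``nonempty interior'' step is not an argument. Uncountably many atoms of uniformly positive mass sitting in pairwise disjoint clopen sets, but belonging to \emph{different} measures $\dual{T}(\delta_{y_\alpha})$ (one atom per index $\alpha$), do not conflict with $\|\dual T\|<\infty$: each individual measure has total variation at most $\|\dual T\|$, and nothing forces mass to pile up in a single measure. Moreover, your $\psi$ was built locally near one witness $(y_0,x_0)$; if $\psi(N^*_0)$ were nowhere dense, the construction gives you no handle on the atom structure of $\dual{T}(\delta_y)$ for $y$ outside $N^*_0$, so there is no internal pressure producing the contradiction you assert. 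That the full strength of surjectivity is needed here is correct, but the mechanism is of a different (and more involved) character than the almost-disjoint-refinement sketch you give --- which is precisely why the authors chose to cite Plebanek's Theorem 6.1 rather than reproduce it.
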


\begin{theorem}\label{injcanonization}
Suppose that $T:C(\N^*)\rightarrow C(\N^*)$ is a funnelless automorphism.
Then, $T$ is right-locally canonizable along a quasi-open map.
\end{theorem}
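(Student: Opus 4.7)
Fix an infinite $A\subseteq\N$; the goal is to produce infinite $A'\subseteq A$ and $B'\subseteq\N$, a nonzero $r\in\R$ and a continuous quasi-open surjection $\phi\colon (B')^*\to (A')^*$ along which $T_{B',A'}$ canonizes with constant $r$. The plan is to marry Plebanek's theorem \ref{plebanekiso} with Theorem \ref{quasi-opendrewnowski}, using the funnelless hypothesis to bridge them. Let $\mathcal U$ be the $\pi$-base supplied by \ref{plebanekiso} and pick $U\in\mathcal U$ with $U\subseteq A^*$ and $U\ne \N^*$. This gives a closed $F\subseteq\N^*$ and a continuous surjection $\psi\colon F\to\overline U$ with $|\dual{T}(\delta_y)|(\{\psi(y)\})\ne 0$ for every $y\in F$. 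By Zorn's lemma, shrink $F$ to a closed $F_0\subseteq F$ on which $\psi|F_0$ is still surjective onto $\overline U$ and additionally irreducible.

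Next I claim that $F_0$ has nonempty interior in $\N^*$: otherwise $F_0$ is nowhere dense, and Definition \ref{funnelless} furnishes a nowhere dense $G\subseteq\N^*$ on which $\dual{T}(\delta_y)$ is concentrated for every $y\in F_0$; the atom condition then forces $\psi(y)\in G$, so $\overline U=\psi[F_0]\subseteq G$, contradicting the nonemptiness of the open set $U\subseteq\overline U$. Pick a clopen $B^*\subseteq \mathrm{int}(F_0)\cap\psi^{-1}[U]$ for some infinite $B\subseteq\N$: given any nonempty open $V\subseteq \mathrm{int}(F_0)$, the irreducibility of $\psi|F_0$ makes $\overline U\setminus \psi[F_0\setminus V]$ a nonempty open subset of $\overline U$ contained in $\psi[V]$, and this set meets the dense open subset $U\subseteq\overline U$, so $V\cap\psi^{-1}[U]$ is a nonempty open subset of $\N^*$ and contains the desired clopen $B^*$. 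The same argument shows that $\psi|B^*\colon B^*\to\N^*$ is quasi-open: for any nonempty open $W\subseteq B^*$ the set $\overline U\setminus\psi[F_0\setminus W]\subseteq\psi[W]$ is nonempty open in $\overline U$ and intersects $U$ in a nonempty open subset of $\N^*$ that lies inside $\psi[W]$.

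Since $\mathrm{int}(\psi[B^*])$ is therefore a nonempty open subset of $U$, fix an infinite coinfinite $\tilde A\subseteq A$ with $\tilde A^*\subseteq \mathrm{int}(\psi[B^*])$ and extend $\psi|B^*$ to a continuous quasi-open self-map $\tilde\psi\colon\N^*\to\N^*$ by choosing any homeomorphism of the clopen set $\N^*\setminus B^*$ onto the clopen set $\N^*\setminus\tilde A^*$ (both are nonempty clopens of $\N^*$ homeomorphic to $\N^*$, since $\N\setminus B$ and $\N\setminus\tilde A$ are infinite). By construction $\tilde\psi^{-1}[\tilde A^*]\subseteq B^*$ and $\tilde A^*\subseteq\tilde\psi[\N^*]$, so Theorem \ref{quasi-opendrewnowski} applies to $\tilde\psi$ and $\tilde A$, yielding $r\in\R$, an infinite $A'\subseteq\tilde A$ and the clopen $(B')^*:=\tilde\psi^{-1}[(A')^*]\subseteq B^*$ with $T_{B',A'}(f^*)=r(f^*\circ\psi)|(B')^*$ for every $A'$-supported $f$. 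The identity $\dual{T}(\delta_y)|(A')^*=r\delta_{\psi(y)}$ on $(B')^*$ together with the Plebanek atom condition forces $|r|=|\dual{T}(\delta_y)|(\{\psi(y)\})\ne 0$. Finally, $\psi|(B')^*\colon (B')^*\to (A')^*$ is a continuous surjection by the definition of $(B')^*$, and it is quasi-open onto the clopen set $(A')^*$ by the preceding paragraph (the nonempty open set of $\N^*$ inside $\psi[W]$ is automatically open in the clopen $(A')^*\supseteq\psi[W]$), so $T_{B',A'}$ is canonizable along the quasi-open map $\psi|(B')^*$, completing the proof.

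The main obstacle will be the passage from Plebanek's closed-set surjection $\psi\colon F_0\to\overline U$ — where neither $F_0$ nor $\overline U$ need be clopen in $\N^*$ — to a clopen domain $B^*$ in $\N^*$ and a clopen codomain on which $\psi$ restricts to a continuous quasi-open map. The funnelless hypothesis together with the density of $U$ in $\overline U$ are precisely what is needed to effect this; once they are in place, the extension trick renders Theorem \ref{quasi-opendrewnowski} directly applicable.
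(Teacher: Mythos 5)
Your proof is correct and takes essentially the same route as the paper: Plebanek's Theorem \ref{plebanekiso}, passing to an irreducible restriction, using the funnelless hypothesis to rule out a nowhere dense domain and thus extract a clopen $B^*$ inside it, and then invoking Theorem \ref{quasi-opendrewnowski} to get the nonzero constant. The only (harmless) difference is that you extend $\psi|B^*$ to a global quasi-open self-map of $\N^*$ so that \ref{quasi-opendrewnowski} applies verbatim, whereas the paper applies that theorem directly to the quasi-open map defined on the clopen set $B^*$.
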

\begin{proof} 
Fix an infinite $A\subseteq \N$. Let $\mathcal U$ be as in \ref{plebanekiso} for $T$
and find $U\in \mathcal U$ such that $U\subseteq A^*$. Let 
$F$ and $\psi$ be as in \ref{plebanekiso} for $U$.
Let $A_1\subseteq\N$ be infinite
such that $A_1^*\subseteq U$
and put $F_1=\psi^{-1}[A^*_1]\subseteq F$.
Let $F_2\subseteq F_1$ be closed such that
$\psi|F_2$ is irreducible and  onto $A_1^*$
 and hence quasi-open (relative to the subspace topology on $F_2$) by lemma \ref{irreduciblequasi-open}.  

As $T$ is funnelless (Definition \ref{funnelless}) and $A_1^*$ is open, $F_2$ cannot 
be nowhere dense, so let $B^*$ be a nonempty clopen
set included in $F_2$. Now $\psi: B^*\rightarrow A_1^*$ is a continuous, quasi-open map
(as a restriction of a quasi-open map to a clopen subset of $F_2$)
satisfying $\dual{T}(\delta_y)(\{\psi(y)\})\not=0$ for each $y$ in $B^*$.
Therefore, we can apply 
\ref{quasi-opendrewnowski} to obtain an infinite $A_2\subseteq A_1$, a clopen 
$B_1^*\subseteq \psi^{-1}[A_2^*]$ and a real $r\in\R$ such that
$T_{B_1,A_2}(f^*)=r(f^*\circ \psi)|B_1^*$,
for every  $f\in\ell_\infty(A_2)$. 
In particular we have that  
$$\dual{T}(\delta_y)(E^*)=T_{B_1,A_2}(\chi_{E^*})(y)=r(\chi_{E^*}\circ \psi)(y)=r\delta_{\psi(y)}(E^*)$$
for every infinite $E\subseteq A_2$ and every $y\in B_1^*$.
It follows that for each $y\in B_1^*$ we have
$\dual{T}(\delta_y)|A_2^*=r\delta_{\psi(y)}$, and so
 $0\not=\dual{T}(\delta_y)(\{\psi(y)\})=r$. Having
 $r\not=0$, we conclude that $T_{B_1,A_2}$ is canonizable along $\psi$.

\end{proof}

\section{The impact of combinatorics on the 
canonization and trivialization of operators on $\ell_\infty/c_0$}

As expected based on the study of $\N^*$ (e.g., \cite{shelahproper}, \cite{stevoquotients}, \cite{boban}, \cite{dow2to1}, \cite{dowhart}, \cite{ilijasauto}), the impact of additional set-theoretic
assumptions on the structure of operators on $\ell_\infty/c_0$ is also very dramatic.

\subsection{Canonization and trivialization of operators on $\ell_\infty/c_0$ under OCA+MA}

Recall from \cite{ilijasauto} that an ideal $\mathcal I$ of subsets of $\N$ 
is called c.c.c. over Fin if, and only if, there are no uncountable
 almost disjoint families of $\mathcal I$-positive sets. Dually
a closed subset $F\subseteq \N^*$ is called c.c.c. over Fin if
$A_\xi^*\cap F=\emptyset$ for some $\xi<\omega_1$ whenever $\{A_\xi: \xi<\omega_1\}$
is an almost disjoint family of infinite subsets of $\N$.
The following theorem by I. Farah (3.3.3. and  3.8.1. from \cite{ilijasauto}) 
will be crucial in this subsection:

\begin{theorem}[OCA+MA (\cite{ilijasauto})]\label{ocacontinuous}
Let $h:\wp(\N)/\text{\emph{Fin}}\longrightarrow \wp(\N)/\text{\emph{Fin}}$ be a homomorphism. Then,
 there is an infinite $B\subseteq \N$,
 a function $\sigma:B\rightarrow \N$ and a homomorphism
 $h_2:\wp(\N)/\text{\emph{Fin}}\rightarrow \wp(\N\setminus B)/\text{\emph{Fin}}$
  such that 
 $h(A)=[\sigma^{-1}[A]]\cup h_2([A])$ for every $A\subseteq \N$,
  and $Ker(h_2)$ 
 is c.c.c. over Fin.
\end{theorem}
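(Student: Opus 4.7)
The plan is to deploy the OCA+MA analysis of homomorphisms of $\wp(\N)/\text{\emph{Fin}}$ developed in \cite{ilijasauto}, decomposing $h$ into a maximally large ``trivial fragment'' induced by a function $\sigma$ and a rigid residual $h_2$ whose kernel cannot be spread out along uncountable almost disjoint families. The first phase is to extract $(B,\sigma)$ from a continuous lifting of $h$; the second is to verify that what remains of $h$ must have c.c.c.\ over \emph{Fin} kernel.

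First I would apply the OCA-based continuous lifting machinery. The standard approach considers partial liftings $\tilde h:\wp(C)\to \wp(\N)$ of $h$ on infinite $C\subseteq \N$, ordered by restriction, and applies OCA to an open coloring that separates ``continuous agreement'' from ``discontinuous disagreement'' between approximating candidates. Combined with MA (to glue countably many compatible partial liftings, in the spirit of almost disjoint coding), this yields a Baire-measurable lifting of $h$ that can be assumed continuous in the product topology on its domain. From continuity at each point $n\in B$ of the resulting lifting, there is at most one $m$ with $n\in h(\{m\})$ mod finite; set $\sigma(n)=m$ for those $n\in B$ where such an $m$ exists, keeping $B$ as large as possible. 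Then define $h_2([A])=h([A])\setminus[\sigma^{-1}[A]]$, noting that this makes sense modulo \emph{Fin}, is Boolean (because $\sigma^{-1}$ commutes with the operations and $h$ does), and lands in $\wp(\N\setminus B)/\text{\emph{Fin}}$.

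The hard part will be proving that $\ker(h_2)$ is c.c.c.\ over \emph{Fin}. Suppose, toward a contradiction, that $\{A_\xi:\xi<\omega_1\}$ is an almost disjoint family with $h_2([A_\xi])\neq 0$ for all $\xi$, and for each $\xi$ pick a witness $n_\xi\in h_2([A_\xi])$. I would set up an open coloring $c:[\omega_1]^2\to\{0,1\}$ that encodes ``the behaviour of $h_2$ on $A_\xi\triangle A_\eta$ is incompatible with any coherent extension of $\sigma$'' — measured in the Cantor topology on $\wp(\N)$ through the partial lifting already fixed. Applying OCA yields a dichotomy: either an uncountable $0$-homogeneous set $I\subseteq\omega_1$, from which MA extracts a new partial function $\sigma'$ on a set $B'\not\subseteq B$ satisfying $[(\sigma')^{-1}[A]]\subseteq h([A])$ for all $A$, contradicting the maximal choice of $(B,\sigma)$; or a partition of $\omega_1$ into countably many $1$-homogeneous pieces, one of them uncountable, from which a further coherence among the witnesses $n_\xi$ must be extractable by MA, again extending the trivial part of $h$ beyond $B$. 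Either branch collides with the maximality built into the construction, so $\ker(h_2)$ is forced to be c.c.c.\ over \emph{Fin}. The delicate technical point throughout is arranging the OCA coloring so that both alternatives are genuinely incompatible with the maximality, which is where one must draw most heavily on the fine structure developed in \cite{ilijasauto}.
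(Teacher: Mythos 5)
Your proposal should first be measured against what the paper actually does here: it does not prove this statement at all, but quotes it from Farah's memoir (3.3.3 and 3.8.1 of \cite{ilijasauto}), where the proof occupies a substantial part of the lifting theory for quotients over analytic ideals. Judged as a blind reconstruction of that argument, your text is an outline of the known strategy rather than a proof: at both of its crucial junctures it explicitly defers to ``the fine structure developed in \cite{ilijasauto}'', which is exactly the content a proof would have to supply.

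Beyond that deferral, the details you do give would not survive scrutiny. The extraction of $\sigma$ is misstated: for any $m\in\N$ the class $[\{m\}]$ is zero in $\wp(\N)/\text{\emph{Fin}}$, so ``there is at most one $m$ with $n\in h(\{m\})$ mod finite'' is vacuous; one has to work with a lifting $\tilde h:\wp(\N)\rightarrow\wp(\N)$, and the passage from a Baire-measurable (or product-continuous) lifting to a completely additive one of the form $A\mapsto\sigma^{-1}[A]$ on a large set is itself a major step (asymptotic additivity, stabilization, the use of MA), not a pointwise continuity remark. Likewise, for $h_2([A])=h([A])\setminus[\sigma^{-1}[A]]$ to be a homomorphism with values in $\wp(\N\setminus B)/\text{\emph{Fin}}$ one needs $h([A])\cap[B]=[\sigma^{-1}[A]]$ for every $A\subseteq\N$, i.e.\ that $\sigma$ accounts exactly for the action of $h$ over $B$; this must be engineered in the choice of $(B,\sigma)$ and does not follow from ``$\sigma^{-1}$ commutes with the operations''. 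Finally, the c.c.c.-over-\emph{Fin} part is a promissory note: you never specify the open coloring, never show that an uncountable $0$-homogeneous set or a countable decomposition into $1$-homogeneous sets would each produce an extension of the trivial part, and the maximality of $(B,\sigma)$ that both branches are supposed to contradict is neither formalized (maximal in which ordering?) nor shown to be attainable. As it stands, the proposal reproduces the shape of Farah's theorem but leaves all of its substance unproved.
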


 The following is a topological reformulation of the above theorem:
 
 \begin{theorem}[Proposition 7, \cite{dow2to1}(OCA+MA)]\label{ocacontinuoustopo}
 Suppose $\psi: \N^*\rightarrow \N^*$ is a continuous  mapping. 
Then, there exist an infinite $B\subseteq \N$ and a function $\sigma: B\rightarrow \N$ such that
 $$\psi(x)=\sigma^*(x)$$
 for all $x\in B^*$ and  $F=\psi[(\N\setminus B)^*]$ is a nowhere dense closed 
  c.c.c. over Fin set.
 \end{theorem}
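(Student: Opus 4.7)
The plan is to deduce this topological statement from Theorem \ref{ocacontinuous} by Stone duality. First I would define a Boolean homomorphism $h : \wp(\N)/\text{\emph{Fin}} \to \wp(\N)/\text{\emph{Fin}}$ by letting $h([A])$ be the (unique modulo \emph{Fin}) class $[C]$ with $\psi^{-1}[A^*] = C^*$; this makes sense because $\psi$ is continuous and clopen subsets of $\N^*$ are precisely those of the form $C^*$. Applying Theorem \ref{ocacontinuous} to $h$ then produces an infinite $B \subseteq \N$, a function $\sigma : B \to \N$, and a homomorphism $h_2 : \wp(\N)/\text{\emph{Fin}} \to \wp(\N \setminus B)/\text{\emph{Fin}}$ with $Ker(h_2)$ c.c.c. over \emph{Fin}, such that $h([A]) = [\sigma^{-1}[A]] \cup h_2([A])$ for every $A \subseteq \N$.

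Next I would verify $\psi(x) = \sigma^*(x)$ for $x \in B^*$. Identifying points of $\N^*$ with ultrafilters on $\wp(\N)/\text{\emph{Fin}}$, one has $A \in \psi(x)$ iff $x \in \psi^{-1}[A^*]$ iff $h([A]) \in x$; since $x \in B^*$ and $h_2([A])$ is supported on $(\N \setminus B)^*$, this reduces to $[\sigma^{-1}[A]] \in x$, which is exactly the statement $A \in \sigma^*(x)$.

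For $F = \psi[(\N \setminus B)^*]$, the same calculation shows that the Boolean homomorphism dual to $\psi|(\N \setminus B)^*$ is $h_2$. A standard Stone-duality argument, using compactness of $A^*$ and that $Ker(h_2)$ is an ideal, then identifies the complement of $F$ as the open set $\bigcup \{a^* : [a] \in Ker(h_2)\}$; consequently $A^* \cap F = \emptyset$ iff $[A] \in Ker(h_2)$. Closedness of $F$ is immediate from compactness of $(\N \setminus B)^*$. For nowhere density, if some $A^* \subseteq F$, then no infinite subset of $A$ lies in $Ker(h_2)$; an uncountable almost disjoint refinement of $A$ would then constitute an uncountable almost disjoint family of $Ker(h_2)$-positive sets, contradicting c.c.c. over \emph{Fin} of $Ker(h_2)$. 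The c.c.c. over \emph{Fin} property of $F$ itself follows in the same spirit: given any almost disjoint family $\{A_\xi : \xi < \omega_1\}$ of infinite sets, c.c.c. over \emph{Fin} of $Ker(h_2)$ yields some $\xi$ with $[A_\xi] \in Ker(h_2)$, equivalently $A_\xi^* \cap F = \emptyset$.

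The only real work is bookkeeping, matching Farah's algebraic decomposition with its Stone-space counterpart and translating a single c.c.c. over \emph{Fin} property of $Ker(h_2)$ into both nowhere density and c.c.c. over \emph{Fin} for the closed set $F$; no combinatorial input beyond Theorem \ref{ocacontinuous} should be required.
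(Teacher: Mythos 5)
Your proposal is correct and follows essentially the same route as the paper: dualize $\psi$ to a homomorphism of $\wp(\N)/\text{\emph{Fin}}$, apply Theorem \ref{ocacontinuous}, read off $\sigma^*=\psi|B^*$ from the decomposition, and translate the c.c.c.\ over \emph{Fin} property of $Ker(h_2)$ into the stated properties of $F$ via the equivalence $A^*\cap F=\emptyset$ iff $[A]\in Ker(h_2)$. The only cosmetic difference is that you prove nowhere density of $F$ directly by an almost disjoint refinement, whereas the paper invokes the general fact that closed c.c.c.\ over \emph{Fin} sets are nowhere dense; these are the same argument.
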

 \begin{proof} By the Stone duality every continuous $\psi: \N^*\rightarrow \N^*$
corresponds to a homomorphism $h:\wp(\N)/\text{\emph{Fin}}\rightarrow \wp(\N)/\text{\emph{Fin}}$
given by $h([A])=[D]$, where $D^*=\psi^{-1}[A^*]$, for every $A\subseteq \N$.
Let $B\subseteq \N$, $\sigma: B\rightarrow \N$ and $h_2$
be as in  \ref{ocacontinuous} for the homomorphism $h$.
For every infinite $A\subseteq \N$ we  have $h([A])\cap [B]=[\sigma^{-1}[A]]$ by \ref{ocacontinuous}
and so $\psi^{-1}[A^*]\cap B^*=(\sigma^{-1}[A])^*$. Therefore, $\sigma^*=\psi|B^*$.
For every infinite $A\subseteq \N$ we  have $h([A])\cap[\N\setminus B]
=h_2([A])$ by \ref{ocacontinuous}, so for every $x\in(\N\setminus B)^*$ we have $\psi(x)=h^{-1}_2[\{[A]: A\in x\}]$
by the Stone duality. The set $F=\psi[(\N\setminus B)^*]$ is closed and 
for every $x\in(\N\setminus B)^*$, the set
 $h^{-1}_2[\{[A]: A\in x\}]$ is disjoint from $Ker(h_2)$, which is c.c.c. over Fin.
Therefore, $F$ is c.c.c. over Fin and c.c.c. over Fin sets are nowhere dense.
 \end{proof}

It turns out that quasi-open maps $\psi: B^*\rightarrow \N^*$ can be reduced to
bijections between subsets of $\N$ assuming OCA+MA.

\begin{lemma}[OCA+MA]\label{ocacontinuoussemi} 
Let $B\subseteq \N$ be infinite.
Suppose that $\psi:B^*\rightarrow \N^*$ is a continuous quasi-open mapping.
Then, there are infinite $B_1\subseteq B$, $A\subseteq \N$ and a bijection $\sigma:B_1\rightarrow A$
 such that $\psi| B_1^*=\sigma^*$.  In particular,  $\psi|B^*_1$
is a homeomorphism.
\end{lemma}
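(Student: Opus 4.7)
The plan is to reduce to the already-established OCA+MA trivialization theorem for continuous self-maps of $\N^*$ (Theorem \ref{ocacontinuoustopo}) by extending $\psi$ to all of $\N^*$, and then to use quasi-openness to discard the c.c.c.-over-Fin remainder.

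First I would extend $\psi$ to a continuous map $\tilde\psi:\N^*\to\N^*$ by setting $\tilde\psi|B^*=\psi$ and $\tilde\psi|(\N\setminus B)^*\equiv x_0$ for some fixed $x_0\in\N^*$; this is continuous because $B^*$ and $(\N\setminus B)^*$ are complementary clopen sets in $\N^*$. Applying \ref{ocacontinuoustopo} produces an infinite $C\subseteq\N$, a function $\sigma:C\to\N$ with $\tilde\psi|C^*=\sigma^*$, and a closed nowhere dense (in fact c.c.c.\ over Fin) set $F=\tilde\psi[(\N\setminus C)^*]$.

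The main step is to argue that $B\cap C$ is infinite. If it were finite then $B\subseteq_*\N\setminus C$, so $\psi[B^*]=\tilde\psi[B^*]\subseteq F$. But $\psi$ is quasi-open and $B^*$ is a nonempty open set, so $\psi[B^*]$ has nonempty interior in $\N^*$, contradicting the fact that $F$ is nowhere dense. Hence on the infinite set $D:=B\cap C$ we have $\psi|D^*=(\sigma|D)^*$, and in particular the Stone extension $(\sigma|D)^*$ lands in $\N^*$, which forces $\sigma|D$ to be finite-to-one: otherwise some $n\in\N$ would have infinite preimage and the entire clopen set $((\sigma|D)^{-1}(n))^*$ would be collapsed by $\beta\sigma$ to $n\in\N\setminus\N^*$.

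The final step is routine. From a finite-to-one $\sigma|D$ I would extract an infinite $B_1\subseteq D$ on which $\sigma$ is injective by choosing one preimage for each value in $\sigma[D]$; putting $A=\sigma[B_1]$, the map $\sigma|B_1:B_1\to A$ is a bijection between infinite subsets of $\N$, and its Stone extension $(\sigma|B_1)^*:B_1^*\to A^*$ is a homeomorphism that agrees with $\psi$ on $B_1^*$. The only genuine obstacle is the quasi-openness argument, but it reduces cleanly to the fact — explicitly noted in the proof of \ref{ocacontinuoustopo} — that c.c.c.\ over Fin subsets of $\N^*$ are nowhere dense.
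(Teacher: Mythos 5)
Your proof is correct and follows essentially the same route as the paper: both reduce to Farah's theorem \ref{ocacontinuoustopo} and then use quasi-openness to rule out the degenerate case before extracting an infinite set on which $\sigma$ is injective. The only difference is cosmetic: the paper applies \ref{ocacontinuoustopo} directly to $\psi$ on $B^*$ (identified with $\N^*$) and uses quasi-openness to see that the image $\sigma[B_0]$ is infinite, whereas you extend $\psi$ by a constant so as to apply the theorem literally on $\N^*$ and use quasi-openness to see that $B\cap C$ is infinite, recovering the infiniteness of the image from the (automatic) finite-to-one-ness of $\sigma$.
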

\begin{proof} 
By \ref{ocacontinuoustopo} there exist
 $B_0\subseteq B$ and a function $\sigma:B_0\rightarrow \N$ such that
 $\psi(x)=\sigma^*(x)$  for all $x\in B_0^*$.

Now, since $\psi$ is quasi-open, there exists an infinite $E\subseteq \N$ such that 
$E^*\subseteq \psi[B_0^*]=\sigma^*[B_0^*]$. Therefore,
$E\subseteq ^* \sigma[B_0]$ and in particular the image of $\sigma$ is infinite and so
there is an infinite $B_1\subseteq B_0$
such that $\sigma|B_1$ is a bijection onto its image $A$.
\end{proof}

\begin{theorem}[OCA+MA]\label{ocatrivialization} If $T:\ell_\infty/c_0\rightarrow\ell_\infty/c_0$
is a fountainless everywhere present operator, then it is left-locally canonizable and so left-locally trivial.
If $T:\ell_\infty/c_0\rightarrow\ell_\infty/c_0$ is 
 a funnelless automorphism, then it is right-locally canonizable  and so right-locally trivial.
\end{theorem}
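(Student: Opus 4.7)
My plan is to combine the ZFC local canonization-along-a-quasi-open-map results of Theorems \ref{surcanonization} and \ref{injcanonization} with Lemma \ref{ocacontinuoussemi}, which under OCA+MA reduces any continuous quasi-open map $\psi:B^*\to\N^*$ to one induced by an actual bijection $\sigma:B_1\to A'$ between infinite subsets of $\N$, on a suitable further restriction $B_1\subseteq B$. Once the canonical identity $\widehat{T_{B,A}}(f^*)=r(f^*\circ\psi)$ with $r\neq 0$ is available, the pointwise equality $\psi|B_1^*=\sigma^*$ rewrites it as $r(f\circ\sigma)^*$, which is precisely the definition of a trivial operator.

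For the first statement I would fix an infinite $B\subseteq\N$ and apply Theorem \ref{surcanonization} to produce an infinite $B_2\subseteq B$ and an infinite $A\subseteq\N$, a continuous quasi-open surjection $\psi:B_2^*\to A^*$, and a nonzero $r\in\R$ witnessing that $T_{B_2,A}$ is canonical along $\psi$. Since $A^*$ is clopen in $\N^*$, composition with the inclusion $A^*\hookrightarrow\N^*$ preserves quasi-openness, and Lemma \ref{ocacontinuoussemi} yields $B_1\subseteq B_2$, an infinite $A'\subseteq\N$, and a bijection $\sigma:B_1\to A'$ with $\psi|B_1^*=\sigma^*$. Since $\sigma^*[B_1^*]=\psi[B_1^*]\subseteq A^*$, we have $A'\subseteq_* A$; after a finite trim of $A'$ and the corresponding adjustment of $B_1$, I may assume $A'\subseteq A$. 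For $f\in\ell_\infty(A')$, extending by zero to $g\in\ell_\infty(A)$ and restricting the canonization identity to $B_1^*$, the equality $g^*\circ\psi|B_1^*=(g\circ\sigma)^*|B_1^*=(f\circ\sigma)^*|B_1^*$ gives $T_{B_1,A'}([f]_{c_0(A')})=[r(f\circ\sigma)]_{c_0(B_1)}$, which is triviality (Definition \ref{trivializations}(3)).

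The second statement is proved by the same recipe, starting from an infinite $A\subseteq\N$ and invoking Theorem \ref{injcanonization} in place of \ref{surcanonization} to obtain an infinite $A_2\subseteq A$, an infinite $B\subseteq\N$, a quasi-open continuous surjection $\psi:B^*\to A_2^*$, and a nonzero $r$ with $T_{B,A_2}$ canonical along $\psi$. Lemma \ref{ocacontinuoussemi} then converts $\psi$ on a further sublocalization into $\sigma^*$ for a bijection $\sigma:B_1\to A_3$ with $A_3\subseteq A_2$, and the same restriction-of-the-canonical-identity argument concludes triviality of $T_{B_1,A_3}$.

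The genuinely difficult content is entirely absorbed into the inputs: the two ZFC selection results, which extract from the multifunctions $\varphi^T$ a continuous quasi-open selection under the fountainless/funnelless assumptions, and Farah's OCA+MA theorem \ref{ocacontinuous} underlying Lemma \ref{ocacontinuoussemi}, which enforces that any continuous quasi-open selection on $\N^*$ is locally induced by a bijection of integers. What is left at this stage is the bookkeeping required to align the set $A'$ produced by OCA+MA with the set $A$ coming from the ZFC canonization step, accomplished via $A'\subseteq_* A$. I expect no real obstacle beyond this alignment and the routine verification that composition with the zero-extension $g$ of $f\in\ell_\infty(A')$ produces $f\circ\sigma$ on $B_1$.
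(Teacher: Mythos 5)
Your proposal is correct and follows exactly the paper's route: the paper's proof is precisely the combination of Theorems \ref{surcanonization} and \ref{injcanonization} with Lemma \ref{ocacontinuoussemi}, concluding that the quasi-open map is somewhere induced by a bijection. Your additional bookkeeping (aligning $A'\subseteq_* A$, restricting the canonization identity to $B_1^*$, and rewriting $f^*\circ\sigma^*$ as $(f\circ\sigma)^*$) just fills in routine details the paper leaves implicit.
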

\begin{proof}
Apply \ref{surcanonization} and \ref{injcanonization}
to obtain left-local or right-local canonization along a quasi-open mapping, respectively. 
Now use \ref{ocacontinuoussemi} to conclude that this mapping is somewhere induced by a bijection.
\end{proof}

\subsection{Operators on $\ell_\infty/c_0$ under CH}

The continuum hypothesis is a strong tool allowing transfinite induction constructions in $\wp(\N)/\text{\emph{Fin}}$
which induce objects in $\ell_\infty/c_0$. Actually, a considerable part of this strength is 
included in a powerful consequence of Parovi\v cenko's theorem: 
if $X$ is zero-dimensional, locally compact, $\sigma$-compact, noncompact Hausdorff space of weight
at most continuum, then $X^*=\beta X\setminus X$ is
 homeomorphic to $\N^*$ (1.2.6 of \cite{vanmillhandbook}). 
In this section we will often be using this result combined with the universal
property of $\beta X$ for locally compact $X$, that every continuous function on $X$
into a compact space extends to  $\beta X$

\begin{theorem}[CH]\label{nowherecanonizable} 
There is an automorphism $T: \ell_\infty/c_0\rightarrow \ell_\infty/c_0$
which is nowhere canonizable along a quasi-open map on
an open set, in particular along a homeomorphism.
\end{theorem}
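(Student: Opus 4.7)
The plan is to construct $T$ directly via a transfinite induction of length $\omega_1$, using CH as a bookkeeping device to destroy every potential canonization along a quasi-open map. The first observation is that under CH only $\aleph_1$ candidates need to be killed: any continuous map $\psi:B^{*}\to A^{*}$ is determined by a Boolean homomorphism of $\wp(\N)/\text{\emph{Fin}}$, and $|\wp(\N)/\text{\emph{Fin}}|=\aleph_1$. So I would enumerate all quadruples $(A_\xi,B_\xi,\psi_\xi,r_\xi)_{\xi<\omega_1}$ where $A_\xi,B_\xi\subseteq\N$ are infinite, $\psi_\xi$ is a continuous quasi-open surjection from an open subset $U_\xi\subseteq B_\xi^{*}$ onto an open subset of $A_\xi^{*}$, and $r_\xi\in\R\setminus\{0\}$. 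By Definition \ref{trivializations}, canonization of $T_{B_\xi,A_\xi}$ along $\psi_\xi$ with constant $r_\xi$ on $U_\xi$ amounts to
$$\dual{T}(\delta_y)|A_\xi^{*}=r_\xi\,\delta_{\psi_\xi(y)}\quad\text{for all } y\in U_\xi,$$
so it is enough to arrange that for every $\xi<\omega_1$ this equation fails at some witness point.

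I would build the adjoint of $T$ as the weak-$*$ continuous extension of a coherent tower of partial data $(\mathcal A_\xi,\tau_\xi,\tau_\xi')_{\xi<\omega_1}$, where $\mathcal A_\xi$ is a countable Boolean subalgebra of $\wp(\N)/\text{\emph{Fin}}$ increasing in $\xi$ with $\bigcup_{\xi<\omega_1}\mathcal A_\xi=\wp(\N)/\text{\emph{Fin}}$ (possible under CH), and $\tau_\xi,\tau_\xi'$ are weak-$*$ continuous partial measure-valued maps on the Stone space of $\mathcal A_\xi$ that will limit to $\dual{T}$ and $\dual{T^{-1}}$ respectively. At limit stages take unions. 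At a successor stage $\xi+1$ first enlarge $\mathcal A_\xi$ to contain $A_\xi$, $B_\xi$ and enough further sets to make $\psi_\xi$ representable over it, then pick a single point $y_\xi\in U_\xi$ and a set $a\in\mathcal A_{\xi+1}$ with $a\subseteq_{*}A_\xi$ for which the value of $\tau_{\xi+1}(y_\xi)(a^{*})$ has not been fixed, and choose it different from $r_\xi\chi_{a^{*}}(\psi_\xi(y_\xi))$; that single inequality alone destroys canonization at $(A_\xi,B_\xi,\psi_\xi,r_\xi)$. Finally update $\tau_{\xi+1}'$ on fresh elements of $\mathcal A_{\xi+1}$ so that the inversion requirement $\tau_{\xi+1}\circ\tau_{\xi+1}'=\tau_{\xi+1}'\circ\tau_{\xi+1}=\mathrm{id}$ remains consistent on the portion of the algebra already treated.

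The main obstacle is making these perturbations compatible with retaining an automorphism. The available freedom at each stage is, however, sizable: the Parovi\v cenko-type properties of $\N^{*}$ under CH (abundance of $\subseteq_{*}$-descending chains, richness of the weak-$*$ closure of countable families of measures, plentiful non-trivial autohomeomorphisms) guarantee that after any countable portion of the data has been committed there is still room both to kill the next quasi-open canonization and to preserve the coherence of the inverse pair $(\tau_{\xi+1},\tau_{\xi+1}')$ on the countable algebra constructed so far. This is essentially the same mechanism as in the classical CH construction of a non-trivial autohomeomorphism of $\N^{*}$, with the added wrinkle that at each stage one more cancelation of a canonization with prescribed nonzero constant must be carried out. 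Once the $\omega_1$-tower is complete, its weak-$*$ continuous extension yields the desired automorphism $T$: every quadruple $(A,B,\psi,r)$ appears among the $(A_\xi,B_\xi,\psi_\xi,r_\xi)$, so no $T_{B,A}$ agrees on any open subset of $B^{*}$ with the formula of Definition \ref{trivializations} for any quasi-open $\psi$. Hence $T$ is nowhere canonizable along a quasi-open map on an open set, and in particular nowhere canonizable along a homeomorphism.
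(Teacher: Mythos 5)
There is a genuine gap, and it occurs at the very first step. You claim that under CH ``only $\aleph_1$ candidates need to be killed'' because a continuous map $\psi:B^*\to A^*$ is determined by a homomorphism of $\wp(\N)/\text{\emph{Fin}}$ and $|\wp(\N)/\text{\emph{Fin}}|=\aleph_1$. The size of the algebra is irrelevant: the number of homomorphisms of an algebra of size $\aleph_1$ into itself can be (and here is) $2^{\aleph_1}=2^{\mathfrak c}$. In fact a classical consequence of CH (already implicit in Rudin's work cited in the paper) is that $\N^*$ has $2^{\mathfrak c}$ autohomeomorphisms, so even the homeomorphism case alone gives $2^{\mathfrak c}$ quadruples $(A,B,\psi,r)$ to defeat. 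Hence no enumeration of length $\omega_1$ in which each successor stage destroys one quadruple can cover all quasi-open maps; this is precisely why the standard CH diagonalizations succeed only against the $\mathfrak c$-many \emph{trivial} candidates, not against arbitrary continuous maps. To make a transfinite construction work you would have to replace pointwise killing by securing a structural property of the measures $\dual{T}(\delta_y)$ that rules out \emph{all} canonizations at once, which is a different argument from the one you propose.

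A secondary but also serious problem is the preservation of automorphism-hood. Your tower $(\mathcal A_\xi,\tau_\xi,\tau'_\xi)$ must in the end produce a bounded linear operator that is injective, surjective and bounded below, with $y\mapsto\dual{T}(\delta_y)$ weak$^*$ continuous; the appeal to ``the same mechanism as the CH construction of a nontrivial autohomeomorphism'' does not supply this, since that construction is a Boolean back-and-forth where coherence and invertibility are automatic, whereas here the perturbation $\tau_{\xi+1}(y_\xi)(a^*)\neq r_\xi\chi_{a^*}(\psi_\xi(y_\xi))$ at a single point of $\N^*$ is not even determined by data on the countable algebra $\mathcal A_{\xi+1}$, and the ``inversion requirement $\tau\circ\tau'=\mathrm{id}$'' is not a condition one can impose on measure-valued maps stage by stage without substantial extra work on norms and ranges. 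The paper avoids both difficulties by a direct construction: it takes an isomorphism $S:C(2^\N)\to C(K)$ with $K$ metrizable having a dense set of isolated points, forms the induced operator between the remainders of $\N\times 2^\N$ and $\N\times K$ (both homeomorphic to $\N^*$ under CH by Parovi\v cenko's theorem), and checks that the adjoint measures live on the Cantor-set fibres where they assign arbitrarily small mass to small clopen sets, so on no open set can they have the form $r\delta_{\phi(t)}$ with $r\neq0$ for a quasi-open $\phi$; no enumeration of candidate maps is needed.
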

\begin{proof}
Let $K$ be an uncountable compact  zero-dimensional metrizable space with countably many
isolated points $\{x_m: m\in \N\}$ which form a  dense open subspace of $K$.
By the classical classification of separable spaces of continuous functions there is
 an isomorphism $S: C(2^\N)\rightarrow C(K)$.

Let $X=\N\times K$ and $Y=\N\times 2^\N$. Note that $X, Y$ satisfy the hypothesis
of the topological consequence of Parovi\v cenko's theorem (1.2.6 of \cite{vanmillhandbook})
mentioned above, hence there are homeomorphisms $\pi: \N^*\rightarrow X^*$
and $\rho: Y^*\rightarrow \N^*$.  Define $\tilde\tau: K\rightarrow \|S\| B_{\dual{C(2^\N)}}$ by
$\tilde\tau(x)=\dual{S}(\delta_x)$ for each $x\in K$, where  the dual ball is considered with the
weak$^*$ topology and identified with the Radon measures on $2^\N$.
Define $\tau: X\rightarrow \| S\| B_{\dual{C(\beta Y)}}$ by 
putting $\tau(n, x)$  to be the measure on $\beta Y$ which is zero on the 
complement of $\{n\}\times 2^\N$ and is equal to the measure $\tilde\tau(x)$ on 
$\{n\}\times 2^\N$. 
%
By the universal property of $\beta X$ there is an extension $\beta\tau: \beta X
\rightarrow  \| S\| B_{\dual{C(\beta Y)}}$

\textsc{Claim:} For each $t\in X^*$ the measure $\beta\tau(t)$ is concentrated on $Y^*$.

Fix $t\in X^*$. Note that for every $n\in\N$
the set $\beta X\setminus \{k\in \N: k\leq n\}\times  K$ is a neighbourhood of $t$. 
Also,  for $k>n$ if $x\in \{k\}\times K$, then $\tau(x)(U)=0$
for every Borel subset $U$ of $\{n\}\times 2^\N$. This completes the proof of the claim
by the weak$^*$ continuity of $\beta\tau$.

Now we can define $T: C(Y^*)\rightarrow C(X^*)$ by $T(f)(t)=\int f \ud(\beta\tau(t))$
for every $t\in X^*$.
It is a well defined bounded linear operator by Theorem 1 in VI.7 of \cite{dunford}. 
We will show that  $T_\pi\circ T\circ T_\rho$ is an automorphism of $\N^*$
which is nowhere canonizable along  a quasi-open map.
For the former we need to prove that $T$ is an isomorphism and  for the latter we need to
prove that for every nonempty clopen sets $U\subseteq X^*$, $O\subseteq Y^*$
 there is no quasi-open $\phi: U\rightarrow O$ 
such that $(\beta\tau(t))|O=r\delta_{\phi(t)}$ for every $t$ in $U$ and some nonzero $r\in \R$. 

To prove that $T$ is an isomorphism, note that one can define  
$R: C(\beta Y)\rightarrow C(\beta X)$ by $R(f)(x)=\int f \ud(\beta\tau(x))$ for every $x\in X$,
and that $C(\beta Y)$ can be identified with the $\ell_\infty$-sum of $C(2^\N)$ while
 $C(\beta X)$ can be identified with the $\ell_\infty$-sum of $C(K)$. 
$R$ sends the subspace corresponding to the $c_0$-sum of $C(2^\N)$ into
the subspace corresponding to the $c_0$-sum of $C(K)$
since the original
$S$ is an isomorphism and $R$ is the $\ell_\infty$-sum of the operator $S$.
It follows that $T$ is induced by $R$ modulo the subspaces corresponding to the $c_0$-sums.
Moreover, one can note using the fact that $S$ is bounded below
that elements  outside  the subspace corresponding to the $c_0$-sum of $C(K)$ are
send by $R$ onto elements  outside 
the subspace corresponding to the $c_0$-sum of $C(K)$.
It follows that $T$ is nonzero on every nonzero element, i.e., is injective. 
The surjectivity of $T$ follows from the surjectivity of $R$ which follows
from the surjectivity of $S$.

Now let us prove that $T$ is nowhere canonizable along a quasi-open mapping.
Fix $U$, $O$ clopen subsets of $X^*$ and $Y^*$ respectively, and suppose
 $\phi$ is as above and quasi-open. Fix a clopen $V\subseteq\phi[U]$.
Let $U'$ be a clopen subset of $X$ such that  $\beta U'\cap X^*=U$.
The set $E$ of integers $n$ such that $U_n=U'\cap (\{n\}\times K)\not=\emptyset$
must be infinite. Since the 
isolated points $\{x_m: m\in \N\}$ are dense in $K$, we may assume, by going to a subset of $U$,
 that $U_n=\{x_{k_n}\}$
for all $n\in E$ and some $k_n\in \N$. Therefore,
$$U'=(\bigcup_{n\in E} \{n\}\times \{x_{k_n}\}).$$
Let $V_n=V'\cap \{n\}\times2^\N$ for $n\in E$, where $\beta V'\cap Y^*=V$.
Let $W_n\subseteq V_n$ be a nonempty clopen such that 
$\tilde\tau(x_{k_n})|W_n$ has its total variation less than $|r|/2$ which can be found
since $2^\N$ has no isolated points. Consider
$$W=(\bigcup_{n\in E} \{n\}\times W_n).$$
Then, $|\beta\tau(n,x_{k_n})(W')|<|r|/2$ for any $W'\subseteq W$ and any $n\in E$.
By the weak$^*$ continuity of $\beta\tau$ 
we have that $|\beta\tau(t)(W')|\leq|r|/2$ for any
 $t\in U$, but this shows that
$\beta\tau(t)$ is not $r\delta_{\phi(t)}$ as required.

\end{proof}

One concrete construction using the methods as above  due to E. van Douwen and J. van Mill is 
 a nowhere dense retract $F\subseteq \N^*$
which is homeomorphic to $\N^*$  and which is a $P$-set 
(see  1.4.3. and 1.8.1. of \cite{vanmillhandbook}).
We will require the following:

\begin{lemma}(CH)\label{nwdpset-isomorphism}
Let $F\subseteq\N^*$ be a nowhere dense $P$-set. The space $\{f\in C(\N^*): f|F=0\}$ is isomorphic to $C(\N^*)$.
\end{lemma}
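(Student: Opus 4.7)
The plan is to reduce the lemma to Parovi\v{c}enko's topological classification theorem, and then to a Banach-space fact about hyperplanes of $C(\N^*)$. First, observe that the natural quotient map $q:\N^*\to K:=\N^*/F$, which collapses $F$ to a single point $p_F$, induces an isometric identification of $\{f\in C(\N^*):f|F=0\}$ with the closed hyperplane $\{g\in C(K):g(p_F)=0\}$ of $C(K)$ via $g\mapsto g\circ q$ (the inverse sends $f$ to the well-defined function $\bar f([x])=f(x)$, since $f$ is constant on $F$). The bulk of the work is then to show that under CH, $K\cong \N^*$, which I intend to do by verifying that $K$ is a Parovi\v{c}enko space: compact Hausdorff, zero-dimensional, of weight $\aleph_1$, without isolated points, an $F$-space, and such that every nonempty $G_\delta$ has nonempty interior.

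The routine conditions (compactness, Hausdorffness, absence of isolated points since $F$ is nowhere dense, zero-dimensionality via clopen $q$-saturated sets, and weight $\aleph_1$ under CH) are straightforward. The $P$-set property of $F$ is used crucially at two points. First, $p_F$ becomes a $P$-point of $K$, so any $G_\delta$ containing $p_F$ pulls back to a $G_\delta$-neighborhood of $F$ in $\N^*$ and by the $P$-set property contains a clopen neighborhood of $F$; $G_\delta$'s in $K$ avoiding $p_F$ live in the open subspace $\N^*\setminus F\subseteq \N^*$ and inherit nonempty interior from $\N^*$ being Parovi\v{c}enko. Second, for the $F$-space property, observe that if $f\in C(\N^*)$ vanishes on $F$, then the zero set $\{f=0\}=\bigcap_n\{|f|<1/n\}$ is a $G_\delta$-neighborhood of the $P$-set $F$ and hence contains an open neighborhood of $F$; this allows one to lift disjoint cozero sets of $K$ to disjoint cozero sets of $\N^*$ whose closures are disjoint from $F$, so that the $F$-space property of $\N^*$ descends to $K$.

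Once Parovi\v{c}enko's theorem gives $K\cong\N^*$, we conclude that $I_F:=\{f\in C(\N^*):f|F=0\}$ is isomorphic to a closed hyperplane $Y=\{g\in C(\N^*):g(p)=0\}$ of $C(\N^*)$. To finish, I will show $Y\cong C(\N^*)$. Using the construction in the proof of \ref{locallynullsurjective} with a discrete sequence of nonprincipal ultrafilters $\{x_n\}\subseteq \N^*\setminus\{p\}$, one obtains a surjection $C(\N^*)\to\ell_\infty$ that admits a section with image in $Y$, exhibiting $\ell_\infty$ as a complemented subspace of both $C(\N^*)$ and $Y$. Since $\ell_\infty\cong\ell_\infty\oplus\R$, writing $C(\N^*)\cong\ell_\infty\oplus Z$ and $Y\cong \ell_\infty\oplus Z'$ and using $C(\N^*)=Y\oplus\R$ yields $Y\cong Y\oplus\ell_\infty\cong C(\N^*)\oplus\ell_\infty\cong C(\N^*)$ by a short absorption argument.

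The main obstacle will be the verification in the second step that $\N^*/F$ is an $F$-space. This is precisely where the $P$-set hypothesis on $F$ (rather than merely its nowhere-denseness) becomes indispensable: without it, functions vanishing on $F$ need not vanish on neighborhoods of $F$, and the $F$-space property of the quotient would fail, blocking the appeal to Parovi\v{c}enko.
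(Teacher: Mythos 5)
Your proposal is correct but takes a genuinely different route from the paper. The paper's proof stays inside the Banach space: after fixing a $P$-point $p\in\N^*$ (whose existence under CH is Rudin's theorem) and choosing $\omega_1$-chains of clopen sets $(A_\alpha^*)$ and $(B_\alpha^*)$ whose complements shrink to $F$ and to $\{p\}$ respectively, it recursively builds coherent bijections $\sigma_\alpha\colon B_\alpha\to A_\alpha$ and pastes them into a direct linear \emph{isometry} from $\{f\colon f|F=0\}$ onto the concrete hyperplane $\{f\colon f(p)=0\}$. Your plan instead passes through the topological quotient $K=\N^*/F$: identify $\{f\colon f|F=0\}$ isometrically with $\{g\in C(K)\colon g(p_F)=0\}$, prove $K\cong\N^*$ by verifying Parovi\v{c}enko's characterization under CH, and then finish with the same hyperplane step. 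Both conclude by the Pe\l{}czy\'nski-type absorption $C(\N^*)\oplus\R\cong C(\N^*)$; yours is a touch more elaborate than the paper's one-line chain $C(\N^*)\sim C(\N^*)\oplus\ell_\infty\sim C(\N^*)\oplus\ell_\infty\oplus\R\sim C(\N^*)\oplus\R$, but equivalent. What your route buys is conceptual economy --- no explicit transfinite construction and no separate appeal to the existence of a $P$-point (the collapsed point $p_F$ inherits the $P$-point property from $F$ automatically) --- at the price of verifying Parovi\v{c}enko's conditions on $K$. You correctly locate the crux in the $F$-space property and the dense-$G_\delta$ property at $p_F$, both of which follow because any $G_\delta$ of $\N^*$ containing the $P$-set $F$ (in particular the zero set of the lift of a $g\in C(K)$ with $g(p_F)=0$) contains a clopen neighborhood of $F$, so the corresponding cozero set has closure missing $F$ and the closed map $q$ transports disjoint closures from $\N^*$ down to $K$. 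One small clarification to your phrasing: not every disjoint pair of cozero sets of $K$ lifts to cozero sets of $\N^*$ with closures disjoint from $F$ --- when one of the pair contains $p_F$ its preimage's closure meets $F$ --- but since at most one can contain $p_F$, the other one's closure misses $F$, and the case split still yields disjoint closures in $K$. The remaining Parovi\v{c}enko conditions (compactness, Hausdorffness, zero-dimensionality, weight $\mathfrak{c}$, no isolated points) are routine as you indicate, so the plan is sound.
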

\begin{proof}
Fix a $P$-point $p\in \N^*$ which exists assuming CH by the results of \cite{rudin}.
Let $(A_\alpha^*: \alpha<\omega_1)$ and 
$(B_\alpha^*: \alpha<\omega_1)$ be sequences of strictly increasing clopen sets such that
$\bigcap_{\alpha<\omega_1}(\N^*\setminus A_\alpha^*)=F$ 
and $\bigcap_{\alpha<\omega_1}(\N^*\setminus B_\alpha^*)=\{p\}$  
(they exists because $F$ is a $P$-set and $p$ is a $P$-point).

Using the standard argument
construct recursively one-to-one, onto functions $\sigma_\alpha: B_\alpha\rightarrow A_\alpha$ such that
$\sigma_\alpha=^*\sigma_\beta|B_\alpha$ for all $\alpha<\beta<\omega_1$. 
Put $\psi_\beta=\sigma_\beta^*:B_\beta^*\rightarrow A_\beta^*$ 
which is the corresponding homeomorphism.

Note that if $f\in C(\N^*)$ is such that 
$f|F=0$, then for every $n\in\N$ there exists $\alpha<\omega_1$ such that 
$\N^*\setminus A_\alpha^*\subseteq f^{-1}[\{t\in\mathbb{R}:|t|<1/(n+1)\}]$. 
Therefore, for each such $f$ there exists $\alpha<\omega_1$ such that $f|(\N^*\setminus A_\alpha^*)=0$.
So define 
$$S: \{f\in C(\N^*): f|F=0\}\rightarrow\{f\in C(\N^*): f(p)=0\}$$
by putting $S(f)=(f\circ\psi_\alpha)\cup 0_{\N\setminus B_\alpha^*}$,
where $\alpha$ is any countable ordinal such that $f|(\N^*\setminus A_\alpha^*)=0$.
It is well defined because the homeomorphisms extend each other, and it is clearly 
a linear isometry. Now it is enough to
note that $\{f\in C(\N^*): f(p)=0\}$ is isomorphic to $C(\N^*)$. To see that this is 
the case, notice that this space
is a hyperplane, and recall that all hyperplanes are isomorphic to each other in any Banach space
(see exercises 2.6 and 2.7 of \cite{fabian}). In the case of $C(\N^*)$ we have 
$$C(\N^*)\sim C(\N^*)\oplus \ell_\infty\sim C(\N^*)\oplus \ell_\infty\oplus \R\sim 
C(\N^*)\oplus \R$$
and so all hyperplanes are isomorphic to the entire $C(\N^*)$.
This completes the proof.
\end{proof}

\begin{proposition}\label{notrightidealCH}(CH) 
The collection of locally null operators is not a right ideal. 
Moreover, the right ideal generated by locally null operators is improper.
\end{proposition}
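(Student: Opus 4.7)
My plan is to produce a single locally null operator $S$ together with a bounded operator $R$ on $\ell_\infty/c_0$ satisfying $S\circ R=\mathrm{Id}$; this one example settles both statements. On the one hand, $S$ is locally null and $R$ is arbitrary, yet $S\circ R=\mathrm{Id}$ is certainly not locally null, so the family of locally null operators is not closed under composition on the right by arbitrary operators and is therefore not a right ideal. On the other hand, $\mathrm{Id}$ then appears as a single-term element of the right ideal generated by locally null operators, forcing that right ideal to contain every operator on $\ell_\infty/c_0$.

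For the construction I would invoke the van Douwen--van Mill result cited just above the proposition (\cite{vandouwen}): there exists a nowhere dense retract $F\subseteq\N^*$ that is homeomorphic to $\N^*$. Fix a retraction $r:\N^*\to F$ and a homeomorphism $\phi:F\to\N^*$. Passing to the $C(\N^*)$ description of $\ell_\infty/c_0$, define
\[
S(f)=f\circ\phi^{-1},\qquad R(g)=g\circ\phi\circ r.
\]
Both $\phi^{-1}:\N^*\to F\subseteq\N^*$ and $\phi\circ r:\N^*\to\N^*$ are continuous self-maps of $\N^*$, so $S$ and $R$ are bounded operators on $C(\N^*)$ (each is of the form $T_\psi$).

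The verification reduces to two short observations. First, $\dual{S}(\delta_y)=\delta_{\phi^{-1}(y)}$ is a Dirac mass sitting inside the nowhere dense set $F$ for every $y\in\N^*$, so by Proposition~\ref{locallydeterminednwd} the operator $S$ is locally null. Second, $S\circ R$ is given by composition with $\phi\circ r\circ\phi^{-1}:\N^*\to\N^*$; since $\phi^{-1}(y)\in F$ and $r$ fixes $F$ pointwise, this composite map is just $\phi\circ\phi^{-1}=\mathrm{id}_{\N^*}$. Hence $S\circ R=\mathrm{Id}$, and both assertions of the proposition follow.

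The only external input required is the existence of the retract $F$, which is quoted as a black box from \cite{vandouwen}; beyond that the argument is a direct calculation with Dirac measures and with compositions of continuous maps, so I do not anticipate any serious obstacle.
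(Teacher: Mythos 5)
Your proof is correct and is essentially the paper's own argument: your $\phi$ is the inverse of the paper's $\psi_2$, so $S=T_{\psi_2}$ and $R=T_{\psi_2^{-1}\circ\psi_1}$, and the cancellation $\phi\circ r\circ\phi^{-1}=\mathrm{id}$ is exactly the paper's $\psi_2^{-1}\circ\psi_1\circ\psi_2=\mathrm{id}$. The only cosmetic difference is that you justify the local nullness of $S$ via Proposition~\ref{locallydeterminednwd}, whereas the paper does so directly by noting that $T_{\psi_2}$ kills every $f^*$ vanishing on the nowhere dense set $F$.
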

\begin{proof} Let $F\subseteq \N^*$ be a nowhere dense retract of $\N^*$ homeomorphic to
$\N^*$,  $\psi_1: \N^*\rightarrow F$ the witnessing retraction and $\psi_2: \N^*\rightarrow F$
the homeomorphism. Note that $\psi_2^{-1}\circ\psi_1$ is a well defined continuous
map from $\N^*$ onto itself, and so $T_{\psi_2^{-1}\circ\psi_1}$ is a well defined 
operator from $C(\N^*)$ into itself.
$T_{\psi_2}$ is locally null because $F$ is nowhere dense and hence 
$T_{\psi_2}(f^*)=f^*\circ \psi_2$ is zero for every $f\in\ell_\infty$ such that
 $f^*|F$ is zero. But for every $f\in\ell_\infty$ we have
$$T_{\psi_2}\circ T_{\psi_2^{-1}\circ\psi_1}(f^*)=f^*\circ \psi_2^{-1}\circ\psi_1\circ \psi_2=f^*,$$
because $Im(\psi_2)= F$ and $\psi_1|F=Id_F$. This means that 
$T_{\psi_2}\circ T_{\psi_2^{-1}\circ\psi_1}=Id$, which is not locally null.
Moreover, for any operator $S:\ell_\infty/c_0\rightarrow\ell_\infty/c_0$ we have 
that $S=(T_{\psi_2}\circ T_{\psi_2^{-1}\circ\psi_1})\circ S$, which is in the 
right ideal generated by locally null operators.

\end{proof}

Nowhere dense $P$-sets homeomorphic to $\N^*$ which are retracts give also
more concrete (compared to  \ref{nowherecanonizable}) examples of automorphisms 
failing canonizability like in \ref{ocatrivialization}.

\begin{example}[CH]\label{notlocalstrongcanon}
There is an  automorphism $T$ of 
$\ell_\infty/c_0$  with the following properties:
\begin{enumerate}
\item $T$ is not fountainless 
\item$T$ is not left-locally  canonizable along any continuous map.
\item $T^{-1}$ is not funnelless
\item $T^{-1}$ is not right-locally canonizable  along any continuous map.
\end{enumerate}
\end{example}
\begin{proof} 
 Let $F$ be a nowhere dense retract of $\N^*$ which is a $P$-set and is 
homeomorphic to $\N^*$. 
Let $\psi_1: \N^*\rightarrow F$ be 
the witnessing retraction. We will need one more additional property of $F$, namely
that $\psi_1$ is not one-to-one while restricted to any nonempty clopen set.
This can be obtained by modifying the construction of 1.4.3. of \cite{vanmillhandbook}
by replacing $W(\omega_1+1)$ with the ``zero-dimensional long line", i.e., 
the space $K$ obtained by gluing Cantor sets inside every ordinal interval $[\alpha, \alpha+1)$
for $\alpha<\omega_1$, obtaining a nonmetrizable
subspace $K$ of the long line which contains $W(\omega_1+1)$ and has no isolated points. 
One takes $\tilde\pi:K\rightarrow K$ which collapses the  entire $K$ to the point $\omega_1$, 
$X=\N\times K$,  and $\pi: X\rightarrow X$ given by $\pi(n, x)=(n, \tilde\pi(x))$.
As in  1.4.3. of \cite{vanmillhandbook} one proves that $\beta\pi[X^*]\subseteq X^*$
and $\psi_1=\beta\pi|X^*$ is the required retraction.
The argument why $\psi_1$ is not a one-to-one while restricted to any
clopen set is similar to the one from the proof of Theorem 2.1. from \cite{vanmillch}:
if $U\subseteq X^*$ clopen, it is of the form $\beta U'\cap X^*$ where
$$U'=\bigcup_{n\in E} \{n\}\times U_n$$
for some infinite $E\subseteq \N$ and nonempty clopen sets   $U_n\subseteq K$ (consider $\chi_U$
and the relation of $X$ to $\beta X$). But these nonempty open sets have at least two
points $x_n, y_n$ as $K$ has no isolated points. Of course $\pi(n, x_n)=(n,\omega_1)=\pi(n, y_n)$.
Consider $x=\lim_{n\in u}x_n$ and $y=\lim_{n\in u}y_n$ ($u$ is a nonprincipal ultrafilter in $\wp(\N)$)
which can be easily separated, so $x\not=y$ and $x, y\in U$. However 
$\psi_1(x)=\lim_{n\in u}\pi(n, x_n)=\lim_{n\in u}\pi(n, y_n)=\psi_1(y)$.

We can decompose $C(\N^*)=X\oplus Y$ where
$$X=\{g\circ\psi_1: g\in C(F)\}, \ \ \  Y=\{f\in C(\N^*): f|F=0\}.$$
The first factor is  isometric to $C(F)$ (the isometry is defined by restricting to $F$),
 which in turn is isometric to $C(\N^*)$ because
of the homeomorphism between $F$ and $\N^*$. By Lemma \ref{nwdpset-isomorphism},
the second factor is also isomorphic $C(\N^*)$.

 Fix an infinite, coinfinite $A\subseteq \N$. Let 
$S: Y\rightarrow C(\N^*\setminus A^*)$
be an isomorphism.
Let $\psi_2: A^*\rightarrow F$ be a homeomorphism.
 Finally, let $T: C(\N^*)\rightarrow C(\N^*)$
be the isomorphism defined by 
$$T=I_A\circ T_{\psi_2}+I_{\N^*\setminus A^*}\circ S\circ(Id-T_{\psi_1}).$$
That is, roughly speaking,  $T$ sends $X$ to $C(A^*)$ and $Y$ to $C(\N^*\setminus A^*)$.
For  $y\in A^*$ we have  $\dual{T}(\delta_y)=\dual{(I_A\circ T_{\psi_2})}(\delta_y)=\delta_{\psi_2(y)}$,
 i.e., $\dual{T}(\delta_y)$ is concentrated on $F$, so $(F, A)$ is a fountain for $T$.
This also implies that $T$ cannot be
canonized along a homeomorphism onto a clopen set below $A^*$ because $F$ is nowhere dense.

On the other hand, 
$$T^{-1}=T_{\psi_1} \circ T_{\psi_2^{-1}}\circ P_A +   S^{-1}\circ P_{\N\setminus A}$$
So  $\dual{(T^{-1})}(\delta_x)|A^*=  \delta_{\psi_2^{-1}(\psi_1(x))}$
for every $x\in \N^*$. In particular $(A^*, F)$ is a funnel for $T^{-1}$ 
and $T^{-1}$ 
 cannot be  canonized on a pair $(A_0, B_0)$ for infinite $A_0\subseteq A$,
 $B_0\subseteq \N$ because $\psi_2^{-1}\circ\psi_1$ is not one-to-one on
any clopen set $B_0\subseteq \N$ by the choice of $\psi_1$.

\end{proof}

\begin{theorem}[CH]\label{nowheretrivialch} 
There is an  automorphism of $\ell_\infty/c_0$ with no fountains and no funnels which
is  nowhere trivial.
\end{theorem}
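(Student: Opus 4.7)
The plan is to take $T = T_\psi$ for $\psi$ a nowhere trivial autohomeomorphism of $\N^*$, and use the characterizations already established to verify the absence of fountains and funnels, with the nowhere triviality of the operator reduced directly to the nowhere triviality of $\psi$.

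First I would invoke the classical fact that under CH there exists an autohomeomorphism $\psi:\N^*\rightarrow\N^*$ which is nowhere trivial, in the sense that for no infinite $B\subseteq\N$ and no injection $\sigma:B\rightarrow\N$ does $\psi|B^*=\sigma^*$. This is produced by a standard $\omega_1$-recursion enumerating all candidate triples $(B,A,\sigma)$ with $\sigma:B\rightarrow A$ a bijection between infinite subsets of $\N$ (there are only $2^{\aleph_0}=\aleph_1$ such triples under CH) and diagonalizing against each, using the saturation/back-and-forth properties of $\wp(\N)/\text{\emph{Fin}}$; this is the well familiar CH construction alluded to in the introduction. Define $T:C(\N^*)\rightarrow C(\N^*)$ by $\str{T}(f^*)=f^*\circ\psi$. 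Since $\psi^{-1}$ is also a continuous selfmap of $\N^*$, we have $T_\psi\circ T_{\psi^{-1}}=T_{\psi^{-1}}\circ T_\psi=\mathrm{Id}$, so $T$ is an automorphism of $\ell_\infty/c_0$.

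Next I would verify the two regularity properties. For fountainlessness, Proposition \ref{locallydet=quasi-open} shows that $T_\psi$ is fountainless if and only if $\psi$ is quasi-open, and a homeomorphism is open, hence quasi-open. For funnellessness, Proposition \ref{funnelless=nwdp} shows that $T_\psi$ is funnelless if and only if $\psi$ maps nowhere dense sets to nowhere dense sets; since $\psi$ is a homeomorphism of $\N^*$, the image of a nowhere dense set is again nowhere dense.

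Finally, to prove nowhere triviality of $T$, suppose towards a contradiction that $T_{B,A}$ is trivial for some infinite $A,B\subseteq\N$, witnessed by a nonzero $r\in\R$ and a bijection $\sigma:B\rightarrow A$. Taking duals in the identity $T_{B,A}=P_B\circ T_\psi\circ I_A$, and using that $\dual{I_A}$ is restriction of measures to $A^*$ while $\dual{P_B}$ extends by zero, for every $y\in B^*$ we obtain
\[
\delta_{\psi(y)}\bigl|A^{*}\;=\;\dual{(T_{B,A})}(\delta_y)\;=\;r\,\delta_{\sigma^{*}(y)}.
\]
Since $r\neq 0$, the right-hand side is a nonzero Dirac mass at $\sigma^{*}(y)\in A^{*}$, which forces $\psi(y)\in A^{*}$, $r=1$, and $\psi(y)=\sigma^{*}(y)$ for every $y\in B^{*}$. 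But this says exactly that $\psi|B^{*}=\sigma^{*}$, contradicting the nowhere triviality of $\psi$. Hence no $T_{B,A}$ is trivial.

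The only nontrivial ingredient is the existence of a nowhere trivial autohomeomorphism of $\N^{*}$ under CH; this is the main (but classical) step, as the rest of the argument is essentially a one-line translation via $\dual{T_\psi}(\delta_y)=\delta_{\psi(y)}$ together with the two characterizations \ref{locallydet=quasi-open} and \ref{funnelless=nwdp} already at our disposal.
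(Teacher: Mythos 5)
Your proposal is correct and follows essentially the same route as the paper: take $T_\psi$ for a CH nowhere trivial homeomorphism $\psi$ of $\N^*$, use Propositions \ref{locallydet=quasi-open} and \ref{funnelless=nwdp} to rule out fountains and funnels, and reduce nowhere triviality of $T_\psi$ to that of $\psi$. The only difference is that you spell out the dual computation $\dual{(T_{B,A})}(\delta_y)=\delta_{\psi(y)}|A^*=r\delta_{\sigma^*(y)}$ and sketch the CH diagonalization, both of which the paper leaves implicit.
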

\begin{proof}
Let $\psi:\N^*\rightarrow \N^*$ be nowhere trivial homeomorphism of $\N^*$. The 
existence of such a  homeomorphism is a folklore result, its first construction
 is implicitly included in \cite{rudin}.
By Propositions \ref{locallydet=quasi-open} and \ref{funnelless=nwdp}, $T_\psi$  has no fountains nor funnels. 
It is not locally trivial because $\psi$ is not trivial on any clopen set.
\end{proof}

\begin{theorem}[CH]\label{quasi-opench} 
 There is a quasi-open surjective map $\psi:\N^*\rightarrow\N^*$ such that 
the images of nowhere dense sets under $\psi$ are nowhere dense  and it is 
not a bijection while restricted to any clopen set. Therefore, $T_\psi$ is
an everywhere present isomorphic embedding of $\ell_\infty/c_0$ into itself  with no fountains 
and with no funnels which  is nowhere canonizable along a homeomorphism.

\end{theorem}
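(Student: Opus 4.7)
The plan is to construct $\psi$ under CH via a transfinite induction of length $\omega_1$, after which the final conclusion about $T_\psi$ is automatic: the fact that $T_\psi$ is an isomorphic embedding follows from surjectivity of $\psi$ (which makes $T_\psi$ isometric), fountainlessness from Proposition \ref{locallydet=quasi-open}, funnelessness from Proposition \ref{funnelless=nwdp}, and the impossibility of canonization along any homeomorphism from the fact that no restriction of $\psi$ to a clopen is a bijection. So the work is in building $\psi$.

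Translating via Stone duality, the construction of $\psi$ is equivalent to building an injective Boolean homomorphism $h: \wp(\N)/\text{\emph{Fin}} \to \wp(\N)/\text{\emph{Fin}}$ satisfying three properties: (i) the image of $h$ is dense in $\wp(\N)/\text{\emph{Fin}}$ (making the dual $\psi$ irreducible, hence quasi-open by Lemma \ref{irreduciblequasi-open}); (ii) for every closed nowhere dense $F \subseteq \N^*$ and every nonzero $b$, some nonzero $b' \leq b$ satisfies $h(b')^* \cap F = \emptyset$ (equivalent to $\psi[F]$ having empty interior); (iii) for every nonzero $b$, two distinct ultrafilters of $b^*$ have equal $\psi$-image (giving non-injectivity on every clopen).

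Enumerating under CH the elements of $\wp(\N)/\text{\emph{Fin}}$ as $(a_\alpha)_{\alpha<\omega_1}$, the closed nowhere dense subsets of $\N^*$ as $(F_\alpha)_{\alpha<\omega_1}$, and the infinite subsets of $\N$ as $(A_\alpha)_{\alpha<\omega_1}$, I would build an increasing chain of countable partial injective Boolean homomorphisms $h_\alpha : B_\alpha \to \wp(\N)/\text{\emph{Fin}}$ on countable subalgebras $B_\alpha \subseteq \wp(\N)/\text{\emph{Fin}}$ with $\bigcup_\alpha B_\alpha = \wp(\N)/\text{\emph{Fin}}$ and $h = \bigcup_\alpha h_\alpha$. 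At stage $\alpha+1$, extend $h_\alpha$ to handle: (a) adding $a_\alpha$ to the domain; (b) for every $b \in B_\alpha$, picking a nonzero $b' \leq b$ with $b'^* \cap F_\alpha = \emptyset$ (possible since $F_\alpha$ is nowhere dense) and arranging $b'$ to lie in the image of $h_{\alpha+1}$; (c) selecting two distinct ultrafilters of $A_\alpha^*$ and committing the values of $h_{\alpha+1}$ on a countable subalgebra of $\wp(A_\alpha)/\text{\emph{Fin}}$ so that the induced $\psi$ identifies them. The existence of such an extension at each step rests on the Parovi\v{c}enko-type saturation of $\wp(\N)/\text{\emph{Fin}}$ under CH: any countable system of constraints consistent with $h_\alpha$ can be realized by a countable extension.

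The main obstacle is the competing nature of the three requirements. Density of the image wants $h$ spread everywhere, while nwd-avoidance wants the image to miss specified nowhere dense sets, and the non-injectivity identifications restrict $h$ on relative algebras. The resolution is that at each stage only countably many constraints need to be handled at once, and each $F_\alpha$, being nowhere dense, leaves every nonzero $b$ with an abundance of refinements $b' \leq b$ avoiding $F_\alpha$; likewise two distinct ultrafilters in $A_\alpha^*$ may be chosen generically enough to avoid clashing with finitely many earlier-made commitments. The technical heart of the proof is the extension lemma at each successor step, producing $h_{\alpha+1}$ simultaneously satisfying all countably many accumulated constraints, which is where CH is essential through the countable saturation of $\wp(\N)/\text{\emph{Fin}}$.
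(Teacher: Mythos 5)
Your approach is genuinely different from the paper's. The paper builds $\psi$ directly and concretely: take an irreducible surjection $\tilde\phi\colon 2^\N\to 2^\N$ that is not injective on any clopen, form $\phi\colon\N\times 2^\N\to\N\times 2^\N$ coordinatewise, identify $(\N\times 2^\N)^*$ with $\N^*$ via the topological Parovi\v cenko theorem (CH), and set $\psi=\beta\phi|(\N\times 2^\N)^*$; irreducibility, quasi-openness, nwd-preservation, and the non-injectivity are then all checked directly. You instead try to build the dual Boolean homomorphism $h$ by a transfinite induction of length $\omega_1$. There are two genuine gaps.

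First, you propose to enumerate ``the closed nowhere dense subsets of $\N^*$ as $(F_\alpha)_{\alpha<\omega_1}$.'' This is impossible: $\N^*$ has $2^{\mathfrak c}=2^{\omega_1}>\omega_1$ closed nowhere dense subsets (e.g., all singletons), so no $\omega_1$-enumeration exists. Fortunately this part of your bookkeeping is also unnecessary: if $h$ has dense image, then $\psi=\str{h}$ is irreducible (Lemma \ref{denseirreducible}), and irreducible maps automatically send nowhere dense sets to nowhere dense sets (Lemma \ref{irreduciblequasi-open}). So requirement (ii) is a consequence of (i), not an independent constraint to schedule, and you should simply drop the enumeration of nwd sets. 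Second, the crucial step---your condition (iii), committing at stage $\alpha$ to identifying ``two distinct ultrafilters of $A_\alpha^*$''---is not actually carried out, and is not as harmless as you suggest. An ultrafilter on $\wp(\N)/\text{\emph{Fin}}$ is determined only by $\omega_1$ many decisions, so there is no countable object at stage $\alpha$ that pins down the pair $(x,y)$; moreover the constraint $\str{h}(x)=\str{h}(y)$, i.e.\ $h^{-1}[x]=h^{-1}[y]$, involves the values of $h$ on its entire eventual domain, not just $B_\alpha$, and thus cannot be discharged once and for all at a single stage. What one would actually need is a \emph{structural} commitment (e.g.\ a bijection $\sigma_\alpha$ between two infinite halves of $A_\alpha$ such that $h(b)\cap$ one half is determined by $h(b)\cap$ the other via $\sigma_\alpha$ for all $b$) maintained throughout the induction; showing that infinitely many such structural commitments are simultaneously consistent with one another, with injectivity, and with density of the image is exactly the content you defer to an unstated ``extension lemma.'' As written this is the heart of the matter and is not proved. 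The paper's construction avoids all of this bookkeeping by pushing the non-injectivity down to the metrizable level, where it is easy to arrange, and letting Parovi\v cenko do the rest.
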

\begin{proof}
It is enough to construct a quasi-open irreducible surjection $\psi:\N^*\rightarrow \N^*$
which is not a homeomorphism when restricted to any clopen set
and consider $T_\psi$ by \ref{irreduciblequasi-open}, \ref{locallydet=quasi-open} and \ref{funnelless=nwdp}.
Let $\tilde\phi: 2^\N\rightarrow 2^\N$ be an irreducible surjection which is not
a bijection while restricted to any clopen subset of $2^\N$ (e.g., obtained via the Stone duality
by taking a dense atomless subalgebra of the free countable algebra which is proper below any element,
see \ref{denseirreducible}).
Consider $X=\N\times 2^\N$ and $\phi: X\rightarrow X$, given by $\phi(n, x)=(n, \tilde\phi(x))$.
By a topological consequence of  Parovi\v cenko's theorem 
(see Theorem 1.2.6. of \cite{vanmillhandbook}) $X^*=\beta X\setminus X$ is homeomorphic to
$\N^*$. Moreover $\beta\phi:\beta X\rightarrow \beta X$ sends $X^*$ into $X^*$.

To check that $\psi=\beta\phi|X^*$ is irreducible take any clopen $U\subseteq X^*$,
which must be of the form $\beta U'\cap X^*$, where
$$U'=\bigcup_{n\in E} \{n\}\times U_n$$
for some infinite $E\subseteq \N$ and nonempty clopen sets   $U_n\subseteq K$ (consider $\chi_U$
and the relation of $X$ to $\beta X$). By the irreducibility of $\tilde\phi$, there are clopen $V_n\subseteq 2^\N$
such that $\tilde\phi[2^\N\setminus U_n]\cap V_n=\emptyset$. So 
$$\beta\phi[U]\cap \beta (\bigcup_{n\in E} \{n\}\times V_n)=\emptyset,$$
which completes the proof of the irreducibility of $\psi$.
The argument why $\psi$ is not a one-to-one while restricted to any
clopen set is similar to the one from the proof of Example \ref{notlocalstrongcanon}.  
\end{proof}

A similar example as above is constructed in the proof of Theorem 2.1 from \cite{vanmillch} however
it does not have the property of preserving nowhere dense sets.

\section{Open problems and final remarks}
\label{sec:problems}
In this section we mention some open problems and some  observations
related to them. This should not be considered as a full list of urgent open problems
concerning the Banach space $\ell_\infty/c_0$, for example we do not touch problems related to the 
primariness of $\ell_\infty/c_0$ (see \cite{drewnowski}, \cite{dowpfa}, \cite{opit})
subspaces of $\ell_\infty/c_0$ (\cite{univ}, \cite{radova}, \cite{stevoembeddings}, \cite{krupski}),
$\ell_\infty$-sums (\cite{drewnowski}, \cite{dowpfa}, \cite{christina})
or extensions of operators on $\ell_\infty/c_0$ (\cite{castilloplichko}, \cite{separablyinjective}).

\begin{problem} Is it consistent (does it follow from PFA or OCA+MA)  that every automorphism $T: \ell_\infty/c_0\rightarrow\ell_\infty/c_0$
can be lifted modulo a locally null operator? That is, is every such operator 
 of the form $T=[R]+S$, where $R: \ell_\infty\rightarrow \ell_\infty$ and 
$S$ is locally null?
\end{problem}

This is related to the fact that our ZFC nonliftable operator (see \ref{nonliftable})
is of the above form. A ZFC possibility of somewhere canonizing every isomorphic embedding is excluded by 
\ref{quasi-opench} or \ref{nowherecanonizable}. As under PFA or OCA+MA canonization along a homeomorphism gives
trivialization we may ask:

\begin{problem} Is it consistent (does it follows from PFA or OCA+MA)  that every isomorphic
embedding $T: \ell_\infty/c_0\rightarrow\ell_\infty/c_0$
is somewhere trivial?
\end{problem}

\begin{problem} Is it true in ZFC that for every isomorphic
embedding $T: \ell_\infty/c_0\rightarrow\ell_\infty/c_0$
there is an infinite $A$, a closed $F\subseteq \N^*$ and 
a homeomorphism $\psi: F\rightarrow A^*$ such that
$T(f^*)|F=r(f^*\circ \psi)$ for $A$-supported $f$'s and some nonzero $r\in \R$?
\end{problem}

The positive solution of this problem would give  the positive solution 
to Problem \ref{furthercopyproblem}.

\begin{problem} Is it consistent (does it follow from PFA, or OCA+MA) that
every  automorphism of $\ell_\infty/c_0$ is somewhere trivial?
\end{problem}

In other words we ask here if the hypothesis in \ref{ocatrivialization} of $T$ being funnelless
or fountainless is needed under PFA or OCA+MA. In principle
there may not be any fountains or funnels
of automorphisms of $\ell_\infty/c_0$ under these assumptions. The only examples we have of such 
phenomena are for automorphisms  under CH (\ref{notlocalstrongcanon}).  

Note that by Plebanek's result \ref{plebanekiso} 
and by going to a subset of $F$
using \ref{irreducibledrewnowski} and \ref{irreducible}, 
we may assume that there is an infinite $A\subseteq \N$,
a closed $F$ and a continuous $\psi: F\rightarrow A^*$ such that
$T(f^*)|F=f^*\circ\psi$ for every $A$-supported $f$. If we knew that $(A^*, F)$
is not a funnel i.e., that $F$ is not nowhere dense, we could use Farah's result \ref{ocacontinuoustopo} as in
the proof of \ref{ocatrivialization} to obtain somewhere trivialization of $T$.
We were not able to prove, however,  a similar reduction for fountains, which could be
more useful in the context
of applying Farah's result as then $F$ would be a continuous image of $A^*$ which is a copy of $\N^*$
so the domain of $\psi$ is as required in \ref{ocacontinuoustopo}.

One strategy for proving that under OCA+MA automorphisms  do not have funnels 
or fountains is to use
the result of I. Farah \ref{ocacontinuoustopo} directly to prove that the sets $F$ appearing in
potential funnels or fountains cannot be nowhere dense. For this we would need
to know that (a) such $F$'s  are not c.c.c. over Fin, and that (b) they (or their
clopen sets) are continuous (or homeomorphic) images of $\N^*$. 
There are several related natural questions which we were unable to solve.

\begin{problem} Suppose that $T: C(\N^*)\rightarrow C(\N^*)$ is 
an automorphism (an isomorphic embedding), $A\subseteq\N$ is infinite, $F\subseteq \N^*$ is closed
nowhere dense and  $\psi: F\rightarrow A^*$ is continuous irreducible such that
$T(f^*)|F=f^*\circ \psi$ for each $A$-supported $f\in \ell_\infty$.
Is it  consistent (under OCA+MA, or PFA) that $F$ cannot be c.c.c. over Fin?
\end{problem}

\begin{problem} Suppose that $T: C(\N^*)\rightarrow C(\N^*)$ is 
an automorphism   and $F\subseteq \N^*$ is closed
nowhere dense and $A\subseteq \N$ infinite such that $(F, A^*)$ is a
fountain for $T$ or $(A^*, F)$ is a funnel for $T$.
Is it true or consistent (under OCA+MA, or PFA) that $F$ cannot be c.c.c. over Fin?
\end{problem}

A more general problem is

\begin{problem} Suppose that $F\subseteq \N^*$ is nowhere dense  closed not c.c.c
(or even homeomorphic to $\N^*$)
which has the linear extension property. Is it 
 consistent (under OCA+MA, or PFA) that $F$ cannot be c.c.c. over Fin?
\end{problem}

Recall that $F\subseteq \N^*$ has the linear extension property if there
is a linear bounded operator $T: C(F)\rightarrow C(\N^*)$ such that
$T(f)|F=f$ for each $f\in C(F)$. The existence of such an operator is a weak version of
the existence of a retraction from $\N^*$ onto $F$. Alan Dow in \cite{dowpfa}
developed new methods (which may be quite useful in the above context) proving that PFA implies
that the cozero sets do not have the linear extension property.
The last couple of problems is related to possible applications of canonizations
of embeddings.

\begin{problem}Is it consistent that every copy of 
$\ell_\infty/c_0$ inside $\ell_\infty/c_0$  is complemented?
\end{problem}

\begin{problem}\label{furthercopyproblem}Is it true or consistent that every copy of 
$\ell_\infty/c_0$ inside $\ell_\infty/c_0$ contains a further copy of $\ell_\infty/c_0$
which is complemented in the entire space?
\end{problem}

One should note that under CH examples of uncomplemented copies of 
$\ell_\infty/c_0$ inside $\ell_\infty/c_0$ were constructed in \cite{castilloplichko}.
They can also be obtained under CH from a superspace of $\ell_\infty/c_0$ 
 obtained in \cite{antoniogaps} in which $\ell_\infty/c_0$
is not complemented.

\section{Appendix}

\subsection{$C(\N^*)$}

\begin{lemma}\label{gdelta}
Every nonempty $G_\delta$ set in $\N^*$ has a nonempty interior.
\end{lemma}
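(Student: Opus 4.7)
The plan is to exploit the fact that $\N^*$ has a base of clopen sets of the form $A^*$ for infinite $A\subseteq \N$, together with the standard diagonalization producing a pseudo-intersection of a countable $\subseteq_*$-decreasing family.

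First I would fix a nonempty $G_\delta$ set $G=\bigcap_{n\in\N} U_n$ with each $U_n$ open in $\N^*$, and choose any point $x\in G$. Since the clopen sets $A^*$ with $A\subseteq\N$ infinite form a base for the topology of $\N^*$, for each $n$ I can pick an infinite $A_n\subseteq\N$ with $x\in A_n^*\subseteq U_n$. Replacing $A_n$ by $A_0\cap A_1\cap\cdots\cap A_n$, I may assume $A_{n+1}\subseteq_* A_n$ for every $n$, and the clopen sets $A_n^*$ still satisfy $x\in A_n^*\subseteq U_n$.

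Next I would construct a single infinite $A\subseteq \N$ that is $\subseteq_*$-below every $A_n$. This is the routine diagonal argument: choose inductively $a_n\in A_0\cap A_1\cap\cdots\cap A_n$ with $a_n>a_{n-1}$, which is possible because each finite intersection is still infinite by the $\subseteq_*$-decreasing property. Setting $A=\{a_n:n\in\N\}$, one has $A\setminus A_k\subseteq\{a_0,\dots,a_{k-1}\}$ for each $k$, hence $A\subseteq_* A_k$ and so $A^*\subseteq A_k^*\subseteq U_k$ for all $k$. Therefore $A^*\subseteq G$, and $A^*$ is a nonempty clopen subset of $\N^*$, so $G$ has nonempty interior.

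There is no real obstacle here; the only thing to be careful about is making sure the diagonal sequence $(a_n)$ actually lies in every $A_k$ from some point on, which is handled by choosing $a_n$ from the intersection $A_0\cap\cdots\cap A_n$ rather than just from $A_n$.
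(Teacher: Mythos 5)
Your proof is correct and is exactly the standard argument that the paper defers to by citing van Mill's handbook: shrink each open $U_n$ to a basic clopen $A_n^*$ containing a fixed point of the $G_\delta$ set, arrange the $A_n$ to be decreasing, and diagonalize to produce a pseudo-intersection $A$ whose star is a nonempty clopen subset of $G$. The one subtlety you implicitly use and should be aware of is that each finite intersection $A_0\cap\cdots\cap A_n$ is infinite precisely because its star contains $x$ (a finite set has empty star), which is what licenses the diagonal choice of $a_n$.
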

\begin{proof}
See section 1.2 of \cite{vanmillhandbook}. 
\end{proof}

\begin{lemma}\label{constantfunction} Suppose $f:\N^*\rightarrow \R$ is continuous and
$r\in \R$ is a value of $f$ at some point. Then there is
a clopen $A^*\subseteq \N^*$ such that $f|A^*\equiv r$.
\end{lemma}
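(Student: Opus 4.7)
The plan is to exploit Lemma \ref{gdelta} together with the zero-dimensionality of $\N^*$.

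First, I would observe that the fiber $f^{-1}[\{r\}]$ is a $G_\delta$ subset of $\N^*$, since
$$f^{-1}[\{r\}] = \bigcap_{n\in\N} f^{-1}\bigl[(r - 1/(n+1),\, r + 1/(n+1))\bigr],$$
where each set on the right is open by continuity of $f$. By hypothesis this $G_\delta$ is nonempty (it contains the point where $f$ takes the value $r$).

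Next, I would apply Lemma \ref{gdelta} to conclude that $f^{-1}[\{r\}]$ has nonempty interior in $\N^*$. Since $\N^*$ is zero-dimensional and has a base of clopen sets of the form $A^*$ for $A \subseteq \N$ infinite, that nonempty interior contains some clopen set of the form $A^*$. By construction $f(x) = r$ for every $x \in A^*$, which is exactly the desired conclusion.

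There is no serious obstacle here; the whole content of the lemma has already been packaged into Lemma \ref{gdelta}, and the only additional ingredient is the fact that clopen sets of the form $A^*$ form a base for the topology of $\N^*$ (which is immediate from the Stone duality between $\wp(\N)/\text{\emph{Fin}}$ and $\N^*$).
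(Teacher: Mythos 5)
Your proof is correct and coincides with the paper's own argument: both write $f^{-1}[\{r\}]$ as a countable intersection of open preimages, invoke Lemma \ref{gdelta} to get nonempty interior, and then pick a basic clopen set $A^*$ inside that interior.
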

\begin{proof}
$f^{-1}[\{r\}]=\bigcap_{n\in \N} f^{-1}[\{t\in\mathbb{R}: r-1/n<t< r+1/n\}]$ is a nonempty $G_\delta$ set.
By \ref{gdelta}, we obtain an infinite $A\subseteq \N$ such that $A^*\subseteq f^{-1}[\{r\}]$.
\end{proof}

\begin{definition} A surjective map is called irreducible if, and only if, it is not surjective
when restricted to any proper closed subset.
\end{definition}

\begin{lemma}\label{irreducible} If $\psi: K\rightarrow L$ is surjective and $K $ and $L$ are compact, then there
is a  closed $F\subseteq K$ such that $\psi|F: F\rightarrow L$ is irreducible.
\end{lemma}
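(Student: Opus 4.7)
The plan is to apply Zorn's lemma to the collection of closed subsets of $K$ on which $\psi$ remains surjective, ordered by reverse inclusion, and extract a minimal such set.

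First I would define
$$\mathcal{F} = \{F \subseteq K : F \text{ closed and } \psi[F] = L\},$$
which is nonempty since $K \in \mathcal{F}$. Order $\mathcal{F}$ by reverse inclusion: $F_1 \preceq F_2$ iff $F_1 \supseteq F_2$. A maximal element for $\preceq$ is a minimal (with respect to inclusion) closed subset of $K$ whose image is all of $L$, and such a set is automatically the desired $F$: any proper closed subset $F' \subsetneq F$ would satisfy $\psi[F'] \neq L$, which is exactly the irreducibility condition.

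To apply Zorn's lemma, I need to check that every $\preceq$-chain in $\mathcal{F}$ has an upper bound. Given a chain $\{F_\alpha\}_{\alpha \in I}$ in $\mathcal{F}$ (so $\{F_\alpha\}$ is linearly ordered by inclusion), set $F = \bigcap_{\alpha \in I} F_\alpha$. This is closed, and I must show $\psi[F] = L$. Fix $y \in L$ and consider the family
$$\{F_\alpha \cap \psi^{-1}(\{y\}) : \alpha \in I\}.$$
Each member is nonempty (since $\psi[F_\alpha] = L$) and closed (since $\psi$ is continuous and $\{y\}$ is closed in the Hausdorff space $L$), and the family is totally ordered by inclusion, hence has the finite intersection property. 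By compactness of $K$ the intersection is nonempty, so there exists $x \in F$ with $\psi(x) = y$. Thus $y \in \psi[F]$, and $F \in \mathcal{F}$ is an upper bound for the chain.

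Zorn's lemma now yields a $\preceq$-maximal element of $\mathcal{F}$, which is the required irreducible $F$. There is no real obstacle here; the only point that requires care is the compactness-plus-chain argument for the upper bound, but this is standard.
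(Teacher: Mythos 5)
Your proof is correct: this is exactly the standard Zorn's lemma argument (minimal closed subset mapping onto $L$, with the chain condition verified via the finite intersection property of the fibers $F_\alpha\cap\psi^{-1}(\{y\})$), and the paper simply states the lemma as a known fact without proof, so there is nothing to compare against beyond this classical argument. The only point worth making explicit is that closedness of $\psi^{-1}(\{y\})$ uses that $L$ is Hausdorff (so singletons are closed), which is harmless here since all compact spaces in the paper are compact Hausdorff.
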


\begin{lemma}\label{denseirreducible} 
Suppose that $\psi: F\rightarrow \N^*$ is a continuous surjection, where
$F\subseteq \N^*$ is a closed subset of $\N^*$.
$\psi$ is irreducible if, and only if, $\{\psi^{-1}[A^*]: A\subseteq \N^*\}$ is a 
dense subalgebra of clopen subsets of $F$.
\end{lemma}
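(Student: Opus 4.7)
The plan is to establish both directions by exploiting the duality between nonempty open sets in $F$ (resp. $\N^*$) and nonempty clopen sets, together with compactness. Recall that a subalgebra $\mathcal B$ of the clopen algebra of $F$ is \emph{dense} in the Boolean sense if every nonempty clopen $U\subseteq F$ contains a nonempty element of $\mathcal B$. Note also that $\{\psi^{-1}[A^*]:A\subseteq\N\}$ is automatically a subalgebra of the clopen algebra of $F$, because $\psi^{-1}$ commutes with finite unions, intersections and complements, and $\{A^*:A\subseteq\N\}$ is a subalgebra of the clopen algebra of $\N^*$.

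For the forward direction, assume $\psi$ is irreducible and fix a nonempty clopen $U\subseteq F$. Then $F\setminus U$ is a proper closed subset of $F$, so by irreducibility $\psi[F\setminus U]\subsetneq\N^*$. Since $F\setminus U$ is compact and $\psi$ is continuous, $\psi[F\setminus U]$ is closed, so $\N^*\setminus\psi[F\setminus U]$ is a nonempty open subset of $\N^*$. Hence there is an infinite $A\subseteq\N$ with $\emptyset\neq A^*\subseteq\N^*\setminus\psi[F\setminus U]$. It follows that $\psi^{-1}[A^*]\cap(F\setminus U)=\emptyset$, i.e. $\psi^{-1}[A^*]\subseteq U$, and $\psi^{-1}[A^*]\neq\emptyset$ because $\psi$ is surjective, so $A^*$ is hit. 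This shows density.

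For the backward direction, assume $\{\psi^{-1}[A^*]:A\subseteq\N\}$ is dense in the clopen algebra of $F$, and let $F'\subsetneq F$ be a proper closed subset. Then $F\setminus F'$ is a nonempty open subset of $F$, and since $F$ is a closed subspace of the zero-dimensional space $\N^*$ its clopen sets form a base, so there is a nonempty clopen $U\subseteq F\setminus F'$. By density there is $A\subseteq\N$ with $\emptyset\neq\psi^{-1}[A^*]\subseteq U\subseteq F\setminus F'$. Thus $\psi^{-1}[A^*]\cap F'=\emptyset$, which means $\psi[F']\cap A^*=\emptyset$; and $A^*\neq\emptyset$ since $\psi^{-1}[A^*]\neq\emptyset$. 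Hence $\psi[F']$ misses the nonempty open set $A^*$, so $\psi[F']\neq\N^*$, proving irreducibility.

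Both directions are short and elementary; no single step stands out as an obstacle, the main content is just the observation that ``small clopen subsets of the form $\psi^{-1}[A^*]$'' exist below every nonempty clopen of $F$ exactly when the image of every proper closed subset of $F$ avoids some nonempty clopen of $\N^*$, which is an immediate consequence of compactness and the zero-dimensionality of $\N^*$.
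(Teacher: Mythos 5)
Your proof is correct and follows essentially the same route as the paper's: for the forward direction, both arguments observe that the complement of a nonempty clopen $U\subseteq F$ is a proper closed set whose compact image misses some nonempty $A^*$; for the converse, both take a nonempty clopen $U$ disjoint from a proper closed $F'$ mapping onto $\N^*$ and note that no nonempty $\psi^{-1}[A^*]$ can fit inside $U$. The paper just phrases both halves contrapositively and more tersely.
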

\begin{proof}
If $U\subseteq F$ were a clopen subset of $F$ such that $\psi^{-1}[A^*]\subseteq U$
does not hold for any infinte $A\subseteq \N$, then $\psi|(F\setminus U)$ is onto $\N^*$
contradicting the irreducibility.
If $U\subseteq F$ were a clopen subset of $F$ such that  $\psi|(F\setminus U)$ is onto $\N^*$,
then $\psi^{-1}[A^*]\subseteq U$ cannot hold for any infinite $A\subseteq \N$.
\end{proof}

\begin{lemma}\label{irreduciblequasi-open} Irreducible maps are quasi-open
and map nowhere dense sets onto nowhere dense sets.
\end{lemma}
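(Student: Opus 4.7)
The plan is to work with the standard reformulation of irreducibility (essentially the content of Lemma \ref{denseirreducible} transferred to the general setting of compact Hausdorff spaces): a continuous surjection $\psi:K\to L$ is irreducible if, and only if, for every nonempty open $U\subseteq K$, the set $W=L\setminus\psi[K\setminus U]$ is nonempty, and this $W$ is a nonempty open subset of $L$ satisfying $\psi^{-1}[W]\subseteq U$. The openness of $W$ follows from the fact that $K\setminus U$ is closed, hence compact, hence $\psi[K\setminus U]$ is closed; the nonemptiness of $W$ is exactly the statement that $\psi$ fails to be surjective when restricted to the proper closed set $K\setminus U$.

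For the first assertion, given a nonempty open $U\subseteq K$, I would take $W=L\setminus\psi[K\setminus U]$ as above. Then I would verify $W\subseteq\psi[U]$: for any $y\in W$, surjectivity of $\psi$ yields some $x\in K$ with $\psi(x)=y$, and since $y\notin\psi[K\setminus U]$ forces $x\in U$, we get $y\in\psi[U]$. Thus the nonempty open $W$ witnesses that $\psi[U]$ has nonempty interior, i.e., $\psi$ is quasi-open.

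For the second assertion, I would argue by contradiction. Suppose $N\subseteq K$ is nowhere dense but $\overline{\psi[N]}$ contains a nonempty open $V\subseteq L$. Then $\psi^{-1}[V]$ is nonempty open in $K$, and since $\overline{N}$ has empty interior, $U:=\psi^{-1}[V]\setminus\overline{N}$ is still nonempty open. Applying irreducibility to $U$, I obtain a nonempty open $W\subseteq L$ with $\psi^{-1}[W]\subseteq U$. Two observations close the argument: first, surjectivity gives $W\subseteq\psi[\psi^{-1}[W]]\subseteq\psi[U]\subseteq\psi[\psi^{-1}[V]]\subseteq V\subseteq\overline{\psi[N]}$; second, $\psi^{-1}[W]\subseteq U\subseteq K\setminus N$ implies $\psi[N]\cap W=\emptyset$, hence $\overline{\psi[N]}\subseteq L\setminus W$. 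Combining the two, $W\subseteq\overline{\psi[N]}\cap W=\emptyset$, contradicting the nonemptiness of $W$.

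There is no real obstacle; the only step requiring care is the use of surjectivity to pass from $\psi^{-1}[W]\subseteq U$ to $W\subseteq\psi[U]$, and the use of compactness of $K$ to ensure $\psi[K\setminus U]$ is closed (so that $W$ is genuinely open). Both hold trivially in the settings of interest in this paper (closed subsets of $\N^*$).
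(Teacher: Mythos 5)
Your proof is correct and takes essentially the same route as the paper's: compactness makes $\psi[K\setminus U]$ closed, so irreducibility yields a nonempty open set $W=L\setminus\psi[K\setminus U]$ contained in $\psi[U]$, giving quasi-openness; and for the preservation of nowhere density both arguments pick an open subset of the preimage avoiding the nowhere dense set and contradict irreducibility. Your write-up is slightly more careful about closures ($\overline{N}$, $\overline{\psi[N]}$), but the mechanism is identical to the paper's proof.
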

\begin{proof} Suppose that $\psi: F\rightarrow G$ is irreducible.
If the interior of $\psi[U]$ is empty for some open $U\subseteq F$, then
it means that $\psi[F\setminus U]$ is dense in $G$, but $\psi[F\setminus U]$
is compact, and  so is equal to $G$ contradicting the irreducibility of $\psi$.

Now suppose that $K\subseteq F$ is nowhere dense whose image contains an
open $U\subseteq G$. As $\psi^{-1}[U]$ is open,  there is $V\subseteq \psi^{-1}[U]$
such that $V\cap K=\emptyset$ and so $\psi[V]\subseteq \psi[K]$. Note that 
$\psi[F\setminus V]=G$ contradicting the irreducibility of $\psi$.

\end{proof}

\begin{lemma}\label{constantirreducible} Suppose $f:F\rightarrow \R$ is continuous,
$F\subseteq \N^*$ is compact and
there is an irreducible map $\psi: F\rightarrow \N^*$. Then, there is
an infinite $A\subseteq \N$ such that $f|\psi^{-1}[A^*]$ is constant.
\end{lemma}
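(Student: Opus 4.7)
The plan is to find $A$ by a diagonal argument, successively shrinking to ensure that the oscillation of $f$ on $\psi^{-1}[A^*]$ tends to zero. I will rely crucially on the irreducibility of $\psi$ via Lemma \ref{denseirreducible}, which tells me that the family $\{\psi^{-1}[A^*] : A\subseteq \N \text{ infinite}\}$ is a $\pi$-base for the clopen subsets of $F$. This is the substitute for Lemma \ref{gdelta} / Lemma \ref{constantfunction}, which would give the result directly on $\N^*$ but not on an arbitrary closed subset.

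I would first set $A_0=\N$ and construct recursively a $\subseteq_*$-decreasing sequence $(A_n)_{n\in\N}$ of infinite subsets of $\N$ such that the oscillation $\sup f[\psi^{-1}[A_n^*]]-\inf f[\psi^{-1}[A_n^*]]$ is less than $1/(n+1)$. Given $A_n$, the compact set $f[\psi^{-1}[A_n^*]]$ admits a finite cover by open intervals $I_1,\dots,I_k\subseteq\R$ each of length less than $1/(n+2)$, so the open sets $\psi^{-1}[A_n^*]\cap f^{-1}[I_j]$ cover the nonempty clopen set $\psi^{-1}[A_n^*]\subseteq F$. Hence one of them, say $\psi^{-1}[A_n^*]\cap f^{-1}[I_{j_0}]$, is nonempty and open in $F$; by the $\pi$-base property there is an infinite $B\subseteq\N$ with $\psi^{-1}[B^*]\subseteq\psi^{-1}[A_n^*]\cap f^{-1}[I_{j_0}]$. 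Setting $A_{n+1}=A_n\cap B$ gives $A_{n+1}\subseteq_* A_n$, $A_{n+1}^*\subseteq B^*$ (so $\psi^{-1}[A_{n+1}^*]\subseteq\psi^{-1}[B^*]\subseteq f^{-1}[I_{j_0}]$), and therefore $f$ has oscillation less than $1/(n+2)$ on $\psi^{-1}[A_{n+1}^*]$.

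To finish, pick any infinite $A\subseteq\N$ such that $A\subseteq_* A_n$ for every $n\in\N$ (a standard diagonalization, possible since $\N$ is countable). Then $A^*\subseteq A_n^*$ and hence $\psi^{-1}[A^*]\subseteq\psi^{-1}[A_n^*]$ for every $n$, so the oscillation of $f$ on $\psi^{-1}[A^*]$ is less than $1/(n+1)$ for every $n$, and therefore equal to zero. Since $A$ is infinite and $\psi$ is surjective, $\psi^{-1}[A^*]$ is nonempty, so $f$ takes a single value there, as required.

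I do not foresee any real obstacle; the only point to watch is the translation between ``$B\subseteq_* A_n$'' (which is what the $\pi$-base inclusion $\psi^{-1}[B^*]\subseteq\psi^{-1}[A_n^*]$ yields after using that $\psi$ is onto) and the literal inclusion of the clopen sets, which is why I replace $B$ by $A_n\cap B$ to guarantee genuine containment at the level of subsets of $\N$.
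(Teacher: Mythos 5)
Your proof is correct and follows essentially the same route as the paper's: recursively shrink along a $\subseteq_*$-decreasing chain $(A_n)$ so that $f[\psi^{-1}[A_n^*]]$ has small diameter, using the irreducibility of $\psi$ via Lemma \ref{denseirreducible} to effect each step, and then diagonalize. The paper states this more tersely, but the argument is the same.
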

\begin{proof} Construct infinite $A_n\subseteq \N$ such that $A_{n+1}\subseteq_* A_n$ and 
intervals $I_n\subseteq \R$ such that the diameter of $I_n$ is less than $1/n$ and such that
$f|\psi^{-1}[A^*_n]\subseteq I_n$. The irreducibility guarantees the recursive step
through Lemma \ref{denseirreducible}.
If $A\subseteq_*A_n$ for all $n$, then $f|\psi^{-1}[A^*]$ is constant.
\end{proof}

\begin{lemma}\label{countablenwd} 
Countable unions of nowhere dense sets in $\N^*$ are nowhere dense.
\end{lemma}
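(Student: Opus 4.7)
The plan is to reduce the statement to the fact (\ref{gdelta}) that every nonempty $G_\delta$ subset of $\N^*$ has nonempty interior. Let $(F_n)_{n\in\N}$ be a countable family of nowhere dense subsets of $\N^*$. Replacing each $F_n$ by its closure (which is still nowhere dense) we may assume each $F_n$ is closed. To show that $\bigcup_{n\in\N} F_n$ is nowhere dense, I will show that every nonempty open set $U \subseteq \N^*$ contains a nonempty open set $V$ with $V \cap \bigcup_{n\in\N} F_n = \emptyset$.

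So fix nonempty open $U \subseteq \N^*$. Since clopen sets of the form $A^*$ form a basis for $\N^*$, choose an infinite $A_0 \subseteq \N$ with $A_0^* \subseteq U$. I then construct recursively a sequence of infinite sets $A_0 \supseteq_* A_1 \supseteq_* \cdots$ such that $A_{n+1}^* \cap F_n = \emptyset$ for every $n \in \N$: given $A_n^*$ nonempty clopen, the nowhere density of the closed set $F_n$ makes $A_n^* \setminus F_n$ a nonempty open subset of $\N^*$, hence it contains some nonempty clopen set $A_{n+1}^*$, which satisfies the requirement.

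Now consider the set $G = \bigcap_{n\in\N} A_n^*$. This is a $G_\delta$ set, and it is nonempty: the sets $A_n^*$ form a decreasing sequence of nonempty closed subsets of the compact space $\N^*$, so by the finite intersection property the intersection is nonempty. By Lemma \ref{gdelta}, $G$ has nonempty interior, so there is a nonempty open $V \subseteq G \subseteq A_0^* \subseteq U$. Since $V \subseteq A_{n+1}^*$ for every $n$, and $A_{n+1}^* \cap F_n = \emptyset$, we obtain $V \cap F_n = \emptyset$ for every $n$, hence $V \cap \bigcup_{n\in\N} F_n = \emptyset$. This completes the proof.

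There is no real obstacle here; the only subtlety is noting that the nonemptiness of $G$ comes from compactness (finite intersection property) rather than directly from the $G_\delta$ property, and then invoking \ref{gdelta} to pass from a nonempty $G_\delta$ to a nonempty open subset.
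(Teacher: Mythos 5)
Your proof is correct and follows essentially the same route as the paper: construct a decreasing sequence of nonempty clopen subsets of $U$ avoiding the successive $F_n$'s, and then use the fact that the intersection contains a nonempty open (clopen) set, which in your write-up is made explicit via compactness plus Lemma \ref{gdelta}. The extra care about why the intersection is nonempty is a harmless elaboration of what the paper leaves implicit.
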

\begin{proof}
 Let $F_n\subseteq \N^*$ be nowhere dense for every $n\in\N$. We may assume each 
$F_n$ to be closed.  Fix 
an open set $U\subseteq \N^*$. Let $B_0\subseteq U\setminus F_0$ be a nonempty
clopen and choose by induction 
$B_{n+1}\subseteq B_n\setminus F_{n+1}$ nonempty clopen. Since there exists a 
nonempty clopen 
$V\subseteq\bigcap_{n\in\N}B_n \subseteq U\setminus\bigcup_{i\in\N}F_i$, we know that 
$U\not\subseteq \overline{\bigcup_{n\in\omega}F_n}$.

\end{proof}

\subsection{Operators on $\ell_\infty$ preserving $c_0$}

\begin{lemma}\label{pseudo_diagonal2}
 Let $(b_{ij})_{i,j\in\mathbb{N}}$ be a $c_0$-matrix. If $J\subseteq \mathbb{N}$
is such that $(\sum_{j\in J}|b_{ij}|)_i\notin c_0$, then 
there exist $\varepsilon>0$, an infinite set $B$ and 
 finite $F_n\subseteq J$ for each $n\in B$, such that

\begin{enumerate}
 \item  $F_n\cap F_k=\emptyset$, for distinct $n,k\in B$,
\item $\sum_{j\in F_i}|b_{ij}|=|\sum_{j\in F_i}b_{ij}|>\varepsilon/4$, for all $i\in B$, and
\item $\lim_{\stackrel{i\rightarrow\infty}{i\in B}}\sum_{j\in \cup_{k\neq i}F_k}|b_{ij}|=0$
\end{enumerate}

\end{lemma}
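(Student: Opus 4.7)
The plan is to combine the two $c_0$-matrix features in the hypothesis: columns in $c_0$ will control the influence of already-chosen rows on new ones, and rows in $\ell_1$ will control the influence of future choices on already-fixed rows.

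First, extract $\varepsilon>0$ and infinite $B_0\subseteq\N$ with $\sum_{j\in J}|b_{ij}|>\varepsilon$ for $i\in B_0$. At each $i$, decomposing $J$ according to the sign of $b_{ij}$, one side contributes mass exceeding $\varepsilon/2$; by pigeonhole, pass to an infinite $B_1\subseteq B_0$ where one fixed sign works uniformly, say $J_i^+=\{j\in J:b_{ij}\geq 0\}$ satisfies $\sum_{j\in J_i^+}b_{ij}>\varepsilon/2$ for all $i\in B_1$.

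Next I will construct by induction indices $i_0<i_1<\cdots$ in $B_1$, finite sets $F_n\subseteq J_{i_n}^+$, and an auxiliary increasing sequence of cutoffs $M_{-1}<M_0<M_1<\cdots$ with $F_n\subseteq (M_{n-1},M_n]$. At step $n$, the columns being in $c_0$ let me pick $i_n>i_{n-1}$ in $B_1$ with $\sum_{j\leq M_{n-1}}|b_{i_n j}|<\varepsilon\cdot 2^{-n-3}$; combined with the previous paragraph this leaves more than $3\varepsilon/8$ of positive $\ell_1$-mass of row $i_n$ on $J_{i_n}^+\cap(M_{n-1},\infty)$, so a finite $F_n$ there with $\sum_{j\in F_n}b_{i_n j}>\varepsilon/4$ exists. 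After fixing $F_n$, the rows being in $\ell_1$ let me pick $M_n>\max F_n$ so that $\sum_{j>M_n}|b_{i_k j}|<\varepsilon\cdot 2^{-n-3}$ \emph{simultaneously} for every $k\leq n$, which is arrangeable because only finitely many rows are involved.

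Setting $B=\{i_n:n\in\N\}$, conditions (1) and (2) are immediate from the disjointness enforced by the cutoffs and from $F_n\subseteq J^+_{i_n}$. For (3), the point is that past $F_k$'s ($k<n$) lie below $M_{n-1}$ while future $F_k$'s ($k>n$) lie above $M_n$, so $\sum_{j\in\bigcup_{k\neq n}F_k}|b_{i_n j}|$ is bounded by $\sum_{j\leq M_{n-1}}|b_{i_n j}|+\sum_{j>M_n}|b_{i_n j}|<\varepsilon\cdot 2^{-n-2}$, which tends to $0$. The main obstacle is this dovetailing: $M_n$ must be chosen after $F_n$ yet serve as a tail bound uniformly over all earlier rows $i_0,\dots,i_n$, and this is precisely what couples the column-$c_0$ and row-$\ell_1$ hypotheses into the single vanishing estimate required by~(3).
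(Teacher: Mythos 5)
Your argument is correct and is essentially the paper's own proof: the same interleaved induction in which the columns being in $c_0$ let you pick a row $i_n$ with small mass on the initial segment $[0,M_{n-1}]$ and the rows being in $\ell_1$ let you pick a cutoff $M_n$ giving a small tail, so the blocks $F_n$ sit in disjoint windows and the two error terms bound the sum in (3) by a quantity tending to $0$. The only (harmless) deviations are cosmetic: you fix one sign globally by pigeonhole instead of splitting each window into its positive and negative parts as the paper does, and you require the tail bound at $M_n$ simultaneously for all rows $i_k$ with $k\leq n$, whereas only the bound for $i_n$ itself is ever used.
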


\begin{proof}
  Let $(b_{ij})_{i,j\in\mathbb{N}}$ be a $c_0$-matrix and fix 
a $J\subseteq\mathbb{N}$ as in the hypothesis. Since $(b_{ij})_{i,j\in\mathbb{N}}$ is a $c_0$-matrix,
we know that for every $k\in\mathbb{N}$ the sequence 
$(\sum_{j\leq k}|b_{ij}|)_i$ converges to zero. Hence,
$J$ must be infinite.
Let $J=\{j_n:n\in\mathbb{N}\}$ be the increasing enumeration of $J$. 
By hypothesis, there exist $\varepsilon>0$ and an infinite $\tilde{B}\subseteq \mathbb{N}$
such that $\sum_{n\in\mathbb{N}}|b_{ij_n}|>\varepsilon$, for all $i\in\tilde{B}$.

We will carry out an inductive construction from where we will obtain the 
sequence $(F_n)$ and the set $B$. Let $i_0$ be the first element of $\tilde{B}$ and $m_0=0$.
Since $\sum_{n\in\mathbb{N}}|b_{i_0j_n}|$ converges, we may choose $m_1>m_0$ such that
$\sum_{n\geq m_1}|b_{i_0j_n}|<\varepsilon/2$. Suppose for every $l\leq k$ we have chosen 
$i_l$ and $m_{l+1}$ satisfying
\begin{enumerate}
 \item[(a)] $m_l<m_{l+1}$, $i_l\in \tilde{B}$, and $i_{l-1}<i_l$,
\item[(b)] $\sum_{n< m_l}|b_{i_lj_n}|<\frac{\varepsilon}{4(l+1)}$, and
\item[(c)]  $\sum_{n\geq m_{l+1}}|b_{i_lj_n}|<\frac{\varepsilon}{4(l+1)}$.
\end{enumerate}

Since $(\sum_{j<m_{k+1}}|b_{ij}|)_i$ converges to zero, we may choose
$i_{k+1}\in\tilde{B}$, such that $i_{k+1}>i_k$ and for all $i\geq i_{k+1}$ we
have $\sum_{n< m_{k+1}}|b_{ij_n}|<\frac{\varepsilon}{4(k+2)}$.
Furthermore, since $\sum_{n\in\mathbb{N}}|b_{i_{k+1}j_n}|$ converges,
we may choose $m_{k+2}>m_{k+1}$ such that 
$\sum_{n\geq m_{k+2}}|b_{i_{k+1}j_n}|<\frac{\varepsilon}{4(k+2)}$.
This finishes the inductive construction.

Notice that for every $k\in\mathbb{N}$ we have
$$\begin{array}{rcl}
\varepsilon <\sum_{n\in\mathbb{N}}|b_{i_kj_n}|&=&
\sum_{n< m_k}|b_{i_kj_n}|+ \sum_{m_k\leq n<m_{k+1}}|b_{i_kj_n}|+\sum_{n\geq m_{k+1}}|b_{i_kj_n}|\\
&<&  \frac{\varepsilon}{4(k+1)}+\sum_{m_k\leq n<m_{k+1}}|b_{i_kj_n}|+\frac{\varepsilon}{4(k+1)}\\
&\leq&\sum_{m_k\leq n<m_{k+1}}|b_{i_kj_n}|+\varepsilon/2.
  \end{array}$$

Hence, $\sum_{m_k\leq n<m_{k+1}}|b_{i_kj_n}|>\varepsilon/2$. 
By splitting the sum $\sum_{m_k\leq n<m_{k+1}}b_{i_kj_n}$
into its positive and negative parts, we obtain 
$F_{i_k}\subseteq \{j_n\in\mathbb{N}:m_k\leq n<m_{k+1}\}$ such that 
$|\sum_{j\in F_{i_k}}b_{i_kj}|>\varepsilon/4$. 
So by letting $B=\{i_k:k\in\mathbb{N}\}$, we know that conditions (1) and (2)
of the lemma are satisfied.
To obtain (3), fix $\delta>0$ and take $m\in\mathbb{N}$ such that
 $\frac{\varepsilon}{2(m+1)}<\delta$. By construction we have that
$\bigcup_{l\neq k}F_{i_l}\subseteq\{j_n\in\mathbb{N}:n<m_k\text{ or }n\geq m_{k+1}\}$,
for every $k\in\mathbb{N}$. So, in particular for every $k>m$, we have
$$ \sum_{j\in \cup_{l\neq k}F_{i_l}}|b_{i_kj}|\leq \sum_{n< m_k}|b_{i_kj_n}|+\sum_{n\geq m_{k+1}}|b_{i_kj_n}|
<\frac{\varepsilon}{2(k+1)}<\delta.$$
\end{proof}

In the following propositions we list some facts leading to the proof of 
Theorem \ref{summary_matrices}.

We start by noting that
 the two conditions that characterize matrices which induce
operators from $\ell_{\infty}$ into $\ell_{\infty}$, also characterize matrices
whose transpose induces operators from $\ell_1$ into $\ell_1$.
\begin{proposition}\label{charac_l_1-matrix}
$S:\ell_1\rightarrow\ell_1$ is a  linear bounded operator if, and only if,
there exists a matrix $(b_{ij})_{i,j\in\mathbb{N}}$ which induces $S$ and is
 such that every column is in $\ell_1$ and the set 
$\{\|(b_{ij})_{i\in\N}\|_{\ell_1}:j\in\mathbb{N}\}$ is bounded.
\end{proposition}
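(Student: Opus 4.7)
The plan is to prove both directions by using the Schauder basis $(e_j)_{j\in\N}$ of $\ell_1$ (the standard unit vectors) and the fact that every $f\in\ell_1$ is represented as the norm-convergent sum $f=\sum_j f(j)e_j$. The $j$-th column of the matrix will naturally be $S(e_j)$, and the column norm bound will correspond to the operator norm bound on the basis vectors. This parallels how Propositions \ref{charac_matrix} and \ref{l_infty-matrix} were obtained by plugging $\delta_i$ into the adjoint; here we are plugging $e_j$ directly into the operator, exploiting that $\ell_1$ (unlike $\ell_\infty$) has an unconditional basis.

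For the forward direction, I would start with a bounded linear $S:\ell_1\to\ell_1$ and define $b_{ij}=S(e_j)(i)$. Then the $j$-th column $(b_{ij})_{i\in\N}$ is exactly $S(e_j)\in\ell_1$, and $\|(b_{ij})_{i\in\N}\|_{\ell_1}=\|S(e_j)\|_{\ell_1}\leq\|S\|\cdot\|e_j\|_{\ell_1}=\|S\|$, which gives the boundedness of the set of column norms. To see that this matrix induces $S$, I would use the continuity of $S$ and the expansion $f=\sum_j f(j)e_j$ to get $S(f)=\sum_j f(j)S(e_j)$ in $\ell_1$, and then read off the $i$-th coordinate as $S(f)(i)=\sum_j f(j)b_{ij}$.

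For the converse, given such a matrix $(b_{ij})_{i,j\in\N}$ with $M:=\sup_j\|(b_{ij})_{i\in\N}\|_{\ell_1}<\infty$, I would define $S(f)(i)=\sum_j b_{ij}f(j)$ and estimate
\[
\|S(f)\|_{\ell_1}=\sum_i\Bigl|\sum_j b_{ij}f(j)\Bigr|\leq \sum_j|f(j)|\sum_i|b_{ij}|\leq M\|f\|_{\ell_1},
\]
where Tonelli's theorem justifies swapping the order of summation since the terms are nonnegative. This simultaneously shows that the double series defining $S(f)(i)$ is absolutely convergent for each $i$, that $S(f)\in\ell_1$, and that $S$ is bounded. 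Linearity is immediate from the definition.

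I do not expect any serious obstacle; the main technical point is the application of Tonelli/Fubini in the boundedness estimate, and the verification that the matrix really induces $S$ in the forward direction, which is just continuity of $S$ applied to the unconditionally convergent basis expansion of $f$.
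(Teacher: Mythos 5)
Your proof is correct and follows essentially the same approach as the paper: the columns are $S(e_j)$ with norms bounded by $\|S\|$, and the converse is the Tonelli/Fubini estimate for nonnegative double series. The only cosmetic difference is that the paper first obtains the matrix by evaluating the adjoint on coordinate functionals ($\dual{S}(\delta_i)\in\ell_\infty$) and then reads off the columns, whereas you define $b_{ij}=S(e_j)(i)$ directly and verify the matrix induces $S$ via the norm-convergent basis expansion $f=\sum_j f(j)e_j$; both are fine.
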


\begin{proof}
Let $S:\ell_1\rightarrow\ell_1$ be a linear bounded operator.
Since $\ell_1^*=\ell_{\infty}$, for each $i\in\mathbb{N}$
 there exists $(b_{ij})_{j\in\N}\in\ell_{\infty}$ such that 
$S(a)(i)=\dual{S}(\delta_i)(a)=\sum_{j\in\N} b_{ij}a_j$, for every $a=(a_k)_{k\in\N}\in\ell_1$.
In other words, $S$ is given by the matrix $(b_{ij})_{i,j\in\mathbb{N}}$.

Note that the $j$-th column of the matrix is equal to $S(\delta_j)\in\ell_1$.
 Moreover, since $S$ is bounded,
we have that $S(\{\delta_j:j\in\mathbb{N}\})=\{(b_{ij})_{i\in\N}:j\in\mathbb{N}\}$
is bounded in $\ell_1$.

Conversely, suppose $(b_{ij})_{i,j\in\mathbb{N}}$ is a matrix
 such that every column $(b_{ij})_{i\in\N}$ is in $\ell_1$ and the set 
$\{\|(b_{ij})_{i\in\N}\|_{\ell_1}:j\in\mathbb{N}\}$ is bounded. We claim that this matrix
induces a linear bounded operator $S:\ell_1\rightarrow\ell_1$. 

First, we shall prove that such an operator is well defined.
Fix $a=(a_k)_{k\in\N}\in\ell_1$. Start by noting that for every $i\in\mathbb{N}$ we have that 
$(b_{ij})_{j\in\N}$ is bounded by hypothesis, and so is in $\ell_{\infty}$. 
Hence, $\sum_{j\in\mathbb{N}}b_{ij}a_j$ is
convergent for every $i\in\mathbb{N}$. Now, we need to show that the sequence 
$(\sum_{j\in\mathbb{N}}b_{ij}a_j)_{i\in\N}$ is in $\ell_1$.

In view of applying Theorem 8.43 of \cite{apostol},
note that by hypothesis $\sum_{i\in\mathbb{N}}b_{ij}a_j$
is absolutely convergent for every $j\in\mathbb{N}$, and
there exists $M\in\mathbb{N}$ such that 
$\sum_{i\in\mathbb{N}}|b_{ij}|<M$ for every $j\in\mathbb{N}$.
Therefore, 
$$\sum_{j\in\mathbb{N}}\sum_{i\in\mathbb{N}}|b_{ij}a_j|=
\sum_{j\in\mathbb{N}}|a_j|\sum_{i\in\mathbb{N}}|b_{ij}|\leq \sum_{j\in\mathbb{N}}|a_j|M<\infty.$$
Hence, by the cited Theorem, we have that both iterated series $\sum_{i\in\mathbb{N}}\sum_{j\in\mathbb{N}}b_{ij}a_j$
and $\sum_{j\in\mathbb{N}}\sum_{i\in\mathbb{N}}b_{ij}a_j$ converge absolutely. 
In particular, we have that  
$$\sum_{i\in\mathbb{N}}|\sum_{j\in\mathbb{N}}b_{ij}a_j|\leq 
\sum_{i\in\mathbb{N}}\sum_{j\in\mathbb{N}}|b_{ij}a_j|<\infty,$$ so that 
$S(a)\in\ell_1$.

Clearly, $S$ is linear. Moreover, if we take $a=(a_k)_{k\in\N}\in\ell_1$ such that
$\|a\|_{\ell_1}\leq 1$, then by a similar argument as above we have
$$\|S(a)\|_{\ell_1}\leq
\sum_{i\in\mathbb{N}}\sum_{j\in\mathbb{N}}|b_{ij}a_j|=
\sum_{j\in\mathbb{N}}\sum_{i\in\mathbb{N}}|b_{ij}a_j|\leq 
\sum_{j\in\mathbb{N}}|a_j|M\leq M.$$
Therefore, $S$ is bounded.

\end{proof}

In the following proposition we identify $\dual{\ell_1}$ with $\ell_\infty$.

\begin{proposition}\label{T-dual-transpose}
 Let $R:\ell_{\infty}\rightarrow\ell_{\infty}$ be a linear bounded operator.
Then $R=\dual{S}$ for some linear bounded operator $S:\ell_1\rightarrow\ell_1$
if, and only if, $R$ is given by a matrix.
Moreover, the matrix corresponding to the operator $S$ is the transpose of
the matrix corresponding to $R$.
\end{proposition}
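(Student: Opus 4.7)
The plan is to pair the two characterization results \ref{l_infty-matrix} and \ref{charac_l_1-matrix} through a direct duality computation. Both results describe a matrix by the same two conditions (rows resp.\ columns in $\ell_1$ with uniformly bounded norms), and transposition interchanges rows with columns, so the two classes of matrices correspond bijectively.

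For the $(\Leftarrow)$ direction, I would start with an operator $R:\ell_\infty\rightarrow\ell_\infty$ given by a matrix $(b_{ij})_{i,j\in\N}$. By \ref{l_infty-matrix} the rows satisfy $b_i=(b_{ij})_j\in \ell_1$ and $\sup_i\|b_i\|_{\ell_1}<\infty$. I would then let $(c_{ij})_{i,j\in\N}=(b_{ji})_{i,j\in\N}$ be the transposed matrix; its columns are exactly the rows of the original, so they lie in $\ell_1$ with uniformly bounded norms. By \ref{charac_l_1-matrix} this data defines a bounded linear operator $S:\ell_1\to\ell_1$ by $S(a)(i)=\sum_j c_{ij}a_j=\sum_j b_{ji}a_j$. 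A direct computation against the standard basis $(\delta_j)\subseteq \ell_1$ now shows
\[
(\dual{S}(f))(j)=f(S(\delta_j))=\sum_i b_{ji}f(i)=R(f)(j),
\]
for every $f\in\ell_\infty$ and every $j\in\N$, so $R=\dual{S}$, and the matrix of $S$ is the transpose of the matrix of $R$.

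For the $(\Rightarrow)$ direction, suppose $R=\dual{S}$ for some bounded $S:\ell_1\to\ell_1$. By \ref{charac_l_1-matrix}, $S$ has a matrix $(b_{ij})_{i,j\in\N}$ whose columns lie in $\ell_1$ and have uniformly bounded norms. Repeating the same calculation backwards gives $(\dual{S}(f))(j)=\sum_i b_{ij}f(i)$, which exhibits $R$ as multiplication by the transposed matrix $(b_{ji})_{i,j\in\N}$, whose rows (the columns of the $S$-matrix) satisfy the hypothesis of \ref{l_infty-matrix}. Hence $R$ is given by a matrix and the ``moreover'' clause is automatic from this identification.

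There is no real obstacle here: the whole statement is essentially a bookkeeping lemma that reads off the duality $\dual{\ell_1}=\ell_\infty$ at the level of matrix coefficients. The only mild care point is to verify that the double sum $\sum_i b_{ji}f(i)$ rearranges correctly when evaluating $\dual{S}(f)$ on $\delta_j$, but this is immediate once one observes $f(S(\delta_j))=\sum_i f(i)\, S(\delta_j)(i)$ using that $S(\delta_j)\in\ell_1$ and $f\in\ell_\infty=\dual{\ell_1}$.
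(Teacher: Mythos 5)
Your proposal is correct and follows essentially the same route as the paper: both directions are handled by evaluating $\dual{S}(f)$ against the standard basis $(\delta_j)$ of $\ell_1$ and reading off that the matrix of $\dual{S}$ is the transpose of the matrix of $S$, with Propositions \ref{l_infty-matrix} and \ref{charac_l_1-matrix} supplying the matching row/column conditions under transposition. No gaps; the duality computation you flag as the only care point is exactly the single-sum evaluation the paper also uses.
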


\begin{proof}
Assume $R=\dual{S}$, for a linear bounded operator $S:\ell_1\rightarrow\ell_1$.
Let $M_S=(b_{ij})_{i,j\in\mathbb{N}}$ be the matrix corresponding to $S$.
Since $S(\delta_j)$ is the $j$-th column of
$M_S$, for every $f\in\ell_{\infty}$  we have that
$$R(f)(j)=\dual{S}(f)(j)=(f\circ S)(\delta_j)=\sum_{i\in\mathbb{N}}b_{ij}f(i).$$

In other words, $R$ is given by the transpose of $M_S$.\\

Conversely, suppose $R$ is given by a matrix $M_R=(b_{ij})_{i,j\in\mathbb{N}}$.
By \ref{l_infty-matrix} and \ref{charac_l_1-matrix}  we have that the transpose of 
$M_R$ defines a linear bounded operator
$S:\ell_1\rightarrow\ell_1$. It only remains to show that $\dual{S}=R$.
So fix for this purpose $f\in\ell_{\infty}$. If we regard it as an element of
$\dual{\ell_1}$, then for every $i\in\N$ we have
$$ \dual{S}(f)(\delta_i) = f\circ S(\delta_i)=\sum_{j\in\mathbb{N}}f(j)b_{ij},$$
but on the other hand,
$$ R(f)(i)= \sum_{j\in\mathbb{N}}f(j)b_{ij}.$$
\end{proof}

In the following proposition we identify $\dual{c_0}$ with $\ell_1$.

\begin{proposition}\label{S-dual-transpose}
Let $R:c_0\rightarrow c_0$ be a linear bounded operator. Then,
 $\dual{R}$ is induced by the transpose of the matrix corresponding
to $R$.
\end{proposition}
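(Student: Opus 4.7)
The plan is to combine the characterization of operators on $c_0$ given by Proposition \ref{charac_matrix} with the characterization of operators on $\ell_1$ given by Proposition \ref{charac_l_1-matrix}, and then compute the matrix of $\dual{R}$ column by column using the canonical identification $\dual{c_0} = \ell_1$.

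First I would recall that by Proposition \ref{charac_matrix}, if $(b_{ij})_{i,j\in\N}$ is the matrix of $R$, then its rows are precisely the functionals $\dual{R}(\delta_i)\in\ell_1$, where $\delta_i\in\ell_1=\dual{c_0}$ denotes the $i$-th coordinate functional. In particular, $b_{ij}=\dual{R}(\delta_i)(e_j)$, where $e_j\in c_0$ is the $j$-th unit vector. Since $R$ is bounded, $\dual{R}:\ell_1\to\ell_1$ is automatically bounded, so by Proposition \ref{charac_l_1-matrix} it is induced by some matrix $(d_{ij})_{i,j\in\N}$ whose $j$-th column is $\dual{R}(\delta_j)\in\ell_1$.

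The main step is then to identify the entries $d_{ij}$. Fix $j\in\N$. Viewed as an element of $\ell_1$, we have $\dual{R}(\delta_j)=(d_{ij})_{i\in\N}$, so $d_{ij}$ is obtained by pairing $\dual{R}(\delta_j)$ with $e_i\in c_0$. But by definition of the adjoint,
\[
d_{ij}\;=\;\dual{R}(\delta_j)(e_i)\;=\;\delta_j\bigl(R(e_i)\bigr)\;=\;R(e_i)(j)\;=\;b_{ji}.
\]
Thus $(d_{ij})_{i,j\in\N}$ is the transpose of $(b_{ij})_{i,j\in\N}$, which is exactly the claim.

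I do not expect any genuine obstacle: the statement is a clean duality calculation, and both characterizations needed (Proposition \ref{charac_matrix} for the matrix of $R$, Proposition \ref{charac_l_1-matrix} for the matrix of $\dual{R}$) are already available. The only point that requires a moment of care is keeping track of which of the two canonical enumerations (by rows versus by columns) is used on each side, but this is handled by evaluating $\dual{R}(\delta_j)$ on the basis $(e_i)_{i\in\N}$ of $c_0$ as shown above.
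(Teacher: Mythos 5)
Your proof is correct and follows essentially the same route as the paper: both identify the $(i,j)$-entry of the matrix of $\dual{R}$ via the adjoint identity $\dual{R}(\delta_j)(e_i)=\delta_j(R(e_i))=b_{ji}$. The only cosmetic difference is that you invoke Propositions \ref{charac_matrix} and \ref{charac_l_1-matrix} explicitly to justify the row/column conventions, whereas the paper performs the computation directly.
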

\begin{proof}
 Let $M_R=(b_{ij})_{i,j\in\mathbb{N}}$ and $M_{\dual{R}}=(b'_{ij})_{i,j\in\mathbb{N}}$ 
be the matrices corresponding to $R$ and $\dual{R}$, respectively.
Observe that for every $i,j\in\mathbb{N}$ we have that 
$b'_{ij}=\dual{R}(\delta_j)(\chi_{\{i\}})=\delta_j(R(\chi_{\{i\}}))=b_{ji}$. Hence, $M_{\dual{R}}$ is the 
transpose of $M_R$.
\end{proof}

\begin{proposition}\label{doubleadjoint_appendix}
A linear  bounded operator $R:\ell_\infty\rightarrow\ell_\infty$ is given by
a  $c_0$-matrix if, and only if, $R=\ddual{(R|c_0)}$.
\end{proposition}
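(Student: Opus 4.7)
The plan is to chain together Propositions \ref{T-dual-transpose} and \ref{S-dual-transpose} to show that both $R$ and $\ddual{(R|c_0)}$ correspond to the same matrix via $\ell_\infty$, and then appeal to Proposition \ref{l_infty-matrix} to conclude they coincide. Since the statement is an equivalence, I would prove both directions by tracking what matrix each operator is induced by and noting that the operation of passing to the bidual corresponds, on the matrix level, to taking the transpose twice.

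For the forward direction, assume $R$ is given by a $c_0$-matrix $(b_{ij})_{i,j\in\N}$. Then $R[c_0]\subseteq c_0$, so $R|c_0\colon c_0\to c_0$ is a bounded operator, and by Proposition \ref{charac_matrix} its matrix is precisely $(b_{ij})_{i,j\in\N}$. By Proposition \ref{S-dual-transpose} the dual operator $\dual{(R|c_0)}\colon \ell_1\to\ell_1$ is induced by the transpose $(b_{ji})_{i,j\in\N}$. Applying Proposition \ref{T-dual-transpose} to the operator $\ddual{(R|c_0)}=\dual{(\dual{(R|c_0)})}\colon\ell_\infty\to\ell_\infty$, we see that $\ddual{(R|c_0)}$ is given by the transpose of $(b_{ji})_{i,j\in\N}$, which is $(b_{ij})_{i,j\in\N}$ again. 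Thus $R$ and $\ddual{(R|c_0)}$ are both bounded linear operators on $\ell_\infty$ induced by the same matrix; by the uniqueness of the matrix representation in Proposition \ref{l_infty-matrix} (e.g., by evaluating both sides on characteristic functions of singletons and reading off the columns), the two operators coincide.

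For the converse, suppose $R=\ddual{(R|c_0)}$; in particular the right-hand side is well-defined, which forces $R[c_0]\subseteq c_0$, so $R|c_0$ is a bona fide operator $c_0\to c_0$. Writing $R=\dual{S}$ where $S=\dual{(R|c_0)}\colon\ell_1\to\ell_1$, Proposition \ref{T-dual-transpose} ensures that $R$ is given by a matrix, namely the transpose of the matrix of $S$. Because $R$ preserves $c_0$, Corollary \ref{matrixoperators} then guarantees that this matrix is in fact a $c_0$-matrix, completing the proof.

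There is no real obstacle here: the whole argument is a bookkeeping exercise about transposes, and the only point that needs a moment's care is making sure the interpretation $R|c_0\colon c_0\to c_0$ is justified in the converse direction (which is automatic once one observes that $\ddual{(R|c_0)}$ would otherwise not land in $\ell_\infty$ and equal $R$). The main ingredients--Propositions \ref{charac_matrix}, \ref{l_infty-matrix}, \ref{T-dual-transpose}, \ref{S-dual-transpose}, and Corollary \ref{matrixoperators}--are all already in place.
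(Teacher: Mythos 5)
Your proof is correct and follows essentially the same route as the paper: both directions chain Propositions \ref{T-dual-transpose} and \ref{S-dual-transpose} so that passing to the bidual corresponds to transposing the matrix twice. The only (harmless) variation is in the converse, where you invoke Corollary \ref{matrixoperators} to see the matrix is a $c_0$-matrix, while the paper identifies it directly as the $c_0$-matrix of $R|c_0$ via Proposition \ref{charac_matrix}.
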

\begin{proof}
 Assume $R$ is given by a $c_0$-matrix $M$. Then, $R[c_0]\subseteq c_0$
and so $R|c_0:c_0\rightarrow c_0$ is well defined and is also
 induced by $M$. By propositions 
 \ref{T-dual-transpose} and \ref{S-dual-transpose} we have that $\ddual{(R|c_0)}$
is also induced by $M$. In other words, $R=\ddual{(R|c_0)}$.

Conversely, assume $R=\ddual{(R|c_0)}$.
Let $M$ be the matrix corresponding to $R|c_0:c_0\rightarrow c_0$.
By propositions \ref{T-dual-transpose} and \ref{S-dual-transpose}  we 
have that $R=\ddual{(R|c_0)}$ is also induced by $M$, which is a $c_0$-matrix.
\end{proof}

\begin{theorem}\label{c_0-matrix--weakly_compact_appendix}
Let $R:c_0\rightarrow c_0$ be a linear bounded operator and let $(b_{ij})_{i,j\in\N}$
be the corresponding matrix. The following are equivalent:
\begin{enumerate}
 \item $R$ is weakly compact.
\item $\ddual{R}[\ell_{\infty}]\subseteq c_0$.
\item $\|b_i\|_{\ell_1}\rightarrow 0$, where $b_i=(b_{ij})_{j\in\N}$. 

\end{enumerate}
\end{theorem}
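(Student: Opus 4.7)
The plan is to decompose the three-way equivalence into (1)$\Leftrightarrow$(2) (a general Banach-space fact) and (2)$\Leftrightarrow$(3) (a direct matrix computation combined with the Schur property of $\ell_1$).

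First I would identify $\ddual{R}$ explicitly. By Proposition \ref{S-dual-transpose}, $\dual{R}\colon\ell_1\to\ell_1$ is the operator induced by the transpose of $(b_{ij})_{i,j\in\N}$, and by Proposition \ref{T-dual-transpose} the operator $\ddual{R}=\dual{(\dual{R})}\colon\ell_\infty\to\ell_\infty$ is induced by the transpose of that transpose, namely by the original matrix $(b_{ij})$ itself. Hence
$$\ddual{R}(f)(i)=\sum_{j\in\N}b_{ij}f(j)\quad\text{for every } f\in\ell_\infty.$$
This identification is the main technical step and the only one that could be a source of slips, but it is a routine consequence of the two transposition lemmas already proved.

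Given this, the implication (3)$\Rightarrow$(2) is immediate from the Hölder-type estimate
$$|\ddual{R}(f)(i)|\le\|f\|_\infty\|b_i\|_{\ell_1},$$
so $\|b_i\|_{\ell_1}\to 0$ forces $\ddual{R}(f)\in c_0$ for every $f\in\ell_\infty$. For (2)$\Rightarrow$(3), assume $\ddual{R}[\ell_\infty]\subseteq c_0$; then for every $f\in\ell_\infty$ we have $\langle b_i,f\rangle=\ddual{R}(f)(i)\to 0$, which is exactly weak convergence $b_i\to 0$ in $\ell_1$ (under $\dual{\ell_1}=\ell_\infty$). The Schur property of $\ell_1$, asserting that weakly null sequences are norm null, then yields $\|b_i\|_{\ell_1}\to 0$.

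Finally, (1)$\Leftrightarrow$(2) is the classical characterization of weakly compact operators: an operator $T\colon X\to Y$ is weakly compact if and only if $\ddual{T}[\ddual{X}]\subseteq Y$ (identifying $Y$ with its canonical image in $\ddual{Y}$). Applied to $R\colon c_0\to c_0$ with $\ddual{c_0}=\ell_\infty$, this reads $R$ weakly compact iff $\ddual{R}[\ell_\infty]\subseteq c_0$. The proof of this general fact uses that $B_{\ddual{X}}$ is weak$^*$-compact, hence $\ddual{T}(B_{\ddual{X}})$ is weak$^*$-compact in $\ddual{Y}$, and a weak$^*$-compact subset of $\ddual{Y}$ that lies in $Y$ is weakly compact in $Y$; together with $T(B_X)\subseteq\ddual{T}(B_{\ddual{X}})$, this gives the equivalence. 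There is no substantial obstacle in the argument — the only piece that requires care is confirming that $\ddual{R}$ is again multiplication by $(b_{ij})$, after which Schur's theorem carries all the weight.
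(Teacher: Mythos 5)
Your argument is correct, and it diverges from the paper's proof in exactly one place: the implication (2)$\Rightarrow$(3). You observe that $\ddual{R}(f)(i)=\langle b_i,f\rangle$ for the pairing $\ell_1\times\ell_\infty\to\R$, so the hypothesis $\ddual{R}[\ell_\infty]\subseteq c_0$ says precisely that $b_i\to 0$ in $\sigma(\ell_1,\ell_\infty)$, and Schur's theorem then gives norm convergence. This is a clean, high-level shortcut. The paper instead proves the contrapositive constructively: assuming $\|b_i\|_{\ell_1}\not\to 0$, it invokes Lemma \ref{pseudo_diagonal2} to produce pairwise disjoint finite blocks $F_i$ on which a definite mass $\varepsilon/4$ concentrates while the cross-terms are small, and then builds by hand a sign-aligned $f\in\ell_\infty$ supported on $\bigcup F_i$ with $|\ddual{R}(f)(i)|>\varepsilon/8$ along an infinite set. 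Your route is shorter because Schur's theorem absorbs the entire combinatorial argument; the paper's route is essentially a bare-hands proof of the relevant instance of Schur, reusing a ``pseudo-diagonal'' lemma (\ref{pseudo_diagonal2}) that the paper needs elsewhere (e.g., in the proof of Proposition \ref{noliftingnontrivial}), so the extra machinery is not wasted. Your identification of $\ddual{R}$ as multiplication by the same matrix via the two transposition lemmas, the Hölder-type estimate for (3)$\Rightarrow$(2), and the appeal to Gantmacher's characterization for (1)$\Leftrightarrow$(2) all match the paper's proof.
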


\begin{proof}
The equivalence of of (1) and (2) is well-known (see exercise 3 of Chapter 3 in \cite{diestel}).

By \ref{doubleadjoint_appendix}, we have that $\ddual{R}$ is given $(b_{ij})_{i,j\in\N}$.
Therefore, for any $f\in\ell_{\infty}$ and any $i\in\N$ we have
 $|\ddual{R}(f)(i)|=|\sum_{j\in\N}b_{ij}\cdot f(j)|\leq \sum_{j\in\N}|b_{ij}|\|f\|_{\ell_{\infty}}$,
and it is clear that (3) implies (2).

For the converse, assume (3) does not hold. Then, by Lemma \ref{pseudo_diagonal2}
there exist $\varepsilon>0$, an infinite set $A\subseteq \N$ and 
 finite $F_n\subseteq \N$ for each $n\in A$, such that

\begin{enumerate}
 \item[(i)]  $F_n\cap F_k=\emptyset$, for distinct $n,k\in A$,
\item[(ii)] $\sum_{j\in F_i}|b_{ij}|=|\sum_{j\in F_i}b_{ij}|>\varepsilon/4$, for all $i\in A$, and
\item[(iii)] there is an $m\in\mathbb{N}$ such that
 $$\sum_{j\in \cup_{k\neq i}F_k}|b_{ij}|<\varepsilon/8,\quad\text{ for all }i>m.$$
\end{enumerate}

Let $f\in\ell_{\infty}$ be such that $supp(f)\subseteq \bigcup_{n\in\N} F_n$
and $b_{ij}\cdot f(j)=|b_{ij}|$, for every $i\in A$ and every $j\in F_i$.
Then, for every $i\in A\setminus m$ we have

$$\begin{array}{rcl}
   |\ddual{R}(f)(i)|&=& |\sum_{j\in\bigcup_{n\in\N}F_n}b_{ij}\cdot f(j)|\\
&\geq&|\sum_{j\in F_i}b_{ij}\cdot f(j)| -|\sum_{j\in\bigcup_{k\neq i}F_k}b_{ij}\cdot f(j)|\\
&\geq& \sum_{j\in F_i}|b_{ij}|-\sum_{j\in\bigcup_{k\neq i}F_k}|b_{ij}|\\
&>&\varepsilon/4-\varepsilon/8=\varepsilon/8. 
  \end{array}$$

Therefore, $\ddual{R}(f)\notin c_0$.

\end{proof}

\begin{proposition}\label{double-dual}
 Let $T:\ddual{X}\rightarrow \ddual{X}$. Then, $T=\ddual{R}$  for some 
$R:X\rightarrow X$, and only if,
$T$ is $w^*$-$w^*$-continuous and $T[X]\subseteq X$.
\end{proposition}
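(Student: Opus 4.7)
The plan is to establish the two implications separately, using throughout the canonical isometric embedding $J:X\hookrightarrow\ddual{X}$, $J(x)(\phi)=\phi(x)$, to identify $X$ with a closed subspace of $\ddual{X}$.

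For the direction $(\Rightarrow)$, I would assume $T=\ddual{R}$ for some bounded $R:X\to X$ and argue as follows. Since $\ddual{R}=\dual{(\dual{R})}$ is the adjoint of $\dual{R}:\dual{X}\to\dual{X}$, it is automatically $w^*$-$w^*$-continuous (the standard fact that Banach-space adjoints are weak$^*$-continuous with respect to the canonical pre-duals). A direct unravelling
\[
\ddual{R}(J(x))(\phi)=J(x)(\dual{R}\phi)=(\dual{R}\phi)(x)=\phi(R(x))=J(R(x))(\phi),
\]
valid for every $x\in X$ and $\phi\in\dual{X}$, shows $\ddual{R}\circ J=J\circ R$, so $T[J(X)]\subseteq J(X)$.

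For $(\Leftarrow)$, assume $T$ is $w^*$-$w^*$-continuous and $T[X]\subseteq X$. I would define the bounded operator $R:=J^{-1}\circ T\circ J:X\to X$; this is well-defined because $T[J(X)]\subseteq J(X)$, and bounded because $T$ is bounded and $J$ is an isometry onto its image. Next, form $\ddual{R}$ and argue that $T=\ddual{R}$ by a weak$^*$-density argument. By the calculation above, $\ddual{R}\circ J=J\circ R=T\circ J$, so $T$ and $\ddual{R}$ coincide on $J(X)$; both operators are $w^*$-$w^*$-continuous on $\ddual{X}$ ($\ddual{R}$ by the forward direction, $T$ by hypothesis); and by Goldstine's theorem $J(X)$ is $w^*$-dense in $\ddual{X}$. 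Given any $\xi\in\ddual{X}$, pick a net $(x_\alpha)\subseteq X$ with $J(x_\alpha)\xrightarrow{w^*}\xi$; then $T(J(x_\alpha))=\ddual{R}(J(x_\alpha))$ for every $\alpha$, and passing to the $w^*$-limit on both sides (using $w^*$-continuity together with the Hausdorffness of the $w^*$-topology on $\ddual{X}$) yields $T(\xi)=\ddual{R}(\xi)$, as required.

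I do not anticipate any genuine obstacle here: the argument rests on two textbook facts (weak$^*$-continuity of adjoints and Goldstine's theorem) together with the elementary naturality $\ddual{R}\circ J=J\circ R$. The only point requiring mild care is that $\ddual{X}$ may be nonmetrizable in its $w^*$-topology, so the final density step must be phrased in terms of nets rather than sequences; this is a presentational matter, not a substantive one.
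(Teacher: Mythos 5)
Your proof is correct, and the forward direction is essentially the paper's (naturality $\ddual{R}\circ J=J\circ R$ plus the fact that adjoints are $w^*$-$w^*$-continuous). For the converse, however, you take a genuinely different route. The paper argues ``downwards'' through preduals: from $w^*$-$w^*$-continuity it first represents $T$ as an adjoint, $T=\dual{S}$ for some $S:\dual{X}\rightarrow\dual{X}$, and then uses the hypothesis $T[X]\subseteq X$ to check, on subbasic $w^*$-open sets, that $S$ is itself $w^*$-$w^*$-continuous, so that $S=\dual{R}$ and $T=\ddual{R}$; the operator $R$ appears only implicitly, and the representation theorem ``$w^*$-continuous operators on a dual space are adjoints'' is invoked twice. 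You instead construct $R$ explicitly as the restriction $J^{-1}\circ T\circ J$ of $T$ to the canonical copy of $X$, verify that $T$ and $\ddual{R}$ agree on $J(X)$, and then conclude by Goldstine's theorem together with $w^*$-continuity of both operators and Hausdorffness of the $w^*$-topology. Your version buys a concrete description of $R$ and needs only one standard duality fact plus a density argument (correctly phrased with nets, since $(\ddual{X},w^*)$ need not be metrizable); the paper's version avoids Goldstine and any limit argument at the cost of identifying $R$ only through two layers of predual representations. Both are complete proofs of the stated equivalence.
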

\begin{proof}
 Assume $T=\ddual{R}$ for some 
$R:X\rightarrow X$. It is well known 
that the fact that $T$ is a dual operator implies that it is $w^*$-$w^*$-continuous.
Since $R[X]\subseteq X$ and $R\subseteq \ddual{R}$, we have that $T[X]\subseteq X$.

Conversely, suppose $T$ is $w^*$-$w^*$-continuous and $T[X]\subseteq X$.
Then,  $T$ is a dual operator,
say of $S:\dual{X}\rightarrow \dual{X}$. It is sufficient to show that $S$ is
$w^*$-$w^*$-continuous. So fix an open set $U\subseteq\mathbb{R}$ and an $x\in X$. 
We denote by $\tilde{x}\in \ddual{X}$
the element corresponding to $x\in X$. Consider the preimage
by $S$ of the  $w^*$-open subbasic $\{g\in \dual{X}:g(x)\in U\}$,
for given $x\in X$ and $U\subseteq \mathbb{R}$ open:
$$\begin{array}{rcl}
   S^{-1}[\{g\in \dual{X}:g(x)\in U\}]&=&\{h\in \dual{X}:S(h)(x)\in U\}\\
&=&\{h\in \dual{X}:\tilde{x}(S(h))\in U\}\\
&=&\{h\in \dual{X}:\dual{S}(\tilde{x})(h)\in U\}.
  \end{array}$$
Since $\dual{S}=T$ and $T[X]\subseteq X$, if we put $v=\dual{S}(\tilde{x})\in X$, then we have
$$ S^{-1}[\{g\in \dual{X}:g(x)\in U\}]=\{h\in \dual{X}:h(v)\in U\},$$
which is clearly $w^*$-open.
\end{proof}

\begin{lemma}\label{w*operators-functionals}
Let $X$ be a Banach space.
\begin{enumerate}
 \item[(a)] For every $x\in X$ the functional $F_x:\dual{X}\rightarrow \mathbb{R}$
given by $F_x(f)=f(x)$ is $w^*$-continuous.
\item[(b)]  $T:\dual{X}\rightarrow \dual{X}$ is $w^*$-$w^*$-continuous if, and only if,
for every $x\in X$ the operator $T_x:\dual{X}\rightarrow\mathbb{R}$
given by $T_x(f)=T(f)(x)$ is $w^*$-continuous.
\end{enumerate}
\end{lemma}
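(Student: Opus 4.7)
The plan is to treat (a) as essentially a restatement of the definition of the weak$^*$ topology, and then exploit (a) to reduce (b) to checking continuity on a subbasis.

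For part (a), the approach is to observe that the weak$^*$ topology on $X^*$ is by definition the coarsest topology making all the evaluation functionals $f \mapsto f(x)$, for $x \in X$, continuous. Hence $F_x$ is $w^*$-continuous tautologically. If a reader would like a more concrete verification, one can note that for every open $U \subseteq \R$ the preimage $F_x^{-1}[U] = \{f \in X^* : f(x) \in U\}$ is a standard $w^*$-open subbasic set.

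For the forward direction of (b), I would write $T_x = F_x \circ T$. Since $T$ is assumed $w^*$-$w^*$-continuous and $F_x$ is $w^*$-continuous by (a), the composition $T_x$ is $w^*$-continuous as a map from $(X^*, w^*)$ to $\R$ with its usual topology.

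For the converse of (b), the key step is to check continuity on a subbasis for the $w^*$ topology on the codomain. A subbasic $w^*$-open set has the form $V_{x,U} = \{g \in X^* : g(x) \in U\}$ for some $x \in X$ and open $U \subseteq \R$. Then
$$T^{-1}[V_{x,U}] \;=\; \{f \in X^* : T(f)(x) \in U\} \;=\; T_x^{-1}[U],$$
which is $w^*$-open in the domain by the hypothesis that each $T_x$ is $w^*$-continuous. Since $T^{-1}$ sends every element of a subbasis to a $w^*$-open set, $T$ is $w^*$-$w^*$-continuous. There is no real obstacle here; the whole statement is a direct unpacking of the definition of the weak$^*$ topology together with the behaviour of continuity under composition, and the only thing one needs to be careful about is keeping straight which topology is on which copy of $X^*$.
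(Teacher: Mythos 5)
Your proof is correct: part (a) is indeed immediate from the definition of the weak$^*$ topology, the forward direction of (b) is the composition $T_x=F_x\circ T$, and the converse follows by checking preimages of the subbasic sets $\{g\in\dual{X}:g(x)\in U\}$. The paper states this lemma without proof, treating it as standard, and your argument is exactly the routine verification being taken for granted there, so nothing further is needed.
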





\begin{proposition}\label{w*-product-top}
In both $\ell_1$ and $\ell_\infty$ the product topology is coarser than the 
$w^*$-topology.
 Moreover, in both cases the converse is only true when restricted to a bounded subspace.
\end{proposition}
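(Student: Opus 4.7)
The plan is to handle the three assertions in turn, treating the $\ell_1 = \dual{c_0}$ and $\ell_\infty = \dual{\ell_1}$ cases in parallel since the arguments are formally identical.

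First, for the forward inclusion, I would observe that the product topology on a sequence space is generated by the coordinate evaluations $f \mapsto f(n)$. In each case these maps are $w^*$-continuous: $f(n) = f(\chi_{\{n\}})$ with $\chi_{\{n\}} \in c_0$ (for $\ell_1$) and $f(n) = f(\delta_n)$ with $\delta_n \in \ell_1$ (for $\ell_\infty$). Applying Lemma \ref{w*operators-functionals}(a) this gives immediately that every product-subbasic set is $w^*$-open, so $\tau_p \subseteq w^*$.

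Next, for the equality on bounded sets, I would fix $M > 0$ and show that on $MB_{\ell_\infty}$ (resp.\ $MB_{\ell_1}$) the product topology already majorizes $w^*$. Fix $g$ in the predual and $\varepsilon > 0$. By splitting $g = g_1 + g_2$ with $g_1$ finitely supported and $\|g_2\|$ small in the appropriate norm (this works because $c_0$ is the closure of finitely supported sequences in $\|\cdot\|_\infty$, and $\ell_1$ the closure in $\|\cdot\|_{\ell_1}$), the evaluation $f \mapsto f(g_1)$ depends only on finitely many coordinates (hence is product-continuous) while $|f(g_2)| \leq M\|g_2\|$ uniformly on the bounded set. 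A standard $\varepsilon/2$-argument then shows that $f \mapsto f(g)$ is product-continuous on the ball, and since $w^*$ is generated by such evaluations, we get $w^*|_{MB} \subseteq \tau_p|_{MB}$.

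Finally, to show the converse fails on the whole space, I would exhibit for each space a sequence converging to $0$ in $\tau_p$ but not in $w^*$. The natural candidate is $f_n = n e_n$ (unbounded; this is where the boundedness restriction becomes essential). Clearly $f_n \to 0$ coordinatewise. For $\ell_\infty$, take $g = \sum_{k} 2^{-k} \delta_{2^k} \in \ell_1$; then $f_{2^k}(g) = 2^k \cdot 2^{-k} = 1 \not\to 0$, witnessing non-convergence in $w^*$. For $\ell_1$, take $g = (1/n)_n \in c_0$; then $f_n(g) = n \cdot 1/n = 1 \not\to 0$.

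The only mildly delicate step is the bounded-set equality, where one must be careful that the approximation of $g$ by finitely supported elements happens in the right norm (the predual norm), since this is what interacts with the given uniform bound $M$ on the dual side. Everything else is straightforward bookkeeping with the defining duality pairings.
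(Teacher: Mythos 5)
Your proof is correct, and it diverges from the paper's argument in two of the three steps. The forward inclusion ($\tau_p$ coarser than $w^*$) is essentially the same in both: coordinate evaluations come from elements of the predual ($\chi_{\{n\}}\in c_0$, resp.\ $\delta_n\in\ell_1$), so product-basic sets are $w^*$-open. For the coincidence on bounded sets, the paper does not approximate at all: it observes that the identity from $(B_{\ell_1}(M),\tau_{w^*})$ to $(B_{\ell_1}(M),\tau_p)$ is continuous by the first part and then invokes $w^*$-compactness of the ball (Banach--Alaoglu) together with the fact that a continuous bijection from a compact space onto a Hausdorff space is a homeomorphism; your $\varepsilon/2$-argument, splitting a predual element as a finitely supported part plus a part of small predual norm and using the uniform bound $M$ on the dual side, is more hands-on, avoids Alaoglu, and applies directly to any bounded set rather than passing through the closed ball, at the cost of a little bookkeeping. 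For the failure on the whole space, the paper fixes an $f\in c_0$ that is not eventually zero and shows that the $w^*$-subbasic set $\{b:|\sum_j(b_j-a_j)f(j)|<\varepsilon\}$ contains no product-basic neighbourhood of $a$, by perturbing a single coordinate beyond the finitely many that the basic neighbourhood controls; you instead exhibit the unbounded sequence $ne_n$, which is $\tau_p$-null but not $w^*$-null (paired against $\sum_k 2^{-k}\delta_{2^k}\in\ell_1$, resp.\ $(1/n)_n\in c_0$), and conclude $w^*\not\subseteq\tau_p$ from the easy implication that a coarser topology inherits convergence. Your version makes the role of unboundedness especially transparent; the paper's version shows the slightly stronger fact that such $w^*$-subbasic sets fail to be product-open at every point, though neither statement needs that extra precision.
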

 
\begin{proof}
 We prove the statement for $\ell_1$, the argument being the same for $\ell_{\infty}$.

Let $k$ be a positive integer and $I_i$ be an open interval for every $i<k$.
We will show that the following basic open of the product topology
$U=\{(b_j)_{j\in\N}\in\ell_1:b_i\in I_i, \forall i<k\}$ is also $w^*$-open.
So let $(a_j)_{j\in\N}\in U$ and take $\varepsilon>0$ such that $(a_i-\varepsilon,a_i+\varepsilon)\subseteq I_i$,
for each $i<k$. 
Consider the following $w^*$-open set
$$\begin{array}{rcl}
   O &=&  \{(b_j)_{j\in\N}\in\ell_1:|\sum_{j\in\mathbb{N}}(b_j-a_j)\chi_{\{i\}}|<\varepsilon/2, \forall i<k\}\\
& =&  \{(b_j)_{j\in\N}\in\ell_1:|b_i-a_i|<\varepsilon/2, \forall i<k\}. 
  \end{array}$$
Clearly, $(a_j)_{j\in\N}\in O$ and $O\subseteq U$. So 
the product topology is coarser than the $w^*$-topology.

Now fix $M\in\mathbb{R}$ and let $Id:(B_{\ell_1}(M),\tau_{w^*})\rightarrow (B_{\ell_1}(M),\tau_p)$
be the identity map, where $B_{\ell_1}(M)=\{a\in\ell_1:\|a\|\leq M\}$
and $\tau_{w^*}$, $\tau_p$ are the weak$^*$ topology and the product topology,
respectively. 
By the above, this is a continuous function, and since
$B_{\ell_1}$ is $w^*$-compact, we know that it is actually a homeomorphism.
So $\tau_{w^*}$ and $\tau_p$ coincide on every bounded set.

However, this is not the case everywhere. Indeed, fix any $(a_j)_{j\in\N}\in\ell_1$ and $\varepsilon>0$,
and let $f\in c_0$ be such that it is not eventually zero. We show that the $w^*$-open set
$O=\{(b_j)_{j\in\N}\in\ell_1:|\sum_{j\in\mathbb{N}}(b_j-a_j)f(j)|<\varepsilon\}$
is not open in the product topology.
For every positive integer $k$ and every $\delta>0$, let 
$A_k^{\delta}=\{(b_j)_{j\in\N}\in\ell_1:|b_j-a_j|<\delta, \forall i<k\}$. 
Note that the family $\{A_k^{\delta}:k\in\mathbb{N}\setminus\{0\}, \delta>0\}$
is a local basis for $(a_i)_{j\in\N}$ in the product topology. Hence, it is enough to 
show that $A_k^{\delta}\nsubseteq O$, for every $k$ and every $\delta$.
So fix $k\in\mathbb{N}\setminus\{0\}$ and $\delta>0$. Take $m>k$ such that
$f(m)\neq 0$ and define $(b_j)_{j\in\N}\in\ell_1$ by putting
 $b_j=a_j$ for all $j\neq m$ and choosing $b_m$  such that
$b_m\geq \frac{\varepsilon}{|f(m)|}+a_m$.
Then, $\varepsilon\leq (b_m-a_m)|f(m)|=|\sum_{j\in\mathbb{N}}(b_j-a_j)f(j)|$.
Therefore, $(b_j)_{j\in\N}\in A_k^{\delta}\setminus O$.
\end{proof}

\begin{theorem}\label{summary_matrices_appendix}
 Let $R:\ell_{\infty}\rightarrow\ell_{\infty}$ be a linear bounded operator.
 The following are equivalent:
\begin{enumerate}
 \item $R=\ddual{(R|c_0)}$.
\item $R$ is given by a $c_0$-matrix.
\item $R$ is $w^*$-$w^*$-continuous and $R[c_0]\subseteq c_0$.
\item $R| B_{\ell_{\infty}}:(B_{\ell_{\infty}},\tau_p)\rightarrow (\ell_{\infty},\tau_p)$
is continuous and $R[c_0]\subseteq c_0$.
\end{enumerate}
\end{theorem}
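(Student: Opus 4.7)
My plan is to establish the chain $(1)\Leftrightarrow(2)\Leftrightarrow(4)$ directly and to obtain $(1)\Leftrightarrow(3)$ by a single appeal to Proposition \ref{double-dual}. The equivalence $(1)\Leftrightarrow(2)$ is already Proposition \ref{doubleadjoint_appendix}, so it may simply be quoted.

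For $(1)\Leftrightarrow(3)$, I would apply Proposition \ref{double-dual} with $X=c_0$: since $\dual{c_0}=\ell_1$ and $\ddual{c_0}=\ell_\infty$, that proposition says an operator $R:\ell_\infty\to\ell_\infty$ is of the form $\ddual{S}$ for some $S:c_0\to c_0$ if and only if $R$ is $w^*$-$w^*$-continuous and $R[c_0]\subseteq c_0$. Since $\ddual{S}|c_0=S$, any such $S$ must coincide with $R|c_0$, so $R=\ddual{S}$ is the same statement as $R=\ddual{(R|c_0)}$. Both directions of $(1)\Leftrightarrow(3)$ follow.

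For $(2)\Rightarrow(4)$: if $R$ is induced by a $c_0$-matrix $(b_{ij})$, then $R[c_0]\subseteq c_0$ is the content of \ref{charac_matrix}. Given a net (or sequence) $f_n\to f$ in $B_{\ell_\infty}$ in the product topology, each row $b_i$ lies in $\ell_1$, so $|b_{ij}f_n(j)|\leq|b_{ij}|$ uniformly in $n$; dominated convergence then yields $R(f_n)(i)=\sum_j b_{ij}f_n(j)\to\sum_j b_{ij}f(j)=R(f)(i)$ for each $i$, which is exactly $\tau_p$-continuity on the ball. Conversely, for $(4)\Rightarrow(2)$: set $b_{ij}=R(\delta_j)(i)$. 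The columns satisfy $b_{\cdot j}=R(\delta_j)\in c_0$ because $\delta_j\in c_0$ and $R[c_0]\subseteq c_0$. Given $f\in B_{\ell_\infty}$, the truncations $f_N=\sum_{j<N}f(j)\delta_j$ all lie in $B_{\ell_\infty}$ and converge to $f$ in $\tau_p$, so by (4) we get $R(f_N)(i)\to R(f)(i)$, that is $R(f)(i)=\sum_j b_{ij}f(j)$. Choosing $f\in B_{\ell_\infty}$ with $f(j)=\operatorname{sgn}(b_{ij})$ turns this identity into $\sum_j|b_{ij}|=R(f)(i)<\infty$, so each row is in $\ell_1$ with $\|b_i\|_{\ell_1}\leq\|R\|$, showing that $(b_{ij})$ is a $c_0$-matrix and that $R$ is induced by it.

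The one subtle point, and the place I would slow down to be careful, is the truncation step in $(4)\Rightarrow(2)$: one must make sure that only $\tau_p$-continuity of $R$ on the unit ball is used (not on all of $\ell_\infty$, where $\tau_p$ and $w^*$ genuinely differ, as noted in Proposition \ref{w*-product-top}), and that the values being tracked are individual coordinates $R(f_N)(i)$ rather than the full $\ell_\infty$-norm. Since $\tau_p$ convergence in the codomain $\ell_\infty$ is just pointwise convergence of sequences of reals, and since every $f_N$ stays in $B_{\ell_\infty}$, the argument is clean. No additional external machinery beyond Propositions \ref{charac_matrix}, \ref{doubleadjoint_appendix}, \ref{double-dual} and \ref{w*-product-top} is needed.
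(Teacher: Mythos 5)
Your proposal is correct, but it closes the cycle differently from the paper. You and the paper agree on the two quoted steps: $(1)\Leftrightarrow(2)$ is Proposition \ref{doubleadjoint_appendix} and $(1)\Leftrightarrow(3)$ is Proposition \ref{double-dual} applied with $X=c_0$ (your observation that any $S$ with $R=\ddual{S}$ must equal $R|c_0$ is exactly the intended reading). The difference is in how statement (4) is attached: the paper proves $(3)\Leftrightarrow(4)$ abstractly, using Proposition \ref{w*-product-top} (the weak$^*$ and product topologies agree on bounded sets) for one direction, and for the converse upgrading $w^*$-continuity of the functionals $f\mapsto R(f)(a)$, $a\in\ell_1$, from the ball $B_{\ell_\infty}$ to all of $\ell_\infty$ via Corollary 4.46 of \cite{fabian} (a Banach--Dieudonn\'e/Krein--\v Smulian type result) together with Lemma \ref{w*operators-functionals}. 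You instead prove $(2)\Leftrightarrow(4)$ by hand: a tail estimate on the $\ell_1$-rows gives $\tau_p$-continuity on the ball, and conversely the truncations $f_N\to f$ in $(B_{\ell_\infty},\tau_p)$ plus the sign-vector choice $f(j)=\operatorname{sgn}(b_{ij})$ recover the matrix, the $\ell_1$-bound $\|b_i\|_{\ell_1}\le\|R\|$ on the rows, and the $c_0$-columns. Your route is more elementary and self-contained, avoiding the external Krein--\v Smulian-type ingredient entirely (note also that $(B_{\ell_\infty},\tau_p)$ is metrizable, so your sequential/dominated-convergence phrasing is legitimate); the paper's route is shorter given the cited machinery and isolates the general principle that ball-$w^*$-continuity of a functional on a dual space is automatically global, which is of independent interest. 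Both arguments are sound, and your care in using $\tau_p$-continuity only on the ball, where it coincides with $w^*$, is precisely the point where a sloppier version would fail.
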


\begin{proof}

\noindent  \textbf{(1) $\Leftrightarrow$ (2)} \hspace{3pt}
%
See Proposition \ref{doubleadjoint_appendix}\\

\noindent \textbf{(1) $\Leftrightarrow$ (3)} \hspace{3pt}
See Proposition \ref{double-dual}.\\

\noindent \textbf{(3) $\Leftrightarrow$ (4)} \hspace{3pt}
Suppose $R$ is $w^*$-$w^*$-continuous and $R[c_0]\subseteq c_0$.
Fix a set $U\subseteq\ell_{\infty}$ open in the product topology.
By Proposition \ref{w*-product-top} and by the $w^*$-$w^*$-continuity of $R$, 
we know that $R^{-1}[U]\cap B_{\ell_{\infty}}$ is open in the product topology. 

Conversely, assume $R$ restricted to the unit ball is continuous in the product topology 
and $R[c_0]\subseteq c_0$. Then, by Proposition \ref{w*-product-top} and
since $R$ is bounded, we have that $R$ restricted to the unit ball is 
$w^*$-$w^*$-continuous. Now consider for each $a\in\ell_1$ the functional 
$R_a:\ell_{\infty}\rightarrow \mathbb{R}$ defined by $R_a(x)=R(x)(a)$. 
By Lemma \ref{w*operators-functionals}(a), we have that 
$R_a| B_{\ell_{\infty}}=F_a\circ R| B_{\ell_{\infty}}$
is $w^*$-$w^*$-continuous. Then, by Corollary 4.46 in \cite{fabian}
we have that $R_a$ is $w^*$-continuous, and since this is true for
every $a\in\ell_1$, we know by Lemma \ref{w*operators-functionals}(b)
that $R$ is $w^*$-$w^*$-continuous.
\end{proof}

\bibliographystyle{amsplain}

\end{document}